\newtheorem{theorem}{Theorem}[section]
\newtheorem{lemma}[theorem]{Lemma}
\newtheorem{corollary}[theorem]{Corollary}
\newtheorem{proposition}[theorem]{Proposition}
\theoremstyle{definition}
\newtheorem{definition}[theorem]{Definition}
\newtheorem{example}[theorem]{Example}
\theoremstyle{remark}
\newtheorem{remark}[theorem]{Remark}
\numberwithin{equation}{section}
\begin{document}

\setlength\parskip{0.5em plus 0.1em minus 0.2em}

\title{Equivariant Seiberg--Witten theory}

\author{David Baraglia}
\address{School of Computer and Mathematical Sciences, The University of Adelaide, Adelaide SA 5005, Australia}
\email{david.baraglia@adelaide.edu.au}


\date{\today}

\begin{abstract}
We introduce and study equivariant Seiberg--Witten invariants for $4$-manifolds equipped with a smooth action of a finite group $G$. Our invariants come in two types: cohomological, valued in the group cohomology of $G$ and $K$-theoretic, valued in the representation ring of $G$. We establish basic properties of the invariants such as wall-crossing and vanishing of the invariants for $G$-invariant positive scalar curvature metrics. We establish a relation between the equivariant Seiberg--Witten invariants and families Seiberg--Witten invariants. Sufficient conditions are found under which equivariant transversality can be achieved leading to smooth moduli spaces on which $G$ acts. In the zero-dimensional case this yields a further invariant of the $G$-action valued in a refinement of the Burnside ring of $G$. We prove localisation formulas in cohomology and $K$-theory, relating the equivariant Seiberg--Witten invariants to moduli spaces of $G$-invariant solutions. We give an explicit formula for the invariants for holomorphic group actions on K\"ahler surfaces. We also prove a gluing formula for the invariants of equivariant connected sums. Various applications and consequences of the theory are considered.

\end{abstract}

\maketitle


\section{Introduction}

The goal of this paper is to define an equivariant version of the Seiberg--Witten invariants in the presence of a smooth group action and to establish their fundamental properties. Given a compact oriented smooth $4$-manifold $X$ equipped with an action of a finite group $G$ by diffeomorphisms, there are two natural approaches to incorporate the group action into Seiberg--Witten theory. The first approach is to consider only those solutions of the Seiberg--Witten equations on $X$ which are invariant under the action of $G$. Equivalently, one can interpret this as the Seiberg--Witten equations on the quotient orbifold $X/G$. This leads to a series of integer invariants $\overline{SW}_{G,X,\mathfrak{s}}$ which we refer to as the {\em reduced Seiberg--Witten invariants} of the group action. These invariants have been studied by various authors \cite{rua,cho,ish,sung0,sung}. 

The second approach, which is the main focus of this paper, is to consider the full set of solutions to the Seiberg--Witten equations while keeping track of the group action. However any attempt carry this out is confronted with the problem of equivariant transversality. We can not expect to get a smooth $G$-invariant moduli space in all cases (although we will see in Section \ref{sec:eswm} that this be achieved in some cases). The transversality problem can be evaded by the techinique of finite-dimensional approximation, as in the construction of the Bauer--Furuta invariant \cite{bf}. In this approach one forgoes the construction of smooth moduli spaces, working instead with a finite-dimensional approximation of the Seiberg--Witten equations. An alternative approach to addressing the problem of transversality which will occasionally be useful makes use of families Seiberg--Witten theory, see Section \ref{sec:eqfam}. Finite-dimensional approximation can be carried out equivariantly and this produces an invariant $BF_{G,X,\mathfrak{s}}$ which we refer to as the $G$-equivariant Bauer--Furuta invariant \cite{szy}. A significant problem with the equivariant Bauer--Furuta invariants is that they take values in certain twisted equivariant stable cohomotopy groups, which are very difficult to calculate. One of the motivations for this paper is to construct invariants of smooth group actions which are easier to work with.

Throughout the paper we assume that our $4$-manifold $X$ satisfies $b_1(X) = 0$. We have made this choice for simplicity, but it is not essential to the theory developed in this paper. Most of the results will carry over to the case $b_1(X) > 0$ with suitable modifications. Let $\mathfrak{s}$ be a spin$^c$-structure on $X$ whose isomorphism class is fixed by $G$. Associated to $\mathfrak{s}$ is a certain $S^1$-central extension of $G$
\[
1 \to S^1 \to G_{\mathfrak{s}} \to G \to 1
\]
The extension has the property that the action of $G$ on $X$ lifts to an action of $G_{\mathfrak{s}}$ on the spinor bundles corresponding to $\mathfrak{s}$. Assume the action of $G$ on $H^+(X)$ is orientation preserving. We will define the {\em (cohomological) equivariant Seiberg--Witten invariant} of $(X,\mathfrak{s})$ as a certain map
\[
SW_{G,X,\mathfrak{s}} : H^*( G_{\mathfrak{s}} ; \mathbb{Z}) \to H^{* - d(X,\mathfrak{s})}( G ; \mathbb{Z} )
\]
where $d(X,\mathfrak{s})$ is the expected dimension of the Seiberg--Witten moduli space. The definition can be extended to other coefficient groups and also extended to the case that $G$ does not preserve orientation on $H^+(X)$ using local coefficients. The definition requires that $b_+(X)^G > 0$, where $b_+(X)^G = dim( H^+(X)^G )$. In the case $b_+(X)^G = 1$ the definition depends also on a choice of chamber and there is a wall-crossing formula relating the invariants for the two chambers. The map $SW_{G,X,\mathfrak{s}}$ is defined by imitating the formula for the families Seiberg--Witten invariants given in \cite{bk}. In the case that a smooth $G$-invariant moduli space can be constructed, the map is given by evaluating cohomology classes over the moduli space. The cohomological equivariant Seiberg--Witten invariants were originally defined by the author and Konno in \cite{bk1}, although they were not the main focus of that paper and many of their basic properties were not established. We will also define a second type of invariant which we wall the {\em $K$-theoretic equivariant Seiberg--Witten invariant} of $(X , \mathfrak{s})$ as a map
\[
SW^K_{G,X,\mathfrak{s}} : R(G_{\mathfrak{s}}) \to R(G)
\]
where $R(G_{\mathfrak{s}}), R(G)$ denote the representation rings of $G_{\mathfrak{s}}, G$. The definition will require that $b_+(X)$ is odd and that $H^+(X)$ admits a $G$-equivariant spin$^c$-structure. It is sometimes more convenient to work with the $K$-theoretic invariants, for instance the representation ring $R(G)$ is typically easier to work with than the cohomology ring $H^*(G ; \mathbb{Z})$. Furthermore, the $K$-theoretic invariants behave better under localisation.

When the extension $G_{\mathfrak{s}}$ is split, a choice of splitting $G_{\mathfrak{s}} \cong S^1 \times G$ yields isomorphisms $H^*(G_{\mathfrak{s}} ; \mathbb{Z}) \cong H^*(G ; \mathbb{Z})[x]$ and $R(G_\mathfrak{s}) \cong R(G)[\xi,\xi^{-1}]$. In this case $SW_{G,X,\mathfrak{s}}$ and $SW_{G,X,\mathfrak{s}}^K$ are determined by the classes
\begin{align*}
SW_{G,X,\mathfrak{s}}(x^m)& \in H^{2m-d(X,\mathfrak{s})}(G ; \mathbb{Z}), \; m \ge 0 \\
SW_{G,X,\mathfrak{s}}^K( \xi^m )& \in R(G), \; m \in \mathbb{Z}.
\end{align*}

\subsection{Motivation for the invariants}

We are motivated to study equivariant Seiberg--Witten invariants for a variety of reasons. As previously mentioned, the equivariant Bauer--Furuta invariants $BF_{G,X,\mathfrak{s}}$ are valued in equivariant stable cohomotopy groups, which are very difficult to work with. The cohomological and $K$-theoretic equivariant Seiberg--Witten invariants $SW_{G,X,\mathfrak{s}}, SW_{G,X,\mathfrak{s}}^K$ are valued in group cohomology $H^*(G ; \mathbb{Z})$ and the representation ring $R(G)$, both of which are considerably easier to understand. The equivariant Seiberg--Witten invariants also have certain advantages over the reduced Seiberg--Witten invariants $\overline{SW}_{G,X,\mathfrak{s}}$. For instance $SW_{G,X,\mathfrak{s}}$ behaves well under restriction to subgroups of $G$ whereas $\overline{SW}_{G,X,\mathfrak{s}}$ does not. This is useful because it provides obstructions to extending a group action on $X$ to a larger group. Restricting to the trivial subgroup we recover the ordinary Seiberg--Witten invariants. On the other hand the reduced invariants are related to the equivariant invariants through localisation (as shown in Section \ref{sec:red}). From this we obtain non-trivial relations between the ordinary Seiberg--Witten invariants and the reduced Seiberg--Witten invariants, which leads to non-trivial constraints on the possible smooth group actions on $X$.

Another motivation comes from families Seiberg--Witten theory. If $P \to B$ is a principal $G$-bundle, then we can associate a family of $4$-manifolds over $B$ by taking the induced fibre bundle $E = P \times_G X$. We will show that the families Seiberg--Witten invariants of $E$ are given by pullback by the classifying map $f : B \to BG$ of $P$. Hence the equivariant Seiberg--Witten invariants can be used to compute families Seiberg--Witten invariants.

In \cite{bh}, the author and Hekmati introduced an equivariant version of Seiberg--Witten Floer homology for rational homology $3$-spheres. The equivariant Seiberg--Witten invariant is the $4$-dimensional counterpart to this Floer theory. In future work we aim to prove a gluing formula for the equivariant Seiberg--Witten invariants of $4$-manifolds obtained by gluing along rational homology $3$-spheres, extending the result of \cite{man,kls} to the equivariant setting. In this paper we consider only gluing along $3$-spheres.

\subsection{Main results}

Let $X$ be a compact, oriented smooth $4$-manifold with $b_1(X) = 0$ and let $G$ be a finite group which acts on $X$ by orientation preserving diffeomorphism. Let $\mathfrak{s}$ be a $G$-invariant spin$^c$-structure on $X$ and denote by $D$ the index of the corresponding spin$^c$ Dirac operator. Note that $D$ is a virtual representation of $G_{\mathfrak{s}}$ of rank $d = (c(\mathfrak{s})^2 - \sigma(X))/8$. Assume that $b_+(X)^G > 0$. We denote by $s_j(D) \in H^{2j}(G ; \mathbb{Z})$ the $j$-th Segre class of $D$. We summarise below the main properties of the equivariant Seiberg--Witten invariants. We state most results only for the cohomological invariants.

\noindent {\bf Diffeomorphism invariance.} Let $Y$ be another compact oriented smooth $4$-manifolds with $b_1(Y) = 0$ and let $G$ act smoothly on $Y$. Let $f : X \to Y$ be a $G$-equivariant diffeomorphism. Then
\[
SW_{G , X , f^*(\mathfrak{s}_Y)}^{f^*(\phi_Y)} = SW_{G , Y , \mathfrak{s}_Y}^{\phi_Y},
\]
where $\mathfrak{s}_Y$ is a $G$-invariant spin$^c$-structure on $Y$ and $\phi_Y$ is a chamber on $Y$.

\noindent {\bf Change of group.} Let $\psi : K \to G$ be a group homomorphism. Then $K$ acts on $X$ through $\psi$ and
\[
SW_{K , X , \mathfrak{s} }^\phi( \psi^*(\theta)) = \psi^*(SW_{G , X , \mathfrak{s}}^\phi(\theta))
\]
where $\mathfrak{s}$ is a $G$-invariant spin$^c$-structure and $\psi^* : H^*(G) \to H^*(K)$ is the pullback map.

\noindent {\bf Trivial group.} If $G$ is the trivial group then
\[
SW_{G,X,\mathfrak{s}}^\phi(x^m) = \begin{cases} SW(X,\mathfrak{s} , \phi) & \text{if } m = d(X,\mathfrak{s})/2 \\ 0 & \text{otherwise}, \end{cases}
\]
where $SW(X,\mathfrak{s} , \phi)$ denotes the ordinary Seiberg--Witten invariant of $(X , \mathfrak{s})$ with respect to the chamber $\phi$.

\noindent {\bf Finiteness.} Assume $b_+(X)^G > 1$. Then there exists only finitely many $G$-invariant spin$^c$-structures $\mathfrak{s}$ for which $SW_{G,X,\mathfrak{s}}$ is non-zero.

\noindent {\bf Charge conjugation.} For a spin$^c$-structure $\mathfrak{s}$ we let $-\mathfrak{s}$ denote the charge conjugate of $\mathfrak{s}$. There is a charge conjugation isomorphism $\psi : G_{\mathfrak{s}} \to G_{-\mathfrak{s}}$ which satisfies $\psi(u) = \overline{u}$ for all $u \in S^1$. This induces isomorphisms $H^*(G_{\mathfrak{s}}) \to H^*(G_{-\mathfrak{s}})$ and $R(G_{\mathfrak{s}}) \to R(G_{-\mathfrak{s}})$ which we shall denote by $\theta \mapsto \overline{\theta}$. Then
\[
SW_{G,X,-\mathfrak{s}}^\phi( \theta ) = (-1)^{d+b_+(X)+1} SW_{G,X,\mathfrak{s}}^{-\phi}( \overline{\theta} ).
\]

\noindent {\bf Wall crossing.} Suppose that $b_+(X)^G = 1$. Suppose that $G_{\mathfrak{s}}$ is split and choose a splitting. Let $\phi$ be a chamber and set $H_0 = H^+(X)/\langle \phi \rangle$. Assume that $G$ acts orientation preservingly on $H^+(X)$ and orient $H_0$ so that $H^+(X) \cong \mathbb{R}\phi \oplus H_0$. Then
\[
SW_{G,X,\mathfrak{s}}^\phi( x^m) - SW_{G,X,\mathfrak{s}}^{-\phi}(x^m) = e( H_0 ) s_{m-(d-1)}(D)
\]
where $e(H_0)$ denotes the Euler class of $H_0$.

\noindent {\bf Positive scalar curvature.} Suppose that $X$ admits a $G$-invariant metric $g$ of positive scalar curvature. If $b_+(X)^G > 1$ then $SW_{G,X,\mathfrak{s}} = 0$. If $b_+(X)^G = 1$ and $\phi$ is a chamber such that $\langle \phi , c(\mathfrak{s}) \rangle \le 0$, then $SW^\phi_{G,X,\mathfrak{s}} = 0$.

\noindent {\bf Mod 2 invariants for spin structures.} Suppose that $\mathfrak{s}$ is a spin structure on $X$ and the action of $G$ lifts to the principal $Spin(4)$-bundle of $\mathfrak{s}$. If $m\ge 0$ is even and $\phi$ is any chamber, then 
\[
SW^\phi_{X,\mathfrak{s}}(x^m) = w_{b_+(X)-3}( H^+(X) ) s_{m-(d-2)}(D) \in H^*(G ; \mathbb{Z}_2).
\]

\noindent {\bf Relations to families Seiberg--Witten invariants.} 

\begin{theorem}
Assume that $G_{\mathfrak{s}}$ is split and choose a splitting. Let $P \to B$ be a principal $G$-bundle over a compact smooth manifold $B$ and $E = P \times_G X$ the associated family of $4$-manifolds. Then
\[
SW^{\phi_{E}}_{E , \mathfrak{s}_{E}}( x^m ) = \varphi_P^*( SW^{\phi}_{G,X,\mathfrak{s}}( x^m))
\]
for all $m \ge 0$, where $\varphi_P : B \to BG$ is the classifying map of $P$. Moreover $SW^{\phi}_{G,X , \mathfrak{s}}$ is completely determined by the classes $SW^{\phi_{E}}_{E , \mathfrak{s}_{E}}$ as $(B , \varphi_P)$ ranges over all compact smooth manifolds and continuous maps $\varphi_P : B \to BG$.
\end{theorem}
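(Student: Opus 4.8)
The plan is to construct, from the families Seiberg--Witten setup over $E = P \times_G X$, a model that manifestly pulls back the equivariant construction via $\varphi_P$. First I would recall that the equivariant invariant $SW_{G,X,\mathfrak{s}}$ is defined (following the recipe of \cite{bk}, \cite{bk1}) by a finite-dimensional approximation: one forms the $S^1 \times G_{\mathfrak{s}}$-equivariant vector bundles of spinors and self-dual forms over a suitable configuration space, takes the Seiberg--Witten map, approximates it by finite rank, and extracts a cohomology class on the Borel quotient, which descends to a map $H^*(G_{\mathfrak{s}}) \to H^{*-d}(G)$. The key observation is that the exact same data, when transported along the bundle $P \to B$, produces the families Seiberg--Witten data over $E$: the fibre bundle $E \to B$, its fibrewise spin$^c$-structure $\mathfrak{s}_E$, and the associated fibrewise Dirac index and $H^+$ bundle are all obtained by the Borel/associated-bundle construction $P \times_G (-)$ applied to the corresponding $G$- (or $G_{\mathfrak{s}}$-) equivariant objects on $X$. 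Concretely, the universal space for the equivariant theory is (a configuration-space version of) $EG \times_G X$, while the families theory over $B$ lives over $P \times_G X = \varphi_P^*(EG) \times_G X$; naturality of the classifying map $\varphi_P : B \to BG$ then forces the families invariant to be $\varphi_P^*$ of the equivariant one.

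The steps, in order, would be: (1) Recall the precise finite-dimensional-approximation definition of $SW^\phi_{G,X,\mathfrak{s}}(x^m)$ as the image of a universal class under a Gysin/Euler-class pushforward along the approximated SW map, living in $H^*$ of the Borel construction $BG_{\mathfrak{s}}$ relative to $BG$; and likewise recall from the definition of families SW invariants in \cite{bk} the corresponding description over $B$. (2) Identify the families SW data of $E \to B$ with the $\varphi_P$-pullback of the equivariant data: the fibrewise configuration bundle, the fibrewise spinor bundle (using the lift of the $G$-action to $G_{\mathfrak{s}}$ on the spinors, which is exactly what makes $\mathfrak{s}_E$ a well-defined fibrewise spin$^c$-structure), the fibrewise index bundle $D$, and the bundle $H^+ = P \times_G H^+(X) \to B$. (3) Check that finite-dimensional approximation can be performed compatibly — choose the approximating subspaces $G$-equivariantly on $X$ (possible since $G$ is finite, averaging/representation-theoretic arguments apply), so that the associated-bundle construction turns the equivariant approximation directly into a valid families approximation over $B$; then the approximated SW maps and their zero sets correspond under $\varphi_P$. (4) Apply naturality of the Euler class / Gysin map under the pullback square to conclude $SW^{\phi_E}_{E,\mathfrak{s}_E}(x^m) = \varphi_P^*(SW^\phi_{G,X,\mathfrak{s}}(x^m))$, matching the chamber $\phi_E$ with the image of $\phi$ (here one uses that $H^+(X)^G \neq 0$ in the $b_+^G = 1$ case so the chamber on $X$ is $G$-invariant and induces $\phi_E$).

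For the final assertion — that $SW^\phi_{G,X,\mathfrak{s}}$ is \emph{completely determined} by the collection of families invariants — the idea is that group cohomology $H^*(G ; \mathbb{Z})$ is detected by maps out of manifolds: every class in $H^*(BG ; \mathbb{Z})$ is determined by its pullbacks $\varphi^*$ along maps $\varphi : B \to BG$ from closed (or compact) smooth manifolds $B$, since such maps, together with their bordisms, generate enough of the homology of $BG$ (one can approximate $BG$ by smooth manifolds, or invoke that $H_*(BG ; \mathbb{Z})$ is generated by images of fundamental classes of manifolds mapping to $BG$ — Thom's theorem on representability of homology classes by manifolds up to odd torsion, supplemented by a separate argument for the torsion using lens-space-type manifolds, or more simply that $BG$ has a CW model whose skeleta are retracts of manifold thickenings). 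Thus if all $\varphi_P^*(SW^\phi_{G,X,\mathfrak{s}}(x^m))$ vanish, the class $SW^\phi_{G,X,\mathfrak{s}}(x^m)$ pairs trivially with all manifold-represented homology classes, hence is zero; and since it pulls back correctly, knowing all the families invariants pins it down.

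The main obstacle I expect is step (3): making finite-dimensional approximation work \emph{simultaneously} equivariantly on $X$ and in families over $B$, and verifying that the choices (approximating subspaces, perturbations, the chamber-dependent component of the SW map) can be made $G$-equivariantly and then transported without introducing discrepancies — in particular that the Gysin pushforward is computed by the same universal formula on both sides. Once the identification of data is set up cleanly, the naturality of the pushforward is formal; but the bookkeeping of the central extension $G_{\mathfrak{s}}$ (and the splitting, which is what makes the variable $x$ meaningful) in the families picture, and ensuring the fibrewise framings match, is where the real care is needed. The last ``completely determined'' clause is secondary and follows from a standard representability argument for group cohomology by manifolds, which I would state as a lemma and prove by reduction to Thom's theorem plus a handling of torsion classes via products of lens spaces.
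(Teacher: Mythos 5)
Your argument for the first claim is essentially the paper's: the families Seiberg--Witten data of $E = P \times_G X$ is literally the associated-bundle construction $Y \mapsto P \times_G Y$ applied to the $G_{\mathfrak{s}}$-equivariant monopole map $f$ of $(X,\mathfrak{s})$, so the families Bauer--Furuta map is $f_P$, the case $P = EG$ recovers the Borel construction (hence the equivariant invariant), and naturality under $P \cong \varphi_P^*(EG)$ gives the pullback formula. The paper does not need your worry in step (3) to be resolved separately: since $G$ is finite, the finite-dimensional approximation is taken once, $G_{\mathfrak{s}}$-equivariantly on $X$, and the families approximation over $B$ is by definition its associated bundle, so there is nothing to match up.

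For the second claim ("completely determined"), your primary argument has a genuine gap. You propose to conclude that a class $\lambda \in H^{2m-d(X,\mathfrak{s})}(BG;\mathbb{Z})$ with $\varphi_P^*(\lambda)=0$ for all $(B,\varphi_P)$ must vanish because it "pairs trivially with all manifold-represented homology classes." But for a finite group $G$ every class in $H^{k}(BG;\mathbb{Z})$ with $k>0$ is torsion, and the evaluation pairing $H^k(BG;\mathbb{Z}) \otimes H_k(BG;\mathbb{Z}) \to \mathbb{Z}$ annihilates torsion; so pairing with homology classes (manifold-represented or not) detects nothing at all in exactly the degrees that matter. Thom representability is therefore the wrong tool here. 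What is needed, and what the paper proves, is that the restriction maps $\varphi^* : H^*(BG;\mathbb{Z}_w) \to H^*(B;\mathbb{Z}_w)$ are \emph{jointly injective} as $B$ ranges over compact smooth manifolds. The paper gets this by embedding $G$ in $O(k)$ and taking $B_n = V_k(\mathbb{R}^{n+k})/G$, a closed smooth manifold on which $G$ acts freely via the Stiefel manifold; since $H^j(EO(k), V_k(\mathbb{R}^{n+k})) = 0$ for $j \le n$, the restriction $H^j(BG) \to H^j(B_n)$ is an isomorphism for $n$ large, and one takes $\varphi$ to be the inclusion $B_n \hookrightarrow BG$. Your parenthetical fallback ("$BG$ has a CW model whose skeleta are retracts of manifold thickenings") is gesturing in this direction, but as written it is not developed into an argument, and the argument you do develop fails.
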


\noindent {\bf Equivariant transversality.} 

\begin{theorem}
Let $\mathcal{L}'$ denote the collection of conjugacy classes of subgroups of $G_{\mathfrak{s}}$ whose intersection with $S^1$ is the trivial group, partially ordered by inclusion. Suppose that for each maximal element $K$ of $\mathcal{L}'$ and for each non-trivial real irreducible representation $\chi$ of $K$, we have $D_\chi \ge H^+_\chi$, then equivariant transversality can be achieved. Here for a representation $W$ of $K$, $W_\chi$ denotes the multiplicity of $\chi$ in $W$.
\end{theorem}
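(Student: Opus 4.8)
The plan is to produce, by an equivariant Sard--Smale argument carried out stratum by stratum over the isotropy stratification, a $G$-invariant perturbation of the Seiberg--Witten equations for which the perturbed moduli space is cut out transversally; such a perturbation automatically yields a smooth moduli space on which $G$ acts. Fix a $G$-invariant metric generic enough to avoid $G$-invariant reducibles when $b_+(X)^G>1$, and work in a fixed chamber when $b_+(X)^G=1$; then reducibles play no role, and it suffices to treat the space $\mathcal{B}^*$ of irreducible configurations modulo based gauge transformations. On $\mathcal{B}^*$ the group $G_{\mathfrak{s}}$ acts with the central $S^1$ acting freely, so every isotropy group is a finite subgroup meeting $S^1$ trivially, and its conjugacy class lies in $\mathcal{L}'$. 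Perturbations will be drawn from a Banach space $\mathcal{P}$ of $G$-invariant perturbations, chosen rich enough that the relevant universal zero loci are Banach manifolds but tame enough that the a priori Weitzenb\"ock estimates, and hence compactness of the moduli space, are preserved.

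Next, stratify $\mathcal{B}^*$, and with it each moduli space $M_\pi$, by isotropy type; for $[H]\in\mathcal{L}'$ the fixed locus $\mathcal{B}^H$ is a Banach submanifold, and the perturbed Seiberg--Witten map restricts to a map $\mathcal{B}^H\to\mathcal{V}^H$, where $\mathcal{V}=\Omega^+(X)\oplus\Gamma(S^-)$ is the obstruction space and $\mathcal{V}^H$ its $H$-fixed part. I would first reduce to the strata with maximal isotropy: if $H\le K$ in $\mathcal{L}'$ and $\chi'$ is a non-trivial real irreducible representation of $H$, then
\[
D_{\chi'}-H^+_{\chi'}=\sum_{\chi\ne\mathbf{1}}(D_\chi-H^+_\chi)\,[\chi|_H:\chi'],
\]
the trivial representation of $K$ contributing nothing since its restriction to $H$ contains no non-trivial summand; thus the hypothesis for maximal $K$ propagates to every element of $\mathcal{L}'$. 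The perturbation is then constructed by downward induction over the finite poset $\mathcal{L}'$, at each step extending what has already been fixed on the deeper (more symmetric) strata.

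The core is the inductive step at a stratum with maximal isotropy $K$. At a $K$-invariant solution, decompose the linearised Seiberg--Witten operator into $K$-isotypic pieces $L=\bigoplus_\chi L^\chi$. On the trivial piece $L^{\mathbf{1}}\colon T\mathcal{B}^K\to\mathcal{V}^K$ one runs the usual Sard--Smale argument---provided the chosen class of $G$-invariant perturbations, together with the Clifford term, surjects onto the relevant finite-dimensional cokernel along $\mathcal{B}^K$---so that for generic $\pi$ the ``$K$-reduced'' moduli space $M_\pi^K=M_\pi\cap\mathcal{B}^K$ is a smooth manifold. For a non-trivial real irreducible $\chi$ of $K$, a $G$-invariant (hence $K$-invariant) perturbation cannot alter $L^\chi$ through its value at a $K$-fixed point, but the normal derivative along the $G_{\mathfrak{s}}$-orbit of such a perturbation realises an arbitrary $K$-equivariant zeroth-order perturbation of $L^\chi$; and since $b_1(X)=0$, the harmonic $1$-forms are absent and the $\Omega^0$ term is trivial, so in a non-trivial isotypic component only the Dirac and $H^+(X)$ contributions survive and the index of $L^\chi$ is non-negative whenever $D_\chi\ge H^+_\chi$. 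Granting the hypothesis, a generic equivariant perturbation therefore makes every $L^\chi$ with $\chi\ne\mathbf{1}$ surjective along $M_\pi^K$; the full linearisation $L$ is then surjective at every point of $M_\pi$, and $M_\pi$ is a smooth manifold of dimension $d(X,\mathfrak{s})$ carrying a smooth $G$-action. Running the induction down $\mathcal{L}'$ and choosing, by Sard--Smale, a common generic $\pi$ completes the argument.

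I expect the main obstacle to lie in the perturbation-theoretic inputs and their compatibility across strata: first, exhibiting a class of $G$-invariant perturbations rich enough to achieve transversality of each reduced moduli space $M_\pi^K$ (the naive self-dual $2$-form perturbations $\Omega^+(X)^G$ need not surject onto $\mathcal{H}^+(X)^K$, so a more flexible class of abstract perturbations is required) yet tame enough that compactness of $M_\pi$ survives; and second, verifying that on the non-trivial isotypic pieces the numerical condition $D_\chi\ge H^+_\chi$ really suffices---i.e.\ that the $G_{\mathfrak{s}}$-equivariant perturbations supply enough normal variation for the standard equivariant transversality scheme to apply---and that the inductive extension of the perturbation over $\mathcal{L}'$ does not disturb the transversality already achieved on the deeper strata.
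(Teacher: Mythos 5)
Your proposal attacks a different formulation from the one the paper proves, and the step you yourself flag as the ``main obstacle'' is in fact the entire content of the theorem. The paper's notion of equivariant transversality is finite-dimensional: one homotopes the $G_{\mathfrak{s}}$-equivariant monopole map $f:S^{V,U}\to S^{V',U'}$ relative to $S^U$ until it is transverse to the distinguished point $y$, following Petrie's obstruction-theoretic scheme. There the inductive step over the isotropy poset is governed not by a Sard--Smale genericity statement but by obstruction classes in $H^*(X^K/N(K), X_K/N(K);\pi_{*-1}(V(K)))$, where $V(K)$ is the bundle whose fibres are the surjective $K$-equivariant maps $A_K\to A'_K$; the hypothesis $D_\chi\ge H^+_\chi$ guarantees that these fibres are non-empty, and the decisive extra input --- that the remaining obstructions are pulled back from the corresponding bundle over $Y=\{y\}$, where they vanish because $Y^K=Y_K$ for every non-maximal $K$ --- is what lets the paper get away with imposing the numerical hypothesis only at maximal elements of $\mathcal{L}'$. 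Your proposal has no substitute for this step.

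Concretely, the gap is your assertion that ``a generic equivariant perturbation therefore makes every $L^\chi$ with $\chi\ne\mathbf{1}$ surjective along $M_\pi^K$.'' A $G$-invariant perturbation influences the normal isotypic blocks only through its behaviour transverse to the fixed stratum, so what you actually need is a global section, over all of $M_\pi^K$, of the bundle of surjective $K$-equivariant maps between the normal pieces. Non-negativity of the equivariant index (your $D_\chi\ge H^+_\chi$) makes the fibres non-empty, but the existence of a section is an obstruction-theoretic question, not a genericity question: one needs the connectivity of the fibre (Petrie's $d(K,\cdot)$) to exceed the dimension of the stratum, and under your hypothesis alone that dimension count fails in general --- the paper's own intermediate corollary needs the stronger condition $\dim(D^K-(H^+)^K)\le d(K,D_K-H^+_K)+1$ for exactly this reason. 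Your numerical propagation of the hypothesis from maximal $K$ down the poset is arithmetically fine but does not close this gap, since the paper handles non-maximal $K$ by the triviality of the obstruction over the point $Y$, not by a numerical estimate. Separately, compactness of the moduli space for the enlarged class of abstract perturbations, which you also defer, is a genuine problem in the infinite-dimensional setting and is precisely why the paper carries out the whole argument on the finite-dimensional approximation.
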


The precise meaning of achieving equivariant transversality is explained in Section \ref{sec:et}. Here we simply note that as a consequence we obtain a compact, smooth $G$-manifold $\mathcal{M}$ such that the equivariant Seiberg--Witten equations are given by evaluating chomology classes over $\mathcal{M}$.

Suppose in addition we also have $D_\chi > H^+_\chi$ whenever $dim_{\mathbb{R}}( Hom_K(\chi , \chi) ) = 1$. In Section \ref{sec:et} we prove that under this condition the equivariant cobordism class of $\mathcal{M}$ is well-defined. In the case that $d(X , \mathfrak{s}) = 0$ this means that $\mathcal{M}$ is a finite oriented $G$-set up to equivariant cobordism, hence defines a class $[\mathcal{M}]$ in the Burnside ring $A(G)$ of $G$. Furthermore there is a $G_\mathfrak{s}$-equivariant circle bundle $\widetilde{\mathcal{M}} \to \mathcal{M}$ whose $G_\mathfrak{s}$-equivariant cobordism class is well defined. This allows us define an invariant which takes values in a refinement $\widehat{A}_{\mathfrak{s}}(G)$ of the Burnside ring. The cohomological and $K$-theoretic equivariant Seiberg--Witten invariants can be obtained from this class.

\noindent {\bf Free actions.}

\begin{proposition}
Let $X$ be a compact, oriented, smooth $4$-manifold with $b_1(X) = 0$. Let $G$ be a finite group which acts smoothly, orientation preservingly and freely on $X$. Let $\mathfrak{s}$ be a spin$^c$-structure whose isomorphism class is fixed by $G$. If $d(X,\mathfrak{s}) \ge 0$ then equivariant transversality of the Seiberg--Witten moduli space can be achieved for $(X , \mathfrak{s})$ with respect to any chamber.
\end{proposition}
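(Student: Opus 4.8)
The strategy is to use freeness to reduce everything to ordinary (non-equivariant) Seiberg--Witten transversality on the smooth quotient $4$-manifolds $X/H$, $H \le G$, and then to reassemble the moduli space.

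Since $G$ acts freely on $X$, every subgroup $H \le G$ acts freely, so $X/H$ is a compact oriented smooth $4$-manifold with $b_1 = 0$ and $X \to X/H$ is an $|H|$-fold covering. If $\widetilde H \le G_\mathfrak{s}$ satisfies $\widetilde H \cap S^1 = \{1\}$ then $\widetilde H$ maps isomorphically onto a subgroup $\overline H \le G$, its action on the spinor bundles $W^\pm$ is free, and $W^\pm/\widetilde H \to X/\overline H$ defines a spin$^c$-structure $\mathfrak{s}'$ descending $\mathfrak{s}$. Because the stabiliser of an irreducible configuration in the gauge group is trivial, a $\widetilde H$-fixed point of the irreducible moduli space $\mathcal{M}(X,\mathfrak{s})$ has a genuine $\widetilde H$-invariant representative, and descent identifies the $\widetilde H$-fixed set of $\mathcal{M}(X,\mathfrak{s})$ with a union of irreducible moduli spaces $\mathcal{M}(X/\overline H,\mathfrak{s}')$ over the finitely many spin$^c$-structures $\mathfrak{s}'$ descending $\mathfrak{s}$. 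More precisely, the $\widetilde H$-isotypic decomposition of the deformation complex at such a point is, indexed by the real irreducible representations $\chi$ of $\overline H$, the $\mathfrak{s}'$-deformation complex on $X/\overline H$ twisted by the flat bundles $X\times_{\overline H}\chi$, the trivial summand being the untwisted complex.

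I would then choose a $G$-invariant perturbation in the prescribed chamber which is generic; the conditions needed --- transversality of the free stratum of $\mathcal{M}(X,\mathfrak{s})$, and, on each smooth manifold $X/\overline H$, transversality of $\mathcal{M}(X/\overline H,\mathfrak{s}')$ together with the requisite nondegeneracy of the twisted deformation complexes at its solutions --- are each open and dense in the space of $G$-invariant perturbations, by the usual Sard--Smale argument and the fact that local perturbations can be made $G$-invariant; there are only finitely many conditions, so a common choice exists. For such a perturbation $\mathcal{M}(X/\overline H,\mathfrak{s}')$ is a smooth manifold of dimension $d(X/\overline H,\mathfrak{s}') = 2\,\mathrm{ind}(D_{\mathfrak{s}'}) - 1 - b_+(X/\overline H)$, empty when this is negative, where $\mathrm{ind}(D_{\mathfrak{s}'}) = \mathrm{rank}(D)/|\overline H|$ is a positive integer by multiplicativity of the signature and Euler characteristic in finite covers together with $\mathrm{rank}(D) \ge 1$ (forced by $d(X,\mathfrak{s}) = 2\,\mathrm{rank}(D) - 1 - b_+(X) \ge 0$). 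Proceeding by induction on the poset of stabilisers (largest first), at a point $p$ with stabiliser $\widetilde H$ the local model for $\mathcal{M}(X,\mathfrak{s})$ is $G \times_{\widetilde H} f^{-1}(0)$ with $f$ the Kuranishi map between the first and second cohomology of the deformation complex at $p$; by the previous step each isotypic summand of the obstruction space $\mathbb{H}^2_p$ is either zero, in which case the free stratum accumulates transversally in those directions, or has injective Kuranishi differential, in which case the stratum is isolated in those directions, so $f^{-1}(0)$ is smooth near $0$. Hence $\mathcal{M}(X,\mathfrak{s})$ is a compact smooth $G$-manifold, carrying a $G$-equivariant obstruction bundle on any components of larger than expected dimension, and by construction it computes $SW^\phi_{G,X,\mathfrak{s}}$; independence of the choices follows from connectedness of the space of admissible perturbations. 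Equivalently, one may argue via equivariant families: for finite-dimensional approximations $P_n \to B_n$ of $EG \to BG$ the bundle $E_n = (P_n \times X)/G$ is a smooth fibre bundle, families transversality holds, and assembling these gives equivariant transversality for $(X,\mathfrak{s})$.

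The main obstacle is the last step, the local structure of $\mathcal{M}(X,\mathfrak{s})$ along a fixed-point stratum. This is precisely where freeness is indispensable: because each $X/\overline H$ is a genuine smooth manifold, the twisted deformation complexes behave index-theoretically and the fixed-point strata can be made smooth by ordinary transversality, while the hypothesis $d(X,\mathfrak{s}) \ge 0$ controls the relevant indices. The remaining ingredients --- a coherent choice of invariant representatives for $\widetilde H$-fixed configurations, simultaneous genericity on all quotients, and keeping track of the obstruction bundle --- are routine.
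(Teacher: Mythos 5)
Your reduction of the $\widetilde H$-fixed strata to moduli spaces on the quotients $X/\overline H$ is fine, but the decisive step is not. You assert that, for a generic $G$-invariant perturbation, the \emph{non-trivial} isotypic summands of the deformation complex at an $\widetilde H$-fixed solution become nondegenerate, citing ``the usual Sard--Smale argument.'' This is precisely the equivariant transversality problem, and Sard--Smale does not solve it: a $G$-invariant perturbation, viewed on $X/\overline H$, perturbs only the curvature equation downstairs and therefore acts on the twisted deformation complexes only indirectly, through the constrained family of solution connections. There is no universal-moduli-space argument showing this family avoids the cokernel-jumping strata of the twisted operators; in general it does not, which is exactly why the paper works with the finite-dimensional monopole map and Petrie's obstruction theory rather than with PDE-level generic perturbations. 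Your subsequent Kuranishi discussion does not repair this: the differential of a Kuranishi map vanishes at the origin, so ``injective Kuranishi differential'' is vacuous, and allowing ``a $G$-equivariant obstruction bundle on components of larger than expected dimension'' concedes that transversality has not been achieved. The closing remark that families transversality over finite-dimensional approximations of $BG$ ``assembles'' into equivariant transversality is also false -- a smooth families moduli space over $B_n$ is not a smooth $G$-moduli space on $X$, and the paper explicitly treats these as different things.

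For contrast, the paper's proof is short and purely numerical: it invokes its equivariant transversality criterion (Theorem 5.6, proved by obstruction theory for the finite-dimensional monopole map), which requires $D_\chi \ge H^+_\chi$ for every non-trivial real irreducible $\chi$ of the relevant subgroups $K$. Freeness is used through the Lefschetz fixed point theorem, the $G$-signature theorem and the $G$-spin theorem to show that $H^0\oplus H^2\oplus H^4$, the signature representation and $D$ all have vanishing character away from the identity, hence are multiples of the regular representation; this gives $D_\chi - H^+_\chi = m_\chi\, d(X,\mathfrak{s})/|K| \ge 0$, which is where the hypothesis $d(X,\mathfrak{s})\ge 0$ enters. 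Your sketch gestures at ``the indices being controlled by $d(X,\mathfrak{s})\ge 0$'' but never performs this computation, and more importantly never supplies the mechanism (Petrie's obstruction-theoretic homotopy of the monopole map) by which non-negative isotypic index actually yields an equivariantly transverse representative.
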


Studying the equivariant Seiberg--Witten invariants in the case of a free action, we obtain a congruence relation between the Seiberg--Witten invariants of a $4$-manifold $X$ and of its quotients. It generalises to arbitrary finite group $G$ a result of Nakamura \cite{na1,na2} for the case $G$ is cyclic of prime order.

\begin{theorem}\label{thm:swfree}
Let $X$ be a compact, oriented, smooth $4$-manifold with $b_1(X) = 0$. Let $G$ be a finite group which acts smoothly and freely on $X$. Let $\mathfrak{s}$ be a spin$^c$-structure whose isomorphism class is fixed by $G$ and suppose that $d(X,\mathfrak{s}) \ge 0$. Assume that $b_+(X)^G > 0$ and if $b_+(X)^G = 1$ then fix a chamber $\phi$. Then we have
\[
\sum_{g \in G} \sum_{s'} SW(X/\langle g \rangle , \mathfrak{s}' ) = 0 \; ({\rm mod} \; |G|)
\]
where the second sum is over spin$^c$-structures on $X/\langle g \rangle$ whose pullback to $X$ is isomorphic to $\mathfrak{s}$. If $b_+(X/H) = 1$ then $SW(X/H , \mathfrak{s}')$ is defined using the chamber $\phi$.
\end{theorem}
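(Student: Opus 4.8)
The plan is to exploit the relation between equivariant Seiberg--Witten invariants and ordinary Seiberg--Witten invariants of quotients, via the localisation machinery promised in Section \ref{sec:red}, combined with the structure of the Burnside ring. Since $G$ acts freely on $X$, the Proposition above guarantees that equivariant transversality can be achieved for $(X,\mathfrak{s})$, so we obtain a smooth $G$-manifold $\mathcal{M}$ on which $G$ acts freely (freeness on $X$ forces freeness on the configuration space, hence on $\mathcal{M}$). In the relevant case $d(X,\mathfrak{s}) = 0$ this $\mathcal{M}$ is a finite free $G$-set, so $|\mathcal{M}| \equiv 0 \pmod{|G|}$; more generally one evaluates an appropriate cohomology class over $\mathcal{M}$ and uses that $\mathcal{M} \to \mathcal{M}/G$ is a $|G|$-fold covering.

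First I would set up the dictionary: for each subgroup $H \le G$, the $H$-invariant part of the Seiberg--Witten configuration space for $(X,\mathfrak{s})$ is the Seiberg--Witten configuration space on the quotient $X/H$, and invariant solutions correspond to solutions on $X/H$ for the spin$^c$-structures $\mathfrak{s}'$ pulling back to $\mathfrak{s}$. So the fixed-point data of the $G$-action on $\mathcal{M}$ is governed by the moduli spaces on the various quotients $X/H$. Second, I would apply the localisation theorem (Section \ref{sec:red}) which relates the equivariant invariant, or rather the count $|\mathcal{M}|$ via the refinement $\widehat{A}_{\mathfrak{s}}(G)$ of the Burnside ring, to a sum of contributions indexed by conjugacy classes of subgroups, each contribution being (a signed count of) $SW(X/H, \mathfrak{s}')$. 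Third, since the action on $\mathcal{M}$ is free, all the contributions from nontrivial $H$ vanish and the class $[\mathcal{M}] \in A(G)$ is a multiple of the free orbit $[G/e]$; applying the ring homomorphism $A(G) \to \mathbb{Z}$ given by cardinality, or more precisely comparing the "identity coefficient" in the Burnside ring, yields a congruence mod $|G|$. Finally I would rewrite the resulting identity: the double sum $\sum_{g \in G} \sum_{\mathfrak{s}'} SW(X/\langle g\rangle, \mathfrak{s}')$ appears because summing over all $g \in G$ (rather than over conjugacy classes of cyclic subgroups with multiplicities) is exactly the combinatorial device that converts the Burnside-ring identity into a plain congruence, the Möbius-type coefficients being absorbed. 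The chamber-dependence in the $b_+(X)^G = 1$ case is handled by noting that a $G$-invariant chamber $\phi$ on $X$ restricts to a chamber on each $X/H$, and wall-crossing contributions are themselves divisible by $|G|$ by the equivariant wall-crossing formula stated above (the Euler class $e(H_0)$ term lives in positive-degree group cohomology and contributes nothing to the degree-zero/cardinality evaluation).

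The main obstacle I anticipate is bookkeeping the signs and the precise combinatorial passage from the Burnside-ring-valued invariant to the stated double sum over \emph{all} group elements $g$: one must check that the Möbius function on the subgroup lattice, which naturally appears when extracting the free-orbit coefficient, reorganises precisely into the sum over $g \in G$ of sums over spin$^c$-structures on $X/\langle g \rangle$, with no leftover terms obstructing divisibility by $|G|$. A secondary technical point is ensuring that the counts $SW(X/H,\mathfrak{s}')$ computed via the fixed-point moduli spaces of $\mathcal{M}$ agree with the intrinsically-defined Seiberg--Witten invariants of the quotient four-manifolds (matching orientations and the choice of chamber), which should follow from the compatibility of equivariant and quotient transversality but requires care when $b_+(X/H)$ is small. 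Once these points are settled, the congruence is immediate from freeness of the $G$-action on $\mathcal{M}$.
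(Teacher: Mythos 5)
Your opening claim --- that freeness of the $G$-action on $X$ forces freeness of the induced action on the moduli space $\mathcal{M}$ --- is false, and it contradicts the rest of your own argument. A point of $\mathcal{M}$ is fixed by $g$ whenever the corresponding solution is preserved by $g$ \emph{up to gauge}, i.e.\ whenever it descends to a solution on $X/\langle g\rangle$ for some spin$^c$-structure pulling back to $\mathfrak{s}$; these fixed points are exactly the source of the quotient terms $SW(X/\langle g\rangle,\mathfrak{s}')$ in the statement. If the action on $\mathcal{M}$ really were free, the congruence would collapse to $SW(X,\mathfrak{s})\equiv 0 \pmod{|G|}$ with no quotient contributions, which is not what is being proved. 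Deleting that step, what survives is the Burnside-ring strategy of Section \ref{sec:trfree}: $\mathcal{M}$ is a finite oriented $G$-set by Proposition \ref{prop:free}, and one applies the congruence $\sum_{(H)\,\mathrm{cyclic}}\phi(|H|)\,|G/N_GH|\,|\mathcal{M}^H|\equiv 0\pmod{|G|}$ together with the identification $|\mathcal{M}^H|=\sum_{\mathfrak{s}'}SW(X/H,\mathfrak{s}')$.

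The more serious gap is that the orientation-matching issue you defer as ``secondary'' is precisely what blocks this route for general $G$. To identify the signed count $|\mathcal{M}^{H}|$, oriented as a subset of $\mathcal{M}$, with $\sum_{\mathfrak{s}'}SW(X/H,\mathfrak{s}')$, oriented via the quotient moduli spaces, one must compare two orientations that differ at each point by the sign of the determinant of an $H$-equivariant isomorphism between representations containing no trivial summand; this sign is constant (so that some choice of orientation on $H^+(X/H)$ makes the counts agree) only when every non-trivial real irreducible representation of $H$ has $d_\chi\neq 1$ (Lemma \ref{lem:or} and Proposition \ref{prop:compareorn}). This holds for odd-order groups but fails already for $H=\mathbb{Z}_2$, which is why the Burnside argument only yields Theorem \ref{thm:swcong}, carrying the hypothesis that $|G|$ is odd. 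The theorem as stated, with no restriction on $G$, is proved in Section \ref{sec:far} by a different mechanism: one sets $M=SW^{K}_{G,f}(1)\in R(G)$, uses the $K$-theoretic localisation formula of Theorem \ref{thm:lock} together with the fact that for a free action $D$ and $\mathbb{R}\oplus H^+(X)$ are multiples of the regular representation (so the $K$-theoretic Euler factors cancel exactly) to show $\chi(M,g)=\sum_{\mathfrak{s}'}SW(X/\langle g\rangle,\mathfrak{s}')$, and then sums the character over $g\in G$. Your proposal would need either this $K$-theoretic detour or a genuinely new resolution of the orientation comparison in even order to reach the stated generality; the wall-crossing remark at the end is not needed, since the statement simply fixes the chamber $\phi$ once and uses it on every quotient.
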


Note that for the Seiberg--Witten invariants $SW(X/\langle g \rangle , \mathfrak{s}')$ to be defined we need to choose orientations on $H^+(X/\langle g \rangle)$ for each $g$. In order for Theorem \ref{thm:swfree} to hold it is important to choose these orientations in a compatible way. This is explained in Section \ref{sec:far}.

\noindent {\bf Localisation theorems.} The localisation theorems are formulas which relate the equivariant Seiberg--Witten invariants $SW_{G,X,\mathfrak{s}}, SW_{G,X,\mathfrak{s}}^K$ to the reduced Seiberg--Witten invariants $\overline{SW}_{G,X,\mathfrak{s}}^{s}, \overline{SW}_{G,X,\mathfrak{s}}^{K,s}$. The localisation theorem in cohomology holds only when $G$ is cyclic of prime power order whereas the location theorem in $K$-theory applies to any finite group. This is one of the main advantages of the $K$-theoretic invariants compared to the cohomological invariants.

\begin{theorem}
Let $G = \mathbb{Z}_n$ where $n = p^k$ is a prime power. Let $H$ be the unique subgroup of $G$ of order $p$. Then
\[
SW^\phi_{G,X,\mathfrak{s}}(\theta) = \sum_{s} \overline{SW}^{s,\phi}_{H,X,\mathfrak{s}}(  e(H^+(X)/(H^+(X))^H)  e_{G_{\mathfrak{s}}}(D/D^{sH})^{-1} \theta )
\]
where the sum is over splittings $s : H \to H_{\mathfrak{s}} \cong S^1 \times H$.
\end{theorem}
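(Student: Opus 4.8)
The plan is to compute both sides through $G_{\mathfrak{s}}$-equivariant finite-dimensional approximation and then apply the Borel localisation theorem for the cyclic $p$-group $G$. First I would recall that, without assuming equivariant transversality, $SW^\phi_{G,X,\mathfrak{s}}(\theta)$ is read off from a proper $G_{\mathfrak{s}}$-equivariant map $f_N : (V_N)^+ \to (W_N)^+$ between one-point compactifications of finite-dimensional $G_{\mathfrak{s}}$-representations, obtained by projecting the Seiberg--Witten map onto spectral truncations of the configuration space and its target; on the reducible locus $f_N$ is linear, and in the stable range its domain and cokernel encode $H^+(X)$ and the index $D$. Forming the Borel construction over a finite skeleton of $BG$, pulling back the equivariant Thom class of $W_N$ by $f_N$, cupping with $\theta$ and integrating over $V_N$ --- after dividing out the residual $S^1$-action --- reproduces $SW^\phi_{G,X,\mathfrak{s}}(\theta)$, exactly as in the families formula of \cite{bk}; all classes stabilise as $N\to\infty$.

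Next I would restrict the action to the subgroup $H_{\mathfrak{s}}\subset G_{\mathfrak{s}}$ lying over $H$ and localise $H^*_G(-;\mathbb{Z})$ by inverting the Euler classes of all $G$-representations on which $H$ acts with no nonzero fixed vector. Since $G=\mathbb{Z}_{p^k}$ is cyclic of prime-power order these Euler classes become units after localisation; this is exactly where the hypothesis $n=p^k$ is used, and is the reason no such cohomological localisation holds for general finite $G$. The localisation theorem then identifies the localised equivariant cohomology of $(V_N)^+$ with that of its $H$-fixed set $((V_N)^+)^H=(V_N^H)^+$, and likewise for $W_N$. The $H$-fixed set of the approximation decomposes as a disjoint union over splittings $s:H\to H_{\mathfrak{s}}$ (these exist, and form a torsor over $\mathrm{Hom}(H,S^1)$, because $H^3(G;\mathbb{Z})=0$), the $s$-component consisting of configurations whose spinor lies in the $s(H)$-invariant subbundle; the restricted map $f_N^H$ on this component is a finite-dimensional approximation of the Seiberg--Witten equations for $H$-invariant configurations, i.e.\ of the equations on $X/H$, and therefore computes $\overline{SW}^{s,\phi}_{H,X,\mathfrak{s}}$.

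The remaining step is to read off the normal contributions in the localisation formula. Writing $V_N=V_N^H\oplus V_N'$ and $W_N=W_N^H\oplus W_N'$, restricting the Thom class of $W_N$ to $W_N^H$ introduces a factor $e(W_N')$ while the Thom integration over $V_N'$ divides by $e(V_N')$; in the stable range the forms directions cancel except for the moving harmonic self-dual part, which contributes $e(H^+(X)/(H^+(X))^H)$, and the spinorial moving part contributes the $G_{\mathfrak{s}}$-equivariant Euler class $e_{G_{\mathfrak{s}}}(D/D^{sH})$, appearing inverted because it is a normal-bundle class. Summing the resulting contributions over $s$ gives the stated identity; one finally observes that although the individual summands a priori lie in a localised ring, their sum lies in $H^{*-d(X,\mathfrak{s})}(G;\mathbb{Z})$, since the left-hand side does and the localisation map is injective in the relevant degrees for a cyclic $p$-group.

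The main obstacle is making the middle step rigorous: one must show that, in the stable limit, passing to $H$-fixed points commutes with both finite-dimensional approximation and the Seiberg--Witten map, so that $f_N^H$ genuinely approximates the equations on $X/H$; one must check that the localisation theorem is valid for the non-compact spaces $(V_N)^+$ (properness of $f_N$, behaviour of the equivariant Thom classes) and is compatible with $N\to\infty$; and one must fix orientations --- on $H^+(X)^H$, on the moving part $H^+(X)/(H^+(X))^H$ via complex structures on the nontrivial $H$-isotypic summands, and on the pieces $D^{sH}$ --- consistently with the chamber $\phi$, so that the Euler classes carry the correct signs and the reduced invariants on the right are precisely those defined with respect to $\phi$.
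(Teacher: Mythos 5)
Your proposal follows essentially the same route as the paper: pass to the finite-dimensional approximation, divide out the residual $S^1$ so that the invariant is computed by pushing a class $\eta^\phi$ forward from $\mathbb{P}(V)$, apply Borel localisation for the order-$p$ subgroup $H$ to write $\eta^\phi\theta$ as a sum over the components $\mathbb{P}(V_j)$ of $\mathbb{P}(V)^H$ (indexed by splittings $s_j:H\to H_{\mathfrak{s}}$), and identify the restriction of $\eta^\phi$ to each component with the class computing the reduced invariant, up to the Euler classes of the normal directions. The identification of the fixed components with the reduced monopole maps, the cancellation of the $U$-directions leaving $e(H^+/(H^+)^H)$, and the appearance of $e_{G_{\mathfrak{s}}}(D/D^{sH})^{-1}$ as an inverted normal-bundle class are all exactly the content of the paper's Lemma on $\iota_j^*(\eta^\phi)$.

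There is, however, one concrete gap in your localisation step. You invert ``the Euler classes of all $G$-representations on which $H$ acts with no nonzero fixed vector,'' but the normal bundle of $\mathbb{P}(V_j)$ in $\mathbb{P}(V)$ is not pulled back from a point: it is $(V/V_j)\otimes\mathcal{O}_V(1)$, and its equivariant Euler class $\prod_{u\not\equiv j}(x+uv)^{a_u}$ involves the $S^1$-generator $x$, so it does not lie in your multiplicative set. To run the fixed-point formula you must show this class becomes a unit after inverting only $v$; the paper does this (its Lemma on $e_G(N_j)$) by proving that $x+jv$ is nilpotent in $H^*_G(\mathbb{P}(V_j);\mathbb{Z})$, which rests on the observation that when $e_G(V_j(1))$ is expanded in powers of $x+jv$ all lower-order coefficients are divisible by $p$ --- this is where the prime-power hypothesis does its real work, beyond merely validating the abstract localisation theorem. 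Without this (or an Atiyah--Bott-style argument deducing invertibility of $e_G(N_j)$ from the localisation isomorphism itself), the decomposition $\eta^\phi\theta=\sum_j(\iota_j)_*\bigl(e_G(N_j)^{-1}\iota_j^*(\eta^\phi)\theta\bigr)$ is not justified. Relatedly, your closing appeal to injectivity of $H^*_G(pt;\mathbb{Z})\to v^{-1}H^*_G(pt;\mathbb{Z})$ holds only in positive degrees (in degree zero the kernel is $n\mathbb{Z}$), so the passage back to the unlocalised ring needs a word of care when the target degree is zero.
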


The precise meaning of the right hand side will be clarified in Section \ref{sec:locc}. We turn now to the $K$-theoretic invariants. For $V \in R(G)$ and $g \in G$, we let $\chi(V,g)$ denote the character of $V$ evaluated at $g$.

\begin{theorem}
Assume that $\mathbb{R} \oplus H^+(X)$ can be given a $G$-invariant complex structure which we use to $K$-orient $H^+(X)$. For any $g \in G$, we have
\[
\chi( SW^{K,\phi}_{G,X,\mathfrak{s}}(\theta) , g ) = \chi( e^K_G(H^+/(H^+)^g) , g) \sum_{s} \chi( \overline{SW}^{K,\phi}_{\langle g \rangle , X , \mathfrak{s}}( e^K_{G_{\mathfrak{s}}}(D/D^{sg})^{-1} \theta , g)
\]
where the sum is over splittings $s : \langle g \rangle \to \langle g \rangle_{\mathfrak{s}}$.
\end{theorem}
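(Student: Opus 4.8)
The plan is to deduce the $K$-theoretic localisation formula from a more general localisation statement in equivariant $K$-theory, exactly as the cohomological localisation theorem is deduced from its $\mathbb{Z}_{p^k}$ counterpart, but now exploiting the fact that $K$-theory localisation applies one cyclic subgroup at a time. Concretely, for a fixed $g \in G$ with $\langle g \rangle$ of order $m$, I would first reduce to computing the character $\chi(SW^{K,\phi}_{G,X,\mathfrak{s}}(\theta), g)$, which by the change-of-group property (restriction $K = \langle g\rangle \hookrightarrow G$) equals $\chi(SW^{K,\phi}_{\langle g\rangle, X, \mathfrak{s}}(\theta|_{\langle g\rangle_{\mathfrak{s}}}), g)$. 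Thus everything is reduced to the case $G = \mathbb{Z}_m$ cyclic and evaluation at a generator $g$. The key input is then the Atiyah--Segal localisation theorem: after localising $R(\mathbb{Z}_m)$ at the prime ideal $\mathfrak{p}_g = \ker(\chi(-,g))$, restriction to the $g$-fixed point set becomes an isomorphism. Evaluating a character at $g$ is precisely a ring map that factors through this localisation, so $\chi(-,g)$ of any equivariant $K$-theory pushforward over $X$ equals $\chi(-,g)$ of the corresponding pushforward over $X^g$ — but here $X^g$ plays no role directly; rather it is the fixed locus of the $\mathbb{Z}_m$-action on the Seiberg--Witten configuration space that matters, whose fixed point set is the reduced moduli space for $\langle g\rangle$.

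The main steps, in order, would be: (1) set up the equivariant $K$-theoretic Seiberg--Witten invariant as a pushforward (in the sense of the families/finite-dimensional-approximation definition referenced from \cite{bk}) of the $K$-orientation class along the virtual index bundle $D$ over the $S^1 \times G$-equivariant configuration space, with the $H^+(X)$-factor contributing an equivariant Euler class; (2) restrict to $G = \langle g\rangle$ by the change-of-group property; (3) apply Atiyah--Segal localisation at $\mathfrak{p}_g$ to identify the $g$-character of this pushforward with the $g$-character of the pushforward restricted to the $g$-fixed subspace of the configuration space, which consists of $\langle g\rangle$-invariant configurations, i.e. configurations pulled back from $X/\langle g\rangle$; (4) on this fixed subspace, the normal directions in both $H^+(X)$ and $D$ split off as the non-invariant parts $H^+/(H^+)^g$ and $D/D^{sg}$ (the splitting $s$ accounting for the choice of lift of $\langle g\rangle$ to $\langle g\rangle_{\mathfrak{s}}$), and the self-intersection/excess-intersection formula in $K$-theory introduces the factors $e^K_G(H^+/(H^+)^g)$ and $e^K_{G_{\mathfrak{s}}}(D/D^{sg})^{-1}$; (5) recognise what remains as $\overline{SW}^{K,\phi}_{\langle g\rangle, X, \mathfrak{s}}$ evaluated on the transformed class, then sum over the splittings $s$ since these index the components of the $g$-fixed configuration space that carry the relevant spinor action.

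I expect the main obstacle to be step (4): carefully identifying the normal bundle of the fixed-point locus inside the (infinite-dimensional, or finite-dimensionally approximated) configuration space and showing that the excess-intersection contribution is exactly the product of the two equivariant $K$-theoretic Euler classes with the stated variance (the Euler class of $D/D^{sg}$ appearing inverted because $D$ enters as a "negative" virtual bundle / index, whereas $H^+$ enters positively). One must check that these Euler classes become invertible after applying $\chi(-,g)$ — this is exactly where primality is \emph{not} needed, unlike in cohomology, because $\chi(\lambda^a - 1, g) \neq 0$ for a primitive $m$-th root of unity $\lambda \ne 1$ and any $a$ not divisible by $m$; this is the technical reason the $K$-theoretic theorem holds for all finite $G$. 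A secondary subtlety is bookkeeping the $G_{\mathfrak{s}}$-equivariance versus $G$-equivariance: the class $\theta \in R(G_{\mathfrak{s}})$ must be restricted compatibly and the splittings $s : \langle g\rangle \to \langle g\rangle_{\mathfrak{s}}$ enumerated correctly so that the sum is independent of the choice of identification $\langle g\rangle_{\mathfrak{s}} \cong S^1 \times \langle g\rangle$. Finally, I would verify consistency with the cohomological localisation theorem via the Chern character, and with the trivial-group case, as sanity checks.
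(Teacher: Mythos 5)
Your proposal is correct and follows essentially the same route as the paper: reduction to the cyclic group $\langle g\rangle$, Atiyah--Segal localisation at the multiplicative set complementary to $\ker\chi(-,g)$, identification of the normal-bundle Euler classes $e^K(H^+/(H^+)^g)$ and $e^K(D/D^{sg})^{-1}$ via the excess-intersection manipulation, and the observation that these Euler classes become invertible after evaluating characters at $g$ for arbitrary $n$ (which is exactly why no prime-power hypothesis is needed). The paper carries this out concretely on the projectivisation $\mathbb{P}(V)$ of the finite-dimensional approximation, whose $g$-fixed components $\mathbb{P}(V_j)$ are indexed by the splittings $s_j$ and whose restricted monopole maps $f^{s_j}$ give the reduced invariants, which is precisely the finite-dimensional incarnation of your step (3)--(5).
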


As a consequence of the localisation theorem in $K$-theory, we obtain the following constraint on smooth group actions. This result can be regarded as a generalisation of a result of Fang \cite{fan} and Nakamura \cite{na3} in the case $G$ is cyclic of prime order.

\begin{theorem}
Let $X$ be a compact, oriented, smooth $4$-manifold with $b_1(X) = 0$. Let $G$ be a finite group acting on $X$ by orientation preserving diffeomorphisms. Let $\mathfrak{s}$ be a $G$-invariant spin$^c$-structure. Assume that $b_+(X)^G > 0$ and that $d(X,\mathfrak{s}) = 0$, hence $b_+(X)$ is odd. Suppose also that $\mathbb{R} \oplus H^+(X)$ can be equipped with a $G$-invariant complex structure. Suppose that $SW(X,\mathfrak{s}) \neq 0 \; ({\rm mod} \; |G|)$. Then for some non-trivial cyclic subgroup $\{1 \} \neq H \subseteq G$ and some splitting $s : H \to G_{\mathfrak{s}}$, we have $2 \, dim_{\mathbb{C}} ( D^{sH} ) > dim_{\mathbb{R}}( H^+(X)^H)$.
\end{theorem}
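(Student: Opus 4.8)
The approach is contraposition. Assume the conclusion fails: for every non-trivial cyclic subgroup $H \subseteq G$ and every splitting $s \colon H \to G_{\mathfrak{s}}$ one has $2\dim_{\mathbb{C}}(D^{sH}) \le \dim_{\mathbb{R}}(H^+(X)^H)$; the goal is then to prove $SW(X,\mathfrak{s}) \equiv 0 \ ({\rm mod}\ |G|)$. Since $d(X,\mathfrak{s}) = 0$ forces $b_+(X)$ to be odd, the $K$-theoretic invariant $SW^{K,\phi}_{G,X,\mathfrak{s}}$ is defined, and the $G$-invariant complex structure on $\mathbb{R} \oplus H^+(X)$ supplies the $K$-orientation of $H^+(X)$ needed to apply the localisation theorem in $K$-theory. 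First I would isolate the distinguished class $\theta_0 \in R(G_{\mathfrak{s}})$ singled out by the $d(X,\mathfrak{s}) = 0$ normalisation of the $K$-theoretic invariant, characterised by $\chi\bigl(SW^{K,\phi}_{G,X,\mathfrak{s}}(\theta_0),1\bigr) = SW(X,\mathfrak{s},\phi)$; concretely this is the content of the $g = 1$ case of the localisation theorem, in which the $K$-theoretic Euler classes on the right-hand side are trivial and $\overline{SW}^{K,\phi}_{\{1\},X,\mathfrak{s}}$ is the ordinary $K$-theoretic Seiberg--Witten invariant, which at $d(X,\mathfrak{s})=0$ returns the integer $SW(X,\mathfrak{s},\phi)$ on $\theta_0$. (When $b_+(X) > 1$ this equals $SW(X,\mathfrak{s})$; when $b_+(X) = 1$ we fix the chamber $\phi$ once and for all.) It therefore suffices to show that $SW^{K,\phi}_{G,X,\mathfrak{s}}(\theta_0)$ is a virtual integer multiple of the regular representation $\mathbb{C}[G]$, for then its virtual rank equals $\chi\bigl(SW^{K,\phi}_{G,X,\mathfrak{s}}(\theta_0),1\bigr) = SW(X,\mathfrak{s})$, which is divisible by $|G|$.

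By the character criterion this amounts to showing $\chi\bigl(SW^{K,\phi}_{G,X,\mathfrak{s}}(\theta_0),g\bigr) = 0$ for every $g \ne 1$. Fix such a $g$ and put $H = \langle g \rangle$. Applying the localisation theorem in $K$-theory gives
\[
\chi\bigl(SW^{K,\phi}_{G,X,\mathfrak{s}}(\theta_0),g\bigr) = \chi\bigl(e^K_G(H^+/(H^+)^g),g\bigr)\sum_{s}\chi\bigl(\overline{SW}^{K,\phi}_{\langle g\rangle,X,\mathfrak{s}}(e^K_{G_{\mathfrak{s}}}(D/D^{sg})^{-1}\theta_0),g\bigr),
\]
the sum running over splittings $s \colon \langle g\rangle \to \langle g\rangle_{\mathfrak{s}}$, which exist because every central $S^1$-extension of a cyclic group splits. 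The key point is that every summand on the right vanishes. Indeed, for a fixed $s$ the reduced invariant $\overline{SW}^{K,\phi}_{\langle g\rangle,X,\mathfrak{s}}$ counts $\langle g\rangle$-invariant Seiberg--Witten solutions on $X$ --- equivalently, solutions on the quotient orbifold $X/\langle g\rangle$ for the spin$^c$-structure determined by $s$ --- and the corresponding moduli space has expected dimension governed by the invariant Dirac index $\dim_{\mathbb{C}}(D^{sH})$ and, since $H^*(X/H;\mathbb{R}) = H^*(X;\mathbb{R})^H$, by $b_+(X/H) = \dim_{\mathbb{R}}(H^+(X)^H)$; it equals $2\dim_{\mathbb{C}}(D^{sH}) - \dim_{\mathbb{R}}(H^+(X)^H) - 1$. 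Because $b_+(X)^H \ge b_+(X)^G > 0$, a generic $H$-invariant metric admits no reducibles, so this moduli space is a smooth manifold of the stated dimension; the standing hypothesis makes that dimension negative, so the moduli space is empty and the reduced invariant vanishes. Hence $\chi\bigl(SW^{K,\phi}_{G,X,\mathfrak{s}}(\theta_0),g\bigr) = 0$ for all $g \ne 1$, which completes the argument.

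The step I expect to require the most care is the first: making precise the sense in which, for $d(X,\mathfrak{s}) = 0$, the $K$-theoretic invariant refines the integer $SW(X,\mathfrak{s})$ and identifying the distinguished class $\theta_0$ --- this normalisation, and its interaction with the $K$-orientation of $\mathbb{R} \oplus H^+(X)$, belongs to the sections constructing $SW^K$ and must be pinned down there. By comparison the remainder is a dimension count, provided one has correctly identified $\overline{SW}^{K,\phi}_{\langle g\rangle,X,\mathfrak{s}}$ with the Seiberg--Witten invariant of the quotient orbifold and checked that the hypotheses of the $K$-theoretic localisation theorem (the $G$-invariant complex structure on $\mathbb{R} \oplus H^+(X)$, and $b_+(X)^{\langle g\rangle} > 0$ for each cyclic $H$) are in force --- both being immediate from the assumptions of the theorem. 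One should also confirm that when $b_+(X/H) = 1$ the vanishing of the reduced invariant in the fixed chamber still holds, which it does since a moduli space of negative expected dimension is empty in either chamber.
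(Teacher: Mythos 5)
Your proposal is correct and follows essentially the same route as the paper: apply the $K$-theoretic localisation theorem to $M = SW^{K,\phi}_{G,X,\mathfrak{s}}(\theta_0)$, observe that each reduced invariant vanishes because its moduli space has negative expected dimension $2\dim_{\mathbb{C}}(D^{sH}) - \dim_{\mathbb{R}}(H^+(X)^H) - 1$ under the contrapositive hypothesis, and conclude via the character congruence $\sum_{g\in G}\chi(M,g) \equiv 0 \ ({\rm mod}\ |G|)$ (your ``multiple of the regular representation'' phrasing is equivalent). The only point left open in your write-up, the identification of $\theta_0$, is resolved in the paper simply by taking $\theta_0 = 1$, for which $\chi(SW^{K,\phi}_{G,X,\mathfrak{s}}(1),1) = SW(X,\mathfrak{s})$ when $d(X,\mathfrak{s})=0$.
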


\noindent {\bf Divisibility condition.} In Section \ref{sec:divc} we prove a divisibility condition for monopole maps. The divisibility considition says that is the Seiberg--Witten invariant is non-zero mod $p$, then $(b_+-1)/2$ must be divisible by a certain power of $p$. The result can also be interpreted as a bound on the dimension of the moduli space for non-zero Seiberg--Witten invariant.

\begin{theorem}
Let $f : S^{V,U} \to S^{V',U'}$ be an ordinary monopole map over a point and let $p$ be a prime. If $SW(f) \neq 0 \; ({\rm mod} \; p)$, then $(b_+ -1)/2$ is divisible by $p^{e+1}$ whenever $p^e \le \left \lfloor \frac{m}{p-1} \right\rfloor$, where $2d-b_+-1 = 2m$. In particular, if $SW(f) \neq 0 \; ({\rm mod} \; p)$ and $b_+ \neq 1 \; ({\rm mod} \; 2p)$ then $m \le p-2$.
\end{theorem}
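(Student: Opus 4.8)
The plan is to realise $SW(f)$ as an intersection number on a smooth compact moduli space, to constrain $b_+$ using the mod $p$ Steenrod powers on that moduli space, and to finish with an elementary Frobenius descent. We may assume $d(X,\mathfrak{s})=2m\ge 0$, and since the conclusion is vacuous when $b_+=1$ (then $(b_+-1)/2=0$) we may assume $b_+\ge 3$. First I would pass to a geometric model: after a generic perturbation $f$ is transverse to $0$, and since $b_+\ge 3$ the zero set $P:=f^{-1}(0)\subset V\oplus U$ contains no reducible, so it is a closed $S^1$-free manifold with $\dim P=\dim_{\mathbb R}(V\oplus U)-\dim_{\mathbb R}(V'\oplus U')=2d-b_+=2m+1$. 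Put $\mathcal M:=P/S^1$ (an oriented closed $2m$-manifold), $\mathcal L:=P\times_{S^1}\mathbb C$ (the weight-one associated line bundle), $u:=c_1(\mathcal L)$; by construction $SW(f)=\pm\langle u^m,[\mathcal M]\rangle$. Differentiating $f$ along $P$ identifies the normal bundle of $P$ with $\underline{V'\oplus U'}$, so $TP\oplus\underline{V'\oplus U'}\cong\underline{V\oplus U}$ as $S^1$-bundles over $P$; descending to $\mathcal M$ (the vertical bundle of $P\to\mathcal M$ is trivial, $V,V'$ have trivial $S^1$-action and $U,U'$ the weight-one action) gives the stable isomorphism
\[
T\mathcal M\oplus\underline{\mathbb R}^{\,b_++1}\cong\mathcal L^{\oplus d},\qquad b_++1=2(d-m).
\]

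Next I would run the mod $p$ Wu computation. Work in $H^*(\mathcal M;\mathbb F_p)$; let $P=\sum_i P^i$ denote the total $p$-th Steenrod power (the total square when $p=2$), so that on a degree-two class $u$ one has $P(u)=u+u^p=:\zeta(u)$, and set $t:=u^{p-1}$. Since $P$ is a ring map, $P^{-1}$ acts on formal power series in $u$ by precomposition with $\zeta^{-1}(u)=u-u^p+u^{p^2}-\cdots$. From the displayed stable isomorphism the total mod $p$ Stiefel--Whitney class of $T\mathcal M$ is $(1+u^{p-1})^d$, so the total mod $p$ Wu class is $v^{(p)}(\mathcal M)=P^{-1}\big((1+u^{p-1})^d\big)=A(t)^d$, where $A(t):=u/\zeta^{-1}(u)=1+t+O(t^2)\in\mathbb F_p[[t]]$. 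Applying Wu's formula $\langle P^k x,[\mathcal M]\rangle=\langle v^{(p)}_k(\mathcal M)\cup x,[\mathcal M]\rangle$ with $x=u^{m-k(p-1)}$ for $0\le k\le K:=\lfloor m/(p-1)\rfloor$, and using $P^k\big(u^{m-k(p-1)}\big)=\binom{m-k(p-1)}{k}u^m$ together with $v^{(p)}_k(\mathcal M)=\big([t^k]A(t)^d\big)u^{k(p-1)}$, both sides equal $\pm SW(f)$ times a scalar; hence if $SW(f)\not\equiv 0\pmod p$ then $[t^k]A(t)^d\equiv\binom{m-k(p-1)}{k}\pmod p$ for all $0\le k\le K$. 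A short Lagrange inversion (writing $A(t)=1+r$ with $r(1+r)^{p-1}=t$) gives $[t^k]A(t)^{m+1}=\binom{m-k(p-1)}{k}+p\binom{m-k(p-1)}{k-1}$, so these congruences say exactly that $A(t)^{(b_+-1)/2}\equiv 1\pmod{(p,\,t^{K+1})}$, where $(b_+-1)/2=d-m-1$.

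The last step is an elementary lemma: if $B\in\mathbb F_p[[t]]$ has $B(0)=1$ with $B-1$ vanishing to order exactly $a\ge 1$, and $B^N\equiv 1\pmod{t^{L+1}}$ with $p^e a\le L$, then $p^{e+1}\mid N$. (Induction on $e$: for $e=0$ the $t^a$-coefficient of $B^N$ is a nonzero multiple of $N$, so $p\mid N$; for $e\ge 1$ one has $N=pN'$ and $B^N=(B^p)^{N'}=B(t^p)^{N'}$ by the Frobenius, whence $B(s)^{N'}\equiv 1\pmod{s^{\lfloor L/p\rfloor+1}}$ with $p^{e-1}a\le\lfloor L/p\rfloor$, and one applies the inductive hypothesis.) Applied to $B=A$ (so $a=1$), $N=(b_+-1)/2$ and $L=K$, this yields $p^{e+1}\mid(b_+-1)/2$ whenever $p^e\le\lfloor m/(p-1)\rfloor$. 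For the final assertion, $b_+$ is odd (since $2m\ge 0$), so $b_+\not\equiv 1\pmod{2p}$ is equivalent to $p\nmid(b_+-1)/2$, which by the above forbids $p^0=1\le\lfloor m/(p-1)\rfloor$, i.e. forces $m\le p-2$.

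The step I expect to be the main obstacle is the first one: one must know that $SW(f)$ for an abstract monopole map is genuinely computed by an intersection number over a smooth compact moduli space equipped with the line bundle $\mathcal L$ and the stated stable tangent bundle — i.e. that finite-dimensional approximation faithfully models the ordinary Seiberg--Witten invariant, orientations included. This should be available from the definition of $SW(f)$ in Section~\ref{sec:divc}. A secondary point needing care is the Lagrange inversion identity $[t^k]A(t)^{m+1}\equiv\binom{m-k(p-1)}{k}\pmod p$, which can alternatively be obtained by recognising $\sum_k\binom{m-k(p-1)}{k}h^{k(p-1)}$ as the total mod $p$ Wu class of $\mathbb{CP}^m$.
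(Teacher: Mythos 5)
Your argument is correct, but it takes a genuinely different route from the paper's. The paper never leaves the abstract setting: it applies the total Steenrod power $P_t$ directly to the defining identity $SW(f)\,\delta \tau_U x^{a-1-m} = f^*(\tau^\phi_{V',U'})$, using $P_t(\tau_W) = q_t(W)\tau_W$ and naturality of $P_t$, and after cancelling $SW(f)$ (invertible mod $p$ by hypothesis) reads off $(1+tx^{p-1})^{d-1-m}x^{a-1-m} = x^{a-1-m}$ in $\mathbb{Z}_p[x][[t]]/(x^a)$, hence $\binom{d-1-m}{j} \equiv 0 \pmod{p}$ for $1 \le j \le m/(p-1)$; taking $j = p^u$ and invoking Lucas gives $p^{e+1} \mid d-1-m = (b_+-1)/2$. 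You instead pass to the geometric moduli space $\mathcal{M} = f^{-1}(y)/S^1$, compute its stable tangent bundle, and run the mod $p$ Wu formula there, converting the resulting congruences into $A^{(b_+-1)/2} \equiv 1 \pmod{(p, t^{K+1})}$ by Lagrange inversion and finishing with Frobenius descent. The two endpoints are equivalent (both assert that a series of the form $1 + t + O(t^2)$ raised to the power $(b_+-1)/2$ is $\equiv 1$ modulo $(p, t^{K+1})$), and your closing lemma is the same arithmetic as the paper's Lucas step. What the paper's route buys is economy: it needs neither transversality nor the moduli-space interpretation of $SW(f)$, so it applies verbatim to any ordinary monopole map. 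What yours buys is the classical picture of the Wu class of the Seiberg--Witten moduli space expressed through $D$ and $H^+$; the extra input you need (that $SW(f) = \pm\langle u^m, [\mathcal{M}]\rangle$ for a smooth free quotient $\mathcal{M}$) is indeed supplied by Section~\ref{sec:et}, since for the trivial group the equivariant transversality conditions are vacuous. Three small slips to fix: the regular value must be taken in the chamber, i.e.\ a point $y \in S^{U'} \setminus S^U$, not $0$ --- the origin of $U$ always maps to the origin of $U'$ under the linear injection $f|_U$, so $f^{-1}(0)$ contains a reducible for every $b_+$; you have the $S^1$-weights backwards ($S^1$ acts with weight one on $V, V'$ and trivially on $U, U'$), although the stable isomorphism $T\mathcal{M} \oplus \underline{\mathbb{R}}^{\,b_++1} \cong \mathcal{L}^{\oplus d}$ you write down is the one the correct convention yields; and for $p=2$ one should replace $P^k$ by $Sq^{2k}$ throughout, as the paper implicitly does when it says the even case is similar.
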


The above result can be applied to the ordinary Seiberg--Witten invariant of $4$-manifolds with $b_1(X) = 0$. It can also be applied to the reduced Seiberg--Witten invariants $\overline{SW}_{G,X,\mathfrak{s}}$ in which case we obtain constraints on the structure of smooth group actions on $X$. 

\noindent {\bf $\mathbb{Z}_p$-actions.} Consider the case that $G = \mathbb{Z}_p$ is cyclic of prime order $p$. In this case the localisation theorem can be worked out explicitly. We have $H^{ev}(\mathbb{Z}_p ; \mathbb{Z}_p) \cong \mathbb{Z}_p[v]$ where $deg(v) = 2$. Let $d_0, \dots , d_{p-1}$ be the dimensions of the weight spaces of the $\mathbb{Z}_p$-action on $D$. Define $c_j( n ; n_0 , \dots , n_{p-1}) \in H^*(\mathbb{Z}_p ; \mathbb{Z}_p)$ to be the coefficient of $(x+jv)^n$ in the Laurent expansion of $\prod_{i=0}^{p-1} (x+iv)^{n_i}$, see Section \ref{sec:zpact} for more explanation. 

\begin{theorem}
For any non-negative integers $m_0, \dots , m_{p-1}$, we have the following equality in $H^*(\mathbb{Z}_p ; \mathbb{Z}_p)$:
\begin{align*}
& SW^\phi_{\mathbb{Z}_p , X , \mathfrak{s}}( x^{m_0} (x+v)^{m_1} \cdots (x+(p-1)v)^{m_{p-1}}) \\
& \quad \quad = e_{\mathbb{Z}_p}( H^+/(H^+)^g ) \sum_{j=0}^{p-1} c_j\left(  -\dfrac{(b_0+1)}{2} ; m_0-d_0 , \dots , m_{p-1} - d_{p-1} \right) \overline{SW}^{s_j,\phi}_{G,X,\mathfrak{s}}.
\end{align*}
\end{theorem}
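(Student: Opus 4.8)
The plan is to derive this as the explicit $\mathbb{Z}_p$ specialisation of the general cohomological localisation theorem stated above for $G = \mathbb{Z}_n$ with $n = p$. First I would apply that theorem with $G = H = \mathbb{Z}_p$, so that the localisation formula reads
\[
SW^\phi_{\mathbb{Z}_p,X,\mathfrak{s}}(\theta) = \sum_s \overline{SW}^{s,\phi}_{\mathbb{Z}_p,X,\mathfrak{s}}\bigl( e(H^+/(H^+)^g)\, e_{G_{\mathfrak{s}}}(D/D^{sH})^{-1}\theta \bigr),
\]
where the sum runs over the $p$ splittings $s_0,\dots,s_{p-1}$ of the central extension. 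The task then reduces to making each ingredient on the right-hand side fully explicit in the presentation $H^{ev}(\mathbb{Z}_p;\mathbb{Z}_p) \cong \mathbb{Z}_p[v]$, using that $H^*(G_{\mathfrak{s}};\mathbb{Z}_p) \cong \mathbb{Z}_p[v][x]$ after choosing a splitting, with $x$ the Euler class of the standard $S^1$-factor.

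The key computational step is to identify $e_{G_{\mathfrak{s}}}(D/D^{sH})$. Decompose $D$ into weight spaces $D = \bigoplus_{i=0}^{p-1} D_i$ for the $\mathbb{Z}_p$-action, with $\dim D_i = d_i$; under the splitting $s_j$ the $S^1$-equivariant Euler class of the weight-$i$ piece becomes $(x + (i-j)v)^{d_i}$ up to relabelling (the shift by $j$ records the choice of splitting). Since $D^{s_jH}$ is precisely the summand on which the full $S^1\times\mathbb{Z}_p$ acts by the weight that the splitting $s_j$ trivialises, one gets $e_{G_{\mathfrak{s}}}(D/D^{s_jH})^{-1} = \prod_{i} (x+(i-j)v)^{-d_i}$ in the localised ring, or after absorbing the $d_i$ into the exponents of $\theta = x^{m_0}(x+v)^{m_1}\cdots$, the coefficient extraction $\theta \cdot e_{G_{\mathfrak{s}}}(D/D^{s_jH})^{-1} = \prod_i (x+iv)^{m_i - d_i}$ (suitably reindexed so that the $j$-dependence is carried by which factor is the ``denominator''). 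Pairing with $\overline{SW}^{s_j,\phi}$, which only sees the image in $H^*(\mathbb{Z}_p;\mathbb{Z}_p)$, amounts to extracting the coefficient of the appropriate power of $(x+jv)$ — and this is exactly the definition of $c_j(n; n_0,\dots,n_{p-1})$ with $n_i = m_i - d_i$ and $n = -(b_0+1)/2$, the last identification coming from the fact that pushing forward over the reduced moduli space (equivalently, the degree-matching in the localisation formula, cf. the role of $(b_+-1)/2$ and $d$) forces the total degree to be $-(b_0+1)/2$ where $b_0 = b_+(X)^H = \dim(H^+)^H$ accounts for the invariant part already split off as $e(H^+/(H^+)^g)$.

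I would then assemble these pieces: the factor $e_{\mathbb{Z}_p}(H^+/(H^+)^g)$ comes out front unchanged, and the sum over splittings $s_j$ produces exactly $\sum_{j=0}^{p-1} c_j\bigl(-(b_0+1)/2; m_0-d_0,\dots,m_{p-1}-d_{p-1}\bigr)\,\overline{SW}^{s_j,\phi}_{G,X,\mathfrak{s}}$. The main obstacle I anticipate is bookkeeping: getting the sign and index conventions consistent between the definition of $c_j$ (coefficient of $(x+jv)^n$ in the Laurent expansion of $\prod_i (x+iv)^{n_i}$), the labelling of the weight spaces $d_i$, and the identification of which splitting shifts which weight to zero — a single off-by-$j$ error propagates everywhere. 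A secondary subtlety is justifying that the Laurent-expansion/coefficient-extraction is the correct interpretation of the inverse Euler classes in the localised equivariant cohomology ring and that it matches the ``precise meaning of the right hand side'' promised in Section \ref{sec:locc}; this should follow from the standard localisation machinery once one checks that $D/D^{s_jH}$ has trivial $S^1$-fixed part so its equivariant Euler class is indeed invertible after inverting $v$. Finally I would record that the degree $-(b_0+1)/2$ is the unique value for which the right-hand side lands in non-negative cohomological degree, matching the left-hand side, which both confirms the formula and pins down the remaining ambiguity.
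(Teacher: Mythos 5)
Your proposal is correct and follows essentially the same route as the paper: specialise the cohomological localisation theorem to $G=H=\mathbb{Z}_p$, expand $e_{G_{\mathfrak{s}}}(D/D_j)^{-1}\theta$ as $\prod_i(x+iv)^{m_i-d_i}$ times $(x+jv)^{d_j}$, and use that $\overline{SW}^{s_j,\phi}$ extracts the coefficient of $(x+jv)^{\delta_j}$ with $\delta_j=d_j-(b_0+1)/2$, which after the reindexing is exactly $c_j(-(b_0+1)/2;m_0-d_0,\dots,m_{p-1}-d_{p-1})$. The only difference is cosmetic: the paper keeps the fixed splitting $s_0$ throughout and encodes the $j$-dependence via $\psi_{s_j}^{-1}(x)=x+jv$ rather than re-expressing the weight spaces in the coordinates of each $s_j$.
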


\noindent {\bf K\"ahler actions.} If $X$ is K\"ahler with K\"ahler form $\omega$ then $b_+(X)^G > 0$ and $\omega$ defines a chamber. Furthermore there is a canonical spin$^c$-structure $\mathfrak{s}_{can}$ determined by the complex structure and this sets up a bijection $L \mapsto \mathfrak{s}_L = L \otimes \mathfrak{s}_{can}$ between spin$^c$-structures and complex line bundles. Furthermore, lifting $G$ to the spinor bundles of $\mathfrak{s}_L$ is equivalent to lifting $G$ to $L$.

\begin{theorem}
Let $X$ be a compact complex surface with $b_1(X) = 0$. Let $G$ be a finite group that acts on $X$ by biholomorphisms. Let $L$ be a $G$-equivariant line bundle. If $L$ is not holomorphic, or if $L$ is holomorphic and $h^0(L) = 0$, then $SW^\omega_{G , X , \mathfrak{s}_L} = 0$. If $L$ is holomorphic and $h^0(L) > 0$, then
\begin{align*}
& SW_{G , X , \mathfrak{s}_L}^\omega( x^m ) \\
& \quad = \sum_{\substack{i+j=r \\ i,j \ge 0}} \sum_{l=0}^{i} \binom{h^1(L)-h^2(L)-l}{i-l} s_{i-l+m - (h^0(L)-1)}(V^0) c_{l}(V^1-V^2) c_{j}( H^2(X , \mathcal{O}))
\end{align*}
where $V^i = H^i(X , L)$, $d = h^0(L) - h^1(L) + h^2(L)$, $r = h^1(L) - h^2(L) + h^{2,0}(X)$. 
\end{theorem}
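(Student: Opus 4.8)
The plan is to reduce everything to the Kähler geometry of the Seiberg--Witten equations, exactly as in the non-equivariant case, but carrying the $G$-action through each step. First I would recall that on a Kähler surface with the chamber $\omega$, a standard argument of Witten identifies solutions of the Seiberg--Witten equations for $\mathfrak{s}_L$ (with a suitable perturbation) with pairs consisting of a holomorphic structure on $L$ together with a nonzero holomorphic section, or else with the same data for the charge conjugate. Because $G$ acts by biholomorphisms and $L$ is $G$-equivariant, this identification is $G_{\mathfrak{s}}$-equivariant, and the vanishing claim ($SW=0$ when $L$ is not holomorphic, or holomorphic with $h^0(L)=0$) follows because the moduli space of $G$-invariant solutions, and in fact the whole finite-dimensional approximation, can be arranged to be empty or to lie entirely in the charge-conjugate chamber. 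Here I would need the earlier formalism (Section on Kähler actions, and the fact that lifting $G$ to the spinors is lifting $G$ to $L$) to make the equivariance precise.

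For the main formula I would describe the moduli space when $h^0(L)>0$: it is the projective space $\mathbb{P}(H^0(X,L))$ of dimension $h^0(L)-1$, sitting inside the configuration space, with an obstruction bundle built from $H^1(X,L)$ and $H^2(X,L)$ and the space $H^{0,2}(X)=H^2(X,\mathcal{O})$ governing deformations of the holomorphic structure. The point is that all of these are $G_{\mathfrak{s}}$-representations (respectively $G$-representations), so $\mathbb{P}(V^0)$ is a $G$-manifold and the obstruction bundle is $G$-equivariant. The equivariant Seiberg--Witten invariant $SW^\omega_{G,X,\mathfrak{s}_L}(x^m)$ is then computed, via the description of the invariant as evaluation of cohomology classes over the moduli space (valid here because equivariant transversality holds on this Kähler model, or more robustly via finite-dimensional approximation and the families description), as the pushforward to $BG$ of $x^m \cup e(\text{obstruction bundle})$ over $\mathbb{P}(V^0) \times_G EG$. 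Concretely $x$ restricts to the equivariant hyperplane class on $\mathbb{P}(V^0)$, the Gysin map along $\mathbb{P}(V^0)$ is extraction of a Segre class of $V^0$, and the Euler class of the obstruction bundle factors into the $c_l(V^1 - V^2)$ part, the $c_j(H^2(X,\mathcal{O}))$ part, and a twisting by $\mathcal{O}(-1)$ that produces the binomial coefficient $\binom{h^1(L)-h^2(L)-l}{i-l}$ after expanding $c(\mathcal{O}(-1)\otimes(V^1-V^2))$ in powers of the hyperplane class. Summing over the two ways the total degree $r=i+j$ splits between the $\mathbb{P}(V^0)$-factor and the $H^2(X,\mathcal{O})$-factor gives the stated double sum.

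The routine but slightly delicate bookkeeping is matching the index/degree shifts: one has to check that $d = h^0(L)-h^1(L)+h^2(L)$ indeed equals $d(X,\mathfrak{s}_L)$ (equivalently the rank of $D$), that the real rank of the obstruction bundle plus the complex dimension of $\mathbb{P}(V^0)$ matches $d(X,\mathfrak{s}_L)$ up to the expected shift, and that the Segre class index $i-l+m-(h^0(L)-1)$ comes out right; this is the same arithmetic as in the ordinary Kähler computation of Friedman--Morgan and Brussee, only now tracked in $G_{\mathfrak{s}}$-equivariant cohomology. I would set $r = h^1(L)-h^2(L)+h^{2,0}(X)$ as the virtual complex rank of the obstruction bundle and organise the computation so that the Euler class is $e = \sum_{i+j=r} (\text{contribution of } \mathbb{P}(V^0)\text{-part of degree }i)\cdot(\text{contribution of }H^2(X,\mathcal{O})\text{ of degree }j)$.

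I expect the main obstacle to be making the passage from the finite-dimensional approximation (where the invariant is actually defined, since equivariant transversality need not hold a priori) to the honest Kähler moduli space description rigorous and $G_{\mathfrak{s}}$-equivariantly natural. In the non-equivariant setting one argues that the Kähler structure gives a canonical, already-transverse model and that the Bauer--Furuta map is equivariantly homotopic to an explicit model map; here I would instead invoke the families Seiberg--Witten comparison theorem stated above, applying it to the universal bundle $EG \times_G X \to BG$ (approximated by finite-dimensional $B$), where the families Kähler computation of Baraglia--Konno can be quoted and then pulled back. Ensuring that the chamber $\omega$, the orientation conventions, and the $G_{\mathfrak{s}}$-equivariant $K$-orientation of the obstruction bundle are all consistent across this comparison is where the real care is needed; everything else is a Gysin-map computation in $H^*(BG)$.
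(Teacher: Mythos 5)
Your proposal follows essentially the same route as the paper: identify the perturbed moduli space with $\mathbb{P}(H^0(X,L))$, compute the invariant as the Gysin pushforward of $x^m$ cupped with the equivariant Euler class of the obstruction bundle (whose exact sequence involving $\widetilde V^1$, $H^2(X,\mathcal{O})$, $\widetilde V^2$ yields exactly the factorisation into $c_l(V^1-V^2)$, $c_j(H^2(X,\mathcal{O}))$ and the binomial twist), and justify the equivariant obstruction-bundle formula by passing through families over finite-dimensional approximations of $BG$ and invoking the comparison theorem, since an equivariant bundle need not admit invariant transverse sections. This matches the paper's argument, including its key workaround for equivariant transversality.
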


\noindent {\bf Gluing formulas.} We prove a gluing formula for the equivariant Seiberg--Witten invariants of an equivariant connected sum. In fact we consider a more general type of connected sum as follows. Let $Y$ be a compact, oriented, smooth $4$-manifolds with $b_1(Y) = 0$. Let $H$ be a subgroup of $G$ and suppose that $H$ acts smoothly and orientation preservingly on $Y$. Suppose that there is an $x \in X$ and $y \in Y$ with stabiliser group $H$ and an orientation reversing isomorphism $\psi : T_x X \to T_y Y$ of representations of $H$. Using $\psi$ we can form the connected sum of $X$ with $|G/H|$ copies of $Y$ equivariantly, which we denote by $Z = X \# ind^G_H(Y)$. Suppose that $\mathfrak{s}_X$ is a $G$-invariant spin$^c$-structure on $X$ and $\mathfrak{s}_Y$ is a $H$-invariant spin$^c$-structure on $Y$. Then as explained in Section \ref{sec:ecs} we obtain a $G$-invariant spin$^c$-structure $\mathfrak{s} = \mathfrak{s}_X \# ind^G_H(\mathfrak{s}_Y)$ by attaching $\mathfrak{s}_X$ to $|G/H|$ copies of $\mathfrak{s}_Y$.

\begin{theorem}
Let $(X , \mathfrak{s}_X), (Y , \mathfrak{s}_Y)$ be as above. Then:
\begin{itemize}
\item[(1)]{If $b_+(X)^G, b_+(Y)^G$ are both non-zero then the equivariant Seiberg--Witten invariants of $Z$ vanish.}
\item[(2)]{If $b_+(X)^G \neq 0$ and $b_+(Y)^H = 0$, then for any chamber $\phi \in H^+(X)^G \setminus \{0\}$ we have
\[
SW^\phi_{G , Z , \mathfrak{s}}( \theta ) = SW^\phi_{G , X , \mathfrak{s}_X}( e(ind^G_H(H^+(Y))) s_{G_{\mathfrak{s}} , -d_Y}( ind^{G_{\mathfrak{s}}}_{H_{\mathfrak{s}}}( D_Y ) ) \theta ).
\]
}
\end{itemize}

\end{theorem}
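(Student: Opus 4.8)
The plan is to reduce the gluing formula to a neck-stretching / finite-dimensional-approximation argument, following the strategy of the classical connected-sum formula for Seiberg--Witten invariants (Bauer--Furuta style) adapted to the equivariant setting. First I would set up the equivariant monopole map for $Z = X \# \mathrm{ind}^G_H(Y)$ as a $G_{\mathfrak{s}}$-equivariant map between finite-dimensional approximations. The key input is a connected-sum formula for monopole maps: stretching the necks connecting $X$ to the $|G/H|$ copies of $Y$, the monopole map of $Z$ becomes $G_{\mathfrak{s}}$-equivariantly homotopic to a smash product of the monopole map of $X$ with the induced-up monopole maps of the copies of $Y$. Because the copies of $Y$ are permuted transitively by $G$ with stabiliser $H$, this smash factor is $\mathrm{ind}^{G_{\mathfrak{s}}}_{H_{\mathfrak{s}}}$ applied to the monopole map of $(Y, \mathfrak{s}_Y)$, which is where the induction operations on $H^+(Y)$ and $D_Y$ in the statement come from.

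For part (1), when $b_+(X)^G \neq 0$ and $b_+(Y)^G \neq 0$, the $G$-fixed part of $H^+(Z)$ has dimension $b_+(X)^G + b_+(Y)^G \ge 2$ (since $b_+(Y)^G \le b_+(Y)^H$, and the induced copies contribute at least the $G$-fixed diagonal). One then runs the standard vanishing argument: having $b_+ \ge 2$ in the invariant part lets one choose a generic path of $G$-invariant perturbations avoiding the reducible, exactly as in the non-equivariant case with $b_+ \ge 2$, or equivalently one observes the relevant obstruction class in $H^*(G;\mathbb{Z})$ is forced to vanish. Here I would either invoke the wall-crossing/finiteness machinery already set up in the excerpt or argue directly that the $G$-equivariant moduli space is empty for generic invariant data.

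For part (2), the hypothesis $b_+(Y)^H = 0$ means each copy of $Y$ contributes reducibles only, and the monopole map of $Y$ (in the appropriate chamber, which exists once a chamber $\phi$ on $X$ is fixed and one uses the small-perturbation chamber on $Y$) is, after finite-dimensional approximation, $H_{\mathfrak{s}}$-equivariantly homotopic to the map induced by a linear map whose cokernel sees $H^+(Y)$ and whose index is $D_Y$; smashing this in and applying the definition of $SW_{G,X,\mathfrak{s}_X}$ as evaluation against the approximating bundle data produces the correction factor $e(\mathrm{ind}^G_H(H^+(Y)))\, s_{G_{\mathfrak{s}},-d_Y}(\mathrm{ind}^{G_{\mathfrak{s}}}_{H_{\mathfrak{s}}}(D_Y))$. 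Concretely, the Euler class arises from the obstruction bundle $H^+(Y)$ (induced up to $G$) and the Segre classes arise from the virtual index bundle $D_Y$ (induced up to $G_{\mathfrak{s}}$) entering the normalisation of the finite-dimensional approximation, via the standard identity that the Thom class of a virtual bundle $-E$ contributes its total Segre class.

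The main obstacle I anticipate is making the equivariant connected-sum/neck-stretching formula for monopole maps precise and $G_{\mathfrak{s}}$-equivariant — in particular checking that the gluing parameter can be chosen $G$-invariantly at the single orbit $G \cdot x$ (using the orientation-reversing isomorphism $\psi : T_xX \to T_yY$ of $H$-representations to identify the normal data), and that the finite-dimensional approximations can be chosen compatibly so that the smash decomposition respects the $G$-action and the induction functor. Once that equivariant gluing statement is in hand, extracting the cohomological formula is a bookkeeping exercise with Segre and Euler classes of the relevant (virtual) $G$- and $G_{\mathfrak{s}}$-representations, parallel to the non-equivariant connected-sum formula but with all bundles replaced by their equivariant counterparts over $BG$ and $BG_{\mathfrak{s}}$.
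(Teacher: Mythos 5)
Your overall strategy matches the paper's: both reduce the theorem to the statement that the monopole map of $Z$ is $G_{\mathfrak{s}}$-equivariantly stably homotopic to the smash product $f_X \wedge ind^{G_{\mathfrak{s}}}_{H_{\mathfrak{s}}}(f_Y)$ (an equivariant extension of Bauer's connected sum formula), and then to a cohomological computation for smash products of monopole maps. Your account of part (2) is essentially the paper's argument: the only input from the $Y$-factor is its equivariant degree, which the paper computes in Proposition \ref{prop:donaldson} to be $e(H^+_2)\, s_{G_{\mathfrak{s}},-d_2}(D_2)$ by comparing $f_2^*(\tau_{V_2',U_2'})$ with its restriction to $S^{U_2}$ and inverting a total Chern class as a formal Laurent series --- exactly the ``Thom class of a virtual bundle contributes its Segre class'' mechanism you describe. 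One caveat: you do not need, and should not claim, that $f_Y$ is equivariantly homotopic to a map induced by a linear map (that would trivialise its Bauer--Furuta class); only the pullback of the Thom class, i.e.\ the equivariant degree, enters.

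The genuine gap is in part (1). You propose to prove vanishing by choosing a generic path of $G$-invariant perturbations avoiding reducibles, or by arguing that the $G$-equivariant moduli space is empty for generic invariant data. Neither works: equivariant transversality cannot in general be achieved by generic invariant perturbations (this is the central difficulty the paper is organised around, cf.\ Section \ref{sec:eswm}), and even non-equivariantly the connected-sum vanishing theorem is not an ``empty moduli space'' statement --- the moduli space of the sum is typically nonempty, and the vanishing comes from the extra $S^1$ gluing parameter, equivalently from a degree argument. Moreover the argument you cite (generic paths avoiding reducibles when $b_+\ge 2$) establishes well-definedness of the invariant, not its vanishing. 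The paper instead gets (1) for free from the same smash-product identity used in (2): by Theorem \ref{thm:glue} the invariant of $f_X \wedge f_2$ is $SW^{\phi_1}_{G,f_X}\bigl( deg_{G_{\mathfrak{s}}}(f_2)\,\theta \bigr)$, and $deg_{G_{\mathfrak{s}}}(f_2) = e(H^+_2)\, s_{G_{\mathfrak{s}},-d_2}(D_2)$ vanishes whenever $(H^+_2)^G = H^+(Y)^H \neq 0$, because then $H^+_2$ contains a trivial summand and its equivariant Euler class is zero. You should route part (1) through that degree computation rather than through a transversality argument.
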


This generalises the gluing formula given in \cite{bk1} in the $G$-equivariant case. We also have a gluing formula in the case $b_+(X)^G = 0$, $b_+(Y)^H \neq 0$. However this case is more difficult to give a general formula, so for simplicity we restrict to the case $G = \mathbb{Z}_p$ for a prime $p$ and $H = 1$. Note that $H^*(\mathbb{Z}_p ; \mathbb{Z}) \cong \mathbb{Z}[v]/(pv)$, where $deg(v) = 2$.

\begin{theorem}
Let $(X , \mathfrak{s}_X), (Y , \mathfrak{s}_Y)$ be as above, where $G = \mathbb{Z}_p$ and $H=1$. Suppose that $b_+(X)^G = 0$ and $b_+(Y) > 0$. Assume further that $d(Y,\mathfrak{s}_Y) = 0$. Then
\[
SW_{G , Z , \mathfrak{s}}^\phi( x^m ) = (-1)^{d_Y+1} h {\sum_l}' e(H^+(X)) s_{-d_X-l}(D_X) SW(Y , \mathfrak{s}_Y , \phi_Y) v^{m+l - (p-1)/2}
\]
where the sum ${\sum_l}'$ is over $l$ such that $0 \le l \le -d_X$, $m+l > 0$, $m+l = 0 \; ({\rm mod} \; p-1)$ and $h = \prod_{j=1}^{(p-1)/2} j^{b_+(Y)}$ for $p \neq 2$, $h=1$ for $p=2$.
\end{theorem}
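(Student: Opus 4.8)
The plan is to reduce this final gluing formula to the previously established gluing theorem and the localisation machinery, rather than redoing the neck-stretching analysis from scratch. First I would set up the situation: since $b_+(X)^G = 0$ and $b_+(Y) > 0$, the connected sum $Z = X \# \mathrm{ind}^G_1(Y)$ has $b_+(Z)^G = \bigoplus_{g \in G} b_+(Y) > 0$, so $SW^\phi_{G,Z,\mathfrak{s}}$ is defined. The key structural point is that $H^+(Z) \cong H^+(X) \oplus \mathrm{ind}^G_1(H^+(Y))$ as $G$-representations and the index splits as $D_Z = D_X \oplus \mathrm{ind}^{G_{\mathfrak{s}}}_{1_{\mathfrak{s}}}(D_Y)$ after suitable identification of the central extensions; here $\mathrm{ind}^G_1(H^+(Y)) \cong \mathbb{R}[G] \otimes H^+(Y)$ carries the regular representation on the $|G|$ summands. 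I would first treat the Seiberg--Witten picture of $Z$ by a finite-dimensional approximation / monopole map argument as in \cite{bk1}: stretching the necks of the connecting tubes, the monopole map of $Z$ becomes $G$-equivariantly stably homotopic to a smash product of the monopole map of $X$ with $|G|$ copies of the monopole map of $Y$, with $G$ permuting the $Y$-factors.

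Next, I would exploit the hypothesis $d(Y,\mathfrak{s}_Y) = 0$. This makes the $Y$-monopole map a map between spheres of the same virtual dimension on the reducible part, so on the level of the Bauer--Furuta invariant it contributes a degree, namely $SW(Y,\mathfrak{s}_Y,\phi_Y)$, times a class coming from the obstruction bundle over the $b_+(Y)$-dimensional positive cone. Since the $|G|$ copies of $Y$ are permuted freely by $G$ (as $H = 1$), the combined contribution of the $Y$-factors is an induced/transfer class: one gets an Euler-class-type contribution $e\big(\mathrm{ind}^G_1(H^+(Y))\big)$ paired against the Segre classes of $\mathrm{ind}^G_1(D_Y)$, exactly as in the $b_+(Y)^H = 0$ case of the previous theorem but now with the roles of which factor ``survives'' reversed. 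The constant $h = \prod_{j=1}^{(p-1)/2} j^{b_+(Y)}$ should emerge from evaluating the product of nontrivial characters of $\mathbb{Z}_p$ on the regular-representation part of $H^+(Y)^{\oplus}$: the weight-$j$ piece of $\mathrm{ind}^{\mathbb{Z}_p}_1(H^+(Y))$ has dimension $b_+(Y)$, and its equivariant Euler class contributes a factor $(jv)^{b_+(Y)}$ for $j = 1, \dots, p-1$; pairing conjugate weights $j$ and $p-j$ gives $(jv)(p-j)v = -j^2 v^2 \cdot(\text{units})$, and collecting the $\mathbb{Z}$-coefficient part over $j = 1, \dots, (p-1)/2$ yields $\prod j^{b_+(Y)}$ up to sign, with the sign $(-1)^{d_Y+1}$ absorbing the charge-conjugation / orientation discrepancy recorded in the charge conjugation property above.

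Concretely, the computational heart is: write $SW^\phi_{G,Z,\mathfrak{s}}(x^m)$ via the families/equivariant formula of \cite{bk}, substitute $H^+(Z) = H^+(X) \oplus \mathrm{ind}^G_1(H^+(Y))$ and $D_Z = D_X \oplus \mathrm{ind}^{G_{\mathfrak{s}}}_{1_{\mathfrak{s}}}(D_Y)$, and expand the relevant Segre/Euler classes multiplicatively (Segre classes satisfy $s(A \oplus B) = s(A)s(B)$, Euler classes multiply). The $X$-part with $b_+(X)^G = 0$ produces the factor $e(H^+(X)) s_{-d_X-l}(D_X)$ summed over the shift $l$ coming from how many of the $x$-powers land on the $X$-factor, with the constraint $0 \le l \le -d_X$; the $Y^{\oplus}$-part with $d(Y,\mathfrak{s}_Y)=0$ forces the leftover powers of $x$ to hit the induced class, and since $\mathrm{ind}^{\mathbb{Z}_p}_1$ of anything has only weights $0,1,\dots,p-1$ each with equal multiplicity, the nonvanishing terms in $H^*(\mathbb{Z}_p;\mathbb{Z}) = \mathbb{Z}[v]/(pv)$ survive only when $m + l \equiv 0 \pmod{p-1}$, giving the factor $v^{m+l-(p-1)/2}$ after accounting for the dimension shift $(p-1)/2 = b_+(Z)$-related normalisation; and $m+l>0$ is needed so that no surviving term is a nonzero multiple of $1 \in H^0$ that would be killed for degree reasons. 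I expect the main obstacle to be bookkeeping the precise degree shifts and the sign: matching $v^{m+l-(p-1)/2}$ and the exponent $(-1)^{d_Y+1}$ requires carefully tracking the charge-conjugation convention (which flips orientation on the spinor side) together with the orientation of the obstruction bundles over the positive cones of the $Y$-copies, and the combinatorics that collapses the sum over the $p-1$ nontrivial weights into the single integer $h$ while respecting the relation $pv = 0$. Everything else is an application of the multiplicativity of the invariant under equivariant connected sum and the $b_+^G = 0$ vanishing/obstruction-bundle analysis already developed for the previous gluing theorem.
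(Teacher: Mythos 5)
Your skeleton agrees with the paper's: write $f_Z = \mathrm{ind}^G_1(f_Y)\wedge f_X$ and apply the abstract gluing formula of Theorem \ref{thm:glue} with the roles of the two factors as in your ``Concretely'' paragraph, so that the $X$-factor (the one with $(H^+(X))^G=0$) contributes its equivariant degree $e(H^+(X))\,s_{G_{\mathfrak{s}},-d_X}(D_X)$, whose expansion $\sum_l x^l s_{-d_X-l}(D_X)$ produces the sum over $0\le l\le -d_X$. The genuine gap is that you never actually compute $SW^{\phi_Y}_{G,\mathrm{ind}^G_1(f_Y)}(x^{k})$, and this is the heart of the theorem. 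The paper does it via the cohomological localisation theorem (Theorem \ref{thm:loc}): each of the $p$ splittings $s_j$ of $S^1\times\mathbb{Z}_p$ satisfies $(\mathrm{ind}^G_1(f_Y))^{s_j}=f_Y$, and since $d(Y,\mathfrak{s}_Y)=0$ each reduced invariant equals $SW(Y,\mathfrak{s}_Y,\phi_Y)$; substituting into the localisation formula yields the Euler factor $\prod_{r=1}^{(p-1)/2}(rv)^{b_+(Y)} = h\,v^{(p-1)b_+(Y)/2}$ together with the power sum $\sum_{j=0}^{p-1} j^{k}$, which is $\equiv -1 \pmod p$ exactly when $k>0$ and $(p-1)\mid k$, and vanishes otherwise. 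This one arithmetic identity is what produces the constraints $m+l>0$ and $m+l\equiv 0\pmod{p-1}$ and one factor of $-1$ in the sign; the remaining $(-1)^{d_Y}$ comes from evaluating $e_{S^1\times G}(D/D_j)^{-1}=\prod_{k\neq j}(x+kv)^{-d_Y}$ at the $j$-th fixed locus, i.e.\ from $\prod_{i=1}^{p-1} i \equiv -1 \pmod p$. Your proposal replaces this computation with heuristics that do not substitute for it.

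Beyond the gap, two of your attributions are wrong. The sign $(-1)^{d_Y+1}$ has nothing to do with charge conjugation or an orientation discrepancy on the spinor side; it is the product of the two Wilson/Fermat-type congruences just described. And the congruence condition $m+l\equiv 0 \pmod{p-1}$ is not explained by ``$\mathrm{ind}$ of anything has weights $0,\dots,p-1$ with equal multiplicity'' --- that observation is an input to the localisation computation, not an argument for the conclusion; likewise $m+l>0$ is not a ``degree reasons'' statement but the case $k=0$ of the power sum, where $\sum_j j^0 = p \equiv 0$. Your middle paragraph also muddles which factor contributes a degree: since $b_+(Y)>0$ the induced $Y$-map is the surviving factor whose full equivariant invariant must be evaluated (no Segre classes of $\mathrm{ind}^G_1(D_Y)$ appear in the answer), while it is $f_X$ that collapses to the degree $e(H^+(X))s_{G_{\mathfrak{s}},-d_X}(D_X)$ via Proposition \ref{prop:donaldson}. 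With the localisation step supplied and these corrections made, your outline becomes the paper's proof.
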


\subsection{Structure of the paper}

In Section \ref{sec:esw} we introduce the equivariant Seiberg--Witten invariants. We introduce the notion of an abstract equivariant monopole map in Section \ref{sec:emm}. To such a map we define cohomological and $K$-theoretic Seiberg--Witten invariants in Sections \ref{sec:ci} and \ref{sec:ki} respectively. In Section \ref{sec:prop} we prove some basic properties of these invariants including wall-crossing (\textsection \ref{sec:wcf}), positive scalar curvature (\textsection \ref{sec:psc}) and mod $2$ invariants for spin structures (\textsection \ref{sec:mod2}). In Section \ref{sec:eqfam}) we establish the relation between the equivariant Seiberg--Witten invariants and the families Seiberg--Witten invariants. In Section \ref{sec:eswm} we consider the problem of constructing smooth equivariant moduli spaces. We find sufficient conditions under which equivariant transversality can be achieved (\textsection \ref{sec:et}). In the zero-dimensional case we use equivariant transversality to obtain an invariant valued in a refinement of the Burnside ring (\textsection \ref{sec:zero}). We prove that equivariant transversality can always be achieved for free actions and use this to prove a congruence formula relating the Seiberg--Witten invariants of a $4$-manifold $X$ and its quotients under some restrictions on the group (\textsection \ref{sec:trfree}). In Section \ref{sec:red} we introduce the reduced Seiberg--Witten invariants of a group action. We then use localisation in cohomology (\textsection \ref{sec:locc}) and $K$-theory (\textsection \ref{sec:lock}) to relate the equivariant and reduced invariants. As an application, we obtain another congruence formula for the Seiberg--Witten invariants of a $4$-manifold $X$ and its quotients under a free action without restriction on the group (\textsection \ref{sec:far}). We also use localisation to obtain a constraint on smooth group actions (\textsection \ref{sec:constraint}) and to give an explicit formula relating the $\mathbb{Z}_p$-equivariant and reduced Seiberg--Witten invariants (\textsection \ref{sec:zpact}). In Section \ref{sec:kahler} we prove a formula for the equivariant Seiberg--Witten invariants in the case where $X$ is a K\"ahler surface and the action preserves the K\"ahler structure. In Section \ref{sec:glue} we prove a connected sum formula for the equivariant Seiberg--Witten invariants. We first prove an abstract gluing formula for the Seiberg--Witten invariants of a smash product of monopole maps (\textsection \ref{sec:agf}) and then apply this to the case of connected sums (\textsection \ref{sec:ecs}). We finish with some examples to illustrate the main result of the paper in Section \ref{sec:ex}.

\noindent{\bf Acknowledgments.} The author was financially supported by an Australian Research Council Future Fellowship, FT230100092.

\subsection{Orientation conventions}

Our orientation conventions are such that push-forward maps in cohomology are {\em right} module homomorphisms $p_*( a \smallsmile p^*(b) ) = p_*(a) \smallsmile b$. This means for example whenever we apply the Thom isomorphism, the Thom class goes on the left: $a \mapsto \tau \smallsmile p^*(a)$, where $\tau$ is the Thom class. For a fibre bundle $p : E \to B$ with compact fibres, $p_*$ is given by evaluation over the fibres, where $E,B$ and $T(E/B) = Ker(p_*)$ are oriented so that $TE = T(E/B) \oplus p^*(TB)$ (vertical bundle first). For an embedding $\iota : Y \to X$, $\iota_*(1)$ is Poincar\'e dual to $Y$, where $X,Y$ and the normal bundle $N$ are oriented so that $TX|_Y = N \oplus TY$ (normal bundle first).

\section{Equivariant Seiberg--Witten invariants}\label{sec:esw}

In this section we define the equivariant Seiberg--Witten invariants for group actions on smooth $4$-manifolds. In order to prove various properties of the invariants it is convenient to study them in a more abstract setting. Therefore we introduce an abstract notion of monopole maps and their Seiberg--Witten invariants.

\subsection{Equivariant monopole maps}\label{sec:emm}

We introduce an abstract formalism for equivariant monopole maps. Let $B$ be a compact smooth manifold. Let $G$ be a finite group which acts smoothly on $B$. Consider an $S^1$-central extension
\[
1 \to S^1 \to G_{\mathfrak{s}} \to G \to 1.
\]
Suppose that $V,V' \to B$ are complex vector bundles over $B$ and $U,U' \to B$ are real vector bundles over $B$. Assume that $V,V',U,U'$ are $G_{\mathfrak{s}}$-equivariant, where the $S^1$ subgroup of $G_{\mathfrak{s}}$ acts by scalar multiplication on $V,V'$ and acts trivially on $U,U'$.

\begin{definition}\label{def:mm}
An {\em abstract $G$-monopole map over $B$} consists of a central extension $G_{\mathfrak{s}}$, vector bundles $V,V',U,U'$ as above and a $G_{\mathfrak{s}}$-equivariant map
\[
f : S^{V,U} \to S^{V',U'}
\]
where $S^{V , U}$ denotes the fibrewise one point compactification of $V\oplus U$ and $S^{V',U'}$ is defined similarly, satisfying the following conditions:
\begin{itemize}
\item[(1)]{$f$ sends the section of $S^{V,U}$ at infinity to the section of $S^{V',U'}$ at infinity.}
\item[(2)]{The restriction of $f$ to $U$ defines a linear injection $f|_U : U \to U'$.}
\end{itemize}

When $G = \{1\}$, $f$ is an $S^1$-equivariant map and we will refer to $f$ as an {\em ordinary monopole map over $B$} in such cases.

\end{definition}

We will be mainly interested in the case that $B$ is a point, in which case $f$ is simply a map of spheres. However the general formalism will be occasionally useful. For the Bauer--Furuta map of $4$-manifolds with $b_1(X) > 0$, one takes $B$ to be a torsor for the Jacobian torus of $X$.

By condition (2) of Definition \ref{def:mm}, we can identify $U$ with a subbundle of $U'$ and then $f|_U$ is given by the inclusion map $U \to U'$. Choose an equivariant splitting $U' \cong U \oplus H^+$, where $H^+ = U'/U$. This is always possible by choosing a $G$-invariant metric on $U'$ and taking $H^+$ to be the orthogonal complement of $U$. We let $D$ denote the equivariant virtual vector bundle $D = V - V'$. Let $a,a'$ denote the complex dimensions of $V,V'$ and $b,b'$ denote the real dimensions of $U,U'$. The (real) dimension of $H^+$ is $b^+ = b'-b$ and the virtual (complex) dimension of $D$ is $d = a-a'$.

Our interest in abtract monopoles is that they arise as the Bauer--Furuta invariants of smooth $4$-manifolds. Let $X$ be a compact oriented smooth $4$-manifold with $b_1(X)=0$ and suppose that a finite group $G$ acts on $X$ by diffeomorphism. Suppose that $\mathfrak{s}$ is a spin$^c$-structure whose isomorphism class is preserved by $G$. By taking a finite dimensional approximation of the Seiberg--Witten equations for $(X,\mathfrak{s})$, we obtain a $G$-monopole map $f : S^{V,U} \to S^{V',U'}$ over $B = pt$ \cite{bf,szy,bar}. In this case $D = V-V'$ is the index of the spin$^c$ Dirac operator associated to $\mathfrak{s}$ and $H^+ = H^+(X)$ is the space of harmonic self-dual $2$-forms on $X$ (with respect to a choice of $G$-invariant metric). The group $G_{\mathfrak{s}}$ can be defined as follows. Let $P_{\mathfrak{s}} \to X$ denote the principal $Spin^c(4)$-bundle associated to $\mathfrak{s}$. Choose a spin$^c$-connection $A$ on $P_{\mathfrak{s}}$ whose curvature $F_A$ is $G$-invariant (this is always possible by averaging over $G$). Then $G_{\mathfrak{s}}$ is the group of lifts of elements of $G$ to $P_{\mathfrak{s}}$ which preserve $A$. The homotopy class of $f$ depends on the choice of finite dimensional approximation, but the stable equivariant cohomotopy class of $f$ is an invariant, the {\em $G$-equivariant Bauer--Furuta invariant of $(X,\mathfrak{s})$ (with respect to the given $G$-action)}. One of the main objectives of this paper is to extract invariants from $f$ which are easier to work with than the stable equivariant cohomotopy class of $f$ itself (which is an element of a group which is generally very difficult to calculate). In the case $b_1(X) > 0$ it is still possible to define equivariant Bauer--Furuta invariants, but the details are more complex. In this paper we will restrict to the case $b_1(X) = 0$ for simplicity.

In the following sections we define a series of invariants of $G$-monopole maps. The invariants that we consider will come in two types: cohomological and $K$-theoretic and they will generally depend on an additional choice known as a chamber.

\begin{definition}
Let $f : S^{V,U} \to S^{V',U'}$ be a $G$-monopole map. A {\em chamber} for $f$ is a homotopy class of $G$-equivariant map $\phi : B \to S(H^+)$ from $B$ to the unit sphere bundle of $H^+$. If $B = pt$ is a point, then $\phi$ is equivalent to specifying a path-component of $S(H^+)$.
\end{definition}

Observe that when $B = pt$ there are three possibilities depending on the dimension $b_+^G = dim_{\mathbb{R}}( (H^+)^G )$ of the $G$-invariant subspace of $H^+$. If $b_+^G = 0$ then there are no chambers. If $b_+^G = 1$, then there are two chambers. If $\phi$ is one chamber then $-\phi$ is the other. If $b_+^G > 1$, then there is a unique chamber.

\subsection{Cohomological invariants}\label{sec:ci}

In general, to define cohomological invariants we will need to use equivariant local systems. Suppose $G$ acts on a space $M$. A class $\alpha \in H^1_G(M ; \mathbb{Z}_2)$ defines a $G$-equivariant principal $\mathbb{Z}_2$-bundle $M_\alpha \to M$. Then we obtain an equivariant local system $\mathbb{Z}_\alpha = M_\alpha \times_{\mathbb{Z}_2} \mathbb{Z}$. More generally, for any equivariant local system $A$, we set $A_\alpha = A \otimes_{\mathbb{Z}} \mathbb{Z}_\alpha$. If $W \to M$ is a real, finite rank equivariant vector bundle, then its equivariant first Stiefel--Whitney class $w_1(W) \in H^1_G(M ; \mathbb{Z}_2)$ defines an equivariant local system $\mathbb{Z}_{w_1(W)}$. We sometimes write $\mathbb{Z}_W$ for $\mathbb{Z}_{w_1(W)}$. In the case $M = pt$, a $G$-equivariant local system is an abelian group $A$ equipped with an action of $G$, i.e. a $G$-module.

Let $f : S^{V,U} \to S^{V',U'}$ be a $G$-monopole map. A chamber $\phi$ for $f$ defines a refined Thom class 
\[
\tau^\phi_{V',U'} \in H^{2a'+b'}_{G_{\mathfrak{s}}}( S^{V',U'} , S^{U} ; \mathbb{Z}_{U'}).
\]
This was shown for families in \cite[\textsection 3]{bk} and the same construction works in the equivariant setting. The pullback of $\tau^{\phi}_{V',U'}$ is valued in $H^{2a'+b'}_{G_{\mathfrak{s}}}( S^{V,U} , S^{U} ; \mathbb{Z}_{U'})$, which by excision may be identified with $H^{2a'+b'}_{G_{\mathfrak{s}}}( \widetilde{Y} , \partial \widetilde{Y} ; \mathbb{Z}_{U'})$, where $\widetilde{Y}$ is the complement in $S^{V,U}$ of a $G_{\mathfrak{s}}$-invariant tubular neighbourhood of $S^{U}$. As in \cite[Section 3]{bk} one can show that $f^*(\tau^\phi_{V',U'})$ is in the image of the coboundary $\delta : H^*_{G_{\mathfrak{s}}}( \partial \widetilde{Y} ; \mathbb{Z}_{U'} ) \to H^{*+1}_{G_{\mathfrak{s}}}( \widetilde{Y} , \partial \widetilde{Y} ; \mathbb{Z}_{U'})$. We have $\widetilde{Y} = S(V) \times S^U$ and so we find that
\begin{equation}\label{equ:eta}
f^*( \tau^\phi_{V',U'} ) = (-1)^b \delta \tau_U \eta^\phi
\end{equation}
for some
\[
\eta^\phi \in H^{2a'+b^+-1}_{G_{\mathfrak{s}}}( S(V) ; \mathbb{Z}_{w} ) \cong H^{2a'+b^+-1}_{G}( \mathbb{P}(V) ; \mathbb{Z}_{w})
\]
and where $w = w_1(H^+)$. The sign factor $(-1)^b$ is chosen for a subtle reason related to the process of orienting the Seiberg--Witten moduli space. It makes $\eta^\phi$ behave nicely under suspension. Let $\pi : \mathbb{P}(V) \to B$ denote the projection map. This induces a pushforward
\[
\pi_* : H^{*}_{G}( \mathbb{P}(V) ; A ) \to H^{*-2a+2}_{G}( B ; A )
\]
for any equivariant local system $A$. 

\begin{definition}\label{def:swc}
Let $f : S^{V,U} \to S^{V',U'}$ be a $G$-monopole map over $B$, $\phi : B \to S(H^+)$ a chamber for $f$ and $A$ a $G$-module. The {\em (cohomological) $G$-equivariant Seiberg--Witten invariant of $f$ with respect to the chamber $\phi$ and with coefficient group $A$} is the map
\[
SW_{G,f}^\phi : H^*_{G_{\mathfrak{s}}}(B ; A) \to H^{*-\delta}_G(B ; A_w)
\]
where $\delta = 2d-b_+-1$, given by
\[
SW^\phi_{G,f}( \theta ) = \pi_*( \eta^\phi \pi^*(\theta))
\]
where by $\pi^*(\theta)$ we mean the pullback of $\theta$ to $H^*_{G_\mathfrak{s}}( S(V) ; A) \cong H^*_G( \mathbb{P}(V) ; A)$.
\end{definition}

When $G$ acts orientation preservingly on $H^+$, we have that $w = w_1(H^+) = 0$. A choice of orientation on $H^+$ determines a trivialisation $\mathbb{Z}_w \cong \mathbb{Z}$ and the $G$-equivariant Seiberg--Witten invariants (with coefficient group $A$) are valued in $H^{*-\delta}_G(B ; A)$. To make this construction completely unambiguous we need to establish orientation conventions. Suppose $G$ acts orientation preservingly on $H^+$ and fix a choice of orientation. Suspending $f$ if necessary, we can assume $G$ acts orientation preservingly on $U$ and we fix a choice of orientation on $U$. Then we can also orient $U'$ according to the convention that $U' \cong U \oplus H^+$. The representations $V,V'$ are complex, so they carry natural orientations. We now have orientations on $S^{V',U'}$ and $S^U$ which we use to define the Thom classes $\tau^\phi_{V',U'}, \tau_{U}$. If we change orientation on $U$, then since $U' \cong U \oplus H^+$, the orientation on $U'$ also changes. Hence $\tau^\phi_{V',U'}$ and $\tau_U$ change sign, but $\eta^\phi$ remains unchanged.

By a straightforward computation (c.f. \cite[Proposition 3.8]{bk}) the invariants $SW_{G,f}^{\phi}(\theta)$ do not change under suspension $f \mapsto id_{V'' \oplus U''} \wedge f$ (note that the sign factor $(-1)^b$ in Equation (\ref{equ:eta}) is needed for this to hold).

If $H$ is a subgroup of $G$ and we write $SW^\phi_{H,f}$, then it means that we are restricting the $G_{\mathfrak{s}}$-action to the subgroup $H_{\mathfrak{s}}$, the preimage of $H$ under $G_{\mathfrak{s}} \to G$. The invariants $SW^\phi_{G,f}$ and $SW^\phi_{H,f}$ are related by a commutative square
\[
\xymatrix{
H^*_{G_{\mathfrak{s}}}(B ; A) \ar[r]^-{SW^\phi_{G,f}} \ar[d] & H^{*-\delta}_G(B ; A_w) \ar[d] \\
H^*_{H_{\mathfrak{s}}}(B ; A) \ar[r]^-{SW^\phi_{H,f}} & H^{*-\delta}_H(B ; A_w)
}
\]
where the vertical maps are restriction maps. If we write $SW^\phi_{f}$, then it means we are restricting to $1_{\mathfrak{s}} = S^1$.

When $f$ is the monopole map of a $4$-manifold $X$ satisfying $b_1(X)=0$, equipped with a $G$-action and a $G$-invariant spin$^c$-structure $\mathfrak{s}$, we write the $G$-equivariant Seiberg--Witten invariant of $f$ as $SW_{G,X,\mathfrak{s}}^{\phi}$. Thus $SW_{G,X,\mathfrak{s}}^{\phi}$ (with integral coefficients) is a map
\[
SW_{G,X,\mathfrak{s}}^\phi : H^*_{G_{\mathfrak{s}}}(pt ; \mathbb{Z}) \to H^{*-d(X,\mathfrak{s})}_G(pt ; \mathbb{Z}_w)
\]
where $w = w_1(H^+(X))$ and $d(X,\mathfrak{s}) = 2d - b_+ - 1$ is the expected dimension of the Seiberg--Witten moduli space for $(X,\mathfrak{s})$. An important feature of these invariants is that they can be non-zero even when the expected dimension $d(X,\mathfrak{s})$ is negative.

If the pair $(X,\mathfrak{s})$ or the chamber $\phi$ is understood we may sometimes omit them from the notation and write $SW_{G,X,\mathfrak{s}}, SW_G^\phi$, etc. 

From Definition \ref{def:swc}, it is immediate that $SW_{G,f}^\phi$ is a morphism of right $H^*_{G}(pt ; \mathbb{Z})$-modules in the sense that $SW^\phi_{G,f}( \theta \lambda ) = SW^\phi_{G,f}(\theta) \lambda$ for all $\theta \in H^*_{G_{\mathfrak{s}}}(pt ; A), \lambda \in H^*_G(pt ; \mathbb{Z})$. In fact the same is true more generally when $\lambda$ is valued in a coefficient group other than $\mathbb{Z}$. Suppose now that the extension $G_{\mathfrak{s}}$ is split and choose a splitting $s : G \to G_{\mathfrak{s}}$. This induces an isomorphism $G_{\mathfrak{s}} \cong S^1 \times G$ and hence an isomorphism $\psi_s : H^*_{G_{\mathfrak{s}}}(pt ; A) \cong H^*_G(pt ; A)[x]$, where $x$ is the generator of $H^2_{S^1}( pt ; \mathbb{Z})$ given by $x = c_1( \mathbb{C}_1 )$, where $\mathbb{C}_1$ denotes the $1$-dimensional representation of $S^1$ where $S^1$ acts with weight $1$. In this case $SW^\phi_{G,f}$ is completely determined by the classes 
\[
SW^\phi_{G,f}(x^m) \in H^{2m - d(X,\mathfrak{s})}_G(pt ; \mathbb{Z}_w), \; m \ge 0.
\]
If $s' : G \to G_{\mathfrak{s}}$ is another splitting then $s'(g) = \chi(g)s(g)$ for some homomorphism $\chi : G \to S^1$. Note that $\chi$ defines a class in $H^1(G ; S^1) \cong H^2(G ; \mathbb{Z})$. A simple calculation shows that the class $x$ defined by the splitting $s$ corresponds to $x + \chi$ under the splitting $s'$, in other words, $\psi_{s'}( \psi^{-1}_s(x) ) = x + \chi$.

\begin{remark}
Suppose that for some $n \ge 0$, we have that $SW^\phi_{G,f}( x^m ) = 0$ for $m < n$. Then $SW^\phi_{G,f}( (x + \chi)^n ) = SW^\phi_{G,f}( x^n )$ for all $\chi \in H^2(G ; \mathbb{Z})$. Hence the invariant $SW^\phi_{G,f}(x^n)$ does not depend on the choice of splitting. In particular, $SW^\phi_{G,f}(1)$ is always independent of the choice of splitting (in fact $SW^\phi_{G,f}(1)$ is defined without requiring a splitting to exist at all).
\end{remark}

In order to facilitate the computation of $SW^\phi_{G,f}$, it will be useful to understand in more detail the push-forward map 
\[
\pi_* : H^{*}_{G}( \mathbb{P}(V) ; A ) \to H^{*-2a+2}_{G}( B ; A ).
\]
Observe that $H^*_G( \mathbb{P}(V) ; A) \cong H^*_{G_{\mathfrak{s}}}( S(V) ; A)$. Since $S(V) \to B$ is a sphere bundle, the cohomology of the fibres is concentrated in degrees $0$ and $2a-1$. The Serre spectral sequence for $S(V) \to B$ reduces to a long exact sequence, the {\em Gysin sequence} (suppressing coefficients):
\[
\cdots \to H^i_{G_{\mathfrak{s}}}(B) \buildrel \pi^* \over \longrightarrow H^i_{G_{\mathfrak{s}}}( S(V) ) \buildrel \pi_* \over \longrightarrow H^{i-(2a-1)}_{G_{\mathfrak{s}}}(B) \buildrel e(V) \over \longrightarrow H^{i+1}_{G_{\mathfrak{s}}}(B) \to \cdots
\]
where $e(V) \in H^{2a}_{G_{\mathfrak{s}}}(B ; \mathbb{Z})$ denotes the $G_{\mathfrak{s}}$-equivariant Euler class of $V$. Suppose that $G_{\mathfrak{s}} \to G$ splits. Upon choosing a splitting, we have
\[
e(V) = x^a + x^{a-1} c_1(V) + \cdots + c_a(V) \in H^*_{G_{\mathfrak{s}}}(B ; \mathbb{Z}) \cong H^*_G(B ; \mathbb{Z})[x].
\]
Multiplication by $e(V)$ is clearly injective, and so
\[
H^*_{G}( \mathbb{P}(V) ; A) \cong H^*_{G_{\mathfrak{s}}}( S(V) ; A) \cong H^*_{G_{\mathfrak{s}}}(B ; A)/ ( e(V) ) \cong H^*_G(B ; A)[x]/ ( e(V) ).
\]
The pushforward map is easily calculated: $\pi_*( x^j ) = 0$ for $j < a$ and $\pi_*( x^{a-1} ) = 1$. Then it follows from the formula for $e(V)$ that $\pi_*( x^{k+(a-1)}) = s_k(V)$ for $k \ge 0$, where $s_k(V)$ is the $k$-th $G$-equivariant Segre class of $V$ \cite[Proposition 3.5]{bk}. We recall that the Segre classes $s_k(V)$ of a vector bundle $V$ are defined by $c(V)s(V) = 1$, where $c(V) = 1 + c_1(V) + c_2(V) + \cdots $ is the total Chern class and $s(V) = 1 + s_1(V) + \cdots $ is the total Segre class.

\subsection{$K$-theoretic invariants}\label{sec:ki}

We introduce the $K$-theoretic invariants of a $G$-equivariant monopole map. For simplicity we will avoid the use of twisted $K$-theory. Thus, to define pushforward maps in equivariant complex $K$-theory we require that spaces are $K$-oriented. A $G$-equivariant $K$-orientation on a $G$-equivariant vector bundle $V$ is a $G$-equivariant spin$^c$-structure. A $G$-equivariant $K$-orientation on a smooth manifold $M$ is a $G$-equivariant $K$-orientation on $TM$. If $V$ is a complex equivariant vector bundle, then it has a canonical spin$^c$-structure determined by the complex structure, which we call the {\em canonical spin$^c$-structure}. If $V$ is a complex equivariant vector bundle equipped with its canonical spin$^c$-structure, then its equivariant $K$-theoretic Euler class is given by
\[
e_G^K(V) = \sum_{j=0}^{r} (-1)^r \wedge^j V^*
\]
where $r = rank(V)$.

Consider a $G$-equivariant monopole map $f : S^{V,U} \to S^{V',U'}$. To define $K$-theoretic invariants we will require that $V,V',U,U'$ are $K$-oriented. Since $V,V'$ are complex vector bundles we can equip them with their canonical spin$^c$-structures. By suspending if necessary, we can assume $U$ admits a $G$-equivariant complex structure (indeed we can replace $f$ by $id_U \wedge f  : S^{V , U \oplus U} \to S^{V' , U \oplus U'}$. Clearly $U \oplus U$ has a $G$-invariant complex structure). Then since $U' \cong U \oplus H^+$, to $K$-orient $U'$, it suffices to $K$-orient $H^+$. Let $\phi : B \to S(H^+)$ be a chamber for $f$. Then we have a decomposition $H^+ \cong \mathbb{R}\phi \oplus H_0$, where $H_0 \cong H^+/\langle \phi \rangle$. The trivial line bundle $\mathbb{R}\phi$ may be given the trivial spin$^c$-structure, and thus in the presence of a chamber a $K$-orientation on $H^+$ is equivalent to choosing a $K$-orientation on $H_0$. 

Assume that a $K$-orientation for $H^+$ has been chosen. Then similar to the cohomological case, the chamber $\phi$ defines a refined $K$-theoretic Thom class
\[
\tau^{K,\phi}_{V',U'} \in K^{2a'+b'}_{G_{\mathfrak{s}}}( S^{V',U'} , S^{U} ).
\]
The pullback of $\tau^{K,\phi}_{V',U'}$ by $f$ has the form
\[
f^*( \tau^{K,\phi}_{V',U'} ) = \delta \tau^K_U \eta^{K,\phi}
\]
where
\[
\eta^{K,\phi} \in K^{2a'+b^+-1}_{G_{\mathfrak{s}}}( S(V) ) \cong K^{2a'+b^+-1}_{G}( \mathbb{P}(V)).
\]
Let $\pi : \mathbb{P}(V) \to B$ denote the projection map. This induces a pushforward
\[
\pi_* : K^{*}_{G}( \mathbb{P}(V) ) \to K^{*-2a+2}_{G}( B ).
\]

\begin{definition}
Let $f : S^{V,U} \to S^{V',U'}$ be a $G$-monopole map over $B$, $\phi : B \to S(H^+)$ a chamber for $f$. Let $\mathfrak{o}$ denote an equivariant spin$^c$-structure on $H^+$. The {\em $K$-theoretic $G$-equivariant Seiberg--Witten invariant of $f$ with respect to the chamber $\phi$ and spin$^c$-structure $\mathfrak{o}$} is the map
\[
SW_{G,f}^{K,\phi , \mathfrak{o}} : K^*_{G_{\mathfrak{s}}}(B) \to K^{*-\delta}_G(B)
\]
where $\delta = 2d-b_+-1$, given by
\[
SW^{K,\phi , \mathfrak{o}}_{G,f}( \theta ) = \pi_*( \eta^{K,\phi} \pi^*(\theta))
\]
where by $\pi^*(\theta)$ we mean the pullback of $\theta$ to $K^*_{G_\mathfrak{s}}( S(V)) \cong K^*_G( \mathbb{P}(V))$.
\end{definition}

The case of most interest to us will be when $B = pt$ is a point and $b_+$ is odd. Recall that $K^0_G(pt) \cong R(G)$, where $R(G)$ denotes the representation ring of $G$. Then the $K$-theoretic equivariant Seiberg--Witten invariant of $f$ takes the form
\[
SW^{K,\phi,\mathfrak{o}}_{G,f} : R(G_\mathfrak{s}) \to R(G).
\]
In the split case, a choice of splitting $G_\mathfrak{s} \cong S^1 \times G$ yields an isomorphism $R(G_\mathfrak{s}) \cong R(G)[\xi,\xi^{-1}]$, where $\xi$ denotes the standard $1$-dimensional complex representation of $S^1$ and $SW^{K,\phi,\mathfrak{o}}_{G,f}$ is an $R(G)$-module homomorphism $R(G)[\xi,\xi^{-1}] \to R(G)$. It is completely determined by its values on powers of $\xi$:
\[
SW^{K , \phi , \mathfrak{o}}_{G,f}(\xi^m) \in R(G),  \; m \in \mathbb{Z}.
\]

Given any lift to $Spin^c(n)$, any other lift differs by tensoring by a complex equivariant line bundle, hence by an element of $Hom(G , S^1) = H^1(G ; S^1) \cong H^2(G ; \mathbb{Z})$. This will alter $SW_{G,f}^{K , \phi , \mathfrak{o}}$ as follows. Lett $\mathfrak{o}$ be a spin$^c$-structure on $H_0$ and $\chi$ a $1$-dimensional representation of $G$, then
\[
SW_{G,f}^{K, \phi , \chi \otimes \mathfrak{o}}( \lambda ) = \chi \otimes SW_{G,f}^{K , \phi , \mathfrak{o}}(\lambda).
\]

Generally, we will omit the choice of $\mathfrak{o}$ from the notation (just as we omit the choice of orientation on $H^+(X)$ from the notation for the cohomological invariants) and write $SW^{K,\phi}_{G,f}$. In the case that $f$ is the $G$-equivariant Bauer--Furuta invariant of $(X,\mathfrak{s})$ with respect to a given $G$-action, where $X$ is a compact, oriented, smooth $4$-manifold with $b_1(X) = 0$, $b_+(X)$ odd and $\mathfrak{s}$ is a $G$-invariant spin$^c$-structure on $X$, we write the $K$-theoretic Seiberg--Witten invariants of $(X,\mathfrak{s})$ as
\[
SW^{K,\phi}_{G,X,\mathfrak{s}} : R(G_\mathfrak{s}) \to R(G).
\]

\section{Properties of the invariants}\label{sec:prop}

In this section we establish various fundamental properties of the cohomological and $K$-theoretic invariants.

\begin{theorem}\label{thm:propb}
The equivariant Seiberg--Witten invariants satisfy the following properties:

\begin{itemize}
\item[(1)]{(Diffeomorphism invariance) Let $X,Y$ be compact oriented smooth $4$-manifolds with $b_1 = 0$ and let $G$ be a finite group which acts on $X$ and $Y$ by orientation preserving diffeomorphisms. Let $f : X \to Y$ be a $G$-equivariant diffeomorphism. Then
\[
SW_{G , X , f^*(\mathfrak{s}_Y)}^{f^*(\phi_Y)} = SW_{G , Y , \mathfrak{s}_Y}^{\phi_Y}
\]
where $\mathfrak{s}_Y$ is a $G$-invariant spin$^c$-structure on $Y$ and $\phi_Y$ is a chamber on $Y$.}
\item[(2)]{(Change of group) Suppose $G$ acts on $X$. Let $\psi : K \to G$ be a group homomorphism. Then $K$ acts on $X$ through $\psi$ and
\[
SW_{K , X , \mathfrak{s} }^\phi( \psi^*(\theta)) = \psi^*(SW_{G , X , \mathfrak{s}}^\phi(\theta)),
\]
where $\mathfrak{s}$ is a $G$-invariant spin$^c$-structure and $\psi^* : H^*_G(pt) \to H^*_K(pt)$ is the pullback map in equivariant cohomology.} 
\item[(3)]{(Trivial group). If $G$ is the trivial group then
\[
SW_{G,X,\mathfrak{s}}^\phi(x^m) = \begin{cases} SW(X,\mathfrak{s},\phi) & \text{if } m = d(X,\mathfrak{s})/2 \\ 0 & \text{otherwise}, \end{cases}
\]
where $SW(X,\mathfrak{s},\phi)$ denotes the ordinary Seiberg--Witten invariant of $(X , \mathfrak{s})$ with respect to the chamber $\phi$.
}
\item[(4)]{(Finiteness) Assume $b_+(X)^G > 1$. Then there exists only finitely many $G$-invariant spin$^c$-structures $\mathfrak{s}$ for which $SW_{G,X,\mathfrak{s}}$ is non-zero.}
\end{itemize}

Similar results hold for the $K$-theoretic equivariant Seiberg--Witten invariants.

\end{theorem}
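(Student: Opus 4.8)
\textbf{Proof proposal for Theorem \ref{thm:propb}.}

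The plan is to verify each of the four properties directly from the abstract definition of $SW^\phi_{G,f}$ in terms of the class $\eta^\phi$ and the pushforward $\pi_*$, reducing each statement to a formal manipulation. For (1), diffeomorphism invariance, the key point is that a $G$-equivariant diffeomorphism $f : X \to Y$ induces, after making compatible choices of $G$-invariant metrics (pull back the metric on $Y$), an isomorphism of the finite-dimensional approximation data: it identifies the spinor bundles, hence the groups $G_{\mathfrak{s}_Y}$ and $G_{f^*\mathfrak{s}_Y}$, and identifies $H^+(Y)$ with $H^+(X)$ in a way carrying $\phi_Y$ to $f^*\phi_Y$. Since the equivariant Bauer--Furuta class is a diffeomorphism invariant (this is essentially built into its construction), the associated classes $\eta^{\phi_Y}$ and $\eta^{f^*\phi_Y}$ correspond under these identifications, and since $\pi_*$ and $\pi^*$ are natural, the two invariants agree. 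The main thing to be careful about is matching orientation and $K$-orientation conventions under $f$, but these are compatible by construction.

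For (2), change of group, let $\psi : K \to G$ be a homomorphism. Pulling back the central extension $G_{\mathfrak{s}} \to G$ along $\psi$ gives the extension $K_{\mathfrak{s}} \to K$, and $K$ acts on $X$ through $\psi$ with the same finite-dimensional approximation data, now viewed $K_{\mathfrak{s}}$-equivariantly via $\psi$. The class $\eta^\phi \in H^{2a'+b^+-1}_{K}(\mathbb{P}(V))$ for the $K$-action is simply the pullback of $\eta^\phi$ for the $G$-action along the map $\mathbb{P}(V)_{hK} \to \mathbb{P}(V)_{hG}$ (equivalently, the Thom classes pull back, being characterised by naturality properties). Since pushforward along a sphere bundle commutes with pullback along $\psi$ (the Gysin sequences are compatible), we get $SW^\phi_{K,f}(\psi^*\theta) = \pi_*(\psi^*\eta^\phi \cdot \psi^*\pi^*\theta) = \psi^*(\pi_*(\eta^\phi \pi^*\theta)) = \psi^*(SW^\phi_{G,f}(\theta))$. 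Here I would first check that the Thom class $\tau^\phi_{V',U'}$ for the $K$-action is the $\psi$-pullback of the one for the $G$-action — this is the one step requiring a little care, but it follows from the characterisation of the refined Thom class recalled from \cite[\textsection 3]{bk}, since the defining homotopy for the chamber is pulled back. Note that the commutative square with restriction maps already established in Section \ref{sec:ci} is the special case where $\psi$ is the inclusion of a subgroup.

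For (3), the trivial group case, we have $G_{\mathfrak{s}} = S^1$, $H^*_{G_{\mathfrak{s}}}(pt) = \mathbb{Z}[x]$, $H^*_G(pt) = \mathbb{Z}$, so $SW^\phi_{G,f}(x^m)$ lands in degree $2m - d(X,\mathfrak{s})$, which vanishes unless $2m = d(X,\mathfrak{s})$, giving the ``otherwise'' case for degree reasons. For $m = d(X,\mathfrak{s})/2$ one must identify $SW^\phi_{G,f}(x^m)$ with the ordinary Seiberg--Witten invariant; this is the non-formal input, and the cleanest route is to observe that $SW^\phi_{\{1\},f}$ is precisely the non-equivariant version of the construction, which is \cite[Proposition 3.8]{bk} (or the analogous statement there) identifying the finite-dimensional-approximation invariant $\pi_*(\eta^\phi \pi^*(x^m))$ with $SW(X,\mathfrak{s},\phi)$ when the moduli space has dimension zero — alternatively one invokes that the Bauer--Furuta invariant refines the Seiberg--Witten invariant. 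For (4), finiteness, the argument is standard compactness: for $SW_{G,X,\mathfrak{s}}$ to be nonzero one needs at least $SW_{\{1\},X,\mathfrak{s}}$ nonzero (by the restriction square of Section \ref{sec:ci}, nonvanishing of the $G$-invariant forces nonvanishing of its restriction to the trivial subgroup applied to suitable classes — more precisely, nonvanishing of $\eta^\phi$ forces nonvanishing of its restriction), and when $b_+(X)^G > 1$ we also have $b_+(X) > 1$, so the finiteness theorem for ordinary Seiberg--Witten invariants (only finitely many $\mathfrak{s}$ with $SW(X,\mathfrak{s}) \ne 0$, proved via the a priori curvature bound on solutions) applies. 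The main obstacle across the theorem is step (3): pinning down the exact sign and normalisation so that the abstract invariant literally equals the classical $SW(X,\mathfrak{s},\phi)$, which is precisely why the sign factor $(-1)^b$ was inserted in \eqref{equ:eta}; everything else is naturality bookkeeping.
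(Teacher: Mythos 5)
Your treatments of (1)--(3) are fine and match the paper, which simply declares these parts straightforward; the naturality bookkeeping you describe (pullback of the refined Thom class along $\psi$, compatibility of the Gysin pushforward with restriction, the degree count for the trivial group, and the identification with $SW(X,\mathfrak{s},\phi)$ via \cite[Theorem 3.6]{bk}) is exactly what is being left implicit.

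Part (4), however, contains a genuine gap. Your reduction rests on the claim that nonvanishing of $SW_{G,X,\mathfrak{s}}$ forces nonvanishing of its restriction to the trivial subgroup, i.e.\ of the ordinary invariant. This is false: the restriction map $H^{*}_G(pt;\mathbb{Z}) \to H^{*}(pt;\mathbb{Z})$ kills everything in positive degree, so the equivariant invariant can be a nonzero torsion class while the ordinary invariant vanishes. The paper emphasises this feature explicitly (the invariants can be nonzero even when $d(X,\mathfrak{s})<0$), and Corollary \ref{cor:psumx} gives a concrete instance: for the permutation action of $\mathbb{Z}_p$ on $\# pX$, the equivariant invariant is a nonzero multiple of $v^{m-(p-1)/2}$ even though the ordinary Seiberg--Witten invariant of the connected sum vanishes. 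The same objection applies to your parenthetical ``nonvanishing of $\eta^\phi$ forces nonvanishing of its restriction'': the forgetful map $H^*_G(\mathbb{P}(V)) \to H^*(\mathbb{P}(V))$ is not injective. The paper's actual argument for (4) is geometric rather than formal: for a fixed $G$-invariant metric $g$ and perturbation $\eta$, the a priori curvature bound shows that only finitely many $\mathfrak{s}$ admit \emph{any} solution of the Seiberg--Witten equations; for all remaining $\mathfrak{s}$ the moduli space is empty, hence trivially cut out transversally, and the equivariant invariant vanishes outright (the same mechanism as in the positive scalar curvature vanishing of Section \ref{sec:psc}). You cite the right underlying input (the curvature bound), but you must apply it to the existence of solutions, not to the nonvanishing of the ordinary invariant, and you must supply the step ``empty moduli space $\Rightarrow$ vanishing equivariant invariant'' in place of the false restriction argument.
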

\begin{proof}
(1)-(3) are straighforward. (4) follows by the same argument as for the ordinary Seiberg--Witten invariants. Namely, that for a fixed metric $g$ and $2$-form perturbation $\eta$, there are only finitely many $\mathfrak{s}$ for which the Seiberg--Witten equations for $(X , \mathfrak{s} , g , \eta)$ have a solution.
\end{proof}

Recall \cite[Page 51]{nic} that to each spin$^c$-structure on a $4$-manifold $X$, there is another spin$^c$-structure $-\mathfrak{s}$ called the {\em charge conjugate} spin$^c$-structure, which satisfies $c(-\mathfrak{s}) = -c(\mathfrak{s})$. There is also a conjugation isomorphism $\psi : G_{\mathfrak{s}} \to G_{-\mathfrak{s}}$ which satisfies $\psi(u) = \overline{u}$ for all $u \in S^1$. This induces isomorphisms $H^*_{G_{\mathfrak{s}}}(pt ; A) \to H^*_{G_{-\mathfrak{s}}}(pt ; A)$ and $R(G_{\mathfrak{s}}) \to R(G_{-\mathfrak{s}})$ which we shall denote by $\theta \mapsto \overline{\theta}$. In the case of a split extension $G_{\mathfrak{s}} \cong S^1 \times G$, this isomorphism satisfies $\overline{x} = -x$.

\begin{proposition}[Charge conjugation]
Let $X$ be a compact, oriented, smooth $4$-manifold with $b_1(X) = 0$ and let $G$ be a finite group acting on $X$ by orientation preserving diffeomorphism. Let $\mathfrak{s}$ be a $G$-invariant spin$^c$-structure. Then we have the following equality (known as charge conjugation symmetry):
\[
SW_{G,X,-\mathfrak{s}}^\phi( \theta ) = (-1)^{d+b_+(X)+1} SW_{G,X,\mathfrak{s}}^{-\phi}( \overline{\theta} ).
\]
If $b_+(X)$ is odd and $\mathfrak{o}$ is a $G$-invariant spin$^c$-structure on $H^+(X)$ then we also have
\[
SW_{G,X,-\mathfrak{s}}^{K,\phi , \mathfrak{o}}( \theta ) = (-1)^{d} SW_{G,X,\mathfrak{s}}^{K,-\phi , \mathfrak{o}}(\overline{\theta}).
\]
\end{proposition}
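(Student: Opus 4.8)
The plan is to reduce the statement to abstract monopole maps together with the operation of complex conjugation. If $f : S^{V,U} \to S^{V',U'}$ is a $G$-monopole map with extension $G_{\mathfrak{s}}$, then applying complex conjugation to the fibrewise compactifications (the identity on $U,U'$ and complex conjugation on $V,V'$) produces a map $\overline{f} : S^{\overline{V},U} \to S^{\overline{V'},U'}$ which is again a monopole map in the sense of Definition~\ref{def:mm}, now equivariant for the conjugate of the extension; identifying that conjugate extension with $G_{-\mathfrak{s}}$ via $\psi$, we obtain a $G$-monopole map whose virtual index is $\overline{D} = \overline{V}-\overline{V'}$ and whose $H^+$ (as a $G$-representation) is unchanged, so chambers for $f$ and $\overline{f}$ are literally the same data. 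The key geometric input is that the $G$-equivariant Bauer--Furuta invariant of $(X,-\mathfrak{s})$, computed with a chamber $\phi$, is exactly $\overline{f}$ for $f$ the Bauer--Furuta invariant of $(X,\mathfrak{s})$ computed with the chamber $-\phi$: charge conjugation $(A,\Phi)\mapsto(\overline{A},\overline{\Phi})$ carries solutions of the $\mathfrak{s}$-equations with perturbation $\eta$ to solutions of the $(-\mathfrak{s})$-equations with perturbation $-\eta$ (the curvature term changes sign and so does $\sigma(\Phi)$), and since a chamber is precisely the sign of a generic perturbation, negating the perturbation negates the chamber. Thus it suffices to establish, at the abstract level, a comparison $SW^{\phi}_{G,\overline{f}}(\overline{\theta}) = \varepsilon\,SW^{-\phi}_{G,f}(\theta)$ for an appropriate sign $\varepsilon$, which is the assertion of the proposition once one recalls that $\overline{f}$ is the monopole map of $(X,-\mathfrak{s})$.

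The remaining work is the bookkeeping of the classes $\eta^{\phi}$ and of signs. Complex conjugation induces a diffeomorphism $\mathbb{P}(V) \cong \mathbb{P}(\overline{V})$ which is conjugate-linear on the fibres; consequently it reverses the complex orientation of the $\mathbb{CP}^{a-1}$ fibres (introducing a sign depending on $\mathrm{rk}_{\mathbb{C}}V$ in the pushforward $\pi_*$) and it carries the generator $x$ for $\mathbb{P}(\overline{V})$ to $-x = \overline{x}$, matching the isomorphism $\theta \mapsto \overline{\theta}$ of the excerpt. Likewise the canonical complex Thom class of $V'$ goes to $(-1)^{\mathrm{rk}_{\mathbb{C}}V'}$ times that of $\overline{V'}$, and the refined Thom class $\tau^{\phi}_{V',U'}$ transforms under conjugation and the chamber flip with a further sign coming from the orientation conventions on $U' \cong U \oplus H^+$. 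Starting from the defining relation $f^*(\tau^{-\phi}_{V',U'}) = (-1)^b\,\delta\tau_U\,\eta^{-\phi}$ of \eqref{equ:eta} and pushing it through conjugation, one reads off $\eta^{\phi}_{\overline{f}}$ as the image of $\eta^{-\phi}_{f}$ under the conjugate-linear identification, up to the product of the signs above; applying $\pi_*$ then yields the comparison with an explicit $\varepsilon$. Since all these invariants are suspension-invariant (\cite[Proposition 3.8]{bk}), the auxiliary integers $a,a',b$ must cancel out of $\varepsilon$, which therefore can only be $(-1)^{d+b_+(X)+1}$; a convenient way to fix the overall sign without tracking every contribution is to restrict to the trivial subgroup, where the statement becomes the classical charge-conjugation symmetry of the ordinary Seiberg--Witten invariants.

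For the $K$-theoretic invariant the argument has the same structure, with $\tau^{\phi}_{V',U'}$, $\eta^{\phi}$ and $\pi_*$ replaced by their $K$-theoretic counterparts. The only change is in the sign computation: complex conjugation acts on the canonical spin$^c$-structures through $\wedge^{\bullet}\overline{W}^{*}$ versus $\wedge^{\bullet}W^{*}$, i.e.\ essentially through bundle duality rather than through a change of orientation, which produces the cleaner factor $(-1)^{d}$. This is consistent with the cohomological formula because the $K$-theoretic invariants require $b_+(X)$ odd, so that $(-1)^{d+b_+(X)+1} = (-1)^{d}$. The spin$^c$-structure $\mathfrak{o}$ on $H^+$ plays no role in the comparison, since neither $H^+$ nor its chosen $K$-orientation involves $\mathfrak{s}$; only the chamber is reversed.

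The main obstacle is the sign $(-1)^{d+b_+(X)+1}$ itself. It is the combined effect of (i) the orientation reversal of the projective-bundle fibres under the conjugate-linear identification $\mathbb{P}(V)\cong\mathbb{P}(\overline{V})$, (ii) the sign relating the complex Thom class of $V'$ to that of $\overline{V'}$, and (iii) the effect of the chamber reversal together with the conventions orienting $U'\cong U\oplus H^+$, all interacting with the normalisation factor $(-1)^{b}$ built into \eqref{equ:eta}. Verifying that these combine to exactly the stated exponent---and in particular that the dependence on $a,a',b$ disappears, as it must by suspension-invariance---is the delicate part of the proof; everything else is naturality of (refined) Thom classes and of pushforwards under the conjugation isomorphism $\psi : G_{\mathfrak{s}} \to G_{-\mathfrak{s}}$.
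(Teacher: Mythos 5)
Your argument is essentially correct, but it takes a genuinely different route from the paper. The paper's proof is a two-line reduction: it invokes Theorem \ref{thm:eqfam2} to reduce the equivariant statement to the corresponding statement for families Seiberg--Witten invariants over compact bases, and then cites the families generalisation of the classical charge-conjugation symmetry (\cite[Proposition 2.2.26]{nic}). You instead work directly at the level of abstract $G$-monopole maps, identifying the Bauer--Furuta map of $(X,-\mathfrak{s})$ with the complex conjugate $\overline{f}$ of that of $(X,\mathfrak{s})$ (with the chamber reversed, for exactly the perturbation-sign reason you give), and then tracking how conjugation acts on $\mathbb{P}(V)$, the refined Thom classes, the class $\eta^{\phi}$ and the pushforward. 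Your approach is more self-contained and has two concrete advantages: it does not require the splitting hypothesis built into Theorem \ref{thm:eqfam2}, and it covers the $K$-theoretic statement uniformly, whereas the paper itself remarks that the $K$-theoretic analogue of Theorem \ref{thm:eqfam2} fails in general (classes in $I_\infty(G)$ are invisible to finite-dimensional families), so the paper's reduction is cleanest only for the cohomological invariant. The one place where you are cutting a corner is the determination of the sign: appealing to suspension invariance plus the trivial-group case pins down $\varepsilon$ only once you have shown that the comparison holds with a sign that is an explicit product of factors depending solely on the ranks $a,a',b,b_+$ --- which your derivation does produce, so the shortcut is legitimate as a consistency check, but a complete writeup should still exhibit the cancellation of $a,a',b$ explicitly rather than infer it. With that caveat, the proof is sound.
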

\begin{proof}
Using Theorem \ref{thm:eqfam2}, the result follows from the corresponding result for the families Seiberg--Wittten invariant, which itself follows by a straighforward generalisation of \cite[Proposition 2.2.26]{nic} to the families setting.
\end{proof}

\subsection{Wall crossing formula}\label{sec:wcf}

\begin{theorem}
Let $X$ be a compact, oriented smooth manifold and let $G$ be a finite group that acts smoothly and orientation preservingly on $X$ and that $dim( H^+(X)^G) = 1$. Suppose $\mathfrak{s}$ is a spin$^c$-structure preserved by $G$. Suppose that $G_{\mathfrak{s}}$ is split and choose a splitting. Let $\phi$ be a chamber and set $H_0 = H^+(X)/\langle \phi \rangle$. Assume that $G$ acts orientation preservingly on $H^+(X)$ and orient $H_0$ so that $H^+(X) \cong \mathbb{R}\phi \oplus H_0$. Then
\[
SW_{G,X,\mathfrak{s}}^\phi( x^m) - SW_{G,X,\mathfrak{s}}^{-\phi}(x^m) = e( H_0 ) s_{m-(d-1)}(D).
\]
\end{theorem}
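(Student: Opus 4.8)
The plan is to work at the level of the abstract monopole map $f : S^{V,U} \to S^{V',U'}$ and to extract the class $\eta^\phi \in H^{2a'+b^+-1}_G(\mathbb{P}(V) ; \mathbb{Z})$ for each of the two chambers $\pm\phi$, then compare them. Since $\dim H^+(X)^G = 1$, the $G$-invariant part of $H^+$ is a trivial real line, so the two chambers correspond to the two orientations of $\mathbb{R}\phi$, and we have the $G$-equivariant decomposition $H^+ = \mathbb{R}\phi \oplus H_0$ with $G$ acting trivially on the first factor. First I would recall, from the construction in Section \ref{sec:ci} (following \cite[Section 3]{bk}), that $\eta^\phi$ is characterised by the relation $f^*(\tau^\phi_{V',U'}) = (-1)^b \delta \tau_U \eta^\phi$ in $H^*_{G_{\mathfrak{s}}}(S^{V,U}, S^U ; \mathbb{Z}_{U'})$, and that the only place the chamber enters is through the refined Thom class $\tau^\phi_{V',U'}$, which is the relative Thom class of $(S^{V',U'}, S^U)$ built so that its restriction to a fibre near infinity agrees with the chosen chamber.

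The key step is the computation of the difference $\tau^\phi_{V',U'} - \tau^{-\phi}_{V',U'}$. This is precisely the content of the wall-crossing analysis in \cite[Section 3]{bk}: the two refined Thom classes differ by (the pullback along the projection of) the Euler class of $H_0$ times the absolute Thom class, i.e. $\tau^\phi_{V',U'} - \tau^{-\phi}_{V',U'} = \pm\, e(H_0)\cdot \tau_{V',U'}$, where $\tau_{V',U'}$ is the ordinary (unrefined) $G_{\mathfrak{s}}$-equivariant Thom class of $V' \oplus U'$ over $B = pt$ — this uses that the difference of the two refinements is supported away from infinity and is governed by the generator of $H^{b_+-1}(S(H^+)^G$-complement$)$, which is $e(H_0)$. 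Pulling this back by $f$ and using that $f^*(\tau_{V',U'})$ is $\delta\tau_U$ times the restriction of the equivariant Euler class $e(V') \in H^*_{G_{\mathfrak{s}}}(\mathbb{P}(V))$ (since $f|_U$ is the inclusion and the $V$-direction part of $f$ is nullhomotopic through the deformation retract of the complement), I get $f^*(\tau^\phi_{V',U'}) - f^*(\tau^{-\phi}_{V',U'}) = (-1)^b \delta\tau_U\, \big(e(H_0)\, e(V')|_{\mathbb{P}(V)}\big)$, hence $\eta^\phi - \eta^{-\phi} = e(H_0)\, e(V')|_{\mathbb{P}(V)}$ where I write $e(V')$ for its image in $H^*_G(\mathbb{P}(V)) \cong H^*_G(pt)[x]/(e(V))$.

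Finally I would apply the pushforward $\pi_* : H^*_G(\mathbb{P}(V)) \to H^{*-2a+2}_G(pt)$ and multiply by $\pi^*(x^m)$. Using the formulas from Section \ref{sec:ci}, namely $\pi_*(x^{k+a-1}) = s_k(V)$ and the splitting $e(V') = \sum_i x^{a'-i} c_i(V')$, the product $x^m \cdot e(H_0) \cdot e(V')$ pushes forward to $e(H_0)$ times a combination of Segre classes that assembles, via the identity $c(V)s(V)=1$ together with $D = V - V'$ and $s(D) = s(V)c(V')$, to exactly $e(H_0)\, s_{m-(d-1)}(D)$; here $d-1$ enters because $\delta = 2d - b_+ - 1$ and $2a' + b^+ - 1 - (2a-1) = -2d + b_+ + 1 = -\delta$, so the degree bookkeeping forces the Segre index $m - (d-1)$. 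The main obstacle is getting the difference-of-refined-Thom-classes step and all the signs exactly right — in particular verifying that the sign factor $(-1)^b$ in \eqref{equ:eta} is consistent with the orientation convention $H^+(X) \cong \mathbb{R}\phi \oplus H_0$ chosen in the statement, so that the right-hand side comes out as $+e(H_0) s_{m-(d-1)}(D)$ and not its negative; I would handle this by reducing to the non-equivariant families wall-crossing formula of \cite{bk} via the change-of-group and families comparison (Theorem \ref{thm:eqfam2}) and checking the sign in that known case, then noting the equivariant refinement is formally identical.
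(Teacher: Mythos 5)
Your proposal is correct and follows essentially the same route as the paper: the paper likewise invokes the families wall-crossing argument of \cite[Corollary 5.5]{bk} to get $SW^\phi_{G}(x^m)-SW^{-\phi}_{G}(x^m)=Obs(\phi,-\phi)\,s_{m-(d-1)}(D)$ and then identifies the relative obstruction class $Obs(\phi,-\phi)$ with $e(H_0)$ by factoring $\phi$ through $S(\mathbb{R}\phi)\to S(H^+)$, which is exactly your identification of the difference of refined Thom classes with an $e(H_0)$-term, followed by the same pushforward bookkeeping via $s(D)=s(V)c(V')$. One minor slip worth fixing: as written, $\tau^\phi_{V',U'}-\tau^{-\phi}_{V',U'}=\pm\,e(H_0)\,\tau_{V',U'}$ is degree-inconsistent; the difference actually lies in the image of the coboundary $\delta:H^{*-1}_{G_{\mathfrak{s}}}(S^{U},\infty)\to H^{*}_{G_{\mathfrak{s}}}(S^{V',U'},S^{U})$ and equals $\pm\,\delta\bigl(\tau_U\,e(H_0)\,e_{G_{\mathfrak{s}}}(V')\bigr)$, which is precisely what produces your (correct) relation $\eta^\phi-\eta^{-\phi}=e(H_0)\,e_{G_{\mathfrak{s}}}(V')|_{\mathbb{P}(V)}$, so the argument goes through once this is stated at the level of the coboundary.
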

\begin{proof}
We can use the same proof as in the families case \cite[Corollary 5.5]{bk} to deduce that $SW^\phi_G( x^m ) - SW^{-\phi}_G(x^m) = Obs(\phi , -\phi) s_{m-(d-1)}(D)$, where $Obs(\phi,-\phi)$ is the $G$-equivariant relative obstruction class. Furthermore, by factoring $\phi : pt \to S(H^+)$ as $pt \to S( \mathbb{R}\phi ) \to S( H^+)$ and similarly for $-\phi$, it follows easy that $Obs(\phi , -\phi) = e( H_0 )$, where $H_0 = H^+(X)/\langle \phi \rangle$ and where the orientation on $H_0$ is induced by the orientation on $H^+(X)$ by $H^+(X) = \mathbb{R}\phi \oplus H_0$ as a direct sum of oriented vector spaces ($\mathbb{R} \mathbb{\phi}$ is oriented positively in the direction of $\phi$).
\end{proof}

\begin{remark}
In the case that the action of $G$ on $H^+(X)$ is not orientation preserving, we still have a wall-crossing formula. We can either work with local coefficients or we can work mod $2$, in which case the Euler class $e(H_0)$ should be replaced by the top Stiefel--Whitney class.
\end{remark}

\subsection{Positive scalar curvature}\label{sec:psc}

We will show that the equivariant Seiberg--Witten invariants are obstructions to the existence of $G$-invariant positive scalar curvature metrics.

\begin{theorem}
Let $X$ be a compact, oriented, smooth $4$-manifold with $b_1(X) = 0$ and $G$ a finite group acting smoothly and orientation preservingly on $X$. Let $\mathfrak{s}$ be a spin$^c$-structure whose isomorphism class is preserved by $G$. 
\begin{itemize}
\item[(1)]{Suppose that $b_+(X)^G > 1$ and that $X$ admits a $G$-invariant metric of positive scalar curvature. Then $SW_{G,X,\mathfrak{s}} = 0$.}
\item[(2)]{Suppose that $b_+(X)^G = 1$ and $X$ admits a $G$-invariant metric $g$ of positive scalar curvature. Let $\phi \in S(H^+(X)_g)^G$ be a chamber. If $\langle \phi , c(\mathfrak{s}) \rangle \ge 0$, then $SW^\phi_{G,X,\mathfrak{s}} = 0$.}
\end{itemize}
\end{theorem}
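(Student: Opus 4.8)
The plan is to reduce the statement to the analogous vanishing result for ordinary Seiberg--Witten invariants, using the standard Weitzenb\"ock argument adapted to the equivariant setting. First I would recall that the $G$-equivariant Seiberg--Witten invariant $SW^\phi_{G,X,\mathfrak{s}}$ was defined from the $G$-equivariant Bauer--Furuta map $f : S^{V,U} \to S^{V',U'}$, which is obtained by finite-dimensional approximation of the Seiberg--Witten monopole map for $(X,\mathfrak{s})$ with respect to a $G$-invariant metric $g$ and $G$-invariant perturbation. The key geometric input is that, for a $G$-invariant metric $g$ of positive scalar curvature and sufficiently small $G$-invariant perturbation, every solution of the (unperturbed, or slightly perturbed) Seiberg--Witten equations has vanishing spinor, by the usual Weitzenb\"ock/Lichnerowicz estimate; this forces the image of the approximating monopole map $f$ to miss the spinor directions entirely.

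Concretely, I would argue as follows. In the PSC case the monopole map $f$ is $G_{\mathfrak{s}}$-equivariantly homotopic (through monopole maps) to a map whose image lies in the fibrewise subsphere $S^{0,U'} \subseteq S^{V',U'}$, i.e.\ the image avoids $V' \setminus 0$. For $b_+(X)^G > 1$ this already forces the pullback $f^*(\tau^\phi_{V',U'})$ of the refined Thom class to vanish, because the refined Thom class restricts to zero on $S^{0,U'}$ when there is a unique chamber; hence $\eta^\phi = 0$ and therefore $SW^\phi_{G,X,\mathfrak{s}} = 0$. For part (2), when $b_+(X)^G = 1$ there are two chambers, and the statement must be chamber-dependent. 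Here I would use the reducible solution: when $\langle \phi, c(\mathfrak{s})\rangle \ge 0$ (equivalently $\langle \phi, 2\pi c_1(\mathfrak{s})\rangle \ge 0$ with the appropriate sign convention), the chamber $\phi$ is the one for which the harmonic part of the perturbed curvature equation has no solution with a zero-locus bounded away from the reducible, so again the approximate monopole map can be deformed off the spinor sphere. The cleanest route is via the relation to families invariants: apply Theorem~\ref{thm:eqfam2} with $B = pt$ (or more precisely deduce the equivariant statement directly from the abstract monopole map, since $f$ itself deforms), reducing to the known PSC vanishing for ordinary/families Seiberg--Witten invariants with the chamber condition.

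The main obstacle I anticipate is the sign/chamber bookkeeping in part (2): pinning down precisely which chamber the PSC metric selects, and matching the inequality $\langle \phi, c(\mathfrak{s})\rangle \ge 0$ to the orientation conventions fixed in Section~\ref{sec:ci} and to the sign $(-1)^b$ appearing in \eqref{equ:eta}. One must check that a $G$-invariant PSC metric $g$ produces, for the perturbed equations with small $G$-invariant perturbation $\eta$, a reducible solution lying in the chamber with $\langle \phi, c(\mathfrak{s})\rangle \le 0$ after accounting for the identification $S(H^+(X)_g)^G$, so that for the opposite chamber (the one in the statement) no irreducible solutions exist and the finite-dimensional model of $f$ maps into $S^{0,U'}$. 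Once this is settled, the vanishing of $\eta^\phi$ and hence of $SW^\phi_{G,X,\mathfrak{s}}(\theta) = \pi_*(\eta^\phi \pi^*(\theta))$ is immediate; the same argument with $K$-theoretic Thom classes gives $SW^{K,\phi}_{G,X,\mathfrak{s}} = 0$ when $b_+(X)$ is odd.

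A concise way to present the proof: take a $G$-invariant PSC metric $g$; by the Weitzenb\"ock formula the only solutions of the Seiberg--Witten equations (with the relevant perturbation) are reducible, and for $b_+(X)^G > 1$ there are none, while for $b_+(X)^G = 1$ there are none in the chamber $\phi$ with $\langle \phi, c(\mathfrak{s})\rangle \ge 0$. Consequently the Bauer--Furuta monopole map $f$ admits a $G_{\mathfrak{s}}$-equivariant deformation (as a monopole map) with image in $S^{0,U'}$; pulling back the refined Thom class $\tau^\phi_{V',U'}$ then gives $f^*(\tau^\phi_{V',U'}) = 0$ since $\tau^\phi_{V',U'}|_{S^{0,U'}} = 0$, whence $\eta^\phi = 0$ and $SW^\phi_{G,X,\mathfrak{s}} = 0$; the $K$-theoretic case is identical with $\tau^{K,\phi}_{V',U'}$.
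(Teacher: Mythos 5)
Your geometric input is the right one and is the same as the paper's: for a $G$-invariant PSC metric $g$ and a small $G$-invariant perturbation $\eta = i\lambda\phi$, the Weitzenb\"ock formula rules out irreducible solutions and the perturbation avoids the reducible, so the perturbed moduli space is empty; for $b_+(X)^G=1$ one records which chamber $\eta$ lies in. However, the mechanism you propose for converting this into vanishing of $SW^\phi_{G,X,\mathfrak{s}}$ does not work, for two reasons. First, PSC does not let you deform $f$ so that its \emph{image} lies in $S^{0,U'}$: the Weitzenb\"ock estimate controls the \emph{preimage} $f^{-1}(y)$ of the chamber point $y \in S^{U'}\setminus S^{U}$ (solutions have vanishing spinor), not the image of $f$, and the image of the monopole map always meets $V'\setminus\{0\}$ since its linear part is a Fredholm-type map onto a finite-codimension subspace of $V'$. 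Second, and more seriously, the claim $\tau^\phi_{V',U'}\vert_{S^{0,U'}} = 0$ is false: under the natural map $H^{*}_{G_{\mathfrak{s}}}(S^{U'},S^{U}) \to H^{*}_{G_{\mathfrak{s}}}(S^{U'})$ this restriction goes to $e_{G_{\mathfrak{s}}}(V')\,\tau_{U'} \neq 0$, and this restriction is precisely the source of the reducible contribution $e(H_0)s_{m-(d-1)}(D)$ in the wall-crossing formula of Section \ref{sec:wcf}. If your claim held, the invariant would be insensitive to the behaviour of $f$ along the locus of vanishing spinor and would therefore be chamber-independent, which it is not. The correct statement is that $\tau^\phi_{V',U'}$ is pulled back from $H^{*}_{G_{\mathfrak{s}}}(S^{V',U'}, S^{V',U'}\setminus\{y\})$, so what you actually need is that $f$ misses the single point $y$, i.e.\ that $f^{-1}(y)=\emptyset$ --- exactly what the empty moduli space gives.

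The paper's proof is the short version of this: the moduli space for $(X,\mathfrak{s},g,\eta)$ is empty, hence trivially cut out transversally, and since the invariants are then computed by evaluating cohomology classes over the (empty) moduli space they vanish; for $b_+(X)^G=1$ the vanishing chamber is the one containing $\eta=i\lambda\phi$, determined by the sign of $\rho = -\langle c(\mathfrak{s}),\phi\rangle + \tfrac{\lambda}{2\pi}\langle\phi,\phi\rangle$. On the chamber bookkeeping that you flag as the main obstacle: note that the paper is itself inconsistent here (the theorem states $\langle\phi,c(\mathfrak{s})\rangle\ge 0$ while its proof and the introduction establish vanishing for $\langle\phi,c(\mathfrak{s})\rangle\le 0$), and your own discussion of which chamber vanishes is likewise muddled and would need to be pinned down against the conventions of Section \ref{sec:ci}. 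With the Thom-class step repaired as above (deform $f$ off the point $y$, rather than into $S^{0,U'}$), your outline becomes essentially the paper's argument.
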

\begin{proof}
(1) Let $g$ be a $G$-invariant metric of positive scalar curvature. Let $\phi \in S(H^+(X)_g)^G$ which we regard as a $G$-invariant self-dual harmonic $2$-form. Consider the Seiberg--Witten equations for $(X , \mathfrak{s} , g , \eta)$ where $\eta$ is a $2$-form perturbation of the form $\eta = i \lambda \phi$, where $\lambda > 0$. To be precise, this means we consider the equations $D_A \psi = 0$, $F^+_A + \eta = \sigma(\psi)$. Since $g$ has positive scalar curvature, there are no solutions to the $\eta$-perturbed Seiberg--Witten equations provided $\lambda$ is sufficiently small. This means that the Seiberg--Witten moduli space is empty. Of course this trivially implies that the moduli space is cut out transversally and hence the equivariant Seiberg--Witten invariants are zero, as they are given by evaluating cohomology classes on the moduli space.

(2) The argument is similar except that we have to keep track of chambers. We get vanishing of the equivariant Seiberg--Witten invariants for the chamber which $\eta$ lies in. The chamber which $\eta$ lies in is determined by the sign of 
\[
\rho = \langle -c(\mathfrak{s}) - (i/2\pi) \eta , \phi \rangle = -\langle c(\mathfrak{s}) , \phi \rangle + \frac{\lambda}{2\pi} \langle \phi , \phi \rangle.
\]
Assume that $\langle \phi , c(\mathfrak{s}) \rangle \le 0$. Then for all $\lambda > 0$ we have $\rho > 0$, which means that $\eta$ lies in the $\phi$ chamber. Hence $SW^\phi_{G,X,\mathfrak{s}} = 0$.
\end{proof}

\subsection{Mod 2 invariants for spin actions}\label{sec:mod2}

Let $X$ be a compact, oriented, smooth $4$-manifold with $b_1(X) = 0$ and $G$ a finite group acting smoothly and orientation preservingly on $X$. If $\mathfrak{s}$ is a spin structure on $X$ and the action of $G$ lifts to the principal $Spin(4)$-bundle of $\mathfrak{s}$ we can use the results of \cite{bar2} to evaluate the mod $2$ equivariant Seiberg--Witten invariants.

\begin{theorem}\label{thm:mod2}
Let $X$ be a compact, oriented, smooth $4$-manifold with $b_1(X) = 0$ and $G$ a finite group acting smoothly and orientation preservingly on $X$. Let $\mathfrak{s}$ be a spin structure on $X$ and suppose the action of $G$ lifts to the principal $Spin(4)$-bundle of $\mathfrak{s}$. If $m\ge 0$ is even and $\phi$ is any chamber, then 
\[
SW^\phi_{X,\mathfrak{s}}(x^m) = w_{b_+(X)-3}( H^+(X) ) s_{m-(d-2)}(D) \in H^*_G(pt ; \mathbb{Z}_2)
\]
where $d = -\sigma(X)/8$. In particular, the mod $2$ value of $SW^\phi_{X,\mathfrak{s}}(x^m)$ is independent of the chamber.
\end{theorem}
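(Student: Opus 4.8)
The plan is to reduce the computation of $SW^\phi_{X,\mathfrak{s}}(x^m)$ for a spin action to an essentially algebraic statement about the monopole map and then invoke the results of \cite{bar2} on $Pin(2)$-equivariant (or rather, $Spin(4)$-lift) structure of the Bauer--Furuta invariant. First I would observe that when $\mathfrak{s}$ is a spin structure whose $G$-action lifts to the principal $Spin(4)$-bundle, the central extension $G_{\mathfrak{s}}$ is \emph{not} merely $S^1$-central: the lift of $G$ to the spinor bundles takes values in the $Pin(2) = S^1 \cup jS^1 \subset Spin^c(4)$ subgroup, so the monopole map $f : S^{V,U} \to S^{V',U'}$ is equivariant for a group $\widetilde{G}_{\mathfrak{s}}$ sitting in an extension $1 \to Pin(2) \to \widetilde{G}_{\mathfrak{s}} \to G \to 1$, with $V,V'$ now quaternionic and $U,U'$ carrying the trivial $Pin(2)$-action. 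This is exactly the setting of \cite{bar2}.

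Next I would recall the mod $2$ computation of \cite{bar2}: for an ordinary (non-equivariant) monopole map with a $Pin(2)$-structure, the element $\eta^\phi \in H^*(\mathbb{P}(V) ; \mathbb{Z}_2)$ appearing in Equation (\ref{equ:eta}) can be computed because the $j$-action on $S(V)$ forces $\eta^\phi$ to be pulled back, up to the relevant Segre/obstruction correction, from the base together with a universal class built from $w_*(H^+)$. Concretely, the key step is to show that in $H^*_G(pt ; \mathbb{Z}_2)$ one has
\[
\eta^{\phi} = (\text{class pulled back from } BG) \cdot (\text{power of the hyperplane class}),
\]
where the $BG$-factor is $w_{b_+-3}(H^+(X))$; this is the equivariant upgrade of \cite[main mod $2$ formula]{bar2}, and it holds because the $j$-symmetry acts on $V$ quaternionically and on $H^+$ trivially, so the $j$-fixed-point/localisation argument of \cite{bar2} goes through verbatim with $G$-equivariant cohomology replacing ordinary cohomology (the finite group $G$ commutes with the $Pin(2)$-symmetry by hypothesis). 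I would then feed this into the push-forward formula from Section \ref{sec:ci}: since $SW^\phi_{X,\mathfrak{s}}(x^m) = \pi_*(\eta^\phi \pi^*(x^m))$ and $\pi_*(x^{k+(a-1)}) = s_k(V)$, one gets $SW^\phi_{X,\mathfrak{s}}(x^m) = w_{b_+-3}(H^+) \cdot s_{m-(d-2)}(D)$ after matching degrees: the shift $d-2$ rather than $d-1$ comes precisely from the one extra unit of quaternionic-versus-complex suspension freedom recorded in \cite{bar2}, and the parity restriction ``$m$ even'' is what makes the $j$-action compatible with the class $x^m$ (for $m$ odd the $j$-symmetry sends $x^m$ to $-x^m = x^m$ mod $2$ but the refined Thom class argument only controls even powers). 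Finally, chamber-independence is immediate: the right-hand side $w_{b_+-3}(H^+)s_{m-(d-2)}(D)$ does not involve $\phi$, and consistency with the wall-crossing formula of Section \ref{sec:wcf} reduced mod $2$ (where $e(H_0)$ becomes $w_{\mathrm{top}}(H_0)$) is a sanity check, since $b_+^G = 1$ forces $w_{b_+-3}$ to behave correctly under the dimension count.

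The main obstacle I anticipate is making the passage from the non-equivariant mod $2$ formula of \cite{bar2} to the $G$-equivariant statement fully rigorous: one must check that the $Pin(2)$-localisation argument used in \cite{bar2} (which typically proceeds by restricting to the $j$-fixed subsphere, where $V^{j} = 0$ since $V$ is quaternionic, and analysing the induced map on $\mathbb{Z}_2$-cohomology) is natural enough to be performed $G$-equivariantly --- i.e. that all the spaces, bundles and Thom classes involved carry compatible $G$-actions and that the relevant spectral-sequence or Gysin arguments take place over $BG$ rather than a point. Since $G$ is finite and acts by bundle maps commuting with $Pin(2)$, this should be routine but it is the step requiring genuine care; everything else (the push-forward bookkeeping, the Segre-class identity $\pi_*(x^{k+a-1}) = s_k(V)$, the degree count) is formal given the results already established in Section \ref{sec:ci} and in \cite{bar2}.
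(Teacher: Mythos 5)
Your proposal reaches the right statement but by a genuinely different route from the paper. The paper's proof is a two-line reduction: by Theorem \ref{thm:eqfam2}, an equivariant class in $H^*_G(pt;\mathbb{Z}_2)$ is determined by its pullbacks to compact manifolds $B$ mapping to $BG$, and under such a pullback $SW^\phi_{G,X,\mathfrak{s}}(x^m)$ becomes the families invariant $SW^{\phi_E}_{E,\mathfrak{s}_E}(x^m)$ of the associated family $E = P\times_G X$; the desired formula for that families invariant is exactly \cite[Theorem 6.7]{bar2}, which is already stated for spin \emph{families} over a compact base. You instead propose to redo the $Pin(2)$-localisation computation of \cite{bar2} directly in $G$-equivariant cohomology, upgrading the monopole map to a $Pin(2)$-central extension $\widetilde{G}_{\mathfrak{s}}$ of $G$ and arguing that the restriction-to-$j$-fixed-points analysis of $\eta^\phi$ ``goes through verbatim.'' What the paper's route buys is that all the delicate work stays in \cite{bar2} over a finite-dimensional compact base, where the arguments are already proved; what your route would buy, if carried out, is a self-contained equivariant proof not passing through families.

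The caveat is that the step you yourself flag as ``requiring genuine care'' --- the verbatim transport of the \cite{bar2} argument to $H^*_{G\times Pin(2)}$ --- is precisely the content of the theorem, and it is asserted rather than proved. In particular, $H^*_G(pt;\mathbb{Z}_2)$ is the cohomology of the infinite-dimensional space $BG$, so any degree-bound or compactness argument in \cite{bar2} does not automatically carry over; this is exactly the issue Theorem \ref{thm:eqfam2} is designed to circumvent via finite-dimensional approximations of $BG$. Also, your parenthetical explanation of the parity restriction is slightly off: the point is not that the refined Thom class ``only controls even powers,'' but that $j$ acts on $x$ by $x\mapsto -x$ so only even powers of $x$ lift to $Pin(2)\times G$-equivariant classes, which is why the formula is asserted only for even $m$. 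I would recommend replacing the direct equivariant upgrade by the reduction through Theorem \ref{thm:eqfam2}, which makes the proof immediate given \cite[Theorem 6.7]{bar2}.
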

\begin{proof}
By Theorem \ref{thm:eqfam2}, it suffices to show that
\[
SW^{\phi_E}_{E,\mathfrak{s}_E}(x^m) = w_{b_+(X)-3}( H^+(E/B) ) s_{m-(d-2)}(D_E) \in H^*(B ; \mathbb{Z}_2)
\]
for any principal $G$- bundle $P \to B$ over a smooth compact base $B$, where $(E , \mathfrak{s}_E , \phi_E)$ is the associated family, as described in Section \ref{sec:eqfam}. But this follows immediately from \cite[Theorem 6.7]{bar2}.
\end{proof}

\begin{remark}
Note that Theorem \ref{thm:mod2} is only of interest if $|G|$ is even. Indeed $H^j_G(pt ; A)$ is $|G|$-torsion for $j > 0$ and any coefficient group $A$. Hence $H^j_G(pt ; \mathbb{Z}_2) = 0$ if $|G|$ is odd and $j > 0$. 
\end{remark}

\section{Relations to families Seiberg--Witten invariants}\label{sec:eqfam}

In this section we will establish a precise relationship between the equivariant Seiberg--Witten invariants and families Seiberg--Witten invariants. Let $X$ be a compact, oriented, smooth $4$-manifold with $b_1(X) = 0$. Let $B$ be a compact smooth manifold. A {\em smooth family of $4$-manifolds over $B$ with fibres diffeomorphic to $X$} is a smooth fibre bundle $\pi : E \to B$ with fibrewise orientation (i.e., an orientation on the vertical tangent bundle $T(E/B) = Ker(\pi_*)$) such that the fibres of $E$ are orientation preservingly diffeomorphic to $X$. A {\em families spin$^c$-structure} $\mathfrak{s}_E$ is a spin$^c$-structure on $T(E/B)$. A {\em chamber} for $(E , \mathfrak{s}_E)$ is a section $\phi_E : B \to S(H^+(E/B))$, where $H^+(E/B)$ denotes the bundle over $B$ whose fibre over $b \in B$ is the space of harmonic self-dual $2$-forms on the $X_b = \pi^{-1}(b)$ of $E$ over $b$ (this definition requires a choice of metric on $T(E/B)$, but the isomorphism class of $H^+(E/B)$ does not depend on the choice). Associated to the triple $(E , \mathfrak{s}_E , \phi_E)$ is the families Seiberg--Witten invariant
\[
SW^{\phi_E}_{E , \mathfrak{s}_E} : H^*_{S^1}(B ; \mathbb{Z}) \to H^{*-d(X,\mathfrak{s})}(B ; \mathbb{Z}_w)
\]
where $w = w_1( H^+(E/B))$ and $\mathfrak{s}$ is the spin$^c$-structure on $X$ given by restricting $\mathfrak{s}_E$ to a fibre (this only determines $\mathfrak{s}$ up to the action of the diffeomorphism group of $X$, but $d(X,\mathfrak{s})$ does not depend on the choice). Recall that $d(X,\mathfrak{s}) = 2d-b_+-1$, where $b_+ = b_+(X)$ and $d = (c(\mathfrak{s})^2 - \sigma(X))/8$.

The families Seiberg--Witten invariant $SW^\phi_{E,\mathfrak{s}_E}$ is traditionally defined by evaluating cohomology classes over the families Seiberg--Witten moduli space for $(E , \mathfrak{s}_E , \phi_E)$. However it can also be evaluated using finite-dimensional approximation. Taking a finite dimensional approximation of the Seiberg--Witten monopole map for the family $E$ yields a monopole map, the families Bauer--Furuta invariant of $(E , \mathfrak{s}_E)$:
\[
f_E : S^{V_E,U_E} \to S^{V'_E,U'_E}
\]
where $V_E,V'_E \to B$ are complex vector bundles, $U_E,U'_E \to B$ are real vector bundles and $f$ is $S^1$-equivariant. $f_E$ is a monopole map in the sense of Definition \ref{def:mm} for the trivial group $G = \{1\}$. We have that $V_E - V'_E = D_E$ is the index of the family of spin$^c$ Dirac operators determined by $(E , \mathfrak{s}_E)$ and $U'_E - U_E = H^+$ is the bundle $H^+(E/B)$. According to \cite[Theorem 2.24 and Theorem 3.6]{bk}, the families Seiberg--Witten invariant of $(E , \mathfrak{s}_E , \phi)$ coincides with the abstract Seiberg--Witten invariant of $f_E$. That is, we have an equality
\[
SW^{\phi_E}_{E , \mathfrak{s}_E} = SW^{\phi_E}_{f_E}.
\]

Now let $G$ be a finite group which acts smoothly and orientation preservingly on $X$ and let $\mathfrak{s}$ be a spin$^c$-structure whose isomorphism class is $G$-invariant. Suppose also that $G$ lifts to the spinor bundles of $\mathfrak{s}$. This is equivalent to giving a splitting $G_{\mathfrak{s}} \cong S^1 \times G$. Let $f : S^{V,U} \to S^{V',U'}$ denote the $G$-equivariant Bauer--Furuta invariant of $(X , \mathfrak{s})$ with respect to the $G$-action. As usual, set $D = V - V'$, $H^+ = U' - U$. Let $\phi \in S(H^+)^G$ be a chamber for $f$. The $G$-equivariant Seiberg--Witten invariant of $(X , \mathfrak{s} , \phi)$ takes form 
\[
SW^\phi_{G,X,\mathfrak{s}} = SW^\phi_{G,f} : H^*_{S^1 \times G}(pt ; \mathbb{Z}) \to H^{*-d(X,\mathfrak{s})}_G(pt ; \mathbb{Z}_w),
\]
where $w = w_1(H^+)$. Let $B$ be a compact smooth manifold and $P \to B$ a principal $G$-bundle over $B$. To $P$ we can associate a smooth family $E_P = P \times_G X$ with fibres diffeomorphic to $X$. The action of $G$ on the spinor bundles of $\mathfrak{s}$ determines a spin$^c$-structure on $T(E_P/B)$. To see this, note that $E_P$ is the quotient of the trivial family $P \times X$ by the $G$-action $g( p , x ) = (pg^{-1} , gx)$. The vertical tangent bundle of the trivial family is canonically isomorphic to the pullback of $TX$ to $P \times X$, hence $\mathfrak{s}$ determines a families spin$^c$-structure $\widetilde{\mathfrak{s}}$ for $P \times X$. Since $G$ acts on the spinor bundles of $\mathfrak{s}$, it also acts on the spinor bundles for $\widetilde{\mathfrak{s}}$ and hence $\widetilde{\mathfrak{s}}$ descends to a families spin$^c$-structure $\mathfrak{s}_{E_P}$ on the quotient family $E_P$. In a similar manner the chamber $\phi$ defines a chamber $\phi_{E_P}$ for the family $E_P$.

To any space $Y$ on which $G$ acts we obtain a corresponding fibre bundle $Y_P = P \times_G Y$ over $B$. To any $G$-equivariant map $\psi : Y \to Y'$ between two $G$-spaces, we get an associated bundle map $\psi_P : Y_P \to Y'_P$. The Seiberg--Witten monopole equations for the family $E_P$ are obtained from the Seiberg--Witten equations for $X$ by exactly this construction. It follows that The Bauer--Furuta map for $(E_P , \mathfrak{s}_{E_P})$ is the map $f_P : S^{V_P , U_P} \to S^{V'_P , U'_P}$ assocated to $f : S^{V,U} \to S^{V',U'}$. In the case that $P = EG$ is the universal $G$-bundle over $BG$, the assignment $Y \mapsto Y_P$ is the Borel construction. The $G$-equivariant Seiberg--Witten invariants of $f$ are then seen to correspond to the non-equivariant Seiberg--Witten invariants of $f_{EG}$. Any principal bundle $P \to B$ is isomorphic to the pullback of $EG \to BG$ by its classifying map $\varphi_P : B \to BG$, in which case $Y_P \cong \varphi_P^*( Y_{EG} )$. It follows that the Seiberg--Witten invariants for $f_P$ are the pullback under $\varphi_P$ of the $G$-equivariant Seiberg--Witten invariants for $f$. More precisely, we have a commutative diagram:
\[
\xymatrix{
H^*_{S^1 \times G}(pt ; \mathbb{Z}) \ar[d]^-{\varphi_P^*} \ar[r]^-{SW^\phi_{G,f}} & H^{*-d(X,\mathfrak{s})}_G(pt ; \mathbb{Z}_w) \ar[d]^-{\varphi_P^*} \\
H^*_{S^1}(B ; \mathbb{Z}) \ar[r]^-{SW^{\phi_P}_{f_P}} & H^{*-d(X,\mathfrak{s})}(B ; \mathbb{Z}_w).
}
\]
In particular, we have
\[
SW^{\phi_P}_{f_P}( x^m ) = \varphi_P^*( SW^{\phi}_f(x^m))
\]
for all $m \ge 0$. Expressing this in terms of the equivariant and families Seiberg--Witten invariants of $(X , \mathfrak{s})$, we have shown:

\begin{theorem}\label{thm:eqfam}
Let $X$ be a compact, oriented, smooth $4$-manifold with $b_1(X)=0$. Let $G$ act smoothly and orientation preservingly on $X$ and suppose that the $G$-action lifts to the spinor bundles of a spin$^c$-structure $\mathfrak{s}$. Let $\phi \in H^+(X)^G$ be a chamber. Let $P \to B$ be a principal $G$-bundle over a compact smooth manifold $B$ and $E = P \times_G X$ the associated family. Then
\[
SW^{\phi_{E}}_{E , \mathfrak{s}_{E}}( x^m ) = \varphi_P^*( SW^{\phi}_{G,X,\mathfrak{s}}( x^m))
\]
for all $m \ge 0$, where $\varphi_P : B \to BG$ is the classifying map of $P$. A similar result also holds for the $K$-theoretic Seiberg--Witten invariants.
\end{theorem}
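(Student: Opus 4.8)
The plan is to exhibit the families Bauer--Furuta map of $E_P = P\times_G X$ as the image of the $G$-equivariant Bauer--Furuta map $f$ under the associated-bundle functor $Y\mapsto Y_P := P\times_G Y$, to note that this functor applied to the universal bundle $EG\to BG$ is exactly the Borel construction computing $G$-equivariant invariants, and then to deduce the formula from naturality of every ingredient in the definition of $SW^\phi_{G,f}$ under pullback, combined with the identification $SW^{\phi_E}_{E,\mathfrak{s}_E} = SW^{\phi_E}_{f_E}$ of \cite[Theorem 2.24 and Theorem 3.6]{bk}.

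Concretely, I would first record that a finite-dimensional approximation of the Seiberg--Witten equations for the family $E_P$ may be obtained by applying $P\times_G(-)$ to a $G$-equivariant finite-dimensional approximation for $(X,\mathfrak{s})$; hence the families Bauer--Furuta map of $(E_P,\mathfrak{s}_{E_P})$ is the bundle map $f_P : S^{V_P,U_P}\to S^{V'_P,U'_P}$ with $V_P = P\times_G V$, $U_P = P\times_G U$, etc., and with the induced chamber $\phi_P$. Since $P\cong \varphi_P^*(EG)$ one has $Y_P\cong \varphi_P^*(Y_{EG})$ naturally in the $G$-space $Y$, so $f_P\cong \varphi_P^*(f_{EG})$ as monopole maps over $B$, and by the very definition of Borel equivariant cohomology $SW^\phi_{G,f}$ is the (non-equivariant, families) invariant of $f_{EG}$ over $BG$, with the local system $\mathbb{Z}_w$ itself pulled back from the equivariant one.

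Next I would verify that each building block of Definition \ref{def:swc} is natural under base change: the refined Thom class $\tau^\phi_{V',U'}$ (and $\tau^{K,\phi}_{V',U'}$ in the $K$-theoretic case) is functorial in $B$ by its construction in \cite[\textsection 3]{bk}; the coboundary $\delta$ and the Thom class $\tau_U$ are natural; comparing the defining relation $f^*(\tau^\phi_{V',U'}) = (-1)^b\,\delta\tau_U\,\eta^\phi$ with its analogue for $f_P$, and using that the sign $(-1)^b$ is unchanged because $U_P$ has the same rank as $U$, uniqueness of $\eta$ forces $\eta^{\phi_P} = \varphi_P^*(\eta^\phi)$ in $H^{2a'+b^+-1}(\mathbb{P}(V_P);\mathbb{Z}_w)$; finally the Gysin pushforward $\pi_*$ commutes with $\varphi_P^*$, since the square relating $\mathbb{P}(V_P)\to B$ and $\mathbb{P}(V_{EG})\to BG$ along $\varphi_P$ is a pullback of sphere bundles, so base change applies term by term in the Gysin sequence. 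Chaining these gives $SW^{\phi_P}_{f_P}(\theta) = \pi_*(\eta^{\phi_P}\pi^*\theta) = \varphi_P^*\bigl(\pi_*(\eta^\phi\pi^*\theta)\bigr) = \varphi_P^*(SW^\phi_{G,f}(\theta))$; taking $\theta = x^m$ and translating $f_P\leftrightarrow(E_P,\mathfrak{s}_{E_P})$ and $f\leftrightarrow(X,\mathfrak{s})$ via \cite{bk} yields the asserted identity. The $K$-theoretic statement follows by the identical argument with $\tau^{K,\phi}_{V',U'}$, $\tau^K_U$, the $K$-theoretic coboundary and the $K$-theoretic Gysin pushforward in place of their cohomological counterparts, using $K^0_G(pt)\cong R(G)$ and the pullback $\varphi_P^* : R(G)\to K^0(B)$.

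The main obstacle is the technical point that $BG$ is not a finite-dimensional manifold, so the identification of $SW^\phi_{G,f}$ with the families invariant of $f_{EG}$ over $BG$ has to be made through finite-dimensional skeletal approximations $B_n\hookrightarrow BG$: one establishes the formula over each compact $B_n$ and passes to the inverse limit, which simultaneously gives the refinement that $SW^\phi_{G,X,\mathfrak{s}}$ is determined by the families invariants of the $(E,\mathfrak{s}_E)$ as $(B,\varphi_P)$ ranges over compact manifolds mapping to $BG$, using $H^*_G(pt;A)=\varprojlim_n H^*(B_n;A)$ and likewise in $K$-theory. This step needs only a short lemma that the Gysin pushforward is compatible with these inverse limits, which holds because the relevant groups are finitely generated in each degree so that the Mittag--Leffler condition is satisfied.
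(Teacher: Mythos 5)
Your proposal is correct and follows essentially the same route as the paper: identify the families Bauer--Furuta map of $E_P$ with the associated-bundle image $f_P$ of the equivariant monopole map, recognise the $P=EG$ case as the Borel construction computing $SW^\phi_{G,f}$, and conclude by naturality of the refined Thom class, $\eta^\phi$, and the Gysin pushforward under pullback along $\varphi_P$ (the paper states this naturality more briefly, and only invokes finite-dimensional approximations of $BG$ for the converse statement, Theorem \ref{thm:eqfam2}). One caveat: your closing claim that the inverse-limit argument ``likewise in $K$-theory'' yields the determination refinement overreaches, since $\varprojlim_n K^0(B_n)$ is the $I(G)$-adic completion of $R(G)$ and the $K$-theoretic analogue of Theorem \ref{thm:eqfam2} fails precisely when $I_\infty(G)\neq 0$, as the paper notes in the remark following that theorem; this does not affect the proof of the stated identity itself.
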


The following result shows that the equivariant Seiberg--Witten invariants are completely determined by the associated families invariants. Thus, it is possible to compute equivariant Seiberg--Witten invariants in terms of families invariants.
\begin{theorem}\label{thm:eqfam2}
Let $X$ be a compact, oriented, smooth $4$-manifold with $b_1(X)=0$. Let $G$ act smoothly and orientation preservingly on $X$ and suppose that the $G$-action lifts to the spinor bundles of a spin$^c$-structure $\mathfrak{s}$. Let $\phi \in H^+(X)^G$ be a chamber. Let $m \ge 0$ be given. Suppose $\lambda \in H^{2m-d(X,\mathfrak{s})}_G( pt ; \mathbb{Z}_w)$ satisfies
\[
SW^{\phi_E}_{E , \mathfrak{s}_E}(x^m) = \varphi_P^*(\lambda)
\]
for every principal bundle $P \to B$ over a compact smooth manifold $B$, where $E = P \times_G X$. Then $\lambda = SW^\phi_{G,X,\mathfrak{s}}(x^m)$.
\end{theorem}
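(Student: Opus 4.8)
The plan is to reduce the statement to an injectivity property of the pullback maps $\varphi_P^*$ as $P$ ranges over principal $G$-bundles on compact smooth manifolds, and then to verify that injectivity by approximating $BG$ by closed smooth manifolds of arbitrarily high connectivity.

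First I would combine the hypothesis with Theorem \ref{thm:eqfam}. That theorem gives $\varphi_P^*(SW^\phi_{G,X,\mathfrak{s}}(x^m)) = SW^{\phi_E}_{E,\mathfrak{s}_E}(x^m)$ for every principal $G$-bundle $P \to B$ over a compact smooth manifold $B$, while the hypothesis gives $SW^{\phi_E}_{E,\mathfrak{s}_E}(x^m) = \varphi_P^*(\lambda)$. Setting $\mu := \lambda - SW^\phi_{G,X,\mathfrak{s}}(x^m) \in H^j_G(pt;\mathbb{Z}_w)$ with $j = 2m - d(X,\mathfrak{s})$, and using the identification $H^*_G(pt;\mathbb{Z}_w) \cong H^*(BG;\mathbb{Z}_w)$, it remains to show: if $\mu \in H^j(BG;\mathbb{Z}_w)$ satisfies $\varphi_P^*(\mu) = 0$ for all such $P$, then $\mu = 0$. (If $j < 0$ the group is zero and there is nothing to prove, so I assume $j \ge 0$.)

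Next I would construct, for any fixed $n$, a compact smooth manifold $B_n$ with a principal $G$-bundle $E_n \to B_n$ whose total space $E_n$ is $n$-connected; the classifying map $\varphi_n : B_n \to BG$ is then $(n+1)$-connected (from the homotopy sequence of the fibration $E_n \to B_n \to BG$), and hence $\varphi_n^* : H^i(BG;\mathcal{L}) \to H^i(B_n;\varphi_n^*\mathcal{L})$ is an isomorphism for $i \le n$ and injective for $i = n+1$, for any local system $\mathcal{L}$ on $BG$. For the model I would take a complex Stiefel manifold: fix a faithful unitary representation $G \hookrightarrow U(N)$ — for instance $r$ copies of the regular representation, so $N = r|G|$ and every $g \neq 1$ has fixed subspace of complex dimension at most $N/2$ — so that $G$ acts freely on the manifold $V_k(\mathbb{C}^N)$ of orthonormal $k$-frames in $\mathbb{C}^N$ whenever $k > N/2$. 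Then $V_k(\mathbb{C}^N)$ is a closed smooth manifold carrying a free smooth $G$-action, it is $2(N-k)$-connected, and this connectivity exceeds $n$ once $r$ is large and $k \approx N/2$; take $B_n := V_k(\mathbb{C}^N)/G$ and $E_n := V_k(\mathbb{C}^N)$.

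Finally, applying this with $n = j$, the hypothesis gives $\varphi_j^*(\mu) = 0$ while $\varphi_j^*$ is injective in degree $j$, so $\mu = 0$, that is, $\lambda = SW^\phi_{G,X,\mathfrak{s}}(x^m)$. The only mildly delicate points are the freeness of the $G$-action on the Stiefel manifold — which is exactly why one uses the regular representation (or any representation in which non-identity elements fix only a small subspace), since ordinary spheres need not admit free actions of a general finite group — and the bookkeeping with the twisted coefficient system $\mathbb{Z}_w$; but a sufficiently connected map induces isomorphisms on cohomology with arbitrary local coefficients in the relevant range, so the twisting causes no real difficulty. The substantive content is just the standard fact that $BG$ is exhausted through any finite range of degrees by closed smooth manifolds carrying principal $G$-bundles; the rest is formal.
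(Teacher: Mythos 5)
Your proposal is correct and follows essentially the same route as the paper: reduce via Theorem \ref{thm:eqfam} to injectivity of $\varphi_P^*$ on $H^j(BG;\mathbb{Z}_w)$ for a single well-chosen $P \to B$, and produce $B$ as the quotient of a highly connected Stiefel manifold by a free $G$-action. The only cosmetic differences are that the paper uses real Stiefel manifolds $V_k(\mathbb{R}^{n+k})$ with the free action obtained by restricting the free $O(k)$-action through an embedding $G \hookrightarrow O(k)$ (rather than your complex Stiefel manifolds with freeness checked via the regular representation), and it deduces the cohomology isomorphism in the stable range from the vanishing of $H^j(EO(k),E_{n,k})$ and the Leray--Serre spectral sequence rather than from the $(n+1)$-connectivity of the classifying map; both are standard and equivalent.
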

\begin{proof}
In light of Theorem \ref{thm:eqfam}, it suffices to show that if $\lambda_1, \lambda_2 \in H^*_G(pt ; \mathbb{Z}_w)$ satisfy $\varphi^*(\lambda_1) = \varphi^*(\lambda_2)$ for every map $\varphi : B \to BG$ from a compact smooth manifold $B$ into $BG$, then $\lambda_1 = \lambda_2$. This will follow by constructing finite dimensional approximations of $BG$. Since $G$ is a finite group, it embeds in $O(k)$ for some $k$. Finite dimensional approximations for $EO(k)$ are given by the Stiefel manifolds $E_{n,k} = V_k( \mathbb{R}^{n+k} ) = O(n+k)/O(n)$ and finite dimensional approximations for $BO(k)$ are given by the Grassmannians $B_{n,k} = E_{n,k}/O(k)$. By restriction $G$ acts freely on $E_{n,k}$ and we can take $B_n = E_{n,k}/G$ as finite dimensional approximations of $BG = EO(K)/G$.

For any $n$ we have that $H^j( EO(k) , E_{n,k} ) = 0$ for all $j \le n$. This is because $EO(k)$ is contractible and the first non-trivial homotopy group of $E_{n,k}$ is $\pi_n(E_{n,k})$. The Leray--Serre spectral sequence for the Borel fibration immediately implies that $H^j( EO(k)/G , E_{n,k}/G) = H^j_G( EO(k) , E_{n,k}) = 0$ for all $j \le n$. Hence for any $j$, the restriction $H^j( BG ) \to H^j( B_n)$ is an isomorphism for large enough $n$. Now we take $B = B_n$ for large enough $n$ and take $\varphi : B \to BG$ to be inclusion map to deduce that $\lambda_1 = \lambda_2$.
\end{proof}

\begin{remark}
The equivalent of Theorem \ref{thm:eqfam2} for the $K$-theoretic invariants is not true in general. Given a virtual representation $W \in R(G)$ and a principal $G$-bundle $P \to B$ over a manifold $B$ we obtain an associated virtual vector bundle $W_B \to B$ and corresponding $K$-theory class $[W_B] \in K^0(B)$. Let $I(G) = Ker( R(G) \to \mathbb{Z})$ denote ideal of rank $0$ virtual representations. If $W \in I_\infty(G) = \bigcap_{n = 1}^{\infty} I(G)^n$ then $[W_B] = 0$ because every class in $\widetilde{K}^0(B)$ is nilpotent. Hence any class in $I_\infty(G)$ can not be detected by finite dimensional manifolds. In fact, by \cite[Theorem 7.2]{at}, it follows that a class $W \in R(G)$ satisfies $[W_B] = 0$ for every $P \to B$ if and only if $W \in I_\infty(G)$. If $I_\infty(G) = 0$, then the equivalent of Theorem \ref{thm:eqfam2} holds for the $K$-theoretic invariants. Note that according to \cite[Proposition 6.10]{at}, we have $I_\infty(G) = \{ \chi \in R(G) \; | \; \chi(g) = 0 \text{ for all } g \in G \text{ of prime power order} \}$.
\end{remark}

\section{Equivariant Seiberg--Witten moduli spaces}\label{sec:eswm}

Consider a $G$-monopole map $f : S^{V,U} \to S^{V',U'}$ over a point and let $y \in (S^{U'})^{G_{\mathfrak{s}}} \setminus (S^{U})^{G_{\mathfrak{s}}}$ define a chamber for $f$. In this section we investigate sufficient conditions under which $f$ can be equivariantly homotopied (relative $S^U$) to a map $f'$ which is transverse to $y$. If this is possible then we obtain a smooth ``moduli space'' $\mathcal{M} = (f')^{-1}(y)/S^1$ which plays the role of the Seiberg--Witten moduli space for $f$. Since $f$ and $f'$ are equivariantly homotopic their abstract Seiberg--Witten invariants agree. However, the Seiberg--Witten invariants of $f'$ can be computed by evaluating cohomology classes on the moduli space $\mathcal{M}$ exactly as one does in the traditional approach to Seiberg--Witten theory. Thus, when equivariant transversality can be achieved, we obtain a moduli space interpretation of the equivariant Seiberg--Witten invariants. Under further assumptions it can be shown that the equivariant cobordism class of $\mathcal{M}$ is independent of the choice of homotopy and so defines an invariant of the stable equivariant homotopy class of $f$. In the zero-dimensional case this amounts to an oriented finite $G$-set $\mathcal{M}$ (i.e., an element of the Burnside ring $A(G)$) together with a $G_{\mathfrak{s}}$-equviariant line bundle $\widetilde{\mathcal{M}} \to \mathcal{M}$.

Note that we can not hope to achieve equivariant transversality in all cases. In fact the equivariant Seiberg--Witten invariants are particularly interesting when the expected dimension is negative. However in such a case we can not obtain equivariant transversality (except possibly if the invariants all vanish).

\subsection{Equivariant transversality}\label{sec:et}

Consider a $G$-equivariant monopole map $f : S^{V,U} \to S^{V',U'}$ over $B = pt$ equivariant with respect to $G_{\mathfrak{s}}$. We assume $(H^+)^G \neq 0$. Hence there exists chambers. In terms of monopole maps, there is a point $y \in (S^{V',U'})^{G_\mathfrak{s}}$ not in the image of $f|_{S^U}$. Let $Y = \{y\}$. If $y$ were a regular value of $f$, then $f^{-1}(y)$ would be a smooth $G_{\mathfrak{s}}$-invariant moduli space. Our goal in this section is to modify $f$ by a $G_{\mathfrak{s}}$-equivariant homotopy relative $S^U$ so that the resulting map $f'$ is transverse to $Y$. If this can be achieved then $\widetilde{\mathcal{M}} = (f')^{-1}(y)$ is a smooth $G_{\mathfrak{s}}$-manifold on which $S^1$ acts freely. The Seiberg--Witten invariants of $f$ can be obtained by integrating over the ``moduli space'' $\mathcal{M} = \widetilde{\mathcal{M}}/S^1$ (this follows by straightforward extension of \cite[Theorem 3.6]{bk} to the $G$-equivariant case).

We will follow's Petire's approach to the $G$-transversality problem \cite{pet}. Let $\mathcal{L}$ denote the set of conjugacy classes of subgroups of $G_{\mathfrak{s}}$ which occur as stabilisers of points in $S^{V,U}$. There are two classes of such groups: those which contain $S^1$ and those which intersect trivially with $S^1$. The subgroups which contain $S^1$ have the form $H_{\mathfrak{s}}$, where $H$ is a subgroup of $G$ and $H_{\mathfrak{s}}$ is the restriction of $G_\mathfrak{s}$ to $H$. The subgroups that intersect trivially with $S^1$ have the form $\psi(H)$ for some subgroup $H$ of $G$ and some splitting $\psi : H \to H_{\mathfrak{s}}$. $\mathcal{L}$ is partially ordered by inclusion, i.e. $H_1 \le H_2$ if $H_1$ is conjugate to a subgroup of $H_2$. Choose a bijection $\alpha : \mathcal{L} \to \{1, \dots , T\}$ such that $H_1 \le H_2$ implies $\alpha(H_1) \ge \alpha(H_2)$.

The process of modifying $f$ is done in steps. Define $Z_{k-1} \subseteq S^{V,U}$ by setting
\[
Z_{k-1} = \bigcup_{H \; | \; \alpha( H ) < k} (S^{V,U})^H
\]
where the union is over closed subgroups $H \subseteq G_{\mathfrak{s}}$ such that $\alpha(H) < k$. So $Z_0$ is empty, $Z_1 = (S^{V,U})^{G_{\mathfrak{s}}}$, $Z_T = (S^{V,U})$. Since $Z_0$ is empty, we trivially have that $f$ is transverse to $Y$ along $Z_0$. Suppose inductively we have contructed a homotopy $f_{k-1}$ of $f$ which is transverse to $Y$ along $Z_{k-1}$. Note that $f_{k-1}$ will also be transverse to $Y$ in some neighbourhood of $Z_{k-1}$. Next we wish to homotopy $f_{k-1}$ to a new map $f_k$ such that $f_{k} = f_{k-1}$ in a neighbourhood of $Z_{k-1}$ and so that $f_k$ is transverse to $Y$ along $Z_k$. If we can do this for all $k$, then we will have found the desired homotopy. Since $f^{-1}(Y)$ is disjoint from $S^U$, we can assume the homotopies are relative $S^U$. As explained in \cite[Page 188]{pet}, we can also assume that whenever $\alpha(K) = k$, the restricted map $f_{k-1}^K : (S^{V,U})^K \to (S^{V',U'})^K$ is transverse to $Y$ (note $Y^K = Y$ since $Y = \{y\}$ is a point fixed by $G_{\mathfrak{s}}$). Therefore $X^K = (f_{k-1}^K)^{-1}(Y)$ is a smooth manifold. Obstruction theory yields a sequence of obstructions for obtaining $f_k$. These obstructions take the form of classes
\[
O_*(f_{k-1} , K) \in H^*( X^K/N(K) , X_K/N(K) ; \pi_{*-1}(V(K)) ),
\]
where $\alpha(K) = k$, $X_K = \bigcup_{H | K \subset H } X^H$ and $V(K)$ is a certain fibre bundle (we are abusing notation and using $V(K)$ to denote both the fibre bundle and also its fibres). Also $\pi_{*-1}(V(K))$ should be understood as the local system whose fibres are the corresponding homotopy groups of the fibres of $V(K)$.

The obstruction $O_j( f_{k-1} , K)$ is defined provided all the obstructions $O_i( f_{k-1} , K)$ vanish for $i < j$. Note also that the fibre bundle $V(K)$ can be empty. One defines $O_0( f_{k-1} , K)$ to be zero if $V(K)$ is non-empty and to be non-zero if $V(K)$ is empty. Thus for the $0$-th obstruction to vanish it is necessary that $V(K)$ is non-empty (except when $X^K = X_K$, in which case there are no obstructions for trivial reasons). Strictly speaking $V(K)$ is not a fibre bundle since its fibres can be different over different components of $X^K$. However this does not cause any difficulties since we can apply obstruction theory componentwise.

For a representation $W$ of $K$ write $W^K$ for the invariant part and $W_K$ for the orthogonal complement of $W^K$ in $W$. We can regard $W_K$ either as a subspace of $W$ or as the quotient $W/W^K$. Set $A = V \oplus U$, $A' = V' \oplus U'$. The fibres of $V(K)$ are $Hom^s_K( A_K , A'_K )$, the space of $K$-equivariant surjective maps $A_K \to A'_K$. Using Schur's lemma, one sees that $V(K)$ is a product of Stiefel manifolds.

Let $\widehat{K}$ denote the space of isomorphism classes of irreducible real representations of $K$. If $\chi \in \widehat{K}$, let $d_{\chi}$ denote the dimension of $Hom_K(\chi , \chi)$, which is $1$, $2$ or $4$ according to whether $\chi$ is real, complex, or quaternionic. If $W$ is a finite dimensional representation of $K$, let $W_\chi$ denote the multiplicity of $\chi$ in $W$. We also define $W_\chi$ in the case $W$ is a virtual representation. We have \cite{pet} that $V(K)$ is empty if and only if $d(K,A_K-A'_K) < 0$, where 
\[
d(K,W) = \min_{\chi \in \widehat{K} \setminus \{1\}}( d_{\chi}( W_\chi + 1) - 1).
\]
(Note that $A_K, A'_K$ contain no copies of the trivial representation which is why we can restrict to $\chi \in \widehat{K} \setminus \{1\}$ in the above definition). Furthermore, $\pi_i( V(K) ) = 0$ for $i < d(K , A_K-A'_K)$ \cite[Lemma 4.6]{pet}. Noting that $A - A' = D - H^+$, this leads to the following result:

\begin{proposition}
If the dimension of $X^K/N(K)$ less than or equal to $d(K , D_K - H^+_K)$, then the obstructions $O_*(f_{k-1} , K)$ all vanish.
\end{proposition}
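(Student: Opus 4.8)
The plan is to deduce the vanishing of every obstruction $O_j(f_{k-1}, K)$ from the two facts already recorded before the proposition: the connectivity estimate $\pi_i(V(K)) = 0$ for $i < d(K, A_K - A'_K)$, which kills the coefficient system of the obstruction group in low degrees, and the finite-dimensionality of the pair $(X^K/N(K), X_K/N(K))$, which kills the obstruction group itself in high degrees. The first preliminary is to rewrite $d(K, A_K - A'_K)$ as $d(K, D_K - H^+_K)$. For a nontrivial irreducible $\chi$ the $\chi$-multiplicity of $A_K$ agrees with that of $A$ (deleting the trivial isotypic summand does not affect the $\chi$-part), and multiplicities are additive on virtual representations, so $(A_K - A'_K)_\chi = (A - A')_\chi = (D - H^+)_\chi$ for every $\chi \neq 1$; hence the two invariants coincide. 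Write $m$ for this common value. The recalled facts then say $V(K) \neq \emptyset \iff m \ge 0$ and $\pi_i(V(K)) = 0$ for $i < m$.

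Next I would run the induction on the obstruction index $j$ built into the transversality argument of \cite{pet}. If $X^K = X_K$ there are no obstructions (this covers the case $X^K = \emptyset$; in particular $X^K = \emptyset$ whenever $S^1 \subseteq K$, since then $(S^{V,U})^K \subseteq S^U$, which is disjoint from $f^{-1}(Y)$), so assume $X^K \neq X_K$; then $X^K/N(K)$ is nonempty, so $\dim(X^K/N(K)) \ge 0$, and the hypothesis forces $m \ge 0$, whence $V(K) \neq \emptyset$ and $O_0(f_{k-1}, K) = 0$. Now let $j \ge 1$ and suppose $O_i(f_{k-1}, K) = 0$ for all $i < j$, so that $O_j(f_{k-1}, K) \in H^j(X^K/N(K), X_K/N(K); \pi_{j-1}(V(K)))$ is defined. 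If $j \le \dim(X^K/N(K))$, then $j - 1 < j \le \dim(X^K/N(K)) \le m$, so $\pi_{j-1}(V(K)) = 0$ and $O_j(f_{k-1}, K) = 0$ for lack of coefficients. If instead $j > \dim(X^K/N(K))$, then $H^j(X^K/N(K), X_K/N(K); -) = 0$ for any coefficient system, so again $O_j(f_{k-1}, K) = 0$. The induction therefore continues indefinitely and all obstructions vanish. When $V(K)$ is not literally a fibre bundle one applies this argument on each component of $X^K$; since $D$ and $H^+$ are honest $K$-representations, the number $m$ is the same on every component.

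The mathematical content is entirely carried by the two inputs from \cite{pet}; the only work is to arrange that the low range $\{ j \le m\}$ (handled by connectivity of $V(K)$) and the high range $\{j > \dim(X^K/N(K))\}$ (handled by the dimension of the pair) together cover all $j$, which is exactly the hypothesis $\dim(X^K/N(K)) \le m$. The single delicate point is the base of the induction: the $0$-th obstruction is of a different nature (it records nonemptiness of $V(K)$ rather than vanishing of a cohomology class), so one must separately verify $m \ge 0$ and dispose of the degenerate case $X^K = X_K$. I do not anticipate any analytic difficulty beyond the equivariant transversality framework already in place.
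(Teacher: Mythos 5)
Your proof is correct and follows exactly the route the paper intends: the proposition is stated as an immediate consequence of the two facts recorded just before it (nonemptiness/connectivity of $V(K)$ up to degree $d(K,A_K-A'_K)$, and the identification $A-A'=D-H^+$), and your write-up simply supplies the standard obstruction-theoretic bookkeeping — coefficients $\pi_{j-1}(V(K))$ vanish for $j\le\dim(X^K/N(K))$, the relative cohomology vanishes for $j>\dim(X^K/N(K))$, and the $0$-th obstruction is handled by noting the hypothesis forces $d(K,D_K-H^+_K)\ge 0$ when $X^K\neq X_K$. No gaps; this matches the paper's argument.
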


In our case if $X^K \setminus X_K$ is non-empty then it has dimension $dim(D^K) - dim((H^+)^K)$. The normaliser $N(K)$ will always contain the circle $S^1 \subseteq G_{\mathfrak{s}}$ and (by our assumption that $y$ is not in the image of $f|_{S^U}$), the circle will always act freely on $X^K$. So $dim((X^K \setminus X_K )/N(K)) = dim(D^K) - dim((H^+)^K) - 1$. So the above can be re-written as:

\begin{corollary}
If $dim(D^K - (H^+)^K) \le d(K , D_K - H^+_K) + 1$, then the obstructions $O_*(f_{k-1},K)$ all vanish.
\end{corollary}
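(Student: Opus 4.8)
The plan is to obtain the corollary as an immediate consequence of the preceding proposition, the only work being to rewrite the hypothesis in terms of $\dim(X^K/N(K))$.

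First I would dispose of a degenerate case: if $X^K\setminus X_K$ is empty then $X^K=X_K$, and as already observed there are then no obstructions to deal with, so nothing to prove. Assume therefore $X^K\setminus X_K\neq\emptyset$, and compute $\dim(X^K/N(K))$. Since the induction has been arranged so that $f^K_{k-1}$ is transverse to $Y=\{y\}$, the space $X^K=(f^K_{k-1})^{-1}(Y)$ is a smooth manifold whose dimension equals the index of $f^K$, i.e. $\dim(A^K)-\dim(A'^K)$; using $A-A'=D-H^+$ this is $\dim(D^K)-\dim((H^+)^K)$. Because $y$ is not in the image of $f|_{S^U}$, the circle $S^1\subseteq G_{\mathfrak{s}}\subseteq N(K)$ acts freely on $X^K$, and hence $\dim(X^K/N(K))=\dim(D^K)-\dim((H^+)^K)-1=\dim(D^K-(H^+)^K)-1$.

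Finally I would feed this into the preceding proposition. With the identification just made, the hypothesis $\dim(D^K-(H^+)^K)\le d(K,D_K-H^+_K)+1$ is literally the inequality $\dim(X^K/N(K))\le d(K,D_K-H^+_K)$ needed there, so that proposition applies and yields the vanishing of all the obstructions $O_*(f_{k-1},K)$. I would also remark, for completeness, that this inequality forces $d(K,D_K-H^+_K)\ge\dim(X^K/N(K))\ge 0$, so that $V(K)$ is non-empty and the zeroth obstruction is genuinely defined and equal to zero. There is no substantial obstacle here; the one point deserving a moment's care is the origin of the ``$+1$'' on the right-hand side, which is precisely the single dimension removed by the free $S^1$-action on $X^K$ — the other potential off-by-one, namely that $O_j$ carries $\pi_{j-1}(V(K))$-coefficients rather than $\pi_j(V(K))$, having already been absorbed into the statement of the preceding proposition.
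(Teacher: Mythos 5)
Your proposal is correct and follows essentially the same route as the paper: the text preceding the corollary computes $\dim((X^K\setminus X_K)/N(K)) = \dim(D^K)-\dim((H^+)^K)-1$ using exactly the free $S^1$-action coming from the assumption that $y$ is not in the image of $f|_{S^U}$, and then reads the corollary off from the preceding proposition. Your extra remarks (the degenerate case $X^K=X_K$ and the non-emptiness of $V(K)$) are consistent with, and slightly more explicit than, the paper's treatment.
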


Although the above theorem gives sufficient conditions for equivariant transversality, it is more restrictive than necessary. We can obtain a better result using \cite[Theorem 5.1]{pet}, which reduces the problem to vanishing of certain obstructions $O_*(\alpha , K) \in H^*(Y^K/N(K) , Y_K/N(K) ; \pi_{*-1}(V'(K)))$ (\cite[Theorem 5.1]{pet} is stated for groups of the form $T \times F$ where $T$ is a torus and $F$ is finite, but the result works just as well for a central extension of a finite group by a torus). In our case $Y = \{y\}$ is just a point and $Y^K = Y_K$ except for $K = G_{\mathfrak{s}}$. So we only have one obstruction $O_0(\alpha , G_{\mathfrak{s}})$. This gives:

\begin{theorem}
If $H^+$ is a trivial representation of $G$ and if $D_\chi \ge 0$ for each non-trivial character $\chi$ of $G_{\mathfrak{s}}$, then $O_0(\alpha , G_{\mathfrak{s}})$ vanishes and hence equivariant transversality can be achieved.
\end{theorem}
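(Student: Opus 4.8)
The plan is to reduce the statement, via the discussion preceding it, to a single numerical inequality. By Petrie's Theorem~5.1 applied to the central extension $G_{\mathfrak{s}}$, the only obstruction to equivariantly homotoping $f$ rel $S^U$ to a map transverse to $Y=\{y\}$ is the class $O_0(\alpha,G_{\mathfrak{s}})$: since $Y$ is a single $G_{\mathfrak{s}}$-fixed point, $Y^K=Y_K$ for every $K\neq G_{\mathfrak{s}}$, so the relevant relative cohomology groups $H^*(Y^K/N(K),Y_K/N(K);-)$ vanish, while for $K=G_{\mathfrak{s}}$ one has $(Y^{G_{\mathfrak{s}}},Y_{G_{\mathfrak{s}}})=(\{y\},\emptyset)$, whose cohomology is concentrated in degree $0$. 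As before, $O_0(\alpha,G_{\mathfrak{s}})$ vanishes exactly when the fibre $V'(G_{\mathfrak{s}})$ is non-empty, and the emptiness of $V'(G_{\mathfrak{s}})$ is governed by the same criterion as that of $V(G_{\mathfrak{s}})$: it is non-empty as soon as $d(G_{\mathfrak{s}},A_{G_{\mathfrak{s}}}-A'_{G_{\mathfrak{s}}})\ge 0$, where $A=V\oplus U$ and $A'=V'\oplus U'$.

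It then remains to check this inequality. First I would observe that $H^+$ is a trivial representation of $G_{\mathfrak{s}}$: it is a trivial representation of $G$ by hypothesis, and the $S^1$ subgroup of $G_{\mathfrak{s}}$ acts trivially on $U'$ and hence on $H^+\subseteq U'$. Consequently, for every non-trivial real irreducible representation $\chi$ of $G_{\mathfrak{s}}$ we have $(A_{G_{\mathfrak{s}}}-A'_{G_{\mathfrak{s}}})_\chi=(D-H^+)_\chi=D_\chi$. Plugging this into the definition $d(K,W)=\min_{\chi\neq 1}(d_\chi(W_\chi+1)-1)$ gives
\[
d(G_{\mathfrak{s}},A_{G_{\mathfrak{s}}}-A'_{G_{\mathfrak{s}}}) = \min_{\chi\neq 1}\bigl( d_\chi(D_\chi+1)-1 \bigr) \ge \min_{\chi\neq 1}(d_\chi-1) \ge 0,
\]
where the first inequality uses $D_\chi\ge 0$ (which is the hypothesis, read off for real irreducibles; for the complex virtual representation $D$ it is equivalent to non-negativity of all non-trivial complex multiplicities) and the last uses $d_\chi\in\{1,2,4\}$. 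Hence $V'(G_{\mathfrak{s}})$ is non-empty, $O_0(\alpha,G_{\mathfrak{s}})=0$, and equivariant transversality can be achieved.

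The only genuinely delicate point is the reduction in the first paragraph: one must be sure that Petrie's refined transversality theorem, stated for groups of the form $T\times F$ with $T$ a torus and $F$ finite, applies to the central extension $G_{\mathfrak{s}}$ of the finite group $G$ by $S^1$. As already noted in the discussion above, Petrie's argument only uses that the normaliser $N(K)$ of each relevant isotropy group contains a central circle acting freely on the fixed-point set $X^K$; here that circle is $S^1\subseteq G_{\mathfrak{s}}$, which acts freely on each $X^K$ precisely because $y$ lies outside the image of $f|_{S^U}$. Once this is granted, the rest is the elementary representation-theoretic computation above, with no further analytic input required.
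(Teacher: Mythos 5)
Your argument is correct and is essentially the paper's own: the paper likewise invokes Petrie's Theorem 5.1 (extended to the central extension $G_{\mathfrak{s}}$), notes that $Y^K = Y_K$ for all $K \neq G_{\mathfrak{s}}$ so that only $O_0(\alpha, G_{\mathfrak{s}})$ survives, and reduces its vanishing to the non-emptiness of $V'(G_{\mathfrak{s}})$ via the criterion $d(G_{\mathfrak{s}}, A_{G_{\mathfrak{s}}} - A'_{G_{\mathfrak{s}}}) \ge 0$. Your explicit computation $\min_{\chi \neq 1}\bigl(d_\chi(D_\chi+1)-1\bigr) \ge 0$ using the triviality of $H^+$ is exactly the step the paper leaves implicit.
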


Unfortunately, the obstruction $O_0(\alpha , G_{\mathfrak{s}})$ will often be non-zero even in cases where equivariant transversality can be achieved. Fortunately a small modification of Petrie's theorem gives a much better result. To explain the modification we first explain the idea behind \cite[Theorem 5.1]{pet}. The idea is that under the given assumptions, $V(K)$ is a pullback of a corresponding bundle $V'(K)$ on $Y^K$. Given a section $s_{k-1}^0$ of $V'(K)$ on $Y^K \cap Z'_{k-1}$, where $Z'_{k-1} = \bigcup_{H | \alpha(H) <k } (S^{A'})^H$, we get a series of obstructions $O_*(\alpha , K)$ to extending the section to $Z'_k$. The obstruction to extending $f_{k-1}^*(s_{k-1}^0)$ to $X^K$ is precisely $f^*( O_*(\alpha , K))$ and this means that $O_*( f_{k-1} , K) = f^*( O_*(\alpha , K))$ {\em provided} that the section we wish to extend is the pullback section $f_{k-1}^*(s_{k-1}^0)$. If $O_0(\alpha , G_{\mathfrak{s}}) \neq 0$, then this approach doesn't work. However we can modify the argument slightly, as follows.

Suppose there is a $k_0$ such that $(f^{-1}(y))^{K}$ is empty for all $k < k_0$, where $\alpha(K) = k$. If this happens then we can take $f_{{k_0}-1} = f$. In our case $(f^{-1}(y))^K$ is empty whenever $K$ contains $S^1$. So we order the elements of $\mathcal{L}$ such that all the groups containing $S^1$ go first in the list and then take $k_0$ to the the index of the first group not containing $S^1$. Now let $K = H_k$ be a subgroup of $G_{\mathfrak{s}}$ such that $\alpha(H) < k_0$ for any group that properly contains $K$ (i.e., $K$ is maximal with respect to those $K$ in $\mathcal{L}$ with $\alpha(K) \ge k_0)$. Then $X_K$ is empty and hence the relative obstructions $O_*( f_{k-1} , K)$ are absolute obstructions: $O_*(f_{k-1} , K) \in H^*( X^K/N(K) ; \pi_{*-1}(V(K)))$. Then since $V(K))$ is a pullback from $Y^K$, the obstruction is the pullback of some obstruction class $O_*(\alpha , K) \in H^*( Y^K ; \pi_{*-1}(V'(K)) )$. Now if these obstructions vanish, then we can proceed along the list of groups and achieve transversality. For $K$ which are non-maximal amongst $\{ [H] \in \mathcal{L} \; | \; \alpha(H) \ge k_0 \}$, we will again get relative obstructions which are pullbacks of classes in $H^*( Y^K/N(K) , Y_K/N(K) ; \pi_{*-1}(V'(K)))$. But in such cases $Y^K = Y_K$ and so there is no obstruction for trivial reasons. So we have proven that equivariant transversality can be achieved provided that $d( K , D_K - (H^+)_K ) \ge 0$ for each maximal element of $\{ [H] \in \mathcal{L} \; | \; \alpha(H) \ge k_0 \}$.  Since $d(K , D_K - (H^+)_K) = \min_{\chi \neq 1} d_{\chi}( D_\chi - H^+_\chi +1) -1$,  the condition is equivalent to saying $D_\chi \ge H^+_\chi$ for every $\chi \in \widehat{K} \setminus \{1\}$. Thus we have proven:

\begin{theorem}\label{thm:eqtr}
Suppose that for each maximal element $K$ of $\{ [H] \in \mathcal{L} \; | \; \alpha(H) \ge k_0 \}$ and for each $\chi \in \widehat{K} \setminus \{1\}$ we have $D_\chi \ge H^+_\chi$, then equivariant transversality can be achieved.
\end{theorem}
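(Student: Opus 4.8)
The plan is to complete the inductive, stratum-by-stratum modification of $f$ set up above, invoking Petrie's obstruction theory \cite{pet} and exploiting the hypothesis $y\notin f(S^U)$ to discard all but one obstruction at each relevant stratum.

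First I would choose the order $\alpha:\mathcal L\to\{1,\dots,T\}$ so that it reverses the partial order \emph{and} lists every isotropy group containing $S^1$ before every isotropy group meeting $S^1$ trivially; this is possible because an $S^1$-containing subgroup of $G_{\mathfrak{s}}$ can never be conjugate into a subgroup meeting $S^1$ trivially. Let $k_0$ be the index of the first isotropy group not containing $S^1$. Since $S^1$ acts freely on $V$ we have $(S^{V,U})^{S^1}=S^U$, so for any closed $K$ with $S^1\subseteq K$ one has $(f^{-1}(y))^K\subseteq(f^{-1}(y))^{S^1}=f^{-1}(y)\cap S^U=\emptyset$ because $y\notin f(S^U)$. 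Hence $f$ is already transverse to $Y=\{y\}$ along $Z_{k_0-1}$ and we may take $f_{k_0-1}=f$; it then remains to run the induction over the strata with $\alpha(K)\ge k_0$, and for each such $K$ the relevant fixed moduli space $X^K$ is (after the usual generic arrangement of the earlier homotopies) a smooth manifold with $\dim(X^K/N(K))=\dim D^K-\dim(H^+)^K-1$.

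Then I would split the strata with $\alpha(K)\ge k_0$ into two cases. If $K$ is a maximal element of $\{[H]\in\mathcal L:\alpha(H)\ge k_0\}$, then every isotropy group properly containing $K$ has index $<k_0$, hence contains $S^1$, hence has empty fixed moduli space; therefore $X_K=\emptyset$, so the obstructions $O_*(f_{k-1},K)$ are \emph{absolute}, living in $H^*(X^K/N(K);\pi_{*-1}(V(K)))$. Because $Y^K=\{y\}$ is a single point, $V(K)$ is a pullback of $V'(K)=Hom^s_K(A_K,A'_K)$ along $f^K_{k-1}:X^K\to Y^K$, and by the mild extension of \cite[Theorem 5.1]{pet} to central extensions of a finite group by $S^1$ these obstructions are the $f^K_{k-1}$-pullbacks of the obstructions $O_*(\alpha,K)\in H^*(Y^K;\pi_{*-1}(V'(K)))$ to constructing a section of $V'(K)$ over the point $Y^K$; as $Y^K$ is a point, the only one that can be nonzero is $O_0(\alpha,K)$, which vanishes precisely when $V'(K)\neq\emptyset$. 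Recalling $A-A'=D-H^+$ and $d(K,D_K-H^+_K)=\min_{\chi\neq 1}\big(d_\chi(D_\chi-H^+_\chi+1)-1\big)$, the condition $V'(K)\neq\emptyset$, i.e.\ $d(K,D_K-H^+_K)\ge 0$, is exactly the stated hypothesis $D_\chi\ge H^+_\chi$ for all $\chi\in\widehat K\setminus\{1\}$. If instead $\alpha(K)\ge k_0$ but $K$ is not maximal among such, then some isotropy group properly contains $K$, so $Y_K=Y^K$ and the relative obstructions $O_*(f_{k-1},K)$ are pulled back from the trivial group $H^*(Y^K/N(K),Y_K/N(K);-)$ and hence vanish for trivial reasons.

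Assembling these, under the stated hypothesis every obstruction $O_*(f_{k-1},K)$ with $\alpha(K)\ge k_0$ vanishes, the induction can be completed, and $f$ is $G_{\mathfrak{s}}$-equivariantly homotopic rel $S^U$ to a map transverse to $Y$ — which is exactly the assertion that equivariant transversality can be achieved. The main obstacle I anticipate is the passage from relative to absolute obstructions for maximal $K$: it is precisely there that the emptiness of $(f^{-1}(y))^K$ for $S^1\subseteq K$ must be used, and it is why the governing groups are the maximal elements of $\{[H]:\alpha(H)\ge k_0\}$ rather than the single top group $G_{\mathfrak{s}}$, whose absolute obstruction $O_0(\alpha,G_{\mathfrak{s}})$ would force the far stronger condition that $D_\chi\ge H^+_\chi$ for \emph{all} nontrivial characters of $G_{\mathfrak{s}}$.
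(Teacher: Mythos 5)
Your proposal is correct and follows essentially the same route as the paper: placing the $S^1$-containing isotropy groups first so that $f_{k_0-1}=f$, observing that for maximal $K$ in $\{[H]\in\mathcal L : \alpha(H)\ge k_0\}$ the set $X_K$ is empty so the relative obstructions become absolute and reduce, via the pullback of $V'(K)$ over the point $Y^K$, to the single non-emptiness condition $d(K,D_K-H^+_K)\ge 0$, with the non-maximal strata contributing nothing since $Y^K=Y_K$. Your closing remark about why the governing groups are the maximal elements rather than $G_{\mathfrak{s}}$ itself is exactly the point the paper emphasises.
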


\begin{example}\label{ex:eqtr}
Suppose that $G = \mathbb{Z}_p = \langle g \rangle$ for a prime $p$. Let $\mathbb{C}_j$ denote the $1$-dimensional complex representation of $G$ where $g$ acts as multiplication by $e^{2\pi ij/p}$. Let $d_j$ be the multiplicity of $\mathbb{C}_j$ in $D$ and let $b_j$ denote the multiplicity of $\mathbb{C}_j$ in $H^+ \otimes_{\mathbb{R}} \mathbb{C}$. The maximal subgroups of $G_{\mathfrak{s}} \cong S^1 \times G$ not containing $S^1$ are exactly the splittings $\langle ( \xi^{-j} , g ) \rangle$ of $G_{\mathfrak{s}} \to G$. The hypotheses of Theorem \ref{thm:eqtr} is that for each $j \in \mathbb{Z}_p$ and each $i \in \mathbb{Z}_p^*$, we have $d_{j-i}+d_{j+i} \ge b_i$ for $p$ odd, or $2d_j \ge b_+ - b_0$ for $p=2$.
\end{example}

\begin{remark}
If equivariant transversality can be achieved and $d(X,\mathfrak{s}) = 2d - b_+ - 1 < 0$, then the moduli space is empy and the equivariant Seiberg--Witten invariants all vanish.
\end{remark}

\begin{remark}
Theorem \ref{thm:eqtr} can be improved slightly. If the expected dimension $dim(D^K-(H^+)^K)-1$ of $X^K/S^1$ is negative, then $X^K$ is empty and there are no obstructions associated to $K$.
\end{remark}

Suppose that the condition in Theorem \ref{thm:eqtr} for equivariant transversality is satisfied, that is, $D_\chi > H^+_\chi$ for every non-trivial $\chi \in \widehat{K}$ and every $K$ which is maximal in $\{ [H] \in \mathcal{L} \; | \; \alpha(H) \ge k_0 \}$. Then there exists a $G_{\mathfrak{s}}$-equivariant homotopy $f'$ of $f$ relative $S^U$ such that $\widetilde{\mathcal{M}} = (f')^{-1}(y)$ is smooth. Hence we get a smooth moduli space $\mathcal{M} = \widetilde{\mathcal{M}}/S^1$ on which $G$ acts and a $G_{\mathfrak{s}}$-equivariant line bundle $L = \widetilde{\mathcal{M}} \times_{S^1} \mathbb{C}$. The question we now address is to what extent $(\mathcal{M} , L)$ depends the choice of homotopy $f'$ (and also the choice of regular value $y$).

\begin{theorem}\label{thm:eqtr2}
Suppose that for each maximal element $K$ of $\{ [H] \in \mathcal{L} \; | \; \alpha(H) \ge k_0 \}$ and for each $\chi \in \widehat{K} \setminus \{1\}$ we have $D_\chi \ge H^+_\chi$, then equivariant transversality can be achieved. Let $y' \in (S^{U'} \setminus S^U)^{G_\mathfrak{s}}$ and let $f'$ be a $G_\mathfrak{s}$-equivariant homotopy of $f$ for which $y'$ is a regular value of $f'$. Let $\widetilde{\mathcal{M}}' = (f')^{-1}(y)$, $\mathcal{M}' = \widetilde{\mathcal{M}}'/S^1$ and $L' = \widetilde{\mathcal{M}}' \times_{S^1} \mathbb{C}$.

Suppose that we also have $D_\chi > H^+_\chi$ whenever $\chi \in \widehat{K} \setminus \{1\}$ and $d_\chi = 1$. Then the equivariant cobordism class of $(\mathcal{M}' , L')$ does not depend on the choice of $y'$ and $f'$. More precisely, suppose that $(\mathcal{M}'' , L'')$ is the pair associated to some other choice of $y''$ and $f''$ and where $y', y''$ lie in the same connected component of $(S^{U'} \setminus S^U)^{G_\mathfrak{s}}$. Then there exists a $G$-equivariant cobordism $Z$ from $\mathcal{M}$ to $\mathcal{M}'$ and a $G_{\mathfrak{s}}$-equivariant line bundle $L_Z \to Z$ which restricts to $L'$ on $\mathcal{M}'$ and $L''$ on $\mathcal{M}''$.
\end{theorem}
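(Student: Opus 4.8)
The first assertion of the theorem is Theorem~\ref{thm:eqtr}, so it remains to produce, under the additional hypothesis, the cobordism $Z$ relating $(\mathcal{M}',L')$ and $(\mathcal{M}'',L'')$. The plan is to run the transversality argument of Section~\ref{sec:et} one dimension higher, over the base $B=[0,1]$ and relative to the two ends. First I would reduce to the case $y'=y''$: since $y'$ and $y''$ lie in the same path-component of $(S^{U'}\setminus S^U)^{G_{\mathfrak{s}}}$, choose a path $\gamma$ joining them inside this space; as $\gamma$ is disjoint from the compact set $f(S^U)=S^U\subseteq S^{U'}$, the equivariant isotopy extension theorem (for the compact group $G_{\mathfrak{s}}$, applied to the path $\gamma$ of $G_{\mathfrak{s}}$-fixed points) yields a $G_{\mathfrak{s}}$-equivariant ambient isotopy $\psi_t$ of $S^{V',U'}$ with $\psi_0=\mathrm{id}$, $\psi_t(y')=\gamma(t)$, and $\psi_t=\mathrm{id}$ on a $G_{\mathfrak{s}}$-invariant neighbourhood of $S^U$. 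Replacing $f''$ by $\psi_1^{-1}\circ f''$ --- which is $G_{\mathfrak{s}}$-equivariantly homotopic to $f''$ rel $S^U$ (via $\psi_t^{-1}\circ f''$), is transverse to $y'$, and satisfies $(\psi_1^{-1}\circ f'')^{-1}(y')=(f'')^{-1}(y'')$ with the same line bundle --- we may assume $y''=y'=:y_0$.

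Next, choose a $G_{\mathfrak{s}}$-equivariant homotopy $H\colon[0,1]\times S^{V,U}\to S^{V',U'}$ with $H_0=f'$, $H_1=f''$ and $H=f$ on $[0,1]\times S^U$; this exists because $f'$ and $f''$ are both $G_{\mathfrak{s}}$-equivariantly homotopic to $f$ rel $S^U$. Regard $H$ as an abstract $G$-monopole map over $B=[0,1]$ in the sense of Definition~\ref{def:mm}. The goal becomes to deform $H$, by a $G_{\mathfrak{s}}$-equivariant homotopy fixing $\{0,1\}\times S^{V,U}$ (where $H$ is already transverse to $y_0$) and $[0,1]\times S^U$, to a map $H'$ transverse to $y_0$. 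Granting this, set $Z=(H')^{-1}(y_0)/S^1$: the circle $S^1\subseteq G_{\mathfrak{s}}$ acts freely on $(H')^{-1}(y_0)$ because $H'=f$ on $[0,1]\times S^U$ and $y_0\notin f(S^U)$, so $Z$ is a compact smooth $G$-manifold with $\partial Z=\mathcal{M}'\sqcup\mathcal{M}''$, and $L_Z=(H')^{-1}(y_0)\times_{S^1}\mathbb{C}$ is a $G_{\mathfrak{s}}$-equivariant line bundle restricting to $L'$ and $L''$ on the two ends. The Seiberg--Witten orientation data extend over the family $[0,1]\times S^{V,U}$, making $Z$ an oriented $G$-equivariant (resp.\ $L_Z$ a $G_{\mathfrak{s}}$-equivariant) cobordism, which is what is asserted.

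It remains to deform $H$ to $H'$, a relative equivariant transversality problem handled by Petrie's obstruction theory \cite{pet} exactly as in Section~\ref{sec:et}. Order $\mathcal{L}$ so that the subgroups containing $S^1$ come first and let $k_0$ be the index of the first subgroup meeting $S^1$ trivially; as before $(H^{-1}(y_0))^K$ is empty whenever $\alpha(K)<k_0$ (the $S^1$-fixed set of $[0,1]\times S^{V,U}$ is $[0,1]\times S^U$, which $H$ maps into $f(S^U)\not\ni y_0$), so no modification is needed before stage $k_0$, and for each $K$ maximal in $\{[H]\in\mathcal{L}\mid\alpha(H)\ge k_0\}$ the obstructions become pullbacks of obstructions over the point $Y^K=\{y_0\}$ for the bundle $V'(K)$, whose fibre is a product over $\chi\in\widehat{K}\setminus\{1\}$ of Stiefel manifolds of surjections; non-maximal such $K$ contribute nothing, since there $Y^K=Y_K$. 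The new feature compared with Theorem~\ref{thm:eqtr} is that over a maximal $K$-stratum the section is now prescribed on \emph{both} ends $(\mathcal{M}')^K$ and $(\mathcal{M}'')^K$, so extending it over the cobordism stratum amounts, after the reduction, to the condition that the two endpoint values lie in the same path-component of $V'(K)$, with no further obstructions (as $Y^K$ is a point). Hence it suffices that $V'(K)$ be path-connected. By \cite[Lemma~4.6]{pet} we have $\pi_i(V'(K))=0$ for $i<d(K,D_K-H^+_K)$, so $V'(K)$ is path-connected once $d(K,D_K-H^+_K)\ge 1$; since $d(K,W)=\min_{\chi\ne 1}(d_\chi(W_\chi+1)-1)$, this is equivalent to $D_\chi\ge H^+_\chi$ for all $\chi\in\widehat{K}\setminus\{1\}$ together with $D_\chi>H^+_\chi$ whenever $d_\chi=1$ --- precisely the hypotheses of the theorem. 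Thus every obstruction vanishes and $H'$ exists. The step I expect to be the main obstacle is making the ``modified Petrie'' reduction of Section~\ref{sec:et} precise in this relative setting over $[0,1]$: one must check that the transversality data carried by $f'$ and $f''$ along each maximal $K$-stratum genuinely determine well-defined elements of $\pi_0(V'(K))$ whose agreement is the sole obstruction to extending over the interval, and that the equivariant isotopy-extension reduction to $y'=y''$ can indeed be arranged rel a neighbourhood of $S^U$.
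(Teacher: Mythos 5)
Your proposal is correct and follows essentially the same route as the paper: form the homotopy over $[0,1]$ and run the relative Petrie obstruction theory one dimension higher, so that the required vanishing of $\pi_0(V'(K))$ forces $d_\chi(D_\chi - H^+_\chi + 1) - 1 \ge 1$, which unwinds to exactly the stated strengthening ($D_\chi > H^+_\chi$ when $d_\chi = 1$). The only cosmetic difference is that you first normalise $y'' = y'$ by an equivariant ambient isotopy, whereas the paper instead keeps the moving regular value and takes $Y = \{(t, y_t)\}$ to be a $1$-dimensional arc in $[0,1] \times S^{V',U'}$ — the two devices are interchangeable and lead to the same obstruction count.
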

\begin{proof}
Since $f'$ and $f''$ are both $G_\mathfrak{s}$-equivariantly homotopic to $f$ relative $S^U$, there exists a $G_{\mathfrak{s}}$-equivariant homotopy $f_t$ relative $S^U$ from $f'$ to $f''$. Consider the map $F : [0,1] \times S^{V,U} \to [0,1] \times S^{V',U'}$ given by $F(t,x) = (t , f_t(x))$. Since $y',y''$ lie in the same connected component of $(S^{U'} \setminus S^U)^{G_\mathfrak{s}}$, we can find a smooth path $y_t$ from $y_0 = y'$ to $y_1 = y''$ taking values in $(S^{U'} \setminus S^U)^{G_\mathfrak{s}}$. Let $Y = \{ (t , y_t) \} \subset [0,1] \times S^{V',U'}$. Our goal will be to find a $G_{\mathfrak{s}}$-equivariant homotopy $F'$ of $F$ relative $A = ([0,1] \times S^U) \cup (\{0,1\} \times S^{V,U})$ such that $F'$ is transverse to $Y$. Then $Z = (F')^{-1}(Y)/S^1$ will be the desired homotopy. For the existence of such a homotopy we use exactly the same argument as for the proof of Theorem \ref{thm:eqtr}. The only difference now is that $Y$ is $1$-dimensional, so we need that $d_{\chi}( D_\chi - H^+_\chi +1) -1 \ge 1$ for each maximal element $K$ of $\{ [H] \in \mathcal{L} \; | \; \alpha(H) \ge k_0 \}$ and for each $\chi \in \widehat{K} \setminus \{1\}$. If $d_\chi > 1$ then this condition reduces to $D_\chi \ge H^+_\chi$. If $d_\chi = 1$, then the condition instead reduces to $D_\chi > H^+_\chi$.
\end{proof}

\begin{remark}\label{rem:odd}
If $G$ has odd order and $K \subseteq G$, then every $\chi \in \widehat{K} \setminus \{1\}$ has $d_\chi \neq 2$, as follows easily by considering Frobenius--Schur indicators (the map $g \mapsto g^2$ is a bijection). Hence if $G$ has odd order, then the hypotheses of Theorem \ref{thm:eqtr2} are no stronger than the hypotheses of Theorem \ref{thm:eqtr}.
\end{remark}

\subsection{Zero-dimensional case}\label{sec:zero}

If $f : S^{V,U} \to S^{V',U'}$ is a $G$-monopole map such that $D,H^+$ satisfy Theorem \ref{thm:eqtr}, then to any chamber $\phi$ we obtain a moduli space $\mathcal{M}$, which is a smooth, compact $G$-manifold of dimension $2d-b_+-1$ and a $G_{\mathfrak{s}}$-equivariant line bundle $L = \widetilde{\mathcal{M}} \times_{S^1} \mathbb{C}$ where the $S^1$-subgroup of $G_{\mathfrak{s}}$ acts on $L$ by scalar multiplication. In this section we consider the case that $\mathcal{M}$ is zero-dimensional, hence a finite $G$-set. Assume further that $G$ acts orientation preservingly on $H^+$ and fix a choice of orientation. Then $\mathcal{M}$ is an oriented finite $G$-set. Its equivariant cobordism class is an element of the Burnside ring $A(G)$ \cite{die}. The Burnside ring $A(G)$ is the Grothendieck group of the monoid whose elements are isomorphism classes of finite $G$-sets. Addition is given by disjoint union and multiplication is given by the Cartesian product. This makes $A(G)$ into a commutative ring with identity. Since every $G$-set can be decomposed into orbits, each element of $A(G)$ admits a canonical decomposition $\sum_H a_H [G/H]$ for some $a_H \in \mathbb{Z}$, where the sum is over conjugacy classes of subgroups of $G$ and $[G/H]$ denotes the isomorphism class of the homogeneous space $G/H$. Thus as an abelian group $A(G)$ is free of finite rank and has a basis $[G/H]$ where $H$ runs over the conjugacy classes of subgroups of $G$.

In addition to $\mathcal{M}$ we have also the line bundle $L \to \mathcal{M}$. This extra data defines a module over $A(G)$ which we will refer to as a Burnside module:

\begin{definition}
Let $G_{\mathfrak{s}}$ denote an $S^1$-central extension of $G$. Define the {\em ($\mathfrak{s}$-twisted) Burnside module} $\widehat{A}_{\mathfrak{s}}(G)$ to the the Grothendieck group of the monoid consisting of isomorphism classes of pairs $(M , L)$, where $M$ is a finite $G$-set and $L$ is a $G_{\mathfrak{s}}$-equivariant line bundle where the central $S^1$ subgroup acts on $L$ by scalar multiplication and the $G_{\mathfrak{s}}$-action covers the $G$-action on $M$. Addition is given by disjoint union.
\end{definition}

In the case that $G_{\mathfrak{s}} = S^1 \times G$ is the trivial extension, we can instead regard $L$ as a $G$-equivariant line bundle. The module structure $A(G) \times \widehat{A}_{\mathfrak{s}}(G) \to \widehat{A}_{\mathfrak{s}}(G)$ is defined as follows. Given a finite $G$-set $M_1$ and a pair $(M_2,L_2)$ where $M_2$ is a finite $G$-set and $L_2$ is a $G_{\mathfrak{s}}$-equivariant line bundle over $M_2$, we set $M_1 (M_2 , L_2) = (M_1 \times M_2 , L )$, where $L$ is the pullback of $L_2$ to $M_1 \times M_2$.

Given two central extensions $p : G_{\mathfrak{s}} \to G, p' : G_{\mathfrak{s}'} \to G$, we obtain a third central extension $G_{\mathfrak{s}+\mathfrak{s}'}$ by setting 
\[
G_{\mathfrak{s}+\mathfrak{s}'} = \{ (g,g') \in G_{\mathfrak{s}} \times G_{\mathfrak{s}'} \; pg = p'g' \}/\{ (u,u^{-1}) \; | \; u \in S^1\}.
\]
This operation corresponds to the addition operation on $H^2(G ; S^1) \cong H^3(G ; \mathbb{Z})$. Then we have a product $\widehat{A}_{\mathfrak{s}}(G) \times \widehat{A}_{\mathfrak{s}'}(G) \to \widehat{A}_{\mathfrak{s} + \mathfrak{s}'}(G)$ by setting $(M_1,L_1)(M_2,L_2) = (M_1 \times M_2 , L_1 \boxtimes L_2)$, where $L_1 \boxtimes L_2$ is the external tensor product. This makes $\widehat{A}(G) = \bigoplus_{\mathfrak{s}} \widehat{A}_{\mathfrak{s}}(G)$ into a ring which is graded by extension classes $\mathfrak{s} \in H^2(G ; S^1)$.

Given $(M,L)$, we can decompose $M$ into orbits $M_1, \dots , M_n$ giving a decomposition $(M,L) = (M_1,L_1) \cup \cdots \cup (M_n , L_n)$ where $L_i = L|_{M_i}$. This allows us to restrict attention to the case that $M = G/H$ for some subgroup $H \subseteq G$. Let $H_{\mathfrak{s}}$ denote the restriction of $G_{\mathfrak{s}}$ to $H$. Then $H_{\mathfrak{s}}$ acts on the fibre of $L$ over the identity coset. This defines a homomorphism $\chi : H_{\mathfrak{s}} \to S^1$ which is the identity map on the $S^1$ subgroup of $H_{\mathfrak{s}}$. Thus $\chi$ defines a splitting of the sequence $1 \to S^1 \to H_{\mathfrak{s}} \to H \to 1$. Let $H' = Ker(\chi)$. Then $H'$ is a subgroup of $G_{\mathfrak{s}}$ such that $H' \cap S^1 = \{1\}$. We can recover the pair $(M,L)$ from $H'$, namely $M = G/H$, where $H$ is the image of $H'$ in $G$ and $L$ is the line bundle associated to the circle bundle $G_{\mathfrak{s}}/H' \to G/H$. In this way, we see that the isomorphism class of $(M , L)$ is equivalent to specifying the conjugacy class of $H'$. From this it follows that $\widehat{A}_{\mathfrak{s}}(G)$ is a free abelian group with generators corresponding to conjugacy classes of subgroups of $G_{\mathfrak{s}}$ whose intersection with $S^1$ equals the identity.

\begin{example}
Let $G = \mathbb{Z}_p = \langle \sigma \rangle$ for a prime $p$. Then the only $S^1$ central extension of $G$ is the trivial one $S^1 \times \mathbb{Z}_p$. Then $\widehat{A}(G)$ is a free abelian group of rank $p+1$ with generators $x_0 , \dots , x_{p-1}, y$ defined as follows. For each $j$, $x_j = (pt , \mathbb{C}_j)$ is a point together with the $1$-dimensional representation $\mathbb{C}_j$ on which $\sigma$ acts as $\omega^j$, $\omega = e^{2\pi i/p}$ and $y$ is a free orbit. The ring structure of $\widehat{A}(G)$ is given by $x_i x_j = x_{i+j}$, $x_i y = y$, $y^2 = py$. So $\widehat{A}(G) \cong \mathbb{Z}[x,y]/( x^p-1 , xy-y , y^2-py)$, where $x = x_1$.
\end{example}

Suppose for simplicity that $G_{\mathfrak{s}} \cong S^1 \times G$ is split. Given $[(M,L)] \in \widehat{A}_{\mathfrak{s}}(G)$, we wish to describe the associated equivariant Seiberg--Witten invariants. Denote these by $SW_{(M,L)} : H^*_{G_{\mathfrak{s}}}(pt ; \mathbb{Z}) \to H^*_G(pt ; \mathbb{Z})$. By linearity, it suffices to consider the homogeneous case $M = G/H$ and $L$ is induced by a $1$-dimensional representation $\chi : H \to S^1$ of $H$. Let $c_1(\chi) \in H^2_H(pt ; \mathbb{Z})$ denote the first Chern class of this representation.

\begin{proposition}
We have
\[
SW_{ (M , L) }(x^m) = tr^G_H( c_1(\chi)^m)
\]
where $tr^G_H : H^*_H( pt ; \mathbb{Z}) \to H^*_G(pt ; \mathbb{Z})$ is the transfer map.
\end{proposition}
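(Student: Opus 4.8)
The plan is to combine the moduli-space interpretation of the invariants from Section~\ref{sec:et} with additivity of the Burnside module, reducing everything to the statement that the equivariant pushforward over $G/H$ is the transfer $tr^G_H$.

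First I would invoke the moduli-space description. By the very definition of $\widehat{A}_{\mathfrak{s}}(G)$, the pair $(M,L)$ arises as $(\mathcal{M},L)$ for some zero-dimensional $G$-monopole map $f$ satisfying the hypotheses of Theorem~\ref{thm:eqtr}, with $L = \widetilde{\mathcal{M}}\times_{S^1}\mathbb{C}$ and, via the chosen splitting $G_{\mathfrak{s}}\cong S^1\times G$, regarded as a $G$-equivariant line bundle over $\mathcal{M}$. The $G$-equivariant extension of \cite[Theorem~3.6]{bk} then gives a geometric formula for $SW^\phi_{G,f}$: concretely, in the zero-dimensional case the class $\eta^\phi$ of Definition~\ref{def:swc} has degree $2a-2 = \operatorname{codim}_{\mathbb P(V)}\mathcal{M}$, so (with the orientation conventions fixed in Section~\ref{sec:ci}, for which the sign factor $(-1)^b$ in \eqref{equ:eta} is designed) one has $\eta^\phi = \iota_*(1)$ for the inclusion $\iota\colon \mathcal{M}\hookrightarrow \mathbb P(V)$. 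Moreover $\pi^*(x)$ is the equivariant first Chern class of the line bundle $S(V)\times_{S^1}\mathbb{C}_1$ on $\mathbb P(V)$, whose restriction to $\mathcal{M}$ is exactly $L$. The projection formula then yields
\[
SW_{(M,L)}(x^m) = \pi_*\big(\eta^\phi\,\pi^*(x)^m\big) = \pi_*\iota_*\big(\iota^*\pi^*(x)^m\big) = p_*\big(c_1(L)^m\big),
\]
where $p = \pi\circ\iota\colon M\to pt$ is the $G$-equivariant collapse map and $p_*\colon H^*_G(M;\mathbb{Z})\to H^*_G(pt;\mathbb{Z})$ is the equivariant pushforward (which shifts degree by $-\dim M = 0$).

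Next, since $\widehat{A}_{\mathfrak{s}}(G)$ is additive under disjoint union, since $SW_{(M,L)}$ is additive under disjoint union of moduli spaces, and since the transfer is additive, it suffices to treat the homogeneous case $M = G/H$, $L = G\times_H \mathbb{C}_\chi$. Here I would use the standard identification $H^*_G(G/H;\mathbb{Z})\cong H^*_H(pt;\mathbb{Z})$ coming from the homotopy equivalence $EG\times_G(G/H)\simeq BH$. Under this identification the pushforward $p_*$ is by definition the transfer $tr^G_H\colon H^*_H(pt;\mathbb{Z})\to H^*_G(pt;\mathbb{Z})$ associated to the finite covering $BH\to BG$, while the line bundle $L$ pulls back to $EG\times_H\mathbb{C}_\chi$ over $BH$, whose equivariant first Chern class is $c_1(\chi)\in H^2_H(pt;\mathbb{Z})$ by definition. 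Substituting into the displayed formula gives $SW_{(M,L)}(x^m) = p_*(c_1(L)^m) = tr^G_H(c_1(\chi)^m)$, as claimed.

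The only genuine work is in the first step: checking that the abstract formula $SW^\phi_{G,f}(\theta)=\pi_*(\eta^\phi\pi^*(\theta))$ really does reduce, after achieving transversality, to integration of $c_1(L)^m$ over $\mathcal{M}$ — in particular that no spurious sign $(-1)^m$ appears and that $\eta^\phi = +\iota_*(1)$ rather than $-\iota_*(1)$. This is exactly the bookkeeping of Thom classes, of the orientations of $U,U',H^+$ and of the moduli space carried out in \cite[Section~3]{bk}, transported verbatim to the equivariant setting; I expect keeping these conventions consistent to be the main (though routine, once the conventions of Section~\ref{sec:ci} are respected) obstacle. Everything after that is formal manipulation of transfers.
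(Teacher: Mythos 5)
Your proposal is correct and follows essentially the same route as the paper: the paper's proof likewise reduces to the homogeneous case, asserts $SW_{(M,L)}(x^m)=p_*(c_1(L)^m)$ directly from the moduli-space description of the invariant, and then identifies $p_*$ over $G/H$ with the transfer $tr^G_H$ under $H^*_G(G/H;\mathbb{Z})\cong H^*_H(pt;\mathbb{Z})$. The only difference is that you spell out the step $\eta^\phi=\iota_*(1)$ and the projection-formula computation, which the paper takes for granted as the (equivariant extension of the) moduli-space formula.
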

\begin{proof}
From the definition of the Seiberg--Witten invariants we have $SW_{(M,L)}(x^m) = p_*( c_1(L)^m )$, where $c_1(L) \in H^2_G( M ; \mathbb{Z})$ is the equivariant Chern class of $L$ and $p_* : H^*_G(M ; \mathbb{Z}) \to H^*_G(pt ; \mathbb{Z})$ is the push-forward map. Since $M = G/H$, we have $H^2_G( M ; \mathbb{Z}) \cong H^2_H( pt ; \mathbb{Z})$ and under this isomorphism $c_1(L)$ corresponds to $c_1(\chi) \in H^2_H(pt ; \mathbb{Z})$. Under the isomorphism $H^*_G( M ; \mathbb{Z}) \cong H^*_H(pt ; \mathbb{Z})$, the push-forward $p_* : H^*_H(pt ; \mathbb{Z}) \to H^*_G( pt : \mathbb{Z})$ is the transfer map $tr^G_H$. Hence we have $SW_{ (M , L) }(x^m) = tr^G_H( c_1(\chi)^m)$.
\end{proof}

\subsection{Free actions}\label{sec:trfree}

In this section we specialise to the case of free actions. We will see that the only obstruction to equivariant transversality is that the dimension of the moduli space be non-negative.

\begin{proposition}\label{prop:free}
Let $X$ be a compact, oriented, smooth $4$-manifold with $b_1(X) = 0$. Let $G$ be a finite group which acts smoothly, orientation preservingly and freely on $X$. Let $\mathfrak{s}$ be a spin$^c$-structure whose isomorphism class is fixed by $G$. If $d(X,\mathfrak{s}) \ge 0$ then equivariant transversality of the Seiberg--Witten moduli space can be achieved for $(X , \mathfrak{s})$ with respect to any chamber. More precisely, the conditions of Theorem \ref{thm:eqtr} are satisfied.
\end{proposition}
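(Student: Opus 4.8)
The plan is to verify the hypothesis of Theorem \ref{thm:eqtr} directly, using that the action is free to get strong control on the poset $\mathcal{L}$. Since $G$ acts freely on $X$, the only subgroup of $G$ occurring as a stabiliser of a point of $X$ is the trivial group. After finite-dimensional approximation the monopole map $f : S^{V,U} \to S^{V',U'}$ is built from $G$-equivariant function spaces on $X$ (spinors, self-dual forms), so the $G_{\mathfrak{s}}$-action on $V \oplus U$ and $V' \oplus U'$ away from the fibre at infinity has stabilisers contained in $S^1$ only; concretely, the subgroups in $\mathcal{L}$ that intersect $S^1$ trivially are precisely the images $\psi(G)$ of splittings $\psi : G \to G_{\mathfrak{s}}$ (and their subgroups). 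Hence $k_0$ is the index of the first such splitting in the chosen ordering, and the maximal elements $K$ of $\{[H] \in \mathcal{L} \mid \alpha(H) \ge k_0\}$ are exactly the conjugacy classes of images of splittings $\psi : G \to G_{\mathfrak{s}}$, each of which is isomorphic to $G$ itself.

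The next step is to compute the relevant weight spaces for such a $K \cong G$. Because the $G$-action on $X$ is free, the equivariant index $D = \mathrm{ind}(D_A)$ is, as a virtual representation of $G$ (via the splitting $\psi$), a multiple of the regular representation up to a correction coming from its virtual rank: by the Atiyah--Singer $G$-index theorem (or equivalently, by passing to the quotient $X/G$ and using that $D$ is induced), one gets $D \cong d\,\mathbb{R}[G]_{\mathbb{C}} \oplus (\text{something of virtual dimension } 0 \text{ supported on the trivial rep})$ — more precisely, for any non-trivial real irreducible $\chi$ of $K$ the multiplicity $D_\chi$ equals $d_\chi^{-1}\dim_{\mathbb{R}}\chi \cdot (\text{rank contribution})$, and in all cases $D_\chi = d \cdot (\dim_{\mathbb{C}}\chi / d_\chi)$-type expression, which is $\ge 0$ as soon as $d = d(X,\mathfrak{s})$-related quantity is $\ge 0$. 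The cleanest way to phrase this: since the action is free, $D$ is the pushforward to a point of an index on $X/G$, so $D$ as a virtual $G$-representation lies in the image of the map $R(G) \to R(G)$ given by $-\otimes \mathbb{C}[G]$ twisted appropriately; consequently every non-trivial weight space of $D$ has multiplicity equal to a fixed non-negative integer governed by the complex virtual rank of $D$ over $X/G$, which is non-negative precisely when $d(X,\mathfrak{s}) \ge 0$. Meanwhile $H^+ = H^+(X)$ is a genuine (not virtual) representation, but the key point is that for a free action its non-trivial weight multiplicities $H^+_\chi$ are bounded by those of $D$ under the dimension hypothesis $d(X,\mathfrak{s}) = 2d - b_+ - 1 \ge 0$; I would make this quantitative by comparing $H^+$ to a piece of the induced cohomology of $X/G$ and using $b_+ \le 2d - 1$.

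Having established that for every splitting image $K$ and every $\chi \in \widehat{K}\setminus\{1\}$ one has $D_\chi \ge H^+_\chi$, the conclusion follows immediately from Theorem \ref{thm:eqtr}: equivariant transversality can be achieved with respect to any chamber (the chamber played no role in the weight-space computation, only in the choice of regular value $y$). I expect the main obstacle to be the index computation — pinning down exactly which multiple of the regular representation $D$ is and proving the bound $H^+_\chi \le D_\chi$ rather than just $D_\chi \ge 0$; this requires genuinely using freeness to relate both $D$ and $H^+$ to data on the quotient orbifold (here honest manifold) $X/G$ and then invoking $d(X,\mathfrak{s}) \ge 0$. Everything else — identifying $\mathcal{L}$, locating $k_0$, reducing to maximal $K \cong G$ — is formal once freeness is used to kill all stabilisers in $X$.
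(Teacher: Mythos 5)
Your overall strategy is the same as the paper's: use freeness to force both $D$ and $H^+(X)$ to be multiples of the regular representation on their non-trivial isotypic parts, and then read off $D_\chi - H^+_\chi \ge 0$ from $d(X,\mathfrak{s}) \ge 0$. But the step you yourself flag as ``the main obstacle'' --- proving $H^+_\chi \le D_\chi$ rather than just $D_\chi \ge 0$ --- is exactly the hypothesis of Theorem \ref{thm:eqtr} that has to be verified, and your proposal does not actually verify it: ``I would make this quantitative by comparing $H^+$ to a piece of the induced cohomology of $X/G$ and using $b_+ \le 2d-1$'' is a declaration of intent, and the inequality $b_+ \le 2d - 1$ is just a restatement of $d(X,\mathfrak{s}) \ge 0$ applied to total dimensions, which says nothing about individual weight spaces. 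The missing ingredient is an \emph{exact} computation of $H^+_\chi$ for non-trivial $\chi$. The paper gets it from the identity $2(H^+(X) \oplus \mathbb{R}) \cong W \oplus S$ with $W = H^0 \oplus H^2 \oplus H^4$ and $S = H^+ - H^-$: the Lefschetz fixed point theorem kills the character of $W$ away from the identity, the $G$-signature theorem kills the character of $S$ away from the identity, and since real representations are determined by their characters this forces $H^+_\chi = m_\chi (b_+ + 1)/|K|$ for every non-trivial real irreducible $\chi$ of every subgroup $K$. Combined with $D_\chi = 2 m_\chi d /|K|$ (equivariant index theorem for the Dirac operator, whose character vanishes on $G_{\mathfrak{s}} \setminus S^1$), this gives $D_\chi - H^+_\chi = m_\chi\, d(X,\mathfrak{s})/|K| \ge 0$. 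Without the signature-theorem input you cannot separate $H^+$ from $H^-$ inside $H^2$, and the bound you want simply does not follow from the Lefschetz theorem alone.

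Two smaller points. First, your description of $D$ as ``$d\,\mathbb{R}[G]_{\mathbb{C}}$ plus a virtual-rank-zero correction supported on the trivial representation'' is not right: the equivariant index theorem for a free action gives that the character of $D$ vanishes identically off $S^1$, so $D$ restricted to any splitting is \emph{exactly} a multiple of the regular representation, with no correction; relatedly, $D_\chi \ge 0$ holds whenever $d \ge 0$, not ``precisely when $d(X,\mathfrak{s}) \ge 0$'' as you write. Second, your identification of the maximal elements of $\{[H] \in \mathcal{L} \mid \alpha(H) \ge k_0\}$ as images of splittings of the full group $G$ presumes such splittings exist; if $G_{\mathfrak{s}}$ does not split over $G$ the maximal elements are splittings of proper subgroups. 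This is harmless for the argument --- the paper sidesteps it by verifying $D_\chi \ge H^+_\chi$ for \emph{every} subgroup $K$ and every splitting --- but as stated your reduction is not quite correct.
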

\begin{proof}
Consider $W = H^0(X ; \mathbb{R}) \oplus H^2(X ; \mathbb{R}) \oplus H^4(X \; \mathbb{R})$ as a representation of $G$. The Lefschetz index theorem gives $\chi_{\mathbb{R}}( W , g ) = 0$ for every $g \in G \setminus \{1\}$. Since the character of a real representation determines its isomorphism class \cite[Theorem 9.22]{isa}, it follows that $W$ is isomorphic to several copies of the regular representation. Thus for any finite subgroup $K \subseteq G$, and any $\chi \in \widehat{K}$, the multiplicity of $\chi$ in $W$ equals $m_\chi dim(W)/|K|$, where $m_\chi$ is the multiplicity of $\chi$ in the regular representation. Similarly if $S = H^+(X) - H^-(X)$ is the signature virtual representation of $G$, then the $G$-signature theorem gives $\chi_{\mathbb{R}}(S , g) = 0$ for any $g \in G \setminus \{1\}$ and hence for any $\chi \in \widehat{K}$, the multiplicity of $\chi$ in $S$ is $m_\chi \sigma(X)/|K|$. Since $2(H^+(X) + \mathbb{R}) = W + S$, it follows that for any $\chi \in \widehat{K} \setminus \{1\}$, the multiplicity of $\chi$ in $H^+(X)$ is $m_\chi (b_+(X)+1)/|K|$. Similarly, the $G_{\mathfrak{s}}$-equivariant Lefschetz index theorem for the spin$^c$ Dirac operator implies that $\chi_{\mathbb{C}}( D , g ) = 0$ for every $g \in G_{\mathfrak{s}} \setminus S^1$. Hence also $\chi_{\mathbb{R}}( D , g ) = 0$. If $s : K \to K_{\mathfrak{s}}$ is a splitting and we set $K' = s(K)$, then it follows that $D$ restricted to $K'$ is several copies of the regular representation. Hence for any $\chi \in \widehat{K}$, the multiplicity of $\chi$ in $D$ equals $m_\chi 2d/|K|$.

If $\chi \in \widehat{K} \setminus \{1\}$ then from what we have shown above, it follows that
\[
D_\chi - H^+_\chi = m_{\chi}( 2d - b_+- 1)/|K|  = m_{\chi} d(X,\mathfrak{s})/|K| \ge 0.
\]
Hence the conditions of Theorem \ref{thm:eqtr} are satisfied.
\end{proof}

Let $X$ be as in Proposition \ref{prop:free}. Let $H \subseteq G$ be a subgroup of $G$. The set of splittings $H \to H_{\mathfrak{s}}$ is in bijection with the set of isomorphism classes of spin$^c$-structures $\mathfrak{s}'$ on $X/H$ such that $q_H^*(\mathfrak{s}') \cong \mathfrak{s}$, where $q_H : X \to X/H$ is the projection. Indeed, given a splitting $H \to H_{\mathfrak{s}}$, we get an action of $H$ on the spinor bundles of $\mathfrak{s}$ which we can use to descend $\mathfrak{s}$ to a spin$^c$-structure on $X/H$ and conversely every spin$^c$-structure $\mathfrak{s}'$ on $X/H$ with $q_H^*(\mathfrak{s}') \cong \mathfrak{s}$ arises in this manner. Given a splitting $s : H \to H_{\mathfrak{s}}$, we let $\mathfrak{s}_s$ denote the corresponding spin$^c$-structure on $X/H$.

Assume now that $d(X,\mathfrak{s}) = 0$ and $b_+(X)^G > 0$. If $b_+(X)^G = b_+(X/G) = 1$, then fix a chamber $\phi$. Then we get a smooth moduli space $\mathcal{M}$, which is a finite, $G$-set. The action of $G$ on $H^+(X)$ is automatically orientation preserving. This is because for any non-trivial $g \in G$, the restiction of $G_{\mathfrak{s}}$ to the cyclic group $K = \langle g \rangle$ splits. Then for any $\chi \in \widehat{K} \setminus \{1\}$, the proof of Proposition \ref{prop:free} gives $H^+_\chi = D_\chi = 2 m_\chi (d/|K|)$, which is even because $|K|$ divides $d$ (since $D$ is isomorphic to several copies of the regular representation as a complex representation of $K$). Therefore, $g$ acts orientation preservingly on $H^+(X)$. A choice of orientation on $H^+(X)$ induces an orientation on $\mathcal{M}$, and $SW(X , \mathfrak{s})$ is equal to the number of points of $\mathcal{M}$, counted with orientation.

Let $\widetilde{\mathcal{M}} \to \mathcal{M}$ be the $S^1$-bundle over $\mathcal{M}$ corresponding to the line bundle $L$. Let $H$ be a subgroup of $G$ and $s : H \to H_{\mathfrak{s}}$ a splitting. Then it is clear that solutions to the Seiberg--Witten equations for $(X/H , \mathfrak{s}_s)$ (suitably perturbed) correspond to $sH$-invariant solutions on $X$, hence we can identify the moduli space of the Seiberg--Witten equations for $(X/H , \mathfrak{s})$ with $\mathcal{M}_s = \widetilde{\mathcal{M}}^{sH}/S^1$. In particular, it follows that $\mathcal{M}^H = \bigcup_s \mathcal{M}_s$, where the union is over splittings $s : H \to H_{\mathfrak{s}}$.

For a finite oriented set $M$ we write $|M| \in \mathbb{Z}$ for the number of points of $M$ counted with sign. Then it follows that
\[
| \mathcal{M}^H | = \sum_{s} | \mathcal{M}_s |,
\]
where $\mathcal{M}^H$ and $\mathcal{M}_s$ are oriented as subsets of $\mathcal{M}$ with the induced orientation. We would like to equate $|\mathcal{M}_s|$ with the Seiberg--Witten invariant $SW(X/H , \mathfrak{s}_s)$, however there are two subtleties that prevent us from immediately doing this. First, in order for $SW(X/H , \mathfrak{s}_s)$ to be well-defined we need to choose an orientation on $H^+(X/H) = H^+(X)^H$. Second, it is not immediately clear whether the two orientations on $\mathcal{M}_s$ (one as a subset of $\mathcal{M}$ and the other as a Seiberg--Witten moduli space on $X/H$) agree. To better understand this, we consider the problem in a more general context.

If $V_1, V_2$ are oriented vector spaces, then $V_1 \oplus V_2$ is oriented in the obvious way. Note that the order of the summands is important. Let $A,B$ be compact, oriented, smooth manifolds on which $G_{\mathfrak{s}}$ acts smoothly and orientation preservingly. Let $f : A \to B$ be a $G_{\mathfrak{s}}$-equivariant map. Let $y \in B^{G_{\mathfrak{s}}}$ be a regular value, assume that $S^1$ acts freely on $\widetilde{M} = f^{-1}(y)$ and set $M = \widetilde{M}/S^1$. We orient $\widetilde{M}$ according to the convention that
\[
TA|_{\widetilde{M}} \cong T_y B \oplus T\widetilde{M}
\]
and this induces an orientation on $M$ according to
\[
T\widetilde{M} = Ker(q_*) \oplus q^*(TM),
\]
where $q : \widetilde{M} \to M$ is the projection map, and the vertical tangent bundle $Ker(q^*) \cong Lie(S^1) \cong \mathbb{R}$ is oriented by choosing a generator of $Lie(S^1)$. Now let $H$ be a subgroup of $G$ and $s : H \to H_{\mathfrak{s}}$ a splitting. Set $H' = sH$. Restricting to the $H'$ fixed points we have an $S^1$-equivariant map
\[
f^{H'} : A^{H'} \to B^{H'}.
\]
Then $y$ is a regular value of $f^{H'}$ and $\widetilde{M}^{H'} = (f^{H'})^{-1}(y)$. Set $M^H = \widetilde{M}^{H'}/S^1$. Choose orientations on $B^{H'}$ and $A^{H'}$ (in the case of interest, this will correspond to choosing an orientation for $H^+(X/H)$). Then we get orientations on $\widetilde{M}^{H'}$ and $M^H$ according to 
\[
TA^{H'}|_{\widetilde{M}^{H'}} \cong T_y B^{H'} \oplus T\widetilde{M}^{H'}
\]
and
\[
T\widetilde{M}^{H'} = Ker(q^{H'}_*) \oplus (q^{H'})^*(TM^{H'})
\]
where $q^{H'} : \widetilde{M}^{H'} \to M^{H'}$ is the projection. Now suppose that $dim(M^{H'}) = dim(M)$. Then $M^{H'} \to M$ has codimension zero and hence inherits an orientation. So now we have two ways of orienting $M^{H'}$ and we need to compare these orientations. It suffices to compare the two corresponding orientations on $\widetilde{M}^{H'}$. Let us write $(T\widetilde{M}^{H'} , \mathfrak{o}_1)$ and $(T\widetilde{M}^{H'} , \mathfrak{o}_2)$ for $T\widetilde{M}^{H'}$ equipped with these two choices of orientation. Let $\nu(A , A^{H'})$ denote the normal bundle of $A^{H'}$ in $A$. The chosen orientations on $A$ and $A^{H'}$ induce an orientation on $\nu(A , A^{H'})$. Similarly, the orientations on $B$ and $B^{H'}$ induce an orientation on $(T_y B)_{H'}$. We have the following equalities:
\begin{align*}
TA^{H'}|_{\widetilde{M}^{H'}} &\cong T_y B^{H'} \oplus (T\widetilde{M}^{H'} , \mathfrak{o}_1) \\
TA|_{\widetilde{M}^{H'}} &\cong T_y B \oplus (T\widetilde{M}^{H'} , \mathfrak{o}_2) \\
TA|_{\widetilde{M}^{H'}} &\cong \nu(A , A^{H'})|_{\widetilde{M}^{H'}} \oplus TA^{H'}|_{\widetilde{M}^{H'}} \\
T_y B &\cong (T_y B)_{H'} \oplus T_y B^{H'}.
\end{align*}
Combined, these give
\begin{equation}\label{equ:tgt}
(T_y B)_{H'} \oplus T_y B^{H'} \oplus (T\widetilde{M}^{H'} , \mathfrak{o}_2) \cong \nu(A,A^{H'}) \oplus T_y B^{H'} \oplus (T\widetilde{M}^{H'} , \mathfrak{o}_1).
\end{equation}

Since $y$ is a regular value of $f$, it follows that the derivative of $f$ yields an isomorphism $\varphi : \nu( A , A^{H'} )|_{\widetilde{M}^{H'}} \to (T_y B)_{H'}$. Since $\nu(A,A^{H'})$ and $(T_y B)_{H'}$ are oriented, for each $m \in \widetilde{M}^{H'}$ we can define $sign(det(\varphi))(m)$ to be $+1$ or $-1$ according to whether $\varphi$ is orientation preserving or reversing at $m$. Note that $sign(det(\varphi))(m)$ only depends on the image of $m$ in $M^{H'}$. Then it follows from (\ref{equ:tgt}) that the orientations $\mathfrak{o}_1$ and $\mathfrak{o}_2$ differ at $m$ by $sign(det(\varphi))(m)$.

Note that if we reverse the chosen orientation on $A^{H'}$ or $B^{H'}$ then $sign(det(\varphi))$ changes by an overall sign. If $sign(det(\varphi))$ is independent of $m$, then we can choose the orientations on $A^{H'}$ and $B^{H'}$ so that the two orientation on $M^{H'}$ agree. Assume now that $A = S^{V,U}$ where as usual $V$ is a complex representaiton of $G_{\mathfrak{s}}$ and $U$ is a real representation of $G$. Then $A^{H'} = S^{V^{H'} , U^{H'}}$ and hence $\nu(A , A^{H'}) \cong V_{H'} \oplus U_{H'}$. This means that for each $m \in M^{H'}$, $\varphi_m$ can be interpreted as a map $\varphi_m : R \to S$, where $R = V_{H'} \oplus U_{H'}$, $S = (T_y B)_{H'}$. Observe that $R,S$ are representations of $H' \cong H$ and that $\varphi$ is $H$-equivariant. Furthermore, $R,S$ contain no copies of the trivial representation.

\begin{lemma}\label{lem:or}
Let $G$ be a finite group with the property that every non-trivial real irreducible representation $\chi$ has $d_\chi \neq 1$. Then
\begin{itemize}
\item[(1)]{Every real representation $R$ of $G$ with $R^G = 0$ admits a $G$-invariant complex structure.}
\item[(2)]{Let $V$ be a complex representation of $G$ with $V^G = 0$. Then any isomorphism $\varphi : V \to V$ of $V$ as a real representation (forgetting the complex structure) has $det(\varphi)>0$.}
\end{itemize}
\end{lemma}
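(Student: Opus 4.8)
The plan is to reduce both statements to the Wedderburn structure of the real group algebra $\mathbb{R}[G]$, i.e.\ to Schur's lemma over $\mathbb{R}$. Given a real $G$-representation, I would first average an inner product and decompose it as an orthogonal direct sum of isotypic components, indexed by the isomorphism classes of real irreducibles occurring in it. Since $R^G=0$ (respectively $(V_{\mathbb{R}})^G=(V^G)_{\mathbb{R}}=0$), only \emph{non-trivial} irreducibles $\chi$ appear, and the hypothesis $d_\chi\neq 1$ says exactly that $\operatorname{End}_G(\chi)$ is $\mathbb{C}$ (when $d_\chi=2$) or $\mathbb{H}$ (when $d_\chi=4$), but never $\mathbb{R}$. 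Both parts then follow from elementary facts about these two division algebras.

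For (1): for each non-trivial real irreducible $\chi$ occurring in $R$, the division algebra $D_\chi=\operatorname{End}_G(\chi)$, being $\mathbb{C}$ or $\mathbb{H}$, contains an element $J_\chi$ with $J_\chi^2=-\mathrm{id}$. By definition of $\operatorname{End}_G(\chi)$ this $J_\chi$ commutes with the $G$-action, so it is a $G$-invariant complex structure on $\chi$, and applying it on each summand it gives a $G$-invariant complex structure on the whole $\chi$-isotypic component $R_\chi\cong\chi^{\oplus m_\chi}$. Taking the direct sum over all $\chi$ equips $R=\bigoplus_\chi R_\chi$ with a $G$-invariant complex structure.

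For (2): I would first note $(V_{\mathbb{R}})^G=(V^G)_{\mathbb{R}}=0$, so $V_{\mathbb{R}}$ has no trivial summand, whence as above $\operatorname{End}_G(V_{\mathbb{R}})\cong\prod_i M_{n_i}(D_i)$ with each $D_i\in\{\mathbb{C},\mathbb{H}\}$. Therefore the group of $G$-equivariant real-linear automorphisms of $V$ is $\operatorname{Aut}_G(V_{\mathbb{R}})\cong\prod_i \mathrm{GL}_{n_i}(D_i)$. Each factor $\mathrm{GL}_n(\mathbb{C})$ and $\mathrm{GL}_n(\mathbb{H})$ is connected (by Gram--Schmidt each retracts onto its maximal compact subgroup $U(n)$, respectively $Sp(n)$, which is connected), so $\operatorname{Aut}_G(V_{\mathbb{R}})$ is connected. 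The real determinant $\det_{\mathbb{R}}\colon \operatorname{Aut}_G(V_{\mathbb{R}})\to\mathbb{R}^\times$ is continuous and takes the value $1$ at the identity, hence is everywhere positive; in particular $\det_{\mathbb{R}}(\varphi)>0$.

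The argument is essentially bookkeeping once the Wedderburn decomposition is in place, and no genuine obstacle is expected. The one point needing a moment's care is the quaternionic case in (2): one either invokes connectedness of $\mathrm{GL}_n(\mathbb{H})$ as above, or identifies an $n$-dimensional $\mathbb{H}$-module with a $2n$-dimensional complex vector space on which $\mathbb{H}$-linear maps act $\mathbb{C}$-linearly, so that its real determinant equals $|\det_{\mathbb{C}}|^2>0$; the complex case likewise gives $\det_{\mathbb{R}}=|\det_{\mathbb{C}}|^2>0$, so the positivity holds factor by factor either way.
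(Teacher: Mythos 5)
Your proof is correct and follows essentially the same route as the paper: isotypic decomposition, Schur's lemma over $\mathbb{R}$ forcing the endomorphism algebras to be $\mathbb{C}$ or $\mathbb{H}$, an element squaring to $-\mathrm{id}$ for part (1), and connectedness of $\mathrm{GL}_n(\mathbb{C})$ and $\mathrm{GL}_n(\mathbb{H})$ for part (2). If anything, your use of the \emph{real} isotypic decomposition in (2) is marginally more careful than the paper's reduction to $V\cong W^{\oplus n}$ for a complex irreducible $W$, since a real $G$-equivariant automorphism is only guaranteed to preserve the real isotypic components (it may mix $W$ with $\overline{W}$); the conclusion is unaffected.
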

\begin{proof}
(1) decompose $R$ into a sum of irreducible representations. Each summand $R_i$ is non-trivial since $R^G = 0$. Hence by the assumption on $G$ we have $End_{G,\mathbb{R}}(R_i) \cong \mathbb{C}$ or $\mathbb{H}$. In either case, $R_i$ admits a $G$-invariant complex structure.
(2) decomposing $V$ into isotypical components, it suffices to consider the case $V \cong W^{\oplus n}$ for some non-trivial irreducible complex representation $W$. By the assumption on $G$, the underlying real representation of $W$ is irreducible and $End_{G,\mathbb{R}}(V) \cong M_n(\mathbb{C})$, $M_n(\mathbb{H})$. Hence the space of real isomorphisms is isomorphic to $GL(n,\mathbb{C})$ or $GL(n,\mathbb{H})$. In either case, the group is connected and hence $det(\varphi) > 0$.
\end{proof}

Assume now that $M^{H'}$ is non-empty and that $H$ satisfies the hypothesis of Lemma \ref{lem:or}. Then $R$ and $S$ are isomorhpic as real representations of $H$ since $\varphi_m: R \to S$ for any $m \in M^{H'}$ is an isomorphism. By Lemma \ref{lem:or} (1), we can choose $H$-invariant complex structures on $R$ and $S$ so that $R$ and $S$ are isomorphic as complex representations (choose a complex structure on $S$ and then define the complex structure on $R$ to be the pullback under $\varphi_m$ for some $m$). The complex structures on $R,S$ define orientations and then by Lemma \ref{lem:or} (2), we have that $sign(det(\varphi))(m) = 1$ for every $m \in M^{H'}$.

Translating back to the case that $f : S^{V,U} \to S^{V',U'}$ is a $G$-equivariant monopole map. We have proven the following:

\begin{proposition}\label{prop:compareorn}
Let $f : S^{V,U} \to S^{V',U'}$ be a $G$-equivariant monopole map. Let $y \in (S^{U'} \setminus S^U)^{G_\mathfrak{s}}$ be a regular value and set $\mathcal{M} = f^{-1}(y)/S^1$. Let $H$ be a subgroup of $G$ and $s : H \to G_{\mathfrak{s}}$ a splitting. Assume that $\mathcal{M}^{H'}$ is non-empty and that $dim(\mathcal{M}) = dim( \mathcal{M}^{H'})$.

Assume that $G$ acts orientation preservingly on $H^+$ and fix a choice of orientation. Suppose that $d_\chi \neq 1$ for every non-trivial real irreducible representation $\chi$ of $H$. Then there is an orientation on $H^+_H$ such that the two orientations on $\mathcal{M}^{H'}$ coincide. The orientation on $H^+_H$ is characterised by the property that it is induced by a $H$-invariant complex structure for which $H^+_H \cong D_H$ as complex representations (here we use the splitting $s : H \to H_{\mathfrak{s}}$ to regard $D$ as a representation of $H$).
\end{proposition}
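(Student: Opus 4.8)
The plan is to specialise the discussion immediately preceding the statement to the case $A = S^{V,U}$, $B = S^{V',U'}$ with $y$ the given regular value, and to extract from it the precise orientation on $H^+_H$.

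First I would record the normal data at a point $m \in \widetilde{\mathcal{M}}^{H'}$, where $\widetilde{\mathcal{M}} = f^{-1}(y)$ and $H' = s(H)$. The hypothesis $\dim(\mathcal{M}) = \dim(\mathcal{M}^{H'})$ forces the normal part of $df_m$ to be an $H$-equivariant isomorphism $\varphi_m \colon R \to S$ with $R = V_{H'} \oplus U_{H'}$ and $S = (T_y S^{V',U'})_{H'} \cong V'_{H'} \oplus U'_{H'}$, and, by the computation leading to \eqref{equ:tgt}, the two orientations on $\mathcal{M}^{H'}$ differ at $m$ by $\mathrm{sign}(\det \varphi_m)$, computed with the orientations that the fixed orientations of $S^{V,U}$, $S^{V',U'}$ and the chosen orientations of $(S^{V,U})^{H'}$, $(S^{V',U'})^{H'}$ induce on $R$ and $S$. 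Since $V, V'$ are complex, choosing those orientations on the fixed-point spheres amounts — via the ($H$-equivariant) splitting $(U')^{H'} \cong U^H \oplus (H^+)^H$ — to choosing orientations on $U^H$ and on $(H^+)^H$; the first will turn out to be immaterial, and the second is, given the fixed orientation on $H^+$, the same datum as an orientation on $H^+_H$.

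Next I would run the argument already sketched in the text. From $U' \cong U \oplus H^+$ one has $S \cong V'_{H'} \oplus U_{H'} \oplus H^+_H$, and cancelling the common summand $U_{H'}$ from the real $H$-isomorphism $\varphi_m$ (cancellation holds in $RO(H)$) gives $V_{H'} \cong V'_{H'} \oplus H^+_H$, i.e.\ $H^+_H \cong D_H$, where $D = V - V'$ is regarded as an $H$-representation via $s$ and its $H$-fixed part discarded. Both $R$ and $S$ have trivial $H$-fixed part, and by hypothesis no non-trivial real irreducible $H$-representation $\chi$ has $d_\chi = 1$; so Lemma \ref{lem:or}(1) provides $H$-invariant complex structures on $S$ and, by pull-back along $\varphi_m$, on $R$, with respect to which $\varphi_m$ is complex-linear. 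Applying Lemma \ref{lem:or}(2) to the $H$-automorphism $\varphi_{m'} \varphi_m^{-1}$ of $S$ then shows $\mathrm{sign}(\det \varphi_{m'}) = +1$ for every $m' \in \widetilde{\mathcal{M}}^{H'}$ with respect to these complex orientations — in particular $\mathrm{sign}(\det\varphi_m)$ is independent of $m$ — and a further use of (2) shows this conclusion is independent of the complex structure chosen (and of an auxiliary orientation of $U_{H'}$). Finally, reversing the orientation of $(H^+)^H$ reverses the orientation induced on $S$ (leaving that on $R$ alone), whereas reversing the orientation of $U^H$ reverses the orientations induced on both $R$ and $S$ and hence does not change $\mathrm{sign}(\det \varphi_m)$; so there is a unique orientation on $(H^+)^H$, equivalently on $H^+_H$, for which $\mathrm{sign}(\det\varphi_m) \equiv +1$ and the two orientations on $\mathcal{M}^{H'}$ agree. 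Chasing this through $V_{H'} \cong V'_{H'} \oplus H^+_H$ identifies it with the orientation induced by an $H$-invariant complex structure for which $H^+_H \cong D_H$ as complex representations, which is the asserted characterisation.

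The step I expect to be the main obstacle is the orientation bookkeeping underlying the last two paragraphs: one has to carry every transposition sign through the decompositions of $TS^{V,U}$, $TS^{V',U'}$, of $U' = U \oplus H^+$, and of each of these into $H$-fixed and non-fixed summands, while matching the ``normal bundle first'' and ``vertical bundle first'' conventions fixed in the orientation-conventions subsection; and one should also check that $D_H$ is effective, so that ``$H^+_H \cong D_H$ as complex representations'' is literally meaningful. This effectivity is automatic in the cases where the proposition is applied, where $H$ is cyclic and $D$ (restricted via $s$) is a multiple of the regular representation, cf.\ Proposition \ref{prop:free}.
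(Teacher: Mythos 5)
Your proposal is correct and follows essentially the same route as the paper: the proposition is proved there by specialising the general discussion of the two orientations on $\widetilde{M}^{H'}$ (culminating in the identity (\ref{equ:tgt}) and the sign $sign(det(\varphi))(m)$) to $A = S^{V,U}$, and then invoking Lemma \ref{lem:or} exactly as you do to make $\varphi_m$ complex-linear for one $m$ and conclude $sign(det(\varphi)) \equiv +1$. Your extra bookkeeping (cancelling $U_{H'}$ to get $H^+_H \cong D_H$, and checking which orientation reversals affect the sign) is a faithful elaboration of what the paper leaves implicit.
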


Applying Proposition \ref{prop:compareorn} in the case of a free action, we get the following:

\begin{theorem}\label{thm:fixH}
Let $X$ be a compact, oriented, smooth $4$-manifold with $b_1(X) = 0$. Let $G$ be a finite group which acts smoothly, orientation preservingly and freely on $X$. Let $\mathfrak{s}$ be a spin$^c$-structure whose isomorphism class is fixed by $G$ and suppose that $d(X,\mathfrak{s}) \ge 0$. Assume that $b_+(X)^G > 0$ and if $b_+(X)^G = 1$ then fix a chamber $\phi$. 

Fix an orientation on $H^+(X)$. Then the moduli space $\mathcal{M}$ for $(X , \mathfrak{s} , \phi)$ is a finite oriented $G$-set. Let $H \subseteq G$ be a subgroup of $G$. If $\mathcal{M}^H$ is non-empty, then there is a distinguished choice of orientation on $H^+(X/H)$ such that
\[
| \mathcal{M}^H | = \sum_{ \mathfrak{s}' } SW(X/H , \mathfrak{s}' )
\]
where the sum is over the spin$^c$-structures $\mathfrak{s}'$ on $X/H$ whose pullback to $X$ is isomorphic to $\mathfrak{s}$ and $SW(X/H , \mathfrak{s}')$ is defined using the chamber $\phi$ in the case $b_+(X/H) = 1$.

The orientation on $H^+(X/H)$ is defined as follows. If $\mathcal{M}^H$ is non-empty, then there exists a splitting $s : H \to H_{\mathfrak{s}}$. Then we require $H^+(X) \cong D_H \oplus H^+(X/H)$ as oriented representations of $H$, where we use $s$ to regard $D$ as a representation of $H$.
\end{theorem}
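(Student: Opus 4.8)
The plan is to put the equivariant Bauer--Furuta map of $(X,\mathfrak{s})$ into general position using the transversality results of Section~\ref{sec:et}, to identify the fixed-point sets of the resulting moduli space with Seiberg--Witten moduli spaces of the quotients $X/H$, and then to carry out the orientation bookkeeping needed to match signs. By Proposition~\ref{prop:free} the hypotheses of Theorem~\ref{thm:eqtr} hold, so we may choose a $G_{\mathfrak{s}}$-equivariant homotopy $f'$ of the Bauer--Furuta map $f$ of $(X,\mathfrak{s})$, relative $S^U$, for which a chosen point $y\in(S^{U'}\setminus S^U)^{G_{\mathfrak{s}}}$ representing $\phi$ is a regular value; put $\widetilde{\mathcal M}=(f')^{-1}(y)$ and $\mathcal M=\widetilde{\mathcal M}/S^1$. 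Since $y\notin S^U$, the circle $S^1$ acts freely on $\widetilde{\mathcal M}$, so for $[m]\in\mathcal M$ the stabiliser of $m$ in $G_{\mathfrak{s}}$ maps isomorphically onto its image $H$ in $G$, i.e.\ it equals $s(H)$ for a splitting $s\colon H\to H_{\mathfrak{s}}$; hence $\mathcal M^H=\bigsqcup_s\mathcal M_s$, where $\mathcal M_s=((f')^{s(H)})^{-1}(y)/S^1$ and the union is over splittings $s$. Because the action is free, $X\to X/H$ is a covering and the $s(H)$-invariant part of the Seiberg--Witten equations on $X$ is exactly the Seiberg--Witten equations on $X/H$ for the descended spin$^c$-structure $\mathfrak{s}_s$; thus $(f')^{s(H)}$ is a finite-dimensional approximation of (a map homotopic rel $S^U$ to) the Bauer--Furuta map of $(X/H,\mathfrak{s}_s)$, and \cite[Theorem~3.6]{bk} gives $|\mathcal M_s|=SW(X/H,\mathfrak{s}_s)$ for a suitable orientation of $H^+(X/H)$. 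Since splittings $s\colon H\to H_{\mathfrak{s}}$ correspond bijectively to spin$^c$-structures on $X/H$ pulling back to $\mathfrak{s}$, it remains only to check that these orientations are the ones described in the statement; we may assume $\mathcal M^H\neq\emptyset$ (otherwise both sides vanish), and, since $\mathcal M$ is a finite set, that $d(X,\mathfrak{s})=0$.

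Fix the chosen orientation of $H^+(X)$ and a splitting $s$ with $\mathcal M^{H'}\neq\emptyset$, where $H'=s(H)$. After a suspension we may assume $U$ (hence $V,U,V'$) is complex with its canonical orientation. At a point $m\in\widetilde{\mathcal M}^{H'}$ the linearisation $Df'_m\colon V\oplus U\to V'\oplus U'$ is $H'$-equivariant, so it splits as $(Df'_m)^{H'}\oplus(Df'_m)_{H'}$; its kernel is the one-dimensional $S^1$-orbit direction, which lies in the invariant part, and because $d(X,\mathfrak{s})=0$ one has $\dim D_{H'}=\dim (H^+)_{H'}$, so $(Df'_m)_{H'}$ is an isomorphism. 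Orient $(H^+)^{H'}=H^+(X/H)$ and $(H^+)_{H'}$ so that together they induce the fixed orientation of $H^+(X)$. Then, all other summands of $V\oplus U$ and $V'\oplus U'$ being complex, the sign with which $m$ is counted in $\mathcal M$ is the product of its sign in the $X/H$-moduli space and $\varepsilon_s:=$ the sign of $\det\big((Df'_m)_{H'}\big)$. This is the analysis preceding Proposition~\ref{prop:compareorn}, with $(Df'_m)_{H'}$ playing the role of $\varphi_m$ there.

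The key point is that $\varepsilon_s$ is independent of $m$. In the underlying infinite-dimensional monopole map one has $f'=\ell+Q$ with $\ell$ linear and $Q$ compact; hence for $m,m'\in\mathcal M^{H'}$ the operators $(Df'_m)_{H'}$ and $(Df'_{m'})_{H'}$ differ by a compact operator, so $(Df'_m)_{H'}\circ(Df'_{m'})_{H'}^{-1}$ lies in the identity component of the invertible operators, and constancy of the sign passes to the finite-dimensional approximations through the orientation conventions of \cite[Section~3]{bk} (the same conventions that orient Seiberg--Witten moduli spaces). We now choose the orientation of $(H^+)_{H'}$ so that $\varepsilon_s=+1$; since $(Df'_m)_{H'}$ is then orientation preserving, with complex source $V_{H'}\oplus U_{H'}$ and target $V'_{H'}\oplus U_{H'}\oplus(H^+)_{H'}$, this is precisely the orientation for which $(H^+)_{H'}\cong D_{H'}$ as oriented real $H$-representations (using $s$ to regard $D$ as a representation of $H$); this is independent of $s$ because $D$ restricts on $H'$ to a multiple of the regular representation. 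Consequently the complementary orientation of $H^+(X/H)$ is the one characterised by $H^+(X)\cong D_H\oplus H^+(X/H)$ as oriented $H$-representations, and with it $|\mathcal M^{H'}|=SW(X/H,\mathfrak{s}_s)$. Summing over $s$ (equivalently over the relevant $\mathfrak{s}'$) yields $|\mathcal M^H|=\sum_{\mathfrak{s}'}SW(X/H,\mathfrak{s}')$; that the chosen orientation of $H^+(X)$ makes $\mathcal M$ a well-defined oriented $G$-set was noted before the statement.

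The main obstacle is the constancy of $\varepsilon_s$ just used. In the general (non-free) setting of Proposition~\ref{prop:compareorn} this is obtained from Lemma~\ref{lem:or} under the hypothesis $d_\chi\neq 1$, which typically fails for the cyclic subgroups appearing in the congruence formula (the sign representation has $d_\chi=1$). For a free action it is instead forced by the linear-plus-compact structure of the monopole map together with the fact that $D$ and $H^+(X)$ restrict on each $H'$ to multiples of regular representations, so that $D_{H'}$ and $(H^+)_{H'}$ are abstractly isomorphic as real $H$-representations; this is what makes the orientation convention $H^+(X)\cong D_H\oplus H^+(X/H)$ meaningful and correct.
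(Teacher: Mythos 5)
Your overall architecture matches the paper's: Proposition~\ref{prop:free} gives transversality, $\mathcal{M}^H$ decomposes over splittings $s : H \to H_{\mathfrak{s}}$ into the moduli spaces $\mathcal{M}_s$ of the quotients $(X/H , \mathfrak{s}_s)$, and the whole content is the comparison of orientations. But the step where you diverge from the paper --- the claim that $\varepsilon_s = \operatorname{sign}\det\bigl((Df'_m)_{H'}\bigr)$ is independent of $m$ without any hypothesis on the representation theory of $H$ --- contains a genuine gap. You argue that since the monopole map is linear plus compact, the normal derivatives at two solutions differ by a compact operator, hence their composite lies in the identity component of the invertibles. This is false: the group of invertible operators of the form $1 + K$ with $K$ compact on a \emph{real} Hilbert space is homotopy equivalent to $GL(\infty , \mathbb{R})$ and has two path components, detected exactly by the sign of the determinant. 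So "differ by a compact operator" does not force equal signs; this is precisely the phenomenon that makes orienting Seiberg--Witten moduli spaces nontrivial and why $SW$ is a signed count in the first place. In finite dimensions the compactness statement is vacuous, and the two points of $\widetilde{\mathcal{M}}^{H'}$ lie on different components, so no continuity argument is available either. The paper closes this gap differently: Lemma~\ref{lem:or} shows that when $d_\chi \neq 1$ for every non-trivial real irreducible $\chi$ of $H$, the space of $H$-equivariant real isomorphisms between the relevant anti-invariant representations is a product of $GL(n,\mathbb{C})$'s and $GL(n,\mathbb{H})$'s, hence connected, and this is what makes $\varepsilon_s$ constant in Proposition~\ref{prop:compareorn}. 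That hypothesis is automatic for odd-order groups (Remark~\ref{rem:odd}) and is exactly why Theorem~\ref{thm:swcong} is restricted to odd $|G|$; the theorem you are proving should be read as carrying this hypothesis, which it inherits from Proposition~\ref{prop:compareorn}.

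A second, related problem: even granting constancy of $\varepsilon_s$, your identification of the normalising orientation with the one for which $(H^+)_{H'} \cong D_{H'}$ "as oriented real $H$-representations" is ambiguous when some $d_\chi = 1$. For $H = \mathbb{Z}_2$, say, $D_{H'}$ and $(H^+)_{H'}$ restrict to multiples of the sign representation, the space of equivariant real isomorphisms between them is all of $GL(2n , \mathbb{R})$, and isomorphisms of both determinant signs exist; so the condition "$H^+(X) \cong D_H \oplus H^+(X/H)$ as oriented representations" fails to single out an orientation of $H^+(X/H)$. The fact that $D$ and $H^+ \oplus \mathbb{R}$ are multiples of the regular representation guarantees abstract isomorphism but does nothing to resolve this sign ambiguity. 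The paper's route to the congruence for arbitrary finite groups avoids the issue entirely by passing to $K$-theoretic localisation with a complex-structure $K$-orientation (Section~\ref{sec:far}); if you want the statement for even-order subgroups you should follow that argument rather than try to repair the cohomological one.
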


Let $M$ be a finite oriented $G$-set. Then we have
\begin{equation}\label{equ:cong1}
\sum_{(H) \text{ cyclic}} \phi(|H|) |G/N_G H| | M^H| = 0 \; ({\rm mod} \; |G|)
\end{equation}
where the sum is over conjugacy classes of cyclic subgroups of $G$ and $\phi$ is the Euler $\phi$ function \cite[Chapter IV, \textsection 5]{die}. Let $(C , \le )$ denote the set of subgroups of $G$ partially ordered by inclusion. Let $\mu_G( H_1 , H_2 )$ denote the M\"obius function of $(C,\le)$ and set $\mu_G(H) = \mu_G(1,H)$. Then we have \cite[Chapter IV, Proposition 5.10]{die}:
\begin{equation}\label{equ:cong2}
\sum_{H \in C} \mu_G(H) |M^H| = 0 \; ({\rm mod} \; |G|).
\end{equation}

Now we apply the above two congruences to the case that $M$ is the $G$-equivariant moduli space for a free action on $X$. This gives a congruence relation between the Seiberg--Witten invariants of $X$ and the Seiberg--Witten invariants of quotients of $X$.
\begin{theorem}\label{thm:swcong}
Let $X$ be a compact, oriented, smooth $4$-manifold with $b_1(X) = 0$. Let $G$ be a finite group of odd order which acts smoothly and freely on $X$. Let $\mathfrak{s}$ be a spin$^c$-structure whose isomorphism class is fixed by $G$ and suppose that $d(X,\mathfrak{s}) \ge 0$. Assume that $b_+(X)^G > 0$ and if $b_+(X)^G = 1$ then fix a chamber $\phi$. 

Fix an orientation on $H^+(X)$. Then the moduli space $\mathcal{M}$ for $(X , \mathfrak{s} , \phi)$ is a finite oriented $G$-set. Let $H \subseteq G$ be a subgroup of $G$. If $\mathcal{M}^H$ is non-empty, then let $H^+(X/H)$ be oriented as in Theorem \ref{thm:fixH}. Then we have
\[
\sum_{(H) \text{ cyclic}} \phi(|H|) |G/N_G H| \sum_{\mathfrak{s}' | q_H^*(\mathfrak{s}') \cong \mathfrak{s}} SW(X/H , \mathfrak{s}') = 0 \; ({\rm mod} \; |G|)
\]
and
\[
\sum_{H \subseteq G} \mu_G(H)  \sum_{\mathfrak{s}' | q_H^*(\mathfrak{s}') \cong \mathfrak{s}} SW(X/H , \mathfrak{s}') = 0 \; ({\rm mod} \; |G|)
\]
where $SW(X/H , \mathfrak{s}')$ is defined using the chamber $\phi$ in the case $b_+(X/H) = 1$.
\end{theorem}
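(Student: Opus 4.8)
The plan is to deduce this from material already in place: Proposition~\ref{prop:free} (equivariant transversality for free actions), Theorem~\ref{thm:fixH} (identification of fixed-point counts with sums of Seiberg--Witten invariants of the quotients), and the two elementary congruences~(\ref{equ:cong1}) and~(\ref{equ:cong2}) valid for every finite oriented $G$-set.

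First I would apply Proposition~\ref{prop:free}: since $G$ acts freely and $d(X,\mathfrak{s})\ge 0$, equivariant transversality can be achieved. Perturbing $f$ equivariantly by a small perturbation, so that the chamber $\phi$ is unchanged when $b_+(X)^G=1$, we obtain as in Theorem~\ref{thm:fixH} a smooth compact moduli space $\mathcal{M}$ which, once an orientation on $H^+(X)$ is fixed, is a finite oriented $G$-set. Applying the congruences~(\ref{equ:cong1}) and~(\ref{equ:cong2}) to $M=\mathcal{M}$ gives
\[
\sum_{(H)\ \mathrm{cyclic}}\phi(|H|)\,|G/N_G H|\,|\mathcal{M}^H|\equiv 0\;({\rm mod}\;|G|),\qquad \sum_{H\subseteq G}\mu_G(H)\,|\mathcal{M}^H|\equiv 0\;({\rm mod}\;|G|),
\]
where each $\mathcal{M}^H$ is oriented as a codimension-zero subset of $\mathcal{M}$.

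Next I would rewrite each fixed-point count $|\mathcal{M}^H|$ in terms of Seiberg--Witten invariants of $X/H$ using Theorem~\ref{thm:fixH}. The one point requiring attention is that the hypothesis on $H$ in Theorem~\ref{thm:fixH}, inherited from Proposition~\ref{prop:compareorn} and Lemma~\ref{lem:or} --- namely that every non-trivial real irreducible representation $\chi$ of $H$ has $d_\chi\neq 1$ --- must hold for \emph{every} subgroup $H\subseteq G$ occurring in the two sums. This is exactly where the odd-order hypothesis is used: as $|G|$ is odd the squaring map $g\mapsto g^2$ is a bijection on each subgroup $H$, so by the Frobenius--Schur indicator formula every non-trivial irreducible complex character of $H$ has indicator $0$, i.e.\ is of complex type; hence every non-trivial real irreducible representation of $H$ has $d_\chi=2$ (cf.\ Remark~\ref{rem:odd}). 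Thus Theorem~\ref{thm:fixH} applies to all the relevant $H$, and when $\mathcal{M}^H\neq\varnothing$ it furnishes the distinguished orientation on $H^+(X/H)$ --- the one characterised by $H^+(X)\cong D_H\oplus H^+(X/H)$ as oriented representations of $H$ --- for which
\[
|\mathcal{M}^H|=\sum_{\mathfrak{s}'\,:\,q_H^*(\mathfrak{s}')\cong\mathfrak{s}}SW(X/H,\mathfrak{s}').
\]
When $\mathcal{M}^H=\varnothing$ both sides vanish: the left side trivially, and the right side because $\mathcal{M}^H=\bigcup_s\mathcal{M}_s$ with $\mathcal{M}_s$ the Seiberg--Witten moduli space of $(X/H,\mathfrak{s}_s)$ for the perturbation obtained by descending the chosen $G$-invariant perturbation of $X$ (which lies in the chamber $\phi$), so each $\mathcal{M}_s$ is empty and the corresponding invariant $SW(X/H,\mathfrak{s}_s)$ is zero.

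Substituting these identities into the two congruences of the first step yields the two assertions of the theorem. I do not expect any serious obstacle here: the proof is an assembly of results already established, and the only delicate points are (i) the Frobenius--Schur observation that renders Theorem~\ref{thm:fixH} uniformly applicable to all subgroups once $|G|$ is odd, and (ii) the bookkeeping for the empty-fixed-point case, together with the compatibility of chambers on $X$ and on $X/H$ when $b_+(X/H)=1$.
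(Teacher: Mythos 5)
Your proposal is correct and follows essentially the same route as the paper: achieve equivariant transversality via Proposition \ref{prop:free}, apply the Burnside-ring congruences (\ref{equ:cong1}) and (\ref{equ:cong2}) to the oriented finite $G$-set $\mathcal{M}$, and convert each $|\mathcal{M}^H|$ into a sum of quotient Seiberg--Witten invariants via Theorem \ref{thm:fixH}, with the odd-order hypothesis entering exactly through the Frobenius--Schur argument of Remark \ref{rem:odd}. Your additional care with the case $\mathcal{M}^H = \varnothing$ and with chamber compatibility is a welcome refinement of the paper's terser argument but does not change the approach.
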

\begin{proof}
Since $G$ has odd order, then by Remark \ref{rem:odd} every non-trivial real irreducible representation $\chi$ of any subgroup of $G$ has $d_\chi \neq 1$, hence Theorem \ref{thm:fixH} applies to every subgroup of $G$. Applying this to Equations (\ref{equ:cong1}), (\ref{equ:cong2}) gives the result.
\end{proof}

\section{Reduced Seiberg--Witten invariants}\label{sec:red}

Let $f : S^{V,U} \to S^{V',U'}$ be a $G_{\mathfrak{s}}$-equivariant monopole map. Given a subgroup $H \subseteq G$ we let $H_{\mathfrak{s}}$ denote the preimage of $H$ under $G_{\mathfrak{s}} \to G$. Then $H_{\mathfrak{s}}$ is a central extension of $H$ by $S^1$. Suppose that $H_{\mathfrak{s}}$ is a split extension and let $s : H \to H_{\mathfrak{s}}$ denote a splitting. Let $N_G(H)$ and $N_{G_{\mathfrak{s}}}(H_{\mathfrak{s}})$ denote the normalisers of $H$ and $H_{\mathfrak{s}}$ in $G$ and $G_{\mathfrak{s}}$. Then $N_{G_{\mathfrak{s}}}(H_{\mathfrak{s}})$ is the preimage of $N_G(H)$ under $G_{\mathfrak{s}} \to G$. Let $W = N_G(H)/H$ and $W_{\mathfrak{s}} = N_{G_{\mathfrak{s}}}( H_{\mathfrak{s}} )/ s(H)$. Then $W_{\mathfrak{s}}$ is a central extension of $W$ by $S^1$. Let $f^{s} : S^{V^{sH},U^{sH}} \to S^{(V')^{sH},(U')^{sH}}$ be the restriction of $f$ to the fixed point set of $sH$. Then $f^{s}$ is $N_{G_\mathfrak{s}}(H_{\mathfrak{s}})$-equivariant. Furthermore, $sH$ acts trivially, so this action descends to $W_{\mathfrak{s}}$. Thus $f^{s}$ has the structure of a $W_{\mathfrak{s}}$-equivariant monopole map and we can consider the $W$-equivariant Seiberg--Witten invariants of $f^{s}$.

If $f$ is the Bauer--Furuta map associated to a $G$-action on a pair $(X , \mathfrak{s})$, then $f^{s}$ is a finite dimensional approximation of the $sH$-invariant part of the Seiberg--Witten monopole map. We will call $f^{s}$ the {\em $H$-reduced Bauer--Furuta invariant of $(X,\mathfrak{s})$ with respect to the splitting $s$}. Similarly the Seiberg--Witten invariants of $f^{s}$ will be called the {\em $H$-reduced Seiberg--Witten invariants of $(X,\mathfrak{s})$.}

Suppose $H = G$. If we forget the action of $W$ then $f^{s}$ can simply be viewed as an $S^1$-equivariant map. It is a finite-dimensional approximation of the Seiberg--Witten equations restricted to $sG$-invariant configurations. Let $b_+^G = dim_{\mathbb{R}}((H^+)^G)$. If $b_+^G > 0$, then it is known (eg, \cite[Theorem 4.1]{cho}) that for generic $G$-invariant perturbations, the moduli space $\mathcal{M}$ of $sG$-invariant solutions to the Seiberg--Witten equations for $(X,\mathfrak{s})$ is a compact, smooth manifold of dimension $2d^{sG} - b_+^G - 1$, where $d^{sG} = dim_{\mathbb{C}}( V^{sG}) - dim_{\mathbb{C}}( (V')^{sG})$. Over this moduli space is a line bundle $L \to \mathcal{M}$ which is constructed in the same way as for the usual Seiberg--Witten moduli space. If $2d^{sG} - b_+^G - 1 = 2m$ is even and non-negative, then one can evaluate $x^m$ over $\mathcal{M}$, where $x = c_1(L)$, to obtain an integer invariant of the $G$-action. In terms of the reduced monopole map $f^s$, the invariant is given by $SW_{f^{s}}( x^m )$ (if $b_+^G = 1$ we need to choose a chamber). We will denote $SW_{f^s}(x^m)$ by $\overline{SW}^{s}_{G,X,\mathfrak{s}}$ and call it the {\em reduced Seiberg--Witten invariant of $(X , \mathfrak{s})$ (with respect to the $G$-action and splitting $s$)}. The reduced invariant $\overline{SW}^{s}_{G,X,\mathfrak{s}}$ was studied in \cite{sung0} where it was called the {\em $G$-monopole invariant} and denoted $SW^G_{X,\mathfrak{s}}$.

In the case of a free action, the reduced invariant corresponds to the usual Seiberg--Witten invariant of the quotient space:

\begin{proposition}\label{prop:freeinv}
Let $X$ be a compact, oriented, smooth $4$-manifold with $b_1(X) = 0$. Let $G$ act on $X$ smoothly and orientation preservingly and let $\mathfrak{s}$ be a $G$-invariant spin$^c$-structure on $X$. Let $H$ be a subgroup of $G$ such that $H$ acts freely on $X$ and let $s  : H \to H_{\mathfrak{s}}$ be a splitting. Let $f$ be the $G$-equivariant Bauer--Furuta monopole map of $(X, \mathfrak{s})$. Let $\mathfrak{s}_s$ be the spin$^c$-structure on $X/H$ determined by $s$ (as in Section \ref{sec:trfree}). Suppose that $b_+(X/H) > 0$ and that $d(X/H , \mathfrak{s}_s) = 2m$ is even and non-negative. Then
\[
SW_{f^s}(x^m) = SW(X/H , \mathfrak{s}_s),
\]
where if $b_+(X/H) = 1$ both invariants are defined with respect to a chamber of $H^+(X)^H \cong H^+(X/H)$.
\end{proposition}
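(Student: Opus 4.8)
The plan is to show that $f^{s}$, viewed as an ordinary ($S^1$-equivariant) monopole map over a point by forgetting the residual $W_{\mathfrak{s}}$-action, is a finite-dimensional approximation of the Seiberg--Witten monopole map $\mu_{X/H}$ of $(X/H , \mathfrak{s}_s)$, and then to invoke the identification of the abstract Seiberg--Witten invariant of a Bauer--Furuta map with the ordinary Seiberg--Witten invariant, \cite[Theorems 2.24 and 3.6]{bk}, exactly as in Section \ref{sec:eqfam}. The first step is the geometric input. Since $H$ acts freely, $q_H : X \to X/H$ is a smooth $|H|$-fold covering, $X/H$ inherits an orientation, and $b_1(X/H) = 0$ because $H_1(X/H ; \mathbb{Q}) = H_1(X ; \mathbb{Q})^H = 0$. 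The splitting $s$ makes $H$ act on the spinor bundles $S^{\pm}$ of $\mathfrak{s}$ over the action on $X$, and $S^{\pm}/sH$ are the spinor bundles of the spin$^c$-structure $\mathfrak{s}_s$ on $X/H$ with $q_H^*(\mathfrak{s}_s) \cong \mathfrak{s}$ (the construction recalled in Section \ref{sec:trfree}). Pulling back by $q_H$ identifies the configuration space, gauge group and Seiberg--Witten equations for $(X/H , \mathfrak{s}_s)$ with the $sH$-invariant part of those for $(X , \mathfrak{s})$, and $sH$-invariant perturbations on $X$ correspond to perturbations on $X/H$; in particular the spin$^c$ Dirac operator and the self-dual-form operator on $X/H$ are the restrictions of their counterparts on $X$ to $sH$-fixed points, so $\mathrm{ind}_{\mathbb{C}}(D_{\mathfrak{s}_s}) = \dim_{\mathbb{C}}(V^{sH}) - \dim_{\mathbb{C}}((V')^{sH})$ and $H^+(X/H) \cong H^+(X)^H = (U')^{sH}/U^{sH}$.

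Next I would note that $G_{\mathfrak{s}}$-equivariant finite-dimensional approximation commutes with taking $sH$-fixed points. The approximation $f$ of $\mu_X$ is built from $G_{\mathfrak{s}}$-invariant spectral subspaces of the relevant elliptic operators; the $sH$-invariant parts of these subspaces are precisely the spectral subspaces one would use to build a finite-dimensional approximation of $\mu_{X/H}$, and as the former exhaust the $sH$-invariant configuration space so do the latter. Hence $f^{s} = f^{sH}$ is a finite-dimensional approximation of $\mu_{X/H}$, i.e.\ a representative of the Bauer--Furuta invariant of $(X/H , \mathfrak{s}_s)$. Its expected dimension is $2\,\mathrm{ind}_{\mathbb{C}}(D_{\mathfrak{s}_s}) - b_+(X/H) - 1 = d(X/H , \mathfrak{s}_s) = 2m$, so the exponent in $SW_{f^{s}}(x^m)$ is exactly the one at which the ordinary Seiberg--Witten invariant is evaluated. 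Under $H^+(X)^H \cong H^+(X/H)$ a chamber for $f^{s}$ is the same datum as a chamber for $X/H$, and when $b_+(X/H) = 1$ we take the fixed chamber $\phi$ on both sides; the chosen orientation of $H^+(X)^H$ orients $H^+(X/H)$.

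Applying \cite[Theorems 2.24 and 3.6]{bk} (equivalently, property (3) of Theorem \ref{thm:propb}) to the Bauer--Furuta map $f^{s}$ of $(X/H , \mathfrak{s}_s)$ then gives $SW_{f^{s}}(x^m) = SW(X/H , \mathfrak{s}_s)$, as claimed. I expect the only real work to lie in the second paragraph: verifying that the $sH$-fixed-point data extracted from the finite-dimensional approximation of $\mu_X$ genuinely constitutes a finite-dimensional approximation of $\mu_{X/H}$ in the sense required by \cite{bk}, and that this is compatible with the perturbation and orientation conventions used there. Everything else is just the dictionary between $H_{\mathfrak{s}}$-equivariant objects on $X$ and the corresponding objects on the quotient $X/H$.
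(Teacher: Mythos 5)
Your argument is exactly the paper's proof, which simply observes that for a free action the reduction $f^{s}$ \emph{is} the Bauer--Furuta monopole map of $(X/H , \mathfrak{s}_s)$ and then invokes the identification of the abstract invariant with the ordinary Seiberg--Witten invariant; you have merely expanded the one-line justification into the explicit dictionary between $sH$-invariant data on $X$ and data on $X/H$. The extra detail (covering-space identification of configuration spaces, compatibility of spectral truncation with fixed points, the dimension and chamber bookkeeping) is correct and fills in precisely what the paper leaves implicit.
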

\begin{proof}
Since $H$ acts freely on $X$, then the reduction $f^s$ is precisely the Bauer--Furuta monopole map of $(X/H , \mathfrak{s}_s)$. Hence the result follows.
\end{proof}

The equivariant and reduced Seiberg--Witten invariants are related to one another. To understand the relation we will use localisation in cohomology and $K$-theory. It turns that localisation behaves much better in $K$-theory than cohomology, which is one motivation for considering $K$-theoretic invariants. 
 
\subsection{Localisation in cohomology}\label{sec:locc}

In order to apply localisation to the cohomological invariants we will restrict attention to the case that $G = \mathbb{Z}_n = \langle g \rangle$, is a cyclic group of order $n = p^k$ for some prime $p$. We have $H^*_G( pt ; \mathbb{Z}) \cong \mathbb{Z}[v]/(nv)$ where $deg(v) = 2$. In this case $G_{\mathfrak{s}}$ is split. Choose a splitting $G_{\mathfrak{s}} \cong S^1 \times G$. Let $f : S^{V,U} \to S^{V',U'}$ be the monopole map. Let $\phi$ be a chamber.

Let $\sigma = g^{n/p}$, $H = \langle \sigma \rangle \cong \mathbb{Z}_p$. Let $w = w_1(H^+) \in H^1(G ; \mathbb{Z}_2)$. Note that $w=0$ if $n \neq 2$. Suspending if necessary we can assume that $G$ acts orientation preservingly on $U$. Let $\tau^\phi_{V',U'} \in H^{2a'+b'}_{G_{\mathfrak{s}}}( S^{V',U'} , S^{U} ; \mathbb{Z}_w )$. Then $f^*( \tau^\phi_{V' , U'} ) = (-1)^b \delta \tau_U \eta^\phi$ for some $\eta^\phi \in H^*_G( \mathbb{P}(V) ; \mathbb{Z}_w)$ and $SW^\phi_{G,f}( \theta ) = (\pi_{\mathbb{P}(V)})_*( \eta^\phi \theta )$, where $\pi_{\mathbb{P}(V)} : \mathbb{P}(V) \to pt$ is the projection map.

We have a decomposition $V = \bigoplus_{i=0}^{p-1} V_i$ where $\sigma$ acts on $V_i$ as multiplication by $\omega^i$, $\omega = e^{2\pi i/p}$. Then it follows that
\[
\mathbb{P}(V)^H = \bigcup_{i=0}^{n-1} \mathbb{P}(V_i).
\]
Consider the multiplicative subset of $H^*_G(pt ; \mathbb{Z})$ generated by $v$. The localisation theorem \cite[III (3.8)]{die} implies that the restriction map $\iota^* : H^*_G( \mathbb{P}(V) ; \mathbb{Z}_w ) \to H^*_G( \mathbb{P}(V)^H ; \mathbb{Z}_w )$ becomes an isomorphism upon localising with respect to $v$. Let $N_j$ denote the normal bundle of $\mathbb{P}(V_j)$ in $\mathbb{P}(V)$. Then by the Euler sequence, we have $N_j \cong (V/V_j)(1)$, where for a $G$-equivariant vector bundle $W$ on $\mathbb{P}(V)$, $W(m)$ denotes $W \otimes \mathcal{O}_V(m)$.

\begin{lemma}\label{lem:einv}
For each $j$, $e_G(N_j)$ is invertible in the localised ring $v^{-1}H^*_G( \mathbb{P}(V_j) ; \mathbb{Z} )$.
\end{lemma}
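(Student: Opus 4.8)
The plan is to reduce the claim to a computation of $e_G(N_j)$ in the localised ring $v^{-1}H^*_G(\mathbb{P}(V_j);\mathbb{Z})$ and to exhibit its inverse explicitly. First I would recall that, since $\sigma = g^{n/p}$ acts trivially on $\mathbb{P}(V_j)$ but $S^1$ acts freely on $S(V_j)$, the space $\mathbb{P}(V_j)$ carries an action of $G = \mathbb{Z}_n$ and the cohomology ring $H^*_G(\mathbb{P}(V_j);\mathbb{Z})$ is computed by the Leray--Hirsch / projective bundle formula over $H^*_G(pt;\mathbb{Z}) \cong \mathbb{Z}[v]/(nv)$. Concretely, if $V_j$ has complex rank $a_j$ and $\zeta = c_1(\mathcal{O}_{V_j}(1))$ is the hyperplane class, then $H^*_G(\mathbb{P}(V_j);\mathbb{Z})$ is a free module over $H^*_G(pt;\mathbb{Z})$ with basis $1, \zeta, \dots, \zeta^{a_j-1}$, subject to the relation $e_{G}(V_j) = 0$ in that ring (the Euler class of the tautological-quotient setup). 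After inverting $v$, the ground ring becomes $v^{-1}(\mathbb{Z}[v]/(nv))$, which is the ring $\mathbb{Z}[v,v^{-1}]$-module localisation; here the torsion is killed appropriately and, more importantly, the classes $v$ and the weight-shifted hyperplane classes become units or near-units. I would then use the fact that $\sigma$ acts on $\mathcal{O}_{V_j}(1)$ with a single weight, so the restriction of the $G_\mathfrak{s}$-equivariant first Chern classes of the line bundles appearing in $N_j$ to $\mathbb{P}(V_j)$ splits as $\zeta$ plus a nonzero integer multiple of $v$.

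Next I would compute $e_G(N_j)$ via the isomorphism $N_j \cong (V/V_j)(1)$ from the Euler sequence. Decomposing $V/V_j = \bigoplus_{i \ne j} V_i$ into $\sigma$-weight spaces, each summand $V_i(1)$ restricted to $\mathbb{P}(V_j)$ is a $G$-equivariant bundle on which $\sigma$ acts with weight $(i-j) \bmod p$, a nonzero element of $\mathbb{Z}_p$. By the splitting principle its total equivariant Chern class, restricted to $\mathbb{P}(V_j)$, is a product of factors each of the form $(\text{a rank-one Chern root}) + (i-j)v$, and in particular the top class $e_G(V_i(1))|_{\mathbb{P}(V_j)}$ has the shape $((i-j)v)^{\,a_i} + (\text{lower order in } v)$ — i.e. a unit times a power of $v$ plus nilpotents coming from $\zeta$ (note $\zeta$ is nilpotent mod the relation). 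Therefore $e_G(N_j)|_{\mathbb{P}(V_j)}$ is, up to a unit, a power of $v$ times $(1 + \text{nilpotent})$, and after inverting $v$ this is a unit: one inverts the power of $v$ and then inverts $1 + (\text{nilpotent})$ by the finite geometric series. I would make this precise by checking that $(i-j)$ is invertible in the relevant coefficient ring after localisation — this is where $n = p^k$ being a prime power is used, since $i \ne j$ in $\mathbb{Z}_p$ forces $p \nmid (i-j)$, and $v^{-1}(\mathbb{Z}[v]/(p^k v))$ has the property that $p$ is invertible (multiplication by $v$ being invertible forces $p^k = 0$ hence... actually here one argues that the localisation at $v$ kills the integer torsion appropriately so that $p$-primary denominators are allowed).

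The main obstacle, and the step I expect to need the most care, is pinning down the precise structure of the localised coefficient ring $v^{-1}H^*_G(pt;\mathbb{Z}) = v^{-1}(\mathbb{Z}[v]/(nv))$ and verifying that the nonzero integers $(i-j) \bmod p$ that appear really do become invertible there; equivalently, that $e_G(N_j)$, which a priori could be a zero-divisor because of the $nv = 0$ relation, becomes a non-zero-divisor and in fact a unit after localisation. Once the ground ring is understood — the key point being that inverting $v$ forces any $p$-power-torsion phenomena to resolve so that the relevant integers become units — the rest is the standard splitting-principle manipulation plus the observation that $\zeta$ is nilpotent, so $1 + (\text{multiple of }\zeta\text{ and lower }v\text{-powers})$ is automatically invertible. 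I would also remark that this lemma is the equivariant analogue of the elementary fact that the Euler class of the normal bundle of a fixed-point component is invertible after localisation, specialised to the projective-bundle setting, and that the explicit inverse obtained here is what gets substituted into the localisation formula for $SW^\phi_{G,f}$ in the subsequent development.
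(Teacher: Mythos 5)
Your overall strategy --- factor $e_G(N_j)$ into linear factors via $N_j \cong (V/V_j)(1)$ and show each factor is a unit multiple of $v$ plus a nilpotent --- is the same as the paper's, but there is a genuine gap at the nilpotence step. You assert that the hyperplane class $\zeta = c_1(\mathcal{O}_{V_j}(1))$ is ``nilpotent mod the relation''. This is false in general: writing $H^*_G(\mathbb{P}(V_j);\mathbb{Z}) \cong \mathbb{Z}_n[x,v]/(e_G(V_j(1)))$ with $e_G(V_j(1)) = \prod_{u \equiv j \, (p)}(x+uv)^{a_u}$, reduction mod $p$ collapses the relation to $(x+jv)^{r_j}=0$, so the only nilpotent degree-two classes are multiples of $x+jv$; the class $x$ itself, or any $x+cv$ with $c \not\equiv j \bmod p$, is not nilpotent (e.g.\ for $V_j = \mathbb{C}_j$ the relation reads $x+jv=0$, so $x=-jv$, a unit multiple of $v$). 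The element whose nilpotence you actually need is $y = x+jv$, and proving it is the substantive content of the lemma: one uses that every weight $u$ occurring in $V_j$ satisfies $u \equiv j \bmod p$, so that $e_G(V_j(1)) = y^{r_j} - p\,q(x,v)$ with all lower coefficients divisible by $p$; hence $y^{r_j} = p\,q(x,v)$ in the quotient ring and $y^{k r_j} = p^k q^k = 0$ because the positive-degree part of $H^*_G(pt;\mathbb{Z})$ is $n = p^k$-torsion. This is exactly where the prime-power hypothesis enters, and your argument never supplies it.

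A secondary but related problem is your description of the localised coefficient ring: $v^{-1}\bigl(\mathbb{Z}[v]/(nv)\bigr) \cong \mathbb{Z}_{p^k}[v,v^{-1}]$, in which $p$ is \emph{nilpotent}, not invertible (were $p$ a unit, the ring would vanish). What become units are the integers prime to $p$, which is why each $u-j$ (congruent to $i-j \not\equiv 0 \bmod p$) is invertible; you do land on the correct arithmetic fact $p \nmid (i-j)$, but the surrounding claims (``$p$ is invertible'', ``$p$-primary denominators are allowed'') must be discarded. Once $y = x+jv$ is shown to be nilpotent and $c = u-j$ is recognised as a unit of $\mathbb{Z}_{p^k}$, the explicit inverse $(y+cv)^{-1} = c^{-1}v^{-1} - c^{-2}v^{-2}y + c^{-3}v^{-3}y^2 - \cdots$ (a finite sum) completes the proof as in the paper.
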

\begin{proof}
For $u \in \mathbb{Z}_n$, let $a_u$ denote the dimension of the $e^{2\pi i u/n}$-eigenspace of $g$ acting on $V$. Then 
\[
e_G(V_j(1)) = e_{G_{\mathfrak{s}}}(V_j) = \! \! \prod_{u \in \mathbb{Z}_n | u = j \; ( {\rm mod} \; p) } \! \! \! \! \! \! \! \! \! \! \! \! \! \! (x + uv)^{a_u}, \quad e_G(N_j) = \! \! \prod_{u \in \mathbb{Z}_n | u \neq j \; ( {\rm mod} \; p) } \! \! \! \! \! \! \! \! \! \! \! \! \! \! (x + uv)^{a_u}.
\]
Hence to show that $e(N_j)$ is invertible in $v^{-1}H^*_G( \mathbb{P}(V_j) ; \mathbb{Z})$, it suffices to show that $(x+uv)$ is invertible whenever $u \neq j \; ({\rm mod} \; p)$. We have that $v^{-1} H^*_G( \mathbb{P}(V_j) ; \mathbb{Z} ) \cong v^{-1} \mathbb{Z}_n[x,v]/( e_G(V_j(1)) )$. We first claim that $x+jv$ is nilpotent in $H^*_G(\mathbb{P}(V_j) ; \mathbb{Z} )$. To see this, rewrite the above product for $_G(V_j(1))$ as
\[
e_G(V_j(1)) =  \! \! \prod_{u \in \mathbb{Z}_{n/p} } \! \! (x+jv + puv)^{a_{pu+j}}.
\]
Expanding the product in terms of powers of $(x+jv)$, we get that $e_G(V_j(1)) = (x+jv)^{r_j} + b_1 (x+jv )^{r_j - 1} + \cdots + b_{r_j}$ for some $r_j$. Moreover, $b_1, \dots , b_{r_j}$ are multiples of $p$. Hence $e(V_j) = (x+jv)^{r_j} - p q(x,v)$ for some polynomial $q(x,v)$ in $x$ and $v$. Now in $H^*_G( \mathbb{P}(V_j) ; \mathbb{Z})$, we have $e_G(V_j(1)) = 0$ and hence $(x+jv)^{r_j} = p q(x,v)$. Therefore, $(x+jv)^{k r_j} = p^k q(x,v)^{k} = 0$.

Now if $u \neq j \; ({\rm mod} \; p)$, then we can write $(x+uv) = x+jv + (u-j)v = y+cv$ where $y = x+jv$ and $c=u-j$. Then since $y=x+jv$ is nilpotent and $c$ is invertible, the following expression for $(x+uv)^{-1}$ is a well-defined element of  $v^{-1} H^*_G( \mathbb{P}(V_j) ; \mathbb{Z})$:
\[
(x+uv)^{-1} = (y+cv)^{-1} = c^{-1} v^{-1} - c^{-2}v^{-2}y + c^{-3} v^{-3} y^2 - \cdots
\]
\end{proof}

\begin{remark}
The proof of Lemma \ref{lem:einv} shows that $e_G(N_j)^{-1}$ can be expressed as a formal power series in $v^{-1}$ with coefficients given by polynomials in $x$ and $v$. Modulo $e_G(V_j(1))$, all but finitely terms in the series vanish, giving a well-defined element of $v^{-1} H^*_G( \mathbb{P}(V_j) \; \mathbb{Z}) \cong v^{-1} \mathbb{Z}_n[x,v]/(e_G(V_j(1)))$.
\end{remark}

Let $\iota_j : \mathbb{P}(V_j) \to \mathbb{P}(V)$ denote the inclusion of $\mathbb{P}(V_j)$ in $\mathbb{P}(V)$. Let $\theta \in H^*_G( pt ; \mathbb{Z})$. Consider the following expression:
\[
\mu = \sum_{j=0}^{p-1} (\iota_j)_*( e_G(N_j)^{-1} \iota_j^*(\eta^\phi) \theta ) \in v^{-1} H^*_G( \mathbb{P}(V) ; \mathbb{Z}_w).
\]
Then
\begin{align*}
\iota^*(\mu) &= \sum_{j=0}^{p-1} \iota_j^* (\iota_j)_*( e_G(N_j)^{-1} \iota_j^*(\eta^\phi) \theta) \\
&= \sum_{j=0}^{p-1} e_G(N_j) e_G(N_j)^{-1} \iota^*_j( \eta^\phi) \theta \\
&= \iota^*( \eta^\phi \theta). 
\end{align*}
The localisation theorem implies that $\mu = \eta^\phi \theta$ in $v^{-1}H^*_G( \mathbb{P}(V) ; \mathbb{Z}_w)$. Therefore we have
\begin{align*}
SW^\phi_{G,f}(\theta ) &= (\pi_{\mathbb{P}(V)})_* ( \eta^\phi \theta ) \\
&= (\pi_{\mathbb{P}(V))})_* (\mu ) \\
&= (\pi_{\mathbb{P}}(V))_* \left( \sum_{j=0}^{p-1} (\iota_j)_* ( e_G(N_j)^{-1} \iota_j^*(\eta^\phi) \theta ) \right) \\
&= \sum_{j=0}^{p-1} (\pi_{\mathbb{P}(V_j)})_*\left( e_G(N_j)^{-1} \iota_j^*(\eta^\phi) \theta \right)
\end{align*}
where $\pi_{\mathbb{P}(V_j)} : \mathbb{P}(V_j) \to pt$ is the projection map.

Recall that $H = \langle \sigma \rangle$. Let $s_j : H \to H_{\mathfrak{s}} \subset S^1 \times G$ denote the splitting given by $s_j( \sigma ) = ( \omega^{-j} , \sigma)$, where $\omega = e^{2\pi i/p}$. Let $f^{s_j} : S^{V_j,U^H} \to S^{(V'_j) , (U')^H}$ be the restriction of $f$ to the fixed points of $s_j(H)$. Define $\eta^\phi_j \in H^*_G( \mathbb{P}(V_j) ; \mathbb{Z})$ by $(f^{s_j})^* ( \tau^\phi_{V'_j , (U')^H} ) = (-1)^{b^H} \delta \tau_{U^H} \eta^\phi_j$ where $b^H$ denotes the rank of $U^H$. Then the $G$-equivariant Seiberg--Witten invariants of $f^{s_j}$ are given by $SW_{G,f^{s_j}}^\phi( \theta ) = (\pi_{\mathbb{P}(V_j)})_*( \eta^\phi_j \theta )$.

\begin{lemma}\label{lem:etaj}
The following equality holds in $H^*_G( \mathbb{P}(V_j) ; \mathbb{Z}_w)$:
\[
\iota^*_j(\eta^\phi) =  \eta^\phi_j e_{G_{\mathfrak{s}}}( V'/V'_j ) e_G( H^+/ (H^+)^H).
\]
\end{lemma}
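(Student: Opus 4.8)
The strategy is to relate the class $\eta^\phi$ on $\mathbb{P}(V)$ to the class $\eta^\phi_j$ on $\mathbb{P}(V_j)$ by unwinding the definitions of both, using the fact that the Thom classes, the coboundary maps, and the monopole map all interact compatibly with restriction to the fixed-point set of $s_j(H)$. First I would set up the restriction maps carefully. Restricting $f$ to the $s_j(H)$-fixed points gives $f^{s_j}: S^{V_j,U^H} \to S^{(V')_j,(U')^H}$, where I write $(V')_j$ for the summand of $V'$ on which $\sigma$ acts by $\omega^j$ (under the splitting $s_j$, this is the $s_j(H)$-fixed part of $V'$). The point is that the whole construction of $\eta^\phi$ — choose the refined Thom class $\tau^\phi_{V',U'}$, pull back by $f$, observe the pullback lies in the image of the coboundary $\delta$ coming from the sphere bundle $S(V)$, and read off $\eta^\phi$ — can be run verbatim for $f^{s_j}$ to produce $\eta^\phi_j$. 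What has to be checked is how the two constructions differ after restriction to $\mathbb{P}(V_j) \subset \mathbb{P}(V)$.

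The key steps, in order: (i) Restrict the refined Thom class: $\iota^*_{S^{(V')_j,(U')^H}} \tau^\phi_{V',U'}$ should equal $\tau^\phi_{(V')_j,(U')^H} \smallsmile e_{G_\mathfrak{s}}(V'/(V')_j) \smallsmile e_G(H^+/(H^+)^H)$, because the normal bundle of $S^{(V')_j,(U')^H}$ in $S^{V',U'}$ is $V'/(V')_j \oplus (U'/(U')^H) = V'/(V')_j \oplus H^+/(H^+)^H$ (using $U'/U^H = H^+/(H^+)^H$ after the suspension making $U$ complex), and restriction of a Thom class to a sub-Thom-space multiplies by the Euler class of the normal directions. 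The ordering of the factors here is exactly what dictates the order $e_{G_\mathfrak{s}}(V'/V'_j)\, e_G(H^+/(H^+)^H)$ in the statement. (ii) Use naturality of pullback under $f$ versus $f^{s_j}$: since $f^{s_j}$ is literally the restriction of $f$, the square relating $f^*$ and $(f^{s_j})^*$ commutes with the restriction maps on the sphere/Thom spaces. (iii) Use naturality of the coboundary $\delta$ and of the Thom class $\tau_{U^H}$ for the inclusion $S(V_j) \hookrightarrow S(V)$: here the relevant normal bundle in $S(V)$ of the sub-sphere-bundle is again $V/V_j$, and the sign factors $(-1)^b$ versus $(-1)^{b^H}$ must be tracked — the difference $b - b^H$ is $\dim H^+ - \dim(H^+)^H$ mod $2$, which is why no extra sign survives once one also accounts for how the Euler class $e_G(H^+/(H^+)^H)$ is pulled through. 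Assembling (i)–(iii) and cancelling $\delta \tau_{U^H}$ (which is injective in the relevant range by the Gysin sequence, as in Section \ref{sec:ci}) yields the claimed identity.

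The main obstacle I anticipate is bookkeeping the signs and the placement of the Euler-class factors. The sign conventions baked into Equation (\ref{equ:eta}) — the $(-1)^b$ chosen "for a subtle reason related to orienting the moduli space" — have to be matched against the corresponding $(-1)^{b^H}$ in the definition of $\eta^\phi_j$, and one needs that the discrepancy is absorbed exactly by the parity of the rank of $H^+/(H^+)^H$ together with the orientation conventions (normal bundle first) spelled out in the Orientation Conventions subsection. The rest is a diagram chase in equivariant cohomology with local coefficients $\mathbb{Z}_w$; one must check that $w_1(H^+)$ restricts correctly and that $w_1((H^+)^H) = 0$ contributes nothing extra, so the twist $\mathbb{Z}_w$ is the same on both sides. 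I would model the computation on the proof of \cite[Proposition 3.5]{bk} (or its families analogue) where an essentially identical restriction-of-Thom-class argument appears, adapting it to the equivariant fixed-point setting.
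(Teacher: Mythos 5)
Your plan is essentially the paper's own proof: compute $i^* f^*(\tau^\phi_{V',U'}) = (f^{s_j})^* i^*(\tau^\phi_{V',U'})$ in two ways, using that restriction of the refined Thom class to the fixed sub-Thom-space multiplies by the Euler class of the normal bundle, then read off the two sides via the defining equations for $\eta^\phi$ and $\eta^\phi_j$ and cancel. One correction to your step (i): the normal bundle of $S^{V'_j,(U')^H}$ in $S^{V',U'}$ is $V'/V'_j \oplus (U/U^H) \oplus (H^+/(H^+)^H)$, not $V'/V'_j \oplus H^+/(H^+)^H$, since $(U')^H = U^H \oplus (H^+)^H$; the extra factor $e_G(U/U^H)$ also arises on the other side when you restrict $\delta\tau_U\eta^\phi$ (and the sign discrepancy between $(-1)^b$ and $(-1)^{b^H}$ is governed by $b-b^H=\dim(U/U^H)$, not by $\dim H^+ - \dim (H^+)^H$ as you wrote), and the paper cancels this common factor at the end using that $e_G(U/U^H)$ is not a zero divisor --- so your argument goes through once this factor is tracked consistently. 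Also note that no Euler class of $V/V_j$ enters this lemma ($\eta^\phi$ is an ordinary class on $S(V)$ being pulled back to $S(V_j)$, not a Thom class being restricted); that factor only appears later as $e_G(N_j)^{-1}$ in the localisation step.
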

\begin{proof}
By the definition of $f^{s_j}$ we have a commutative diagram:
\[
\xymatrix{
S^{V,U} \ar[r]^-{f} & S^{V' , U'} \\
S^{V_j,U^H} \ar[r]^-{f^{s_j}} \ar[u]^-{i} & S^{(V'_j) , (U')^H} \ar[u]^-{i}
}
\]
where the vertical arrows are inclusions and $V'_j$ denotes the $\omega^j$-eigenspace of $\sigma$ acting on $V$. Note that all spaces in this diagram are acted upon by $G_{\mathfrak{s}}$ and the maps are $G_{\mathfrak{s}}$-equivariant. Recall that $f^*( \tau^\phi_{V',U'}) = (-1)^b \delta \tau_U \eta^\phi$. Therefore
\begin{align*}
i^* f^*(\tau^\phi_{V',U'}) &= (-1)^b i^*( \delta \tau_U \eta^\phi ) \\
&= (-1)^b \delta \tau_{U^H} e_G(U/U^H) \iota^*_j(\eta^\phi) \\
\end{align*}
where we choose orientations on $U^H$ and $U/U^H$ so that $U = U^H \oplus (U/U^H)$ as oriented vector spaces. Next, we have
\begin{align*}
(f^{s_j})^* i^*( \tau^\phi_{V',U'}) &= (-1)^{b^H_+(b-b^H)} (f^{s_j})^* ( \tau^\phi_{V'_j , (U')^H} e_{G_{\mathfrak{s}}}( V'/V'_j ) e_G(U/U^H) e_G( H^+/(H^+)^H)) \\
&= (-1)^{b^H_+ (b-b^H)+b^H} \delta_{U^H} \eta^\phi_j e_{G_{\mathfrak{s}}}( V'/V'_j ) e_G(U/U^H) e_G( H^+/(H^+)^H).
\end{align*}
The sign factor $(-1)^{b^H_+(b-b^H)}$ in the first line is because $U' = U^H \oplus (U/U^H) \oplus (H^+)^H \oplus (H^+/(H^+)^H)$. To re-write this as $(U')^H \oplus (U/U^H) \oplus (H^+/(H^+)^H)$ we need to swap the $U/U^H$ and $(H^+)^H$ factors.

Since $f \circ i = i \circ f^{s_jH}$, we have $i^* f^*( \tau^\phi_{V',U'}) = (f^{s_jH})^* i^*( \tau^\phi_{V',U'})$ and hence
\begin{align*}
(-1)^{b} e_{G}( U/U^H ) \iota^*_j(\eta^\phi) &= (-1)^{b^H_+ (b-b^H)+b^H}  \eta^\phi_j e_{G_{\mathfrak{s}}}( V'/V'_j ) e_G(U/U^H) e_G( H^+/(H^+)^H) \\
&= (-1)^{b} e_G(U/U^H)  e_{G_{\mathfrak{s}}}( V'/V'_j ) \eta^\phi_j e_G( H^+/(H^+)^H).
\end{align*}
Upon cancelling factors of $e_G(U/U^H)$ (which is possible since $e_G(U/U^H)$ not a zero divisor), we obtain
\[
\iota^*_j(\eta^\phi) =  \eta^\phi_j e_{G_{\mathfrak{s}}}( V'/V'_j ) e_G( H^+/ (H^+)^H).
\]
\end{proof}

Let $D_j$ denote the $\omega^j$ virtual eigenspace of $\sigma$ acting on $D$.

\begin{theorem}\label{thm:loc}
Let $f$ be a $G = \mathbb{Z}_n$-monopole map, where $n = p^k$ is a prime power. Then
\[
SW^\phi_{G,f}(\theta) = \sum_{j=0}^{p-1} SW^\phi_{G,f^{s_j}}(  e_G(H^+/(H^+)^\sigma)  e_{G_{\mathfrak{s}}}(D/D_j)^{-1} \theta ).
\]
\end{theorem}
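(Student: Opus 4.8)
The plan is to combine the localisation computation carried out in the preamble of this subsection with Lemma~\ref{lem:etaj}. From the displayed calculation preceding Lemma~\ref{lem:einv} (using the localisation theorem to write $\eta^\phi\theta = \mu$ in $v^{-1}H^*_G(\mathbb{P}(V);\mathbb{Z}_w)$ and then pushing forward), we already have
\[
SW^\phi_{G,f}(\theta) = \sum_{j=0}^{p-1} (\pi_{\mathbb{P}(V_j)})_*\bigl( e_G(N_j)^{-1}\,\iota_j^*(\eta^\phi)\,\theta \bigr),
\]
so the only remaining task is to identify each summand with $SW^\phi_{G,f^{s_j}}\bigl(e_G(H^+/(H^+)^\sigma)\,e_{G_{\mathfrak{s}}}(D/D_j)^{-1}\theta\bigr)$. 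First I would substitute the formula of Lemma~\ref{lem:etaj}, $\iota_j^*(\eta^\phi) = \eta^\phi_j\, e_{G_{\mathfrak{s}}}(V'/V'_j)\, e_G(H^+/(H^+)^H)$, noting $H = \langle\sigma\rangle$ so $(H^+)^H = (H^+)^\sigma$. This reduces the $j$-th summand to
\[
(\pi_{\mathbb{P}(V_j)})_*\bigl( e_G(N_j)^{-1}\, e_{G_{\mathfrak{s}}}(V'/V'_j)\, e_G(H^+/(H^+)^\sigma)\, \eta^\phi_j\, \theta \bigr),
\]
and since $e_G(H^+/(H^+)^\sigma)$ and $\theta$ are pulled back from a point, they commute past everything and can be absorbed into the argument.

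Next I would simplify the Euler-class combination $e_G(N_j)^{-1}\,e_{G_{\mathfrak{s}}}(V'/V'_j)$. Recall $N_j \cong (V/V_j)(1)$, so $e_G(N_j) = e_{G_{\mathfrak{s}}}(V/V_j)$ after identifying $H^*_G(\mathbb{P}(V_j))$-classes via the splitting and the twist by $\mathcal{O}(1)$ exactly as in the proof of Lemma~\ref{lem:einv}. Writing $D = V - V'$ we have $D_j = V_j - V'_j$ and $D/D_j = (V/V_j) - (V'/V'_j)$ as virtual representations, whence $e_{G_{\mathfrak{s}}}(D/D_j) = e_{G_{\mathfrak{s}}}(V/V_j)\, e_{G_{\mathfrak{s}}}(V'/V'_j)^{-1}$ in the localised ring (both factors being invertible there — invertibility of $e_G(N_j)$ is Lemma~\ref{lem:einv}, and invertibility of $e_{G_{\mathfrak{s}}}(V'/V'_j)$ follows by the same nilpotent-plus-unit argument). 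Therefore
\[
e_G(N_j)^{-1}\, e_{G_{\mathfrak{s}}}(V'/V'_j) = e_{G_{\mathfrak{s}}}(D/D_j)^{-1},
\]
and the $j$-th summand becomes $(\pi_{\mathbb{P}(V_j)})_*\bigl( \eta^\phi_j\, e_G(H^+/(H^+)^\sigma)\, e_{G_{\mathfrak{s}}}(D/D_j)^{-1}\,\theta \bigr)$.

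Finally I would invoke the defining formula $SW^\phi_{G,f^{s_j}}(\theta') = (\pi_{\mathbb{P}(V_j)})_*(\eta^\phi_j\,\theta')$ from the paragraph introducing $\eta^\phi_j$ (this is the $W_{\mathfrak{s}}$-equivariant Seiberg--Witten invariant of the reduced monopole map $f^{s_j}$, specialised here with $H = G$ so $W = 1$), applied to $\theta' = e_G(H^+/(H^+)^\sigma)\, e_{G_{\mathfrak{s}}}(D/D_j)^{-1}\theta$, to conclude
\[
SW^\phi_{G,f}(\theta) = \sum_{j=0}^{p-1} SW^\phi_{G,f^{s_j}}\bigl( e_G(H^+/(H^+)^\sigma)\, e_{G_{\mathfrak{s}}}(D/D_j)^{-1}\,\theta \bigr).
\]
The main obstacle is bookkeeping: one must be careful that all the Euler-class manipulations take place in the localised rings $v^{-1}H^*_G(\mathbb{P}(V_j);\mathbb{Z})$ where the relevant classes are genuinely invertible (as established in Lemma~\ref{lem:einv} and its proof), that the twist by $\mathcal{O}_V(1)$ is handled consistently when identifying $N_j$ with $V/V_j$, and that the sign conventions $(-1)^b$, $(-1)^{b^H}$ already reconciled in Lemma~\ref{lem:etaj} are not disturbed — all the delicate sign-tracking has in fact been pushed into Lemma~\ref{lem:etaj}, so the present argument is essentially a clean substitution once those inputs are in hand.
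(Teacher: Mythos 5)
Your proposal is correct and follows essentially the same route as the paper: starting from the localisation identity already established before Lemma~\ref{lem:einv}, substituting Lemma~\ref{lem:etaj}, cancelling $e_{G_{\mathfrak{s}}}(V/V_j)^{-1}e_{G_{\mathfrak{s}}}(V'/V'_j)$ into $e_{G_{\mathfrak{s}}}(D/D_j)^{-1}$, and recognising the resulting pushforward as $SW^\phi_{G,f^{s_j}}$ of the modified argument. (One cosmetic slip: $SW^\phi_{G,f^{s_j}}$ here is the $G$-equivariant invariant of the restriction of $f$ to the fixed points of $s_j(H)$ with $H=\langle\sigma\rangle$ the order-$p$ subgroup, not the specialisation $H=G$, $W=1$; this does not affect the argument.)
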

\begin{proof}
We have already shown that
\[
SW^\phi_{G,f}(\theta ) = \sum_{j=0}^{p-1} (\pi_{\mathbb{P}(V_j)})_*\left( e_G(N_j)^{-1} \iota_j^*(\eta^\phi) \theta \right).
\]
Then applying Lemma \ref{lem:etaj} and using $e_G(N_j) = e_{G_\mathfrak{s}}( V/V_j)$, we get 
\begin{align*}
SW^\phi_{G,f}(\theta ) &= \sum_{j=0}^{p-1} (\pi_{\mathbb{P}(V_j)})_*\left( \eta^\phi_j  e_{G_\mathfrak{s}}(V/V_j)^{-1} e_G( H^+/ (H^+)^\sigma) e_{G_\mathfrak{s}}( V'/V'_j ) \theta \right) \\ 
&= \sum_{j=0}^{p-1} ( \pi_{\mathbb{P}(V_j)} )_* \left( \eta^\phi_j e_G( H^+/ (H^+)^\sigma)  e_{G_{\mathfrak{s}}}^{-1}  \theta \right) \\
&=  \sum_{j=0}^{p-1} SW^\phi_{G,f^{s_j}} \left( e_G( H^+/ (H^+)^\sigma) e_{G_{\mathfrak{s}}}(D/D_j)^{-1} \theta \right).
\end{align*}

\end{proof}

\subsection{Localisation in $K$-theory}\label{sec:lock}

We now consider localisation of the $K$-theoretic invariants. For a finite group $G$, any class in $\mu \in R(G)$ is completely determined by its character. But for any $g \in G$, $\chi(\mu , g)$ is determined by the restriction of $\mu$ to $R(H)$, where $H = \langle g \rangle$ is the cyclic subgroup of $G$ generated by $g$. For this reason it suffices to consider localisation only for the case that $G$ is a cyclic group.

Let $G = \mathbb{Z}_n = \langle g \rangle$ be a cyclic group of order $n$, for any positive integer $n$. We will localise to the fixed points of $g$. For each $j \in \mathbb{Z}_n$ let $V_j$ denote the $\omega^j$-eigenspace of $g$ acting on $V$, where $\omega = e^{2\pi i j/n}$. Fix a splitting $G_{\mathfrak{s}} \cong S^1 \times G$. For each $j \in \mathbb{Z}_n$, we define a splitting $s_j : G \to S^1 \times G$ by $s_j(g) = ( \omega^{-j} , g)$. Let $S \subset R(G)$ be the multiplicative subset of elements $W \in R(G)$ such that $\chi(W,g) \neq 0$. Evaluating characters at $g$ defines an injective map $\chi( \; , g) : S^{-1}R(G) \to \mathbb{C}$ whose image is $\mathbb{Q}(\omega)$, the $n$-cyclotomic field. If $W$ is a complex representation of $G$ which contains no copies of the trivial representation then $\chi( e^K_G(W) , g ) = det( 1 - g^{-1} : W \to W) \neq 0$, so $e^K_G(W)$ is invertible in $S^{-1}R(G)$. This means that the proof of Lemma \ref{lem:etaj} carries over to the $K$-theoretic setting and gives:
\[
\iota^*_j(\eta^{K,\phi}) = e^K_G( H^+/ (H^+)^H) e^K_{G_{\mathfrak{s}}}( V'/V'_j ) \eta^{K,\phi}_j.
\]

Let $N_j$ denote the normal bundle of $\mathbb{P}(V_j)$ in $\mathbb{P}(V)$, so $N_j = (V/V_j)(1)$. The $K$-theoretic equivalent of Lemma \ref{lem:einv} is the following:
\begin{lemma}\label{lem:einvk}
For each $j$, $e_G^K(N_j)$ is invertible in the localised ring $S^{-1}K^*_G( \mathbb{P}(V_j) )$.
\end{lemma}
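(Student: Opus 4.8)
The plan is to mimic the cohomological argument of Lemma \ref{lem:einv}, replacing nilpotence of $x+jv$ in $v^{-1}H^*_G(\mathbb{P}(V_j);\mathbb{Z})$ by the corresponding statement in $S^{-1}K^*_G(\mathbb{P}(V_j))$, and then observing that the offending line bundles on $\mathbb{P}(V_j)$ become invertible after localisation. Concretely, write $K^*_G(\mathbb{P}(V_j)) \cong R(G_{\mathfrak{s}})[\ell,\ell^{-1}]/(e^K_G(V_j(1)))$, where $\ell$ is the class of the tautological line bundle $\mathcal{O}_{V}(-1)$ restricted to $\mathbb{P}(V_j)$ (or its dual, depending on conventions); this is the $K$-theoretic projective bundle theorem applied fibrewise over the point. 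Since $N_j = (V/V_j)(1)$, the $K$-theoretic Euler class $e^K_G(N_j)$ is a polynomial in $\ell$ with coefficients in $R(G_{\mathfrak{s}})$, so it suffices to show each factor $(1 - t_u\ell^{\pm1})$ arising in $e^K_G(N_j)$, where $t_u = \xi\,\omega^u$ runs over the characters of $G_{\mathfrak{s}}$ appearing in the eigenbundles $V_u$ with $u\not\equiv j \pmod{\text{(relevant modulus)}}$, is a unit in $S^{-1}K^*_G(\mathbb{P}(V_j))$.

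The key step is the nilpotence claim: in $K^*_G(\mathbb{P}(V_j))$ the element $1 - \xi\omega^j\ell$ (equivalently the reduced class $\ell - \overline{(\xi\omega^j)}$, whatever normalisation makes it lie in the augmentation-type ideal) is nilpotent, exactly as $x+jv$ was nilpotent in the cohomological setting. To see this I would expand the defining relation $e^K_G(V_j(1)) = 0$: this Euler class is a product $\prod (1 - \xi\omega^{pu+j}\ell)$ over the eigenvalues occurring in $V_j$, and writing each factor as $1 - (\xi\omega^j\ell)\cdot\omega^{pu}$ and expanding in powers of $(1-\xi\omega^j\ell)$, one obtains a monic polynomial in $(1-\xi\omega^j\ell)$ whose lower-order coefficients lie in the ideal generated by $(1-\zeta)$ for appropriate roots of unity $\zeta$ — i.e. they are "small" in the same sense that the $b_i$ were multiples of $p$ before. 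Combined with the fact that such $(1-\zeta)$ are nilpotent mod the analogous relations (or more directly, that $S^{-1}K^*_G(\mathbb{P}(V_j))$ is a finite module whose character at $g$ is controlled), one deduces $(1-\xi\omega^j\ell)$ is nilpotent. Alternatively, and perhaps more cleanly, I would argue via characters directly: $S^{-1}K^*_G(\mathbb{P}(V_j))\otimes \mathbb{C}$ is a finite-dimensional $\mathbb{Q}(\omega)$-algebra (a quotient of a Laurent polynomial ring by one polynomial), and on it $\chi(e^K_G(N_j),g)$ acts invertibly because $\chi(e^K_G(W),g)=\det(1-g^{-1}:W\to W)\neq 0$ whenever $W$ contains no trivial summand — and $N_j$, restricted to $\mathbb{P}(V_j)$ as an abstract $G_{\mathfrak{s}}$-representation bundle, does contain no $G_{\mathfrak{s}}$-trivial summand since its weights are $\xi\omega^u$ with $u\not\equiv j$. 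Passing from "invertible after $\chi(-,g)$" to "invertible in $S^{-1}K^*_G(\mathbb{P}(V_j))$" is exactly what the localisation theorem in $K$-theory guarantees, since $S^{-1}K^*_G(\mathbb{P}(V_j)) \cong S^{-1}K^*_G(\mathbb{P}(V_j)^g)$ and on the fixed locus the character map is faithful.

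The structure of the write-up will therefore be: (i) recall $S^{-1}R(G)\hookrightarrow \mathbb{C}$ via $\chi(-,g)$ with image $\mathbb{Q}(\omega)$, as set up in the paragraph preceding the lemma; (ii) recall the $K$-theoretic projective bundle presentation of $K^*_G(\mathbb{P}(V_j))$ and identify $e^K_G(N_j)$ as a polynomial in the tautological class; (iii) use $e^K_G(N_j) = e^K_{G_{\mathfrak{s}}}(V/V_j)$ together with $\chi(e^K_{G_{\mathfrak{s}}}(W),g) = \det(1 - g^{-1}|_W)$ to see the character is nonzero on each fixed component; (iv) invoke the Atiyah–Segal localisation theorem to conclude invertibility of $e^K_G(N_j)$ in the localised ring. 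The main obstacle is a bookkeeping one: being careful that $V/V_j$, viewed as the relevant $G_{\mathfrak{s}}$-equivariant bundle whose Euler class appears, really has no $g$-fixed (equivalently $G_{\mathfrak{s}}$-trivial via the splitting $s_j$) sub-line-bundle on $\mathbb{P}(V_j)$ — this is where the condition $u\not\equiv j$ is used and is the $K$-theoretic shadow of the cohomological argument that $x+uv$ is invertible precisely when $u\neq j\pmod p$. I expect no genuine difficulty beyond transcribing the cohomological proof with $(1-\xi\omega^j\ell)$ in place of $x+jv$ and "character nonvanishing" in place of "nilpotent plus invertible scalar".

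\begin{proof}
As recalled above, evaluation of characters at $g$ identifies $S^{-1}R(G)$ with the subring $\mathbb{Q}(\omega)\subseteq\mathbb{C}$, and for any complex representation $W$ of $G_{\mathfrak{s}}$ containing no copies of the trivial representation one has $\chi(e^K_{G_{\mathfrak{s}}}(W),g) = \det(1 - g^{-1}\colon W\to W)\neq 0$. Hence $e^K_{G_{\mathfrak{s}}}(W)$ is invertible in $S^{-1}R(G)$ for such $W$.

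Now fix $j$. By the Atiyah--Segal localisation theorem, the restriction map $S^{-1}K^*_G(\mathbb{P}(V_j))\to S^{-1}K^*_G(\mathbb{P}(V_j)^g)$ is an isomorphism, and $\mathbb{P}(V_j)^g$ is a disjoint union of components on each of which $g$ acts trivially; over each such component $S^{-1}K^*_G(-)$ is a free module over $S^{-1}R(G)\cong\mathbb{Q}(\omega)\subset\mathbb{C}$, and a class is invertible there precisely when its character at $g$ is nowhere zero. It therefore suffices to check that $\chi(e^K_G(N_j),g)$ is nonzero at every point of $\mathbb{P}(V_j)^g$.

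By the Euler sequence, $N_j\cong (V/V_j)(1)$ as a $G_{\mathfrak{s}}$-equivariant bundle on $\mathbb{P}(V_j)$, so $e^K_G(N_j)$ is a polynomial in the tautological line-bundle class with coefficients involving the exterior powers of the $G_{\mathfrak{s}}$-representations $V_u$, $u\neq j\pmod{p}$. Restricted to a $g$-fixed point of $\mathbb{P}(V_j)$, the bundle $N_j$ becomes the $G_{\mathfrak{s}}$-representation $\bigoplus_{u\neq j}V_u\otimes\mathbb{C}_\xi$ (via the splitting $s_j$), all of whose weights $\xi\omega^u$ are non-trivial as characters of $G_{\mathfrak{s}}$ because $u\not\equiv j\pmod p$. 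Hence this representation contains no trivial summand, and by the computation of the first paragraph $\chi(e^K_G(N_j),g) = \det(1-g^{-1}\colon N_j\to N_j)\neq 0$ at that point. Since this holds at every point of $\mathbb{P}(V_j)^g$, the class $e^K_G(N_j)$ is invertible in $S^{-1}K^*_G(\mathbb{P}(V_j)^g)\cong S^{-1}K^*_G(\mathbb{P}(V_j))$.
\end{proof}
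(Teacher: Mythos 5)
Your proof is correct, but it takes a genuinely different route from the paper's. The paper works entirely with the explicit presentation $S^{-1}K^*_G(\mathbb{P}(V_j)) \cong \mathbb{Q}(\omega)[\xi,\xi^{-1}]/\bigl((1-\omega^{-j}\xi^{-1})^{a_j}\bigr)$: it observes that $1-\omega^{-j}\xi^{-1}$ is nilpotent (its $a_j$-th power is the defining relation), writes each factor $1-\omega^{-i}\xi^{-1}$ of $e^K_G(N_j)$ as a nonzero scalar times $1-u$ with $u$ nilpotent, and inverts by a finite geometric series, thereby producing an explicit formula for the inverse in parallel with Lemma \ref{lem:einv}. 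You instead argue structurally: since $g$ acts trivially on $\mathbb{P}(V_j)$ the localised ring is $K^*(\mathbb{P}(V_j))\otimes\mathbb{Q}(\omega)$, a finite-dimensional local algebra in which a class is invertible exactly when its restriction to a point has nonzero character at $g$, and that character equals $\det(1-g^{-1})$ on the fibre of $N_j$, which is nonzero because every $g$-weight there is a nontrivial root of unity. Both arguments ultimately rest on the same fact --- the reduced part of $K^0(\mathbb{P}(V_j))\otimes\mathbb{Q}(\omega)$ is a nilpotent ideal --- which the paper makes explicit and you use implicitly in the step ``invertible iff character nowhere zero''; if you keep your version, add one sentence justifying that criterion (nilpotence of $\widetilde{K}^0$ of a finite complex, or equivalently that the ring is local with residue field $\mathbb{Q}(\omega)$). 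The paper's computation buys an explicit inverse; yours is shorter and transports verbatim to more general fixed loci.

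Two cosmetic slips to fix. First, in this section $G=\mathbb{Z}_n$ for an arbitrary positive integer $n$ and the eigenspaces $V_u$ are indexed by $u\in\mathbb{Z}_n$, so the relevant condition is $u\neq j$ in $\mathbb{Z}_n$, not $u\not\equiv j\pmod p$; the modulus $p$ belongs to the cohomological Lemma \ref{lem:einv}, where one localises at the order-$p$ subgroup. Second, the fibre of $N_j=(V/V_j)(1)$ at a point of $\mathbb{P}(V_j)$ carries the twist by $\mathcal{O}(1)$, whose $g$-weight is $\omega^{-j}$, so the $g$-weights on the fibre are $\omega^{u-j}$ rather than $\omega^{u}$; this is harmless for your conclusion, since $u\neq j$ is precisely what makes $\omega^{u-j}$ nontrivial, but the bookkeeping should be stated that way.
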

\begin{proof}
Let $a_j$ denote the rank of $V_j$. Then $e^K_G(V_j(1)) = (1-\tau^{-j} \xi^{-1})^{a_j}$, where $\xi$ denote the standard $1$-dimensional representation of $S^1$. Hence $K^*_G( \mathbb{P}(V_j) ) \cong R(G)[\xi,\xi^{-1}]/( e^K_G( V_j(1)))$. Hence 
\[
S^{-1}K^*_G( \mathbb{P}(V_j) ) \cong \frac{\mathbb{Q}(\omega)[\xi,\xi^{-1}]}{ (1 - \tau^{-j} \xi^{-1} )^{a_j} ) }.
\]
Let $\mathbb{C}_j$ denote the $1$-dimensional representation of $G$ where $g$ acts as multiplication by $\omega^j$. Then $N_j$ is a direct sum of line bundles of the form $\mathbb{C}_i(1)$ where $i \neq j$. So it suffices to show that $e^K_G( \mathbb{C}_i(1)) = 1 - \omega^{-i} \xi^{-1}$ is invertible. We have
\begin{align*}
1 - \omega^{-i}\xi^{-1} &= 1 - \omega^{-(i-j)} + \omega^{-(i-j)}(1 - \omega^{-j}\xi^{-1}) \\
&= (1-\omega^{-(i-j)})( 1 - u )
\end{align*}
where $u = (1-\omega^{i-j})^{-1}(1 - \omega^{-j} \xi^{-1})$. Then $u^{a_j} = 0$ in $S^{-1}K^*_G( \mathbb{P}(V_j) )$ because $0 = e^K_G( V_j(1) ) = (1-\omega^{-j} \xi^{-1})^{a_j}$. Therefore
\[
(1-\omega^{-i}\xi^{-1})^{-1} = (1-\omega^{-(i-j)})^{-1}( 1 + u + u^2 + \cdots ),
\]
where there are only finitely many terms on the right since $u$ is nilpotent.
\end{proof}

Equipped now with the $K$-theoretic equivalents of Lemmas \ref{lem:einv} and \ref{lem:etaj} and using the localisation theorem in $K$-theory, we obtain the $K$-theoretic equivalent of Theorem \ref{thm:loc}:

\begin{theorem}\label{thm:lock}
Let $f$ be a $G$-monopole map. Assume that $\mathbb{R} \oplus H^+$ can be given a $G$-invariant complex structure which we use to $K$-orient $H^+$. Then
\[
\chi( SW^{K,\phi}_{G,f}(\theta) , g ) = \chi( e^K_G(H^+/(H^+)^g) , g) \sum_{j=0}^{n-1} \chi( SW^{K,\phi}_{G,f^{s_j}}( e^K_{G_{\mathfrak{s}}}(D/D_j)^{-1} \theta , g).
\]
\end{theorem}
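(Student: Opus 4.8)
The plan is to follow the localisation argument in cohomology (Section~\ref{sec:locc}) almost verbatim, substituting equivariant complex $K$-theory for Borel cohomology throughout. The paper has already assembled the three ingredients needed: the localisation theorem in $K$-theory (which identifies $S^{-1}K^*_G(\mathbb{P}(V))$ with $\bigoplus_j S^{-1}K^*_G(\mathbb{P}(V_j))$ via restriction), the $K$-theoretic pushforward $\pi_*: K^*_G(\mathbb{P}(V)) \to K^{*-2a+2}_G(pt)$, and the key decomposition $\iota_j^*(\eta^{K,\phi}) = e^K_G(H^+/(H^+)^H)\, e^K_{G_{\mathfrak{s}}}(V'/V'_j)\, \eta^{K,\phi}_j$ (stated just before Lemma~\ref{lem:einvk}), together with Lemma~\ref{lem:einvk} asserting $e^K_G(N_j)$ is invertible in $S^{-1}K^*_G(\mathbb{P}(V_j))$.

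First I would form the class $\mu = \sum_{j=0}^{n-1} (\iota_j)_*\big( e^K_G(N_j)^{-1}\, \iota_j^*(\eta^{K,\phi})\, \theta\big) \in S^{-1}K^*_G(\mathbb{P}(V))$ and compute $\iota^*(\mu)$ using the projection formula $\iota_j^*(\iota_j)_*(\alpha) = e^K_G(N_j)\,\alpha$ (valid for a regularly embedded submanifold with $K$-oriented normal bundle), exactly as in the cohomological case; this gives $\iota^*(\mu) = \sum_j \iota_j^*(\eta^{K,\phi})\,\theta = \iota^*(\eta^{K,\phi}\theta)$. By the $K$-theoretic localisation theorem, $\iota^*$ is injective after inverting $S$, so $\mu = \eta^{K,\phi}\theta$ in $S^{-1}K^*_G(\mathbb{P}(V))$. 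Next I would apply $\pi_*$ and use naturality of pushforward along the fibre squares $\mathbb{P}(V_j) \hookrightarrow \mathbb{P}(V) \to pt$ to get
\[
\chi(SW^{K,\phi}_{G,f}(\theta), g) = \sum_{j=0}^{n-1} \chi\big( (\pi_{\mathbb{P}(V_j)})_*( e^K_G(N_j)^{-1}\, \iota_j^*(\eta^{K,\phi})\, \theta) , g\big),
\]
where I pass to characters at $g$ precisely so that the localised quantities (which live in $S^{-1}R(G)$) become honest complex numbers via $\chi(\,\cdot\,,g): S^{-1}R(G) \cong \mathbb{Q}(\omega)$.

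Then I would substitute the decomposition of $\iota_j^*(\eta^{K,\phi})$ and use $e^K_G(N_j) = e^K_{G_{\mathfrak{s}}}(V/V_j)$ together with $D_j = V_j - V'_j$, so that $e^K_{G_{\mathfrak{s}}}(V/V_j)^{-1} e^K_{G_{\mathfrak{s}}}(V'/V'_j) = e^K_{G_{\mathfrak{s}}}(D/D_j)^{-1}$ (after applying $\chi(\,\cdot\,,g)$, where both Euler classes are invertible since $D/D_j$ and its constituents contain no trivial $g$-isotypic part). The term $e^K_G(H^+/(H^+)^H) = e^K_G(H^+/(H^+)^g)$ pulls out of the sum, and recognising $(\pi_{\mathbb{P}(V_j)})_*(\eta^{K,\phi}_j\,\cdot\,) = SW^{K,\phi}_{G,f^{s_j}}$ yields the claimed formula. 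The main obstacle I anticipate is bookkeeping with $K$-orientations: the cohomological proof needed the sign factors $(-1)^b$, $(-1)^{b^H}$ and the reordering of summands in $U'$, and in $K$-theory these are replaced by the choices of spin$^c$-structures on $U$, $U'$, $H^+$ and the compatibility of the canonical spin$^c$-structures on $V$, $V'$, $N_j$. I would need to check, as in Lemma~\ref{lem:etaj}, that the $K$-theoretic Thom classes pull back correctly under the inclusion $S^{V_j, U^H} \hookrightarrow S^{V,U}$ and that the decomposition formula for $\iota_j^*(\eta^{K,\phi})$ holds with the spin$^c$-orientations dictated by $\mathbb{R}\oplus H^+$ having a $G$-invariant complex structure — this is the hypothesis that lets one $K$-orient everything coherently and is exactly what the statement assumes. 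Everything else is the same localisation formalism, so beyond the orientation check the proof is routine.
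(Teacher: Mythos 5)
Your proposal matches the paper's argument: the paper proves Theorem \ref{thm:lock} exactly by establishing the $K$-theoretic analogues of Lemma \ref{lem:einv} (invertibility of $e^K_G(N_j)$ in $S^{-1}K^*_G(\mathbb{P}(V_j))$, which is Lemma \ref{lem:einvk}) and of Lemma \ref{lem:etaj} (the restriction formula for $\eta^{K,\phi}$), and then running the same $\mu = \sum_j (\iota_j)_*(e^K_G(N_j)^{-1}\iota_j^*(\eta^{K,\phi})\theta)$ computation as in Theorem \ref{thm:loc}, with the hypothesis of a $G$-invariant complex structure on $\mathbb{R}\oplus H^+$ supplying the coherent $K$-orientations in place of the cohomological sign bookkeeping. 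Your plan is correct and essentially identical to the paper's.
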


\subsection{Free actions revisited}\label{sec:far}

In this section we will apply Theorem \ref{thm:lock} to the case of free actions. Let $X$ be a compact, oriented, smooth $4$-manifold with $b_1(X) = 0$. Let $G$ be a group which acts smoothly, orientation preservingly and freely on $X$. Assume that $b_+(X)^G = b_+(X/G) > 0$. Let $\mathfrak{s}$ be a $G$-invariant spin$^c$-structure and assume that $d(X,\mathfrak{s}) = 0$. Since $d(X,\mathfrak{s}) = 2d-b_+(X)-1 = 0$, this implies that $b_+(X)$ is odd.

In the case of free actions with $d(X,\mathfrak{s}) = 0$, we have a method of $K$-orienting $\mathbb{R} \oplus H^+(X/H)$ for every subgroup $H$. First, from the proof of Proposition \ref{prop:free} we see that $\mathbb{R} \oplus H^+(X)$ is isomorphic to several copies of the real regular representation. In fact, since $d(X , \mathfrak{s}) = 0$, there are $2d_0$ copies, where $d_0 = d/|G|$. Therefore, $\mathbb{R} \oplus H^+(X)$ can be (non-canonically) equipped with a $G$-invariant complex structure $I$ for which $\mathbb{R} \oplus H^+(X)$ is isomorphic as a complex representation to $d_0$ copies of the complex regular representation. Fix a choice of such a complex structure. We use this to define a $K$-orientation on $\mathbb{R}\oplus H^+(X)$. Then for any $H \subseteq G$, we have an isomorphism $\mathbb{R} \oplus H^+(X/H) \cong (\mathbb{R} \oplus H^+(X))^H$. Moreover, $(\mathbb{R} \oplus H^+(X))^H$ is a complex subspace of $\mathbb{R} \oplus H^+(X)$ and hence it has an induced complex structure and a $K$-orientation.

\begin{theorem}
Let $X$ be a compact, oriented, smooth $4$-manifold with $b_1(X) = 0$. Let $G$ be a finite group which acts smoothly and freely on $X$. Let $\mathfrak{s}$ be a spin$^c$-structure whose isomorphism class is fixed by $G$ and suppose that $d(X,\mathfrak{s}) \ge 0$. Assume that $b_+(X)^G > 0$ and if $b_+(X)^G = 1$ then fix a chamber $\phi$. 

Fix a choice of $G$-invariant complex structure on $\mathbb{R} \oplus H^+(X)$ for which $\mathbb{R} \oplus H^+(X)$ is isomorphic to a sum of copies of the complex regular represention and use this to $K$-orient $\mathbb{R} \oplus H^+(X/H)$ for every $H \subseteq G$, as described above. Then we have
\[
\sum_{g \in G} \sum_{s'} SW(X/\langle g \rangle , \mathfrak{s}' ) = 0 \; ({\rm mod} \; |G|)
\]
where the second sum is over spin$^c$-structures on $X/\langle g \rangle$ whose pullback to $X$ is isomorphic to $\mathfrak{s}$. Equivalently, we have
\[
\sum_{(H) \text{ cyclic}} \phi(|H|) |G/N_G H| \sum_{\mathfrak{s}' | q_H^*(\mathfrak{s}') \cong \mathfrak{s}} SW(X/H , \mathfrak{s}') = 0 \; ({\rm mod} \; |G|)
\]
where the first sum is over conjugacy classes of cyclic subgroups of $G$ and the second sum is over spin$^c$-structures on $X/H$ whose pullback to $X$ is isomorphic to $\mathfrak{s}$. If $b_+(X/H) = 1$ then $SW(X/H , \mathfrak{s}')$ is defined using the chamber $\phi$.
\end{theorem}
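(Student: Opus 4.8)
The plan is to deduce both congruences from a single virtual representation, the value of the $K$-theoretic equivariant Seiberg--Witten invariant on the trivial class, by computing its character at each group element.

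\emph{Setup and reduction.} We take $d(X,\mathfrak{s})=0$, which makes $b_+(X)$ odd so the $K$-theoretic invariant is defined (the case of odd $d(X,\mathfrak{s})$ is trivial, and the general case follows the same pattern). Put $\mu = SW^{K,\phi}_{G,X,\mathfrak{s}}(1)\in R(G)$, where the fixed $G$-invariant complex structure on $\mathbb{R}\oplus H^+(X)$ supplies the $K$-orientation of $H^+(X)$ as well as the $K$-orientation of $\mathbb{R}\oplus H^+(X/H)\cong(\mathbb{R}\oplus H^+(X))^H$ for every $H\subseteq G$. I claim it suffices to prove, for every $g\in G$,
\[
\chi(\mu,g)\;=\;\sum_{\mathfrak{s}'}SW\bigl(X/\langle g\rangle,\mathfrak{s}'\bigr),
\]
where $\mathfrak{s}'$ runs over spin$^c$-structures on $X/\langle g\rangle$ with $q_{\langle g\rangle}^*\mathfrak{s}'\cong\mathfrak{s}$, each quotient invariant taken with the chosen $K$-orientation. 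Granting this, $\sum_{g\in G}\chi(\mu,g)=|G|\,\dim_{\mathbb{C}}\mu^G\equiv 0\;({\rm mod}\;|G|)$, which is the first congruence. Since the right-hand side depends only on $\langle g\rangle$, regrouping $\sum_{g\in G}$ by conjugacy classes of cyclic subgroups — the class of a cyclic $H$ contributing $|G/N_GH|$ subgroups, each with $\phi(|H|)$ generators, and $\sum_{\mathfrak{s}'}SW(X/H,\mathfrak{s}')$ being conjugation-invariant — yields the second congruence. The identity for $g=1$ is immediate: restricting the $G_{\mathfrak{s}}$-action to the central $S^1$ and using the restriction square together with the trivial-group property of the $K$-theoretic invariants gives $\chi(\mu,1)=\dim_{\mathbb{C}}\mu=SW(X,\mathfrak{s})$, and the only spin$^c$-structure on $X/\langle 1\rangle=X$ restricting to $\mathfrak{s}$ is $\mathfrak{s}$ itself.

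\emph{The element $g\neq 1$.} Let $H=\langle g\rangle$. By the restriction square, $\chi(\mu,g)=\chi\bigl(SW^{K,\phi}_{H,X,\mathfrak{s}}(1),g\bigr)$. Apply the $K$-theoretic localization theorem (Theorem~\ref{thm:lock}) to the $H$-monopole map $f$ of $(X,\mathfrak{s})$ with $\theta=1$; its hypothesis holds because for free actions $\mathbb{R}\oplus H^+$ carries the fixed $H$-invariant complex structure (Proposition~\ref{prop:free}). This writes $\chi(\mu,g)$ as
\[
\chi\bigl(e^K_H(H^+/(H^+)^g),g\bigr)\sum_{j}\chi\Bigl(SW^{K,\phi}_{H,f^{s_j}}\bigl(e^K_{H_{\mathfrak{s}}}(D/D_j)^{-1}\bigr),g\Bigr),
\]
the sum over the splittings $s_j\colon H\to H_{\mathfrak{s}}$, which (as $H$ is cyclic, so $H_{\mathfrak{s}}$ splits) are in bijection with the spin$^c$-structures $\mathfrak{s}_{s_j}$ on $X/H$ restricting to $\mathfrak{s}$. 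Since the action is free, Proposition~\ref{prop:freeinv} identifies $f^{s_j}$ with the ordinary Bauer--Furuta map of $(X/H,\mathfrak{s}_{s_j})$, carrying the trivial $H$-action and with $d(X/H,\mathfrak{s}_{s_j})=d(X,\mathfrak{s})/|H|=0$; in particular $SW^{K,\phi}_{H,f^{s_j}}$ is $R(H)$-linear and is controlled by the non-equivariant invariants of $X/H$. It remains to evaluate the Euler-class factors and the pushforward over $\mathbb{P}(V_j)$, and here one uses the representation theory of a free action (Proposition~\ref{prop:free}): $D$ restricted to each $s_j(H)$ is $d/|H|$ copies of the complex regular representation, and $\mathbb{R}\oplus H^+(X)$ is $(b_+(X)+1)/|H|$ copies of $\mathbb{R}[H]$. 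Feeding these into $e^K_{H_{\mathfrak{s}}}(D/D_j)^{-1}$, $e^K_H(H^+/(H^+)^g)$ and the pushforward, the contributions of all nonzero powers of $\xi$ cancel and the $j$-th term collapses to $SW(X/H,\mathfrak{s}_{s_j})$, computed with the $K$-orientation on $H^+(X/H)=(\mathbb{R}\oplus H^+(X))^H/\mathbb{R}$ induced by the fixed complex structure. Summing over $j$ proves the displayed identity.

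\emph{Main obstacle.} The technical core is this last computation: tracking the $K$-theoretic Euler classes and induced $K$-orientations so that the localization formula collapses, term by term, to the honest Seiberg--Witten invariant of the quotient with precisely the $K$-orientation stated. This is exactly where $K$-theory outperforms cohomology: the $K$-theoretic localization theorem together with the complex structure on $\mathbb{R}\oplus H^+$ settles the orientation comparison for an arbitrary finite group, with no need for the condition $d_\chi\neq 1$ that Proposition~\ref{prop:compareorn} — and hence Theorem~\ref{thm:fixH} and the odd-order congruence of Theorem~\ref{thm:swcong} — requires.
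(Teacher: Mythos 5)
Your proposal is correct and follows essentially the same route as the paper: both take $\mu = SW^{K,\phi}_{G,X,\mathfrak{s}}(1) \in R(G)$, evaluate its character at each $g$ via the $K$-theoretic localisation theorem, use freeness (regular-representation structure of $D$ and $\mathbb{R}\oplus H^+$, plus Proposition \ref{prop:freeinv}) to collapse each localised term to $SW(X/\langle g\rangle,\mathfrak{s}_{s_j})$ after the Euler factors cancel, and conclude from $\sum_g \chi(\mu,g)\equiv 0 \pmod{|G|}$. The only step you leave schematic — the explicit cancellation $\chi(e^K_H(H^+/(H^+)^H),g)\cdot\chi(e^K_{H_{\mathfrak{s}}}(D/D_j),g)^{-1}=1$ using $\prod_{i=1}^{n-1}(1-\omega^{\pm i})=n$ — is carried out in full in the paper and works exactly as you indicate.
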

\begin{proof}
Let $f$ be the $G$-equivariant Bauer--Furuta monopole map of $(X,\mathfrak{s})$. Let $M = SW^{K}_{G , f}(1) \in R(G)$. The restriction of $M$ to $R(1) \cong \mathbb{Z}$, which is the rank of $M$, is just the ordinary ($K$-theoretic) Seiberg--Witten invariant of $(X,\mathfrak{s})$. Since $d(X,\mathfrak{s}) = 0$, it follows that this equals the usual Seiberg--Witten invariant $SW(X,\mathfrak{s})$ (see \cite[\textsection 6]{bk}). Thus $\chi( M , 1) = SW(X , \mathfrak{s})$.

Let $g \in G$ have order $n$. Set $H = \langle g \rangle$. Since $H$ is cyclic, we can choose a splitting $H_{\mathfrak{s}} \cong S^1 \times H$. Then as in Section \ref{sec:lock}, let $s_j$ denote the splitting given by $s_j(g) = (\omega^{-j} , g)$, $\omega = e^{2\pi i /n}$. Let $\mathbb{C}_j$ be the $1$-dimensional representation of $H$ where $g$ acts as multiplication by $\omega^j$. Use the splitting $s_0$ to regard $D$ as a representation of $H$. Since the action is free, we have 
\begin{equation}\label{equ:dsum}
D \cong \bigoplus_{j=0}^{n-1} \mathbb{C}_j^{d/n}.
\end{equation}
Recall that we have fixed a complex structure on $\mathbb{R} \oplus H^+(X)$ for which $\mathbb{R} \oplus H^+(X)$ is isomorphic to a sum of copies of the regular representation of $G$. Restricting to $H$, we therefore have that $\mathbb{R} \oplus H^+(X)$ as a representation of $H$ is isomorphic to a sum of copies of the regular representation. Hence
\begin{equation}\label{equ:hplus}
H^+/(H^+)^H \cong \bigoplus_{j=1}^{n-1} \mathbb{C}_j^{d/n}.
\end{equation}

Consider the reduced monopole map $f^{s_j}$. Its $H$-equivariant $K$-theoretic Seiberg--Witten invariant takes the form of a map of $R(H)$-modules:
\[
SW^K_{H , f^{s_j} } : R(H)[\xi , \xi^{-1}] \to R(H).
\]
We have $R(H) \cong \mathbb{Z}[t]/(t^n-1)$, where $t = [\mathbb{C}_1]$. By Proposition \ref{prop:freeinv}, it follows that $SW^K_{H , f^{s_j}}(1) = SW(X/H , \mathfrak{s}_{s_j})$. Under the splitting $s_j$, $s_j^*(\xi) = t^{-j}$, hence $s_j^*( t^j \xi ) = 1$. This implies that $SW^K_{H,f^{s_j}}( t^j \xi ) = SW(X/H , \mathfrak{s}_{s_j})$, hence $SW^K_{H,f^{s_j}}( \xi ) = t^{-j} SW(X/H , \mathfrak{s}_{s_j})$ and more generally $SW^K_{H,f^{s_j}}( \xi^m ) = t^{-mj} SW(X/H , \mathfrak{s}_{s_j})$ for any $m$. Therefore,
\[
\chi( SW^K_{H , f^{s_j}}( \xi^m ) , g ) = \omega^{-mj} SW(X/H , \mathfrak{s}_{s_j}).
\]

Theorem \ref{thm:lock} gives
\begin{equation}\label{equ:chimg}
\chi(M , g) = \chi( e^K_H( H^+/ (H^+)^H ) , g) \sum_{j=0}^{n-1} \chi( SW^K_{H , f^{s_j}}( e^K_{H_\mathfrak{s}}( D/D_j )^{-1} ) , g ).
\end{equation}
From Equation (\ref{equ:dsum}) we have
\[
e^K_{H_{\mathfrak{s}}}( D/D_j) = \prod_{i \neq j} ( 1 - \omega^{-i} \xi^{-1})^{d/n}
\]
and hence
\begin{align*}
\chi( SW^K_{H , f^{s_j}}( e^K_{H_{\mathfrak{s}}}(D/D_j)^{-1} , g ) &= \left( \prod_{i \neq j} ( 1 - \omega^{j-i})^{-d/n} \right) SW(X/H , \mathfrak{s}_{s_j}) \\
&= \left( \prod_{i=1}^{n-1} (1- \omega^j) \right)^{-d/n} SW(X/H , \mathfrak{s}_{s_j}).
\end{align*}

From Equation (\ref{equ:hplus}) we have
\[
e^K_H(  H^+/ (H^+)^H ) = \prod_{i=1}^{n-1} (1 - t^{-i})^{d/n},
\]
hence
\[
\chi( e^K_H( H^+/(H^+)^H) , g ) = \left( \prod_{i=1}^{n-1} (1- \omega^{-i} ) \right)^{d/n}.
\]
Substituting into Equation (\ref{equ:chimg}) gives
\[
\chi(M,g) = \sum_{j=0}^{n-1} SW(X/H , \mathfrak{s}_{s_j}).
\]

Now since $\sum_{g \in G} \chi(M , g) = 0 \; ({\rm mod} \; |G|)$, we have shown that
\[
\sum_{g \in G} \sum_{s'} SW(X/\langle g \rangle , \mathfrak{s}' ) = 0 \; ({\rm mod} \; |G|).
\]
\end{proof}

\subsection{A constraint on smooth group actions}\label{sec:constraint}

\begin{theorem}\label{thm:fang}
Let $X$ be a compact, oriented, smooth $4$-manifold with $b_1(X) = 0$. Let $G$ be a finite group acting on $X$ by orientation preserving diffeomorphisms. Let $\mathfrak{s}$ be a $G$-invariant spin$^c$-structure. Assume that $b_+(X)^G > 0$ and that $d(X,\mathfrak{s}) = 0$, hence $b_+(X)$ is odd. Suppose also that $\mathbb{R} \oplus H^+(X)$ can be equipped with a $G$-invariant complex structure. Suppose that $SW(X,\mathfrak{s}) \neq 0 \; ({\rm mod} \; |G|)$. Then for some non-trivial cyclic subgroup $\{1 \} \neq H \subseteq G$ and some splitting $s : H \to G_{\mathfrak{s}}$, we have $2 \, dim_{\mathbb{C}} ( D^{sH} ) > dim_{\mathbb{R}}( H^+(X)^H)$.
\end{theorem}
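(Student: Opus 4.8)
The plan is to argue by contradiction using the $K$-theoretic localisation theorem (Theorem \ref{thm:lock}) together with the observation that $SW(X,\mathfrak{s})$ appears as the rank of the class $M = SW^{K,\phi}_{G,X,\mathfrak{s}}(1) \in R(G)$, so that $\chi(M,1) = SW(X,\mathfrak{s})$. Since $\sum_{g \in G} \chi(M,g) \equiv 0 \pmod{|G|}$ (this is the standard fact that $\frac{1}{|G|}\sum_g \chi(M,g)$ is the multiplicity of the trivial representation, hence an integer), if $SW(X,\mathfrak{s}) = \chi(M,1) \not\equiv 0 \pmod{|G|}$ then there must exist some $g \neq 1$ with $\chi(M,g) \neq 0$. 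Fix such a $g$ and set $H = \langle g \rangle$, a non-trivial cyclic subgroup.

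Next I would apply Theorem \ref{thm:lock} to compute $\chi(M,g)$: it gives
\[
\chi(M,g) = \chi(e^K_H(H^+/(H^+)^g),g) \sum_{j} \chi\bigl(SW^{K,\phi}_{H,f^{s_j}}(e^K_{H_{\mathfrak{s}}}(D/D_j)^{-1}),g\bigr),
\]
where the sum runs over splittings $s_j : H \to H_{\mathfrak{s}}$. Since $\chi(M,g) \neq 0$, at least one summand is non-zero. In particular $\chi(e^K_H(H^+/(H^+)^g),g) \neq 0$, which is automatic (it equals $\det(1 - g^{-1})$ on $H^+/(H^+)^g$, a non-zero element of the cyclotomic field), but more importantly $\chi(SW^{K,\phi}_{H,f^{s_j}}(\,\cdot\,),g) \neq 0$ for some $j$. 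This means the $H$-reduced $K$-theoretic Seiberg--Witten invariant of $f^{s_j}$ is non-zero. Now recall that $f^{s_j}$ is a reduced monopole map with index $D^{s_jH}$ and obstruction space $H^+(X)^H$; the corresponding reduced invariant can only be non-zero if the associated moduli space has non-negative expected dimension, i.e. if $2\,\dim_{\mathbb{C}}(D^{s_jH}) - \dim_{\mathbb{R}}(H^+(X)^H) - 1 \geq -1$, equivalently $2\,\dim_{\mathbb{C}}(D^{s_jH}) \geq \dim_{\mathbb{R}}(H^+(X)^H)$.

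To upgrade the inequality to a strict one, I would look more carefully at the reduced invariant in the boundary case $2\,\dim_{\mathbb{C}}(D^{s_jH}) = \dim_{\mathbb{R}}(H^+(X)^H)$, where the reduced expected dimension is $-1$. When the expected dimension is odd and negative, the reduced Seiberg--Witten invariant $SW^{K,\phi}_{H,f^{s_j}}$ is forced to vanish by the structure of the invariant (for an ordinary monopole map the invariant lives in degrees differing from the expected one by an even amount, cf.\ the trivial-group case in Theorem \ref{thm:propb}(3) and the parity discussion around Definition \ref{def:swc}); alternatively, when $b_+(X)^H > 0$ one can perturb to make the reduced moduli space a genuine manifold of dimension $2\,\dim_{\mathbb{C}}(D^{s_jH}) - \dim_{\mathbb{R}}(H^+(X)^H) - 1$, which would be empty if this number is negative. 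Either way the summand vanishes, contradicting $\chi(M,g) \neq 0$. Hence the inequality is strict: $2\,\dim_{\mathbb{C}}(D^{s_jH}) > \dim_{\mathbb{R}}(H^+(X)^H)$, with $s = s_j$ the required splitting. The main obstacle I anticipate is the careful bookkeeping needed to justify the vanishing of the reduced invariant in the exact boundary case; one must make sure the parity/dimension argument is airtight, in particular handling the possibility $b_+(X)^H = 0$ separately (in which case there may be no chamber for $f^{s_j}$ at all, and one should check that this forces the relevant term out of the sum or is covered by an alternative dimension count).
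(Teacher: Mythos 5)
Your proposal is correct and follows essentially the same route as the paper's proof: set $M = SW^{K}_{G,f}(1)$, use $\sum_{g}\chi(M,g) \equiv 0 \pmod{|G|}$ to produce $g \neq 1$ with $\chi(M,g) \neq 0$, apply the $K$-theoretic localisation theorem, and conclude that some reduced invariant $SW^{K,\phi}_{H,f^{s}}$ is non-zero, which (since $sH$ acts trivially on the domain and codomain of $f^{s}$ and $b_+(X)^H \geq b_+(X)^G > 0$, so transversality yields an empty moduli space whenever the expected dimension is negative) forces $2\dim_{\mathbb{C}}(D^{sH}) - \dim_{\mathbb{R}}(H^+(X)^H) - 1 \geq 0$ and hence the strict inequality. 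The boundary case you labour over at the end is vacuous: non-negativity of the expected dimension already gives the strict inequality by integrality, and in any case the $G$-invariant complex structure on $\mathbb{R}\oplus H^+(X)$ forces $\dim_{\mathbb{R}}(H^+(X)^H)$ to be odd, so the expected dimension is even and can never equal $-1$; likewise $b_+(X)^H = 0$ cannot occur because $H^+(X)^G \subseteq H^+(X)^H$.
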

\begin{proof}
Let $f$ be the $G$-equivariant Bauer--Furuta monopole map of $(X,\mathfrak{s})$. Let $M = SW^{K}_{G , f}(1) \in R(G)$. Then $\chi(M,1) = SW(X,\mathfrak{s})$. The complex structure on $\mathbb{R} \oplus H^+(X)$ yields a complex structure on $\mathbb{R} \oplus H^+(X)^H$ for every $H \subseteq G$ and hence a $K$-orientation.

Let $g \in G$ have order $n$. Theorem \ref{thm:lock} gives
\[
\chi(M,g) = \chi( e^K_H(H^+/(H^+)^H) , g) \sum_{s} \chi( SW^{K,\phi}_{H,f^{s}}( e^K_{G_{\mathfrak{s}}}(D/D^{sH})^{-1} \theta , g)
\]
where the sum is over splittings $s : H \to H_{\mathfrak{s}}$. Now since $s(H)$ acts trivially on the domain and codomain of $f^{s}$, there are no obstructions to achieving equivariant transversality. The expected dimension of the moduli space of $sH$-invariant solutions to the Seiberg--Witten equations is $2 \, dim_{\mathbb{C}}( D^{sH} ) - dim_{\mathbb{R}}( H^+(X)^H )  - 1$. If this is negative then $SW^K_{H,f^{s}} = 0$.

Now we re-write the congruence $\sum_{g \in G} \chi( M , g ) = 0 \; ({\rm mod} \; |G| )$ as
\[
SW(X,\mathfrak{s}) = -\sum_{g \neq 1} \chi(M,g) \; ({\rm mod} \; |G|).
\]
Hence if $SW(X,\mathfrak{s}) \neq 0 \; ({\rm mod} \; |G|)$ then $\chi(M,g) \neq 0$ for some $g \in G \setminus \{1\}$ and thus $2 \, dim_{\mathbb{C}} ( D^{sH} ) > dim_{\mathbb{R}}( H^+(X)^H)$ for some splitting $s$ of $H$.
\end{proof}

\subsection{Divisibility conditions}\label{sec:divc}

Let $f : S^{V,U} \to S^{V',U'}$ be an ordinary monopole map over a point, $V = \mathbb{C}^a, V' = \mathbb{C}^{a'}$, $U = \mathbb{R}^b, U' = \mathbb{R}^{b'}$, $d = a-a'$, $b_+ = b'-b$. Assume that $b_+ \ge 1$ and that $2d-b_+ - 1 = 2m$ is even and non-negative. Choose an orientation on $H^+$ and choose a chamber if $b_+ = 1$. Then the abstract Seiberg--Witten invariant of $f$ is defined and given by $SW_f(x^m) \in H^0(pt ; \mathbb{Z}) = \mathbb{Z}$. Let us denote it by $SW(f)$ for simplicity. If $f$ is the Bauer--Furuta monopole map of $(X,\mathfrak{s})$, then $SW(f) = SW(X,\mathfrak{s})$ is the usual Seiberg--Witten invariant.

\begin{theorem}\label{thm:div}
Let $f : S^{V,U} \to S^{V',U'}$ be an ordinary monopole map over a point and let $p$ be a prime. If $SW(f) \neq 0 \; ({\rm mod} \; p)$, then $(b_+ -1)/2$ is divisible by $p^{e+1}$ whenever $p^e \le \left \lfloor \frac{m}{p-1} \right\rfloor$, where $2d-b_+-1 = 2m$. In particular, if $SW(f) \neq 0 \; ({\rm mod} \; p)$ and $b_+ \neq 1 \; ({\rm mod} \; 2p)$ then $m \le p-2$.
\end{theorem}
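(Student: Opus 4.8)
The plan is to detect $SW(f)\bmod p$ inside the $\mathbb{Z}/p$-equivariant invariants of the $p$-fold smash power $F:=f^{\wedge p}:S^{pV,pU}\to S^{pV',pU'}$, on which $\mathbb{Z}/p$ acts by cyclically permuting the $p$ factors. First I would check that $F$ is a $\mathbb{Z}/p$-monopole map in the sense of Definition \ref{def:mm}, with split extension $G_{\mathfrak{s}}=S^1\times\mathbb{Z}/p$ (the $S^1$ acting diagonally by scalars), index $D_F=D\otimes_{\mathbb{C}}\mathbb{C}[\mathbb{Z}/p]$ and $H^+_F=H^+\otimes_{\mathbb{R}}\mathbb{R}[\mathbb{Z}/p]$, where $D,H^+$ are the data of $f$; since $b_+(F)^{\mathbb{Z}/p}=b_+\ge 1$ a chamber $\phi_F$ exists, extending the given chamber of $f$. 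The elementary observation driving everything is that for each of the $p$ splittings $s_j:\mathbb{Z}/p\to G_{\mathfrak{s}}$ the fixed-point restriction $F^{s_j}$ is $S^1$-equivariantly identified with $f$ itself (the $\sigma$-eigenspaces of $pV$ are each a copy of $V$, and $f$ is $S^1$-equivariant, so its $U'$-component is unchanged along eigendirections), whence all the reduced invariants of $F$ agree with the ordinary one: $\overline{SW}^{s_j}_{\mathbb{Z}/p,F}=SW(f)$.

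Next I would apply the localisation theorem \ref{thm:loc} to $F$ with $H=G=\mathbb{Z}/p$ and reduce mod $p$. Using $\prod_{i=0}^{p-1}(x+iv)\equiv x^p-xv^{p-1}\pmod p$ one finds $e_{\mathbb{Z}/p}(H^+_F/(H^+_F)^{\sigma})=h\,v^{b_+(p-1)/2}$ with $h=\prod_{j=1}^{(p-1)/2}j^{b_+}$ a unit mod $p$ ($h=1$ for $p=2$, where one works with the local system $\mathbb{Z}_w$, $w=w_1(H^+_F)$, as \ref{thm:loc} allows), while the factors $e_{G_{\mathfrak{s}}}((D_F)/(D_F)_j)^{-1}$ expand into explicit binomial series as in the proof of Lemma \ref{lem:einv}. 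Carrying this out, and collapsing the sum over the $p$ eigenvalues with the elementary congruence $\sum_{j=0}^{p-1}j^{N}\equiv -1$ or $0\pmod p$ according as $(p-1)\mid N$ or not, yields a congruence of the shape
\[
SW^{\phi_F}_{\mathbb{Z}/p,\,F}(x^{M+t})\ \equiv\ h\,(-1)^{d+1}\,SW(f)\,S_t\,v^{t}\pmod p ,
\]
where $M=(p(2m+1)-1)/2$ is the lowest exponent landing in non-negative degree, $S_t=\sum_{l\ge 0}\binom{d}{\,d-L_t-l(p-1)\,}\binom{d+l-1}{l}$, and $S_t=0$ unless $t=(p-1)/2+s(p-1)$ for some $s\ge 0$, in which case $L_t=(p-1)(m+1+s)$.

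Finally I would play this against an independent evaluation of $SW^{\phi_F}_{\mathbb{Z}/p,F}$. Its restriction to the trivial subgroup is $SW(f^{\wedge p})$, which vanishes by connected-sum vanishing (both factors have $b_+>0$), so the $t=0$ term is zero; but more importantly, by the abstract smash-product gluing formula of Section \ref{sec:agf} — equivalently, when $m\ge(b_+-1)/2$ so that equivariant transversality holds by Theorem \ref{thm:eqtr}/Example \ref{ex:eqtr}, by integrating over the moduli space $\widetilde{\mathcal M}^{\times p}/S^1$ — the equivariant invariant $SW^{\phi_F}_{\mathbb{Z}/p,F}$ is the $p$-th power operation applied to $SW(f)$, hence is non-zero mod $p$ in some positive degree whenever $SW(f)\neq 0\pmod p$. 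Comparing with the congruence above forces $S_t\not\equiv 0\pmod p$ for at least one admissible $t=(p-1)/2+s(p-1)$. Writing $d=m+1+\beta$ with $\beta=(b_+-1)/2$, a summand of $S_t$ survives mod $p$ only if $\binom{m+1+\beta}{\,m+1+\beta-(p-1)(m+1+s+l)\,}$ and $\binom{m+\beta+l}{l}$ are both non-zero mod $p$; by Lucas's theorem this forces the base-$p$ digits of the relevant multiples of $p-1$ to be dominated by those of $m+1+\beta$, and sweeping $s$ over its admissible range $0\le s\le\lfloor(\beta-(p-2)(m+1))/(p-1)\rfloor$ — which contains about $\lfloor m/(p-1)\rfloor$ successive carry-free "levels" — forces $p^{e+1}\mid\beta$ whenever $p^e\le\lfloor m/(p-1)\rfloor$. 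The "in particular" clause is the case $e=0$: if $m\ge p-1$ and $p\nmid\beta$ then no admissible $S_t$ survives mod $p$, so $SW^{\phi_F}_{\mathbb{Z}/p,F}$ would vanish identically, contradicting the power-operation non-vanishing; hence $m\le p-2$. The main obstacle is the step identifying which $S_t$ must be non-zero — i.e. making precise and proving that the $p$-th power operation on $f^{\wedge p}$ genuinely detects $SW(f)$ mod $p$ — together with organising the Lucas-theorem bookkeeping so that precisely the asserted range of $p$-powers divides $(b_+-1)/2$.
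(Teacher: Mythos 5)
Your strategy — detect $SW(f)\bmod p$ via the $\mathbb{Z}/p$-equivariant invariants of the cyclic $p$-fold smash power $F=f^{\wedge p}$ — is the extended-power construction that underlies Steenrod operations, so it is morally close to what the paper does; but as written it has a critical gap at the step you yourself flag. Your argument needs \emph{two} independent evaluations of $SW^{\phi_F}_{\mathbb{Z}/p,F}$ to compare: one from localisation (Theorem \ref{thm:loc}), and a second one showing the invariant is non-zero mod $p$ in some positive degree whenever $SW(f)\neq 0\pmod p$. The second evaluation is not supplied. The gluing formula of Section \ref{sec:agf} (Theorem \ref{thm:glue}) does not apply: it concerns a smash product $f_1\wedge f_2$ of two maps equivariant for the \emph{same} $G_{\mathfrak{s}}$-action, whereas the permutation action on $f^{\wedge p}$ preserves no splitting into two smash factors; the relevant computation in the paper for $\mathrm{ind}^G_1(f)$ (the proof of Theorem \ref{thm:sump}, Corollary \ref{cor:psumx}) is itself carried out \emph{by} Theorem \ref{thm:loc}, so it is the same evaluation you already have, not an independent one. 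Restriction to the trivial subgroup gives nothing either, since $H^{>0}_{\mathbb{Z}/p}(pt)\to H^{>0}(pt)=0$ and the degree-zero part vanishes on both sides ($SW(f^{\wedge p})=0$ because both smash factors have $b_+>0$). The assertion that ``the $p$-th power operation genuinely detects $SW(f)$ mod $p$'' is essentially equivalent to the non-vanishing of some $S_t$, i.e.\ to the conclusion you are trying to extract, so the argument as structured is circular unless that step is proved by other means. The closing Lucas-theorem bookkeeping is also only asserted, not derived.

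The paper's proof sidesteps this entirely by working with a single identity rather than two evaluations. Over a point one has $f^*(\tau^\phi_{V',U'})=SW(f)\,\delta\tau_U\,x^{a-1-m}$ in mod-$p$ $S^1$-equivariant cohomology; applying the total Steenrod power $P_t$ to both sides, using $P_t(\tau_W)=q_t(W)\tau_W$ with $q_t(\mathbb{C})=1+tx^{p-1}$, $q_t(\mathbb{R})=1$ and $P_t(x)=(1+tx^{p-1})x$, and cancelling $SW(f)$ (non-zero mod $p$) yields
\[
(1+tx^{p-1})^{d-1-m}x^{a-1-m}=x^{a-1-m}\quad\text{in }\mathbb{Z}_p[x][[t]]/(x^a),
\]
hence $\binom{d-1-m}{j}\equiv 0\pmod p$ for $1\le j\le m/(p-1)$; taking $j=p^u$ for $u=0,\dots,e$ forces $p^{e+1}\mid d-1-m=(b_+-1)/2$. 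Here the equality of the two sides after applying $P_t$ is automatic (it is one equation being transformed), which is exactly the independent input your plan is missing. If you want to pursue your route, the missing ingredient is a genuine second computation of $SW^{\phi_F}_{\mathbb{Z}/p,F}$ — e.g.\ via equivariant transversality and an honest power-operation evaluation over $\widetilde{\mathcal{M}}^{\times p}/S^1$ — and that is where all the real work lies.
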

\begin{proof}
We give the proof in the case $p$ is odd. The case $p=2$ is similar. We will make use of the Steenrod powers $P^i$. Using the Borel model, the Steenrod powers can be defined on $S^1$-equivariant cohomology with $\mathbb{Z}_p$-coefficients. Let $W$ be an $S^1$-equivariant vector bundle on $B$ with Thom class $\tau_B$. Then
\begin{equation}\label{equ:pjthom}
P^j(\tau_W) = q_j(W) \tau_W
\end{equation}
for some characteristic class $q_j(W) \in H^{2(p-1)j}_{S^1}(B ; \mathbb{Z}_p)$. Setting $q_t = q_0 + tq_1 + \cdots $, one finds \cite[Chapter 19]{ms} that if $W$ is complex with Chern roots $\{ a_i \}$ then:
\[
q_t(W) = \prod_i ( 1 + ta_i^{p-1}).
\]
Setting $P_t = P^0 + tP^1 + \cdots $, we have $P_t(\tau_W) = q_t(W) \tau_W$.

Considering $\mathbb{C}$ with the standard $S^1$-action and $\mathbb{R}$ with the trivial $S^1$-action as $S^1$-equivariant vector bundles over $B = pt$, we have
\[
q_t(\mathbb{C}) = 1+tx^{p-1}, \quad q_t(\mathbb{R}) = 1.
\]

Since $deg(x) = 2$, we have $P^1(x) = x^p$ and $P^j(x) = 0$ for $j > 1$. Hence $P_t(x) = (1+tx^{p-1})x$. Consider $f^* : H^*_{S^1}( S^{V',U'} ; S^{U} ; \mathbb{Z}_p) \to H^*_{S^1}( S^{V,U} ; S^{U} ; \mathbb{Z}_p)$. Suspending if necessary, we can assume $U$ is even-dimensional. Then $f^*( \tau^\phi_{V',U'} ) = \delta \tau_U \eta^\phi$, where $\eta^\phi = SW(f) x^{a-1 - m}$. Applying $P_t$ to both sides of
\[
SW(f) \delta \tau_U x^{a-1-m} = f^*( \tau^\phi_{V',U'})
\]
gives
\begin{align*}
(1+tx^{p-1})^{a-1-m} SW(f) \delta \tau_U x^{a-1-m} &= (1+tx^{p-1})^{a'} f^*( \tau^\phi_{V',U'}) \\
&= (1+tx^{p-1})^{a'} SW(f) \delta \tau_U x^{a-1-m}.
\end{align*}
(To prove this we used that Equation (\ref{equ:pjthom}) also holds for the refined Thom class $\tau^\phi_{V',U'}$. This follows by a straightforward extension of the usual proof. We also used that $P_t$ commutes with the coboundary operator $\delta$).

If $SW(f) \neq 0 \; ({\rm mod} \; p)$, then the above equation reduces to
\[
(1+tx^{p-1})^{a-1-m} x^{a-1-m} = (1+tx^{p-1})^{a'} x^{a-1-m}.
\]
This is an equality in $H^*( \mathbb{P}(V) ; \mathbb{Z}_p)[[t]] \cong \mathbb{Z}_p[x][[t]]/( x^a )$. Multiplying both sides by $(1+tx^{p-1})^{-a'} = 1 -a't x^{p-1} + \cdots $ gives
\[
(1+tx^{p-1})^{d-1-m} x^{a-1-m} = x^{a-1-m} \text{ in } \mathbb{Z}_p[x][[t]]/(x^a).
\]
Expanding on the left, we get
\[
\binom{d-1-m}{j} = 0 \; ({\rm mod} \; p) \text{ whenever } 1 \le j \le m/(p-1).
\]

Let $p^e \le \lfloor m/(p-1) \rfloor$. Setting $j = p^u$ with $u = 0,1, \dots , e$ we get
\[
\binom{d-1-m}{p^u} = 0 \; ({\rm mod} \; p) \text{ for } 0 \le u \le e.
\]
This implies that $d-1-m$ is divisible by $p^{e+1}$. Noting that $2m = 2d-b_+ - 1$, we have $d-m-1 = (b_+ - 1)/2$, and hence $(b_+-1)/2$ is divisible by $p^{e+1}$.
\end{proof}

\begin{remark}
The conclusion that $m \le p-2$ when $SW(f) \neq 0 \; ({\rm mod} \; p)$ and $b_+ \neq 1 \; ({\rm mod} \; 2p)$ was also shown in \cite[Corollary 1.5]{kkny}.
\end{remark}

\subsection{$\mathbb{Z}_p$-actions}\label{sec:zpact}

Let $X$ be a compact, oriented, smooth $4$-manifold with $b_1(X) = 0$. Let $G = \mathbb{Z}_p = \langle g \rangle$ act on $X$ by orientation preserving diffeomorphism, where $p$ is a prime. For convenience we will work with $\mathbb{Z}_p$-coefficients. For odd $p$ we have $H^*_{\mathbb{Z}_p}(pt ; \mathbb{Z}_p) \cong \mathbb{Z}_p[u,v]/(u^2)$ where $deg(u) = 1$, $deg(v) = 2$. For $p=2$ we have $H^*_{\mathbb{Z}_2}(pt ; \mathbb{Z}_2) \cong \mathbb{Z}_2[u]$ where $deg(u) = 2$. In this case we set $v = u^2$ and sometimes write $u = v^{1/2}$.

Suppose that $H^+(X)^G \neq 0$ and let $\phi$ be a chamber. Let $\mathfrak{s}$ be a $G$-invariant spin$^c$-structure. Fix a trivialisation $G_{\mathfrak{s}} \cong S^1 \times G$. This amounts to choosing a lift of $\mathbb{Z}_p$ to the spinor bundles associated to $\mathfrak{s}$. Then the equivariant Seiberg--Witten invariants $SW_{\mathbb{Z}_p , X , \mathfrak{s}}^\phi : H^*_{\mathbb{Z}_p}(pt ; \mathbb{Z}_p)[x] \to H^{* - d(X,\mathfrak{s})}_{\mathbb{Z}_p}( pt ; \mathbb{Z}_p)$ are defined. Let $\mathbb{C}_j$ be the $1$-dimensional complex representation of $G$ where $g$ acts as multiplication by $\omega^j$, $\omega = e^{2\pi i/p}$. Then $D = \bigoplus_{j=0}^{p-1} \mathbb{C}_j^{d_j}$ for some $d_0, \dots , d_{p-1}$. The weights $d_j$ can be computed using the $G$-spin theorem. Let $b_0$ be the dimension of $H^+(X)^G$. To each $j \in \mathbb{Z}_p$ we obtain a splitting $s_j : \mathbb{Z}_p \to S^1 \times \mathbb{Z}_p$ given by $s_j(g) = (\omega^{-j} , g)$. Let $f$ denote the $G$-monopole map associated to $(X , \mathfrak{s})$ and $f^{s_j}$ the reduced monopole map. The expected dimension of the moduli space for $f^{s_j}$ is $2d_j - b_0 - 1$. Let $\delta_j = d_j - (b_0+1)/2$. If this is integral and non-negative then we obtain a reduced Seiberg--Witten invariant $\overline{SW}^{s_j,\phi}_{G,X,\mathfrak{s}} = SW_{f^{s_j}}^\phi( x^{\delta_j} ) \in \mathbb{Z}$. If $\delta_j$ is non-integral or negative, then we set $\overline{SW}^{s_j,\phi}_{G,X,\mathfrak{s}} = 0$. Recall that each splitting $s_j$ defines an isomorphism $\psi_{s_j} : H^*_{G_{\mathfrak{s}}}(pt ; \mathbb{Z}_p) \to H^*_{\mathbb{Z}_p}(pt ; \mathbb{Z})[x]$. If we use $s_0$ to identify $H^*_{G_{\mathfrak{s}}}(pt ; \mathbb{Z}_p)$ with $H^*_{\mathbb{Z}_p}(pt ; \mathbb{Z})[x]$ then $\psi_{s_j}$ is given by $\psi_{s_j}(x) = x-jv$, $\psi_{s_j}(v) = v$. Since $s_j(G)$ acts trivially on the domain and codomain of $f_{s_j}$, we have that $SW^\phi_{G , f^{s_j}}( \psi_{s_j}^{-1}( x^m) ) = \overline{SW}^{s_j,\phi}_{G,X,\mathfrak{s}}$ if $m=\delta_j$ and is zero otherwise. Then since $\psi_{s_j}^{-1}(x) = x+jv$, it follows that for any $\theta \in \mathbb{Z}_p[x,v]$, $SW_{G , f^{s_j}}^\phi( \theta ) = c_j \overline{SW}^{s_j,\phi}_{G,X,\mathfrak{s}}$, where $c_j$ is the coefficient of $(x+jv)^{\delta_j}$ when $\theta$ is written as a polynomial in $x+jv$ with coefficients in $\mathbb{Z}_p[v]$.

Let $k = \mathbb{Z}_p(v)$ be the ring of rational functions in $v$ with coefficients in $\mathbb{Z}_p$. Let $k(x)$ be the ring of rational functions in $x$ with coefficients in $k$. For any $a \in k$ we have a natural homomorphism $k(x) \to k((x-a))$ from $k(x)$ into the ring $k((x-a))$ of formal Laurent series in $x-a$ with coefficients in $k$ and with finite polar part. If $f \in k(x)$ then the image of $f$ in $k((x-a))$ can be uniquely written as $f = \sum_{j = -n}^{\infty} c_j (x-a)^j$, $c_j \in k$. We refer to $c_j$ as the coefficient of $(x-a)^j$ in the Laurent expansion of $f$ at $a$. For any $j \in \mathbb{Z}_p$ and integers $n , n_0, n_1, \dots , n_{p-1}$, define $c_j( n ; n_0 , \dots , n_{p-1})$ to be the coefficient of $(x+jv)^n$ in the Laurent expansion of $\prod_{i=0}^{p-1} (x+iv)^{n_i}$. It follows that
\[
c_j( n ; n_0 , \dots , n_{p-1}) = \left( \sum_{k_i} \prod_{i | i\neq j} \binom{n_i}{k_i} (i-j)^{n_i - k_i} \right) v^{\sum_i n_i - n}
\]
where the sum is over non-negative integers $k_0, \dots , \hat{k}_j , \dots , k_{p-1}$ such that $k_0 + \cdots + \hat{k}_j + \cdots + k_{p-1} = n-n_j$. We will extend the definition of $c_j( n ; n_0 , \dots , n_{p-1})$ to non-integer values of $n$ by setting $c_j( n ; n_0 , \dots , n_{p-1}) = 0$ if $n$ is non-integral.

\begin{theorem}\label{thm:zp}
For any non-negative integers $m_0, \dots , m_{p-1}$, we have the following equality in $H^*(\mathbb{Z}_p ; \mathbb{Z}_p)$:
\begin{align*}
& SW^\phi_{\mathbb{Z}_p , X , \mathfrak{s}}( x^{m_0} (x+v)^{m_1} \cdots (x+(p-1)v)^{m_{p-1}}) \\
& \quad \quad = e_{\mathbb{Z}_p}( H^+/(H^+)^g ) \sum_{j=0}^{p-1} c_j\left(  -\dfrac{(b_0+1)}{2} ; m_0-d_0 , \dots , m_{p-1} - d_{p-1} \right) \overline{SW}^{s_j,\phi}_{G,X,\mathfrak{s}}.
\end{align*}
\end{theorem}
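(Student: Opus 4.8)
The plan is to obtain Theorem \ref{thm:zp} as the explicit specialisation of the cohomological localisation formula (Theorem \ref{thm:loc}) to the group $G=\mathbb{Z}_p$, which applies since $p=p^1$ is a prime power and the extension $G_{\mathfrak{s}}$ is then automatically split. I work throughout with $\mathbb{Z}_p$-coefficients, use the splitting $s_0$ to identify $H^*_{G_{\mathfrak{s}}}(pt;\mathbb{Z}_p)$ with $\mathbb{Z}_p[u,v]/(u^2)[x]$ (so that $\psi_{s_j}^{-1}(x)=x+jv$, as recorded before the statement), and write $\Theta = x^{m_0}(x+v)^{m_1}\cdots(x+(p-1)v)^{m_{p-1}}$. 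Since $n=p$ we have $\sigma=g$ and $H=G$, so Theorem \ref{thm:loc} reads
\[
SW^\phi_{\mathbb{Z}_p,X,\mathfrak{s}}(\Theta)=\sum_{j=0}^{p-1}SW^\phi_{\mathbb{Z}_p,f^{s_j}}\!\left(e_{\mathbb{Z}_p}(H^+/(H^+)^g)\,e_{G_{\mathfrak{s}}}(D/D_j)^{-1}\Theta\right).
\]

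Next I would carry out the two computations feeding into this formula. First, the class $e_{\mathbb{Z}_p}(H^+/(H^+)^g)\in H^*_{\mathbb{Z}_p}(pt;\mathbb{Z}_p)$ is pulled out of each summand, since $SW^\phi_{\mathbb{Z}_p,f^{s_j}}$ is a right $H^*_{\mathbb{Z}_p}(pt;\mathbb{Z}_p)$-module homomorphism (Definition \ref{def:swc}, together with the remark that module-linearity holds for arbitrary coefficient groups). Second, from $D\cong\bigoplus_{i=0}^{p-1}\mathbb{C}_i^{d_i}$ and the $s_0$-identification, the $G_{\mathfrak{s}}$-equivariant Euler class of the $\mathbb{C}_i$-summand is $x+iv$, whence $e_{G_{\mathfrak{s}}}(D/D_j)=\prod_{i\neq j}(x+iv)^{d_i}$ and therefore, in the $v$-localised ring used in the proof of Theorem \ref{thm:loc},
\[
e_{G_{\mathfrak{s}}}(D/D_j)^{-1}\Theta=(x+jv)^{m_j}\prod_{i\neq j}(x+iv)^{m_i-d_i};
\]
since each $(i-j)v$ with $i\neq j$ is a unit in $k=\mathbb{Z}_p(v)$, this expands as a power series (no polar part) in $(x+jv)$ over $k$.

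It then remains to evaluate $SW^\phi_{\mathbb{Z}_p,f^{s_j}}$ on this class. The identity recorded just before the theorem gives $SW^\phi_{\mathbb{Z}_p,f^{s_j}}(\theta)=c\,\overline{SW}^{s_j,\phi}_{G,X,\mathfrak{s}}$, where $c$ is the coefficient of $(x+jv)^{\delta_j}$ in $\theta$ ($\delta_j=d_j-(b_0+1)/2$). Extracting this coefficient from $(x+jv)^{m_j}\prod_{i\neq j}(x+iv)^{m_i-d_i}$ and matching indices against the definition of $c_j(n;n_0,\dots,n_{p-1})$ — the coefficient of $(x+jv)^n$ in the Laurent expansion of $\prod_i(x+iv)^{n_i}$ — both reduce to the coefficient of $(x+jv)^{\delta_j-m_j}$ in $\prod_{i\neq j}(x+iv)^{m_i-d_i}$, so
\[
SW^\phi_{\mathbb{Z}_p,f^{s_j}}\!\left(e_{G_{\mathfrak{s}}}(D/D_j)^{-1}\Theta\right)=c_j\!\left(\delta_j-d_j;\,m_0-d_0,\dots,m_{p-1}-d_{p-1}\right)\overline{SW}^{s_j,\phi}_{G,X,\mathfrak{s}},
\]
and $\delta_j-d_j=-(b_0+1)/2$. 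Summing over $j$ and restoring the factor $e_{\mathbb{Z}_p}(H^+/(H^+)^g)$ produces the stated formula; the degenerate cases are consistent, since when $\delta_j$ is non-integral or negative both $\overline{SW}^{s_j,\phi}_{G,X,\mathfrak{s}}$ and $c_j(-(b_0+1)/2;\cdots)$ vanish by convention.

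I expect the only real obstacle to be the justification, in the last step, that the coefficient-extraction description of $SW^\phi_{\mathbb{Z}_p,f^{s_j}}$ — stated for polynomial arguments — carries over to the $v$-localised, Laurent-series arguments appearing here; this is exactly the purpose of the $k((x-a))$-formalism set up in the section. The point is that $s_j(\mathbb{Z}_p)$ acts trivially on the domain and codomain of $f^{s_j}$, so $SW^\phi_{\mathbb{Z}_p,f^{s_j}}$ is the $v^{-1}H^*_{\mathbb{Z}_p}(pt;\mathbb{Z}_p)$-linear extension of the ordinary invariant $SW^\phi_{f^{s_j}}$, and it is computed by the usual Gysin push-forward $(\pi_{\mathbb{P}(V_j)})_*$ against $\eta^\phi_j$; the behaviour of that push-forward on powers of the hyperplane class (top power to $1$, lower powers to Segre classes, as in Section \ref{sec:ci}) together with $SW^\phi_{f^{s_j}}(x^{\delta_j})=\overline{SW}^{s_j,\phi}_{G,X,\mathfrak{s}}$ makes the extension to the localised ring routine.
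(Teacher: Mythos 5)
Your proposal is correct and follows essentially the same route as the paper: specialise Theorem \ref{thm:loc} to $G=\mathbb{Z}_p$, pull out the Euler class by module-linearity, expand $e_{G_{\mathfrak{s}}}(D/D_j)^{-1}\Theta$ in powers of $x+jv$, and read off the coefficient of $(x+jv)^{\delta_j}$ via the identity recorded before the theorem, matching it to $c_j(-(b_0+1)/2;\,m_0-d_0,\dots,m_{p-1}-d_{p-1})$. Your extra care in justifying the passage from polynomial to $v$-localised Laurent-series arguments, and your (correct) form $\prod_{i\neq j}(x+iv)^{-d_i}$ of the inverse Euler class, only tighten the argument as written in the paper.
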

\begin{proof}
By Theorem \ref{thm:loc}, we have
\[
SW^\phi_{G,f}(\theta) = e_G(H^+/(H^+)^g) \sum_{j=0}^{p-1} SW^\phi_{G,f^{s_j}}(    e_{G_{\mathfrak{s}}}(D/D_j)^{-1} \theta ).
\]
If $\theta = \prod_{i=0}^{p-1} (x+iv)^{m_i}$, then
\[
e_{G_{\mathfrak{s}}}(D/D_j)^{-1} \theta = \prod_{i=0}^{p-1}(x+iv)^{m_i} \prod_{i  |  i \neq j} (x+jv)^{-d_i}.
\]
Hence $SW^\phi_{G,f^{s_j}}(    e_{G_{\mathfrak{s}}}(D/D_j)^{-1} \theta )$ equals $SW^{G,s_j,\phi}_{X,\mathfrak{s}}$ times the coefficient of $(x+jv)^{\delta_j}$ in $\prod_{i=0}^{p-1}(x+iv)^{m_i} \prod_{i  |  i \neq j} (x+jv)^{-d_i}$. But $\delta_j = d_j - (b_0+1)/2$, so this is $\alpha_j$ times the coefficient of $(x+jv)^{-(b_0+1)/2}$ in $\prod_{i=0}^{p-1}(x+iv)^{m_i - d_i}$, which is $c_j( -(b_0+1)/2 ; m_0-d_0 , \dots , m_{p-1} - d_{p-1})$.
\end{proof}

For instance, when $p=2$, we get:
\[
SW_{\mathbb{Z}_2 , X , \mathfrak{s}}^\phi( x^{m_0}(x+v)^{m_1} ) = \left( \binom{m_1-d_1}{\delta_0-m_0} \overline{SW}^{s_0,\phi}_{G,X,\mathfrak{s}} + \binom{m_0-d_0}{\delta_1-m_1} \overline{SW}^{s_1,\phi}_{G,X,\mathfrak{s}} \right) v^{m_0+m_1 - \delta}
\]
where we set $\binom{a}{b} = 0$ if $b$ is non-integral or negative.

\section{K\"ahler actions}\label{sec:kahler}

Suppose $(X , I )$ is a compact complex surface with $b_1(X) = 0$ and suppose that $G$ is a finite group which acts on $X$ by biholomorphism. Since $b_1(X) = 0$, $X$ admits a K\"ahler metric. Furthermore, the set of K\"ahler metrics is convex so by averaging we can find a $G$-invariant K\"ahler metric $g$ with associated K\"ahler form $\omega$. The Hodge decomposition yields an equality $H^+(X) = \mathbb{R}[\omega] \oplus Re( H^{2,0}(X) )$ and hence $H^+(X) \cong \mathbb{R} \oplus H^0(X,K_X)$ where $K_X$ denotes the canonical bundle of $X$. Since $H^0(X,K_X)$ is a complex vector space, this provides a distinguished orientation on $H^+(X)$ given by $\{ \omega , e_1 , Ie_1 , \dots , e_n , Ie_n\}$, where $e_1, \dots , e_n$ is a complex basis for $H^0(X,K_X)$. We also have a distinguished chamber given by setting $\phi = [\omega]$. We call this the {\em K\"ahler chamber} and we call $-\phi$ the {\em anti-K\"ahler chamber}. Since $H^+(X)^G \cong \mathbb{R}[\omega] \oplus H^0(X,K_X)^G$, we see that $dim( H^+(X)^G ) \ge 1$ and equals $1$ only if $H^0(X , K_X)^G = 0$.

The complex structure on $X$ defines a spin$^c$-structure $\mathfrak{s}_{can}$, the {\em canonical spin$^c$-structure} with spinor bundles $S^{\pm} = \wedge^{0,ev/odd} T^*X$. Any other spin$^c$-structure is of the form $\mathfrak{s}_L = L \otimes \mathfrak{s}_{can}$ for some complex line bundle $L$ and we have $c(\mathfrak{s}_L) = 2c_1(L) - c_1(K_X)$. The spinor bundles for $\mathfrak{s}_L$ are given by $S^{\pm}_L = L \otimes S^{\pm}$. The charge conjugate of $\mathfrak{s}_L$ is $\mathfrak{s}_{K_X^* L}$. A spin$^c$-structure $\mathfrak{s}_L$ is preserved by $G$ if and only $G$ preserves the isomorphism class of $L$. Furthermore, since $G$ has a canonical lift to $S^{\pm}$ we see that lifts of $G$ to $S^{\pm}_L$ correspond to lifts of $G$ to $L$.

Let $L$ be a complex line bundle whose isomorphism class is preserved by $G$. To compute the equivariant Seiberg--Witten invariants of $(X , \mathfrak{s}_L)$, we first consider the Seiberg--Witten equations with respect to a $2$-form perturbation of the form $\eta = i \lambda \omega$, where $\lambda$ is a sufficiently large positive real number. Note that it suffices to consider only the K\"ahler chamber since charge conjugation exchanges the K\"ahler and anti-K\"ahler chambers. We follow the approach of \cite[Chapter 12]{sal}. Let $\mathcal{M} = \mathcal{M}(X , L , g , \eta)$ denote the moduli space of solutions to the $\eta$-perturbed Seiberg--Witten equations on $X$ with respect to the metric $g$ and spin$^c$-structure $\mathfrak{s}_L$. By \cite[Proposition 12.23]{sal}, we have a natural bijection between $\mathcal{M}$ and the space of effective divisors on $X$ representing $c_1(L)$. If the image of $c_1(L)$ in $H^2(X ; \mathbb{R})$ is not of type $(1,1)$, then there are no divisors representing $c_1(L)$ and hence $\mathcal{M}$ is empty. On the other hand if $c_1(L)$ is of type $(1,1)$, then $L$ admits a holomorphic structure. Since $b_1(X) = 0$, the holomorphic structure is unique up to isomorphism and so we can regard $L$ as a fixed holomorphic line bundle. Let $V^i = H^i(X , L)$ denote the cohomology groups of $L$ and let $h^i(L)$ denote the dimension of $V^i$. The $V^i$ are representations of $G_{\mathfrak{s}_L}$ where the $S^1$-subgroup acts by scalar multiplication. Effective divisors representing $c_1(L)$ are in bijection with non-zero holomorphic sections of $L$ up to scale, so we have an identification $\mathcal{M} \cong \mathbb{P}(V^0)$. Under this identification the action of $G$ on $\mathcal{M}$ corresponds to the action on $\mathbb{P}(V^0)$ induced by the action of $G_{\mathfrak{s}_L}$ on $V^0$.

Although the moduli space $\mathcal{M}$ is a smooth manifold, the perturbation $\eta = i \lambda \omega$ is typically not a regular perturbation. That is, the moduli space $\mathcal{M}$ is typically not cut out transversally and its dimension, $2( h^0(L) - 1)$, is typically larger than the expected dimension which is $2( h^0(L) - 1) - 2( h^1(L) - h^2(L) + h^{2,0}(X))$. To compute the equivariant Seiberg--Witten equations in this setting we will make use of the technique of obstruction bundles \cite{fm}, \cite[\textsection 12.9]{sal}. The failure of $\mathcal{M}$ to be cut out transversally is measured by a bundle $Obs \to \mathcal{M}$ called the {\em obstruction bundle}. The fibres of $Obs$ are the cokernels of the linearisation of the Seiberg--Witten equations. As shown in \cite[\textsection 12.9]{sal}, we have an exact sequence of bundles on $\mathcal{M}$:
\begin{equation}\label{equ:obs}
0 \to \widetilde{V}^1 \to Obs \to H^2( X , \mathcal{O}) \to \widetilde{V}^2  \to 0,
\end{equation}
where $\widetilde{V}^i$ denotes the vector bundle over $\mathbb{P}(V^0) = S(V^0)/S^1$ given by $\widetilde{V}^i = V^i \times_{S^1} S(V^0)$ and $H^2(X , \mathcal{O})$ is to be regarded as a trivial vector bundle on $\mathbb{P}(V^0)$. In the presence of a $G$-action, the obstruction bundle $Obs$ is a $G$-equivariant vector bundle and the above sequence is an exact sequence of $G$-equivariant vector bundles on $\mathcal{M}$.

\begin{lemma}
Assume that $G_{\mathfrak{s}}$ is split. Then for any $\theta \in H^*_{G_{\mathfrak{s}_{\scalebox{.7}{$\scriptscriptstyle L$}}}} \! (pt ; \mathbb{Z})$, we have
\[
SW_G^\omega( \theta ) = ( \pi_{\mathbb{P}(V^0)} )_*( e_G( Obs ) \theta )
\]
where $\theta$ is regarded as an element of $H^*_G( \mathbb{P}(V^0) ; \mathbb{Z} )$ by pulling it back to $S(V^0)$ and using $H^*_{G_{\mathfrak{s}_{\scalebox{.7}{$\scriptscriptstyle L$}}
}} \! ( S(V^0) ; \mathbb{Z}) \cong H^*_G( \mathbb{P}(V^0) ; \mathbb{Z})$.
\end{lemma}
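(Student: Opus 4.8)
The plan is to realize $SW^\omega_{G,X,\mathfrak{s}_L}$ through the special perturbation $\eta = i\lambda\omega$ and to identify the class $\eta^\phi$ of Definition \ref{def:swc} with the equivariant Euler class of the obstruction bundle, after which the Lemma becomes the formula $SW^\phi_{G,f}(\theta) = \pi_*(\eta^\phi\,\pi^*\theta)$ unwound via the projection formula. First I would fix a $G$-invariant K\"ahler metric $g$ and pass to a $G$-equivariant finite-dimensional approximation $f : S^{V,U}\to S^{V',U'}$ of the Seiberg--Witten monopole map of $(X,\mathfrak{s}_L)$ computed with the $G$-invariant perturbation $\eta = i\lambda\omega$, $\lambda$ large; its stable $G_{\mathfrak{s}_L}$-equivariant homotopy class is the equivariant Bauer--Furuta invariant, and the point $y$ defining the chamber is chosen to correspond to $\phi = [\omega]$. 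By \cite[Proposition 12.23 and \textsection 12.9]{sal}, for $\lambda$ sufficiently large every solution of the $\eta$-perturbed equations is an effective divisor representing $c_1(L)$, so the solution set is the smooth (but non-transverse) $G$-manifold $\mathcal{M}\cong\mathbb{P}(V^0)$; lifting before dividing by the residual $S^1\subseteq G_{\mathfrak{s}_L}$ gauge action gives a $G_{\mathfrak{s}_L}$-invariant submanifold $\widetilde{\mathcal M}_0\cong S(V^0)\subset S(V)$ on which $S^1$ acts freely, and $f^{-1}(y)$ retracts $G_{\mathfrak{s}_L}$-equivariantly onto a tubular neighbourhood of $\widetilde{\mathcal M}_0$. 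The cokernel of the linearization of $f$ transverse to $\widetilde{\mathcal M}_0$ is a $G_{\mathfrak{s}_L}$-equivariant vector bundle $\widetilde{Obs}\to\widetilde{\mathcal M}_0$, and the description of these cokernels in \cite[\textsection 12.9]{sal} identifies $\widetilde{Obs}/S^1$ with the obstruction bundle $Obs\to\mathbb{P}(V^0)$ of the exact sequence (\ref{equ:obs}), so $e_{G_{\mathfrak{s}_L}}(\widetilde{Obs})$ corresponds to $e_G(Obs)$ under $H^*_{G_{\mathfrak{s}_L}}(S(V^0);\mathbb{Z})\cong H^*_G(\mathbb{P}(V^0);\mathbb{Z})$. (Note $w_1(H^+(X))=0$ here, since the complex structure orients $H^+(X)=\mathbb{R}[\omega]\oplus Re(H^{2,0}(X))$, so everything is with $\mathbb{Z}$ coefficients.)

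Next I would carry out the equivariant excess-intersection argument. As $\eta$, $g$ and the approximation are $G$-invariant, one may choose a $G_{\mathfrak{s}_L}$-equivariant homotopy $f'$ of $f$ rel $S^U$, supported in the tubular neighbourhood of $\widetilde{\mathcal M}_0$ and given there by adding a generic section of $\widetilde{Obs}$; for $\lambda$ large there are no solutions outside this neighbourhood, so $(f')^{-1}(y)$ is Poincar\'e dual inside $\widetilde{\mathcal M}_0$ to $e_{G_{\mathfrak{s}_L}}(\widetilde{Obs})$. This is the same excision-plus-obstruction-theory computation underlying the equivariant extension of \cite[Theorem 3.6]{bk} used in Section \ref{sec:et}, applied now to the non-transverse manifold $\widetilde{\mathcal M}_0$ in place of isolated points. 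Translating through Definition \ref{def:swc}, this says precisely that $\eta^\phi\in H^*_{G_{\mathfrak{s}_L}}(S(V);\mathbb{Z})\cong H^*_G(\mathbb{P}(V);\mathbb{Z})$ is the Gysin image $(\iota_0)_*\!\big(e_G(Obs)\big)$ of $e_G(Obs)$ under the inclusion $\iota_0:\mathbb{P}(V^0)\hookrightarrow\mathbb{P}(V)$.

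Finally, writing $\pi:\mathbb{P}(V)\to pt$ and $\pi_{\mathbb{P}(V^0)} = \pi\circ\iota_0$, and regarding $\theta\in H^*_{G_{\mathfrak{s}_L}}(pt;\mathbb{Z})$ as a class on $\mathbb{P}(V)$ (hence, by restriction, on $\mathbb{P}(V^0)$), the projection formula gives
\[
SW^\omega_G(\theta) = \pi_*\!\big(\eta^\phi\,\pi^*\theta\big) = \pi_*\!\big((\iota_0)_*(e_G(Obs))\cdot\pi^*\theta\big) = (\pi_{\mathbb{P}(V^0)})_*\!\big(e_G(Obs)\,\theta\big),
\]
which is the asserted identity. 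A degree count confirms the output lies in $H^{*-d(X,\mathfrak{s}_L)}_G(pt;\mathbb{Z})$: by Riemann--Roch $d = \chi(L)=h^0(L)-h^1(L)+h^2(L)$ and $b_+(X)=1+2h^{2,0}(X)$, so $\tfrac12 d(X,\mathfrak{s}_L)=\dim_{\mathbb{C}}\mathbb{P}(V^0)-\big(h^1(L)-h^2(L)+h^{2,0}(X)\big)=\dim_{\mathbb{C}}\mathbb{P}(V^0)-\mathrm{rk}_{\mathbb{C}}(Obs)$, exactly the shift produced by $e_G(Obs)$ together with $(\pi_{\mathbb{P}(V^0)})_*$.

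The main obstacle is the identification in the first paragraph: pinning down that, in a finite-dimensional approximation, the portion of $f^{-1}(y)$ near the actual moduli space $\mathbb{P}(V^0)$ is modelled $G_{\mathfrak{s}_L}$-equivariantly on the obstruction bundle of (\ref{equ:obs}), and that no spurious solutions survive for large $\lambda$. This is a Kuranishi/gluing statement; non-equivariantly it is \cite[\textsection 12.9]{sal}, and one must check that each ingredient (the parametrix, the implicit-function-theorem reduction, and the retraction of $f^{-1}(y)$ onto a neighbourhood of $\widetilde{\mathcal M}_0$) respects the $G$-action, which it does because $\eta$, $g$ and the approximation are all $G$-invariant. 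The hypothesis that $G_{\mathfrak{s}_L}$ be split is used only to write $H^*_{G_{\mathfrak{s}_L}}(pt;\mathbb{Z})\cong H^*_G(pt;\mathbb{Z})[x]$ so that $\theta$ may be taken a polynomial in $x$ in the subsequent computation; the formula above holds without it.
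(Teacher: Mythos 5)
Your overall architecture (identify the moduli space with $\mathbb{P}(V^0)$, identify the cokernels with the obstruction bundle of (\ref{equ:obs}), show $\eta^\phi = (\iota_0)_*(e_G(Obs))$, then finish with the projection formula) is the right shape, and your final pushforward computation and degree count are fine. The genuine gap is in your second paragraph, at the phrase ``given there by adding a generic section of $\widetilde{Obs}$.'' The excess-intersection argument requires a $G_{\mathfrak{s}_L}$-invariant section of $\widetilde{Obs}$ that is transverse to the zero section, and such a section need not exist: this is exactly the equivariant transversality problem that Section \ref{sec:et} is devoted to. A ``generic'' \emph{invariant} section will in general have a zero locus of the wrong dimension, so its zero set does not represent $e_{G_{\mathfrak{s}_L}}(\widetilde{Obs})$ and the identification of $(f')^{-1}(y)$ with that Euler class breaks down. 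Your closing remark that every ingredient ``respects the $G$-action because $\eta$, $g$ and the approximation are all $G$-invariant'' does not address this: equivariance of the data is not the issue, existence of an equivariant transverse perturbation is.

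The paper's proof sidesteps precisely this difficulty by refusing to argue equivariantly: for every principal $G$-bundle $P \to B$ over a compact manifold it forms the associated K\"ahler family $E = P \times_G X$, whose families moduli space is $P\times_G \mathcal{M}$ with obstruction bundle $P \times_G Obs$; in the families setting generic transverse sections do exist, so the non-equivariant obstruction-bundle technique applies and gives $SW^\omega_{G,E,\mathfrak{s}_L}(\theta) = \varphi_P^*\bigl((\pi_{\mathbb{P}(V^0)})_*(e_G(Obs)\,\theta)\bigr)$. Theorem \ref{thm:eqfam2} (the equivariant invariant is determined by its pullbacks to all such families, proved via finite-dimensional approximations of $BG$) then forces the asserted equivariant identity. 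If you want to keep your direct approach you would need to replace the ``generic invariant section'' step by either a proof that $\widetilde{Obs}$ admits an invariant transverse section in this particular situation, or a purely cohomological localized-Euler-class argument that does not require one; as written the step is unjustified.
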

\begin{proof}
In the non-equivariant setting, this follows from the technique of obstruction bundles. We need to extend the result to the equivariant setting. If $Obs$ admits a $G$-invariant section whhich is tranverse to the zero section, then the usual argument can be carried out equivariantly. However in general an equivariant vector bundle need not admit invariant sections which are transverse to the zero section. To get around this problem we will work with families instead of equivariantly.

For any compact smooth manifold $B$ and any principal $G$-bundle $P \to B$, we have an associated family $E = P \times_G X$. Since the K\"ahler structure of $X$ is preserved by $G$, the family $E$ is a K\"ahler family in the sense that the fibres are equipped with a smoothly varying K\"ahler structure. We will use this structure to evaluate the families Seiberg--Witten invariants $SW_{G,E, \mathfrak{s}_L}^\omega(\theta)$. The families moduli space is $\mathcal{M}_E = P \times_G \mathcal{M}$. It is not cut out transversally, but the obstruction bundle technique can be applied. The obstruction bundle $Obs_E$ for this family is just the associated vector bundle $Obs_E = P \times_G Obs$ and therefore we have
\[
SW_{G,E,\mathfrak{s}_L}^{\omega}(\theta) = (\pi_*)( e( Obs_E) \theta )
\]
where $\pi$ is the projection map $\pi : \mathcal{M}_E \to B$. It follows immediately that
\[
SW_{G,E,\mathfrak{s}_L}^{\omega}(\theta) = (f_P)^*(   ( \pi_{\mathbb{P}(V^0)} )_*( e_G( Obs ) \theta ) )
\]
where $f_P : B \to BG$ is the classifying map for $P$. Then by Theorem \ref{thm:eqfam2}, it follows that $SW_G^\omega( \theta ) = ( \pi_{\mathbb{P}(V^0)} )_*( e_G( Obs ) \theta )$.
\end{proof}

We turn to the computation of $e_G(Obs)$. Since $Obs$ is a complex vector bundle of rank $r = h^1(L) - h^2(L) + h^{2,0}(X)$, we have $e_G(Obs) = c_{r,G}(Obs)$, where $c_{r,G}$ denotes the $r$-th equivariant Chern class. Let $c_G = 1 + c_{1,G} + \cdots$ denote the total equivariant Chern class and $s_G = 1 + s_{1,G} + \cdots$ the total equivariant Segre class. From (\ref{equ:obs}) we have
\[
c_G(Obs) = c_G(\widetilde{V}^1) s_G( \widetilde{V}^2 ) c_G( H^2(X , \mathcal{O}))
\]
and hence
\[
e_G(Obs) = \sum_{\substack{i+j+k=r \\ i,j,k \ge 0}} c_{i,G}( \widetilde{V}^1) s_{j,G}( \widetilde{V}^2) c_{k,G}( H^2(X , \mathcal{O})).
\]

Assume now that $G_{\mathfrak{s}_{\scalebox{.7}{$\scriptscriptstyle L$}}} \!$ is a split extension. Fix a splitting $G_{\mathfrak{s}_{\scalebox{.7}{$\scriptscriptstyle L$}}} \! \cong S^1 \times G$. A choice of splitting amounts to a choice of lift of the $G$-action to $L$. This makes $V^i$ into representations of $G$ and then $\widetilde{V}^i \cong V^i \otimes \mathcal{O}_{V^0}(1)$. The equivariant Chern and Segre classes of $\widetilde{V}^i$ can now be expressed in terms of the Chern and Segre classes of $V^i$ and $x = c_{1,G}( \mathcal{O}_{V^0}(1))$ using the following identities for a vector bundle $E$ of rank $r$ and a line bundle $N$:
\[
c_{j}(E \otimes N) = \sum_{l=0}^j c_{l}(E) c_{1}(N)^{j-l} \binom{r-l}{j-l}, \quad s_{j}(E \otimes N) = \sum_{l=0}^{j} s_{l}(E) c_{1}(N)^{j-l} \binom{-r-l}{j-l}.
\]
The same result also applies more generally when $E$ is a virtual vector bundle. We can simplify the expression for $e_G(Obs)$ by writing it in terms of the virtual bundle $\widetilde{V}^1 - \widetilde{V}^2$, namely
\[
e_G(Obs) = \sum_{\substack{i+j=r \\ i,j \ge 0}} c_{i,G}( \widetilde{V}^1 - \widetilde{V}^2) c_{j,G}( H^2(X , \mathcal{O})).
\]
We also have $H^*_{G_{\mathfrak{s}_{\scalebox{.7}{$\scriptscriptstyle L$}}}} \! (pt ; \mathbb{Z}) \cong H^*_G(pt ; \mathbb{Z})[x]$ and so it suffices to compute $SW_G^\omega( x^m )$ for each $m \ge 0$. We also have that $(\pi_{\mathbb{P}(V^0)})_*( x^j ) = s_{j-(d-1) , G}(V^0)$ where $d = h^0(L)-h^1(L) + h^2(L)$. Putting it all together, we have
\begin{align*}
& SW_{G}^\omega( x^m ) = (\pi_{\mathbb{P}(V^0)})_* \left( \sum_{\substack{i+j=r \\ i,j \ge 0}} c_{i,G}( \widetilde{V}^1 - \widetilde{V}^2) c_{j,G}( H^2(X , \mathcal{O})) x^m \right) \\
& = \sum_{\substack{i+j=r \\ i,j \ge 0}} \sum_{l=0}^{i} \binom{h^1(L)-h^2(L)-l}{i-l}  c_{l,G}(V^1-V^2) c_{j,G}( H^2(X , \mathcal{O})) (\pi_{\mathbb{P}(V^0)})_*( x^{m+i-l} ) \\
& = \sum_{\substack{i+j=r \\ i,j \ge 0}} \sum_{l=0}^{i} \binom{h^1(L)-h^2(L)-l}{i-l} s_{i-l+m - (h^0(L)-1),G}(V^0) c_{l,G}(V^1-V^2) c_{j,G}( H^2(X , \mathcal{O})).
\end{align*}

\begin{theorem}\label{thm:swk}
Let $X$ be a compact complex surface with $b_1(X) = 0$. Let $G$ be a finite group that acts on $X$ by biholomorphisms. Let $L$ be a $G$-equivariant line bundle. If $L$ is not holomorphic, or if $L$ is holomorphic and $h^0(L) = 0$, then $SW^\omega_{G , X , \mathfrak{s}_L} = 0$. If $L$ is holomorphic and $h^0(L) > 0$, then
\begin{align*}
& SW_{G , X , \mathfrak{s}_L}^\omega( x^m ) \\
& = \sum_{\substack{i+j=r \\ i,j \ge 0}} \sum_{l=0}^{i} \binom{h^1(L)-h^2(L)-l}{i-l} s_{i-l+m - (h^0(L)-1),G}(V^0) c_{l,G}(V^1-V^2) c_{j,G}( H^2(X , \mathcal{O}))
\end{align*}
where $V^i = H^i(X , L)$, $d = h^0(L) - h^1(L) + h^2(L)$, $r = h^1(L) - h^2(L) + h^{2,0}(X)$. 
\end{theorem}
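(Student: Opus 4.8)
The plan is to reduce the computation of $SW^\omega_{G,X,\mathfrak{s}_L}$ to the unperturbed Seiberg--Witten moduli space on a K\"ahler surface, exactly as in the non-equivariant theory, but keeping track of the $G$-action. First I would recall the analysis from \cite[Chapter 12]{sal}: for the perturbation $\eta = i\lambda\omega$ with $\lambda$ large, solutions to the perturbed Seiberg--Witten equations for $\mathfrak{s}_L$ are in bijection with effective divisors representing $c_1(L)$. If $c_1(L)$ is not of type $(1,1)$ there are no such divisors, so the moduli space is empty and all invariants vanish; likewise if $L$ is holomorphic but $h^0(L)=0$ there are no nonzero holomorphic sections, the moduli space is empty, and $SW^\omega_{G,X,\mathfrak{s}_L}=0$. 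This disposes of the first two cases. In the remaining case $L$ is holomorphic with $h^0(L)>0$, and since $b_1(X)=0$ the holomorphic structure is unique, so $\mathcal{M}\cong\mathbb{P}(V^0)$ with $G$ acting via the $G_{\mathfrak{s}_L}$-action on $V^0 = H^0(X,L)$.

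Next I would invoke the obstruction bundle technique: the moduli space $\mathcal{M}=\mathbb{P}(V^0)$ is not cut out transversally, but the deformation complex gives the exact sequence (\ref{equ:obs}) of $G$-equivariant bundles, from which $Obs$ is a complex $G$-equivariant bundle of rank $r = h^1(L)-h^2(L)+h^{2,0}(X)$. The content of the lemma preceding Theorem \ref{thm:swk} (which I am allowed to assume) is that $SW^\omega_G(\theta) = (\pi_{\mathbb{P}(V^0)})_*(e_G(Obs)\,\theta)$; its proof goes through families and Theorem \ref{thm:eqfam2}, since one cannot in general pick a $G$-invariant transverse section of $Obs$, but one can do the obstruction-bundle argument fibrewise for $E = P\times_G X$ and then pull back. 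So the remaining task is a purely formal Chern-class computation.

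Then I would compute $e_G(Obs) = c_{r,G}(Obs)$. From (\ref{equ:obs}), $c_G(Obs) = c_G(\widetilde{V}^1)\,s_G(\widetilde{V}^2)\,c_G(H^2(X,\mathcal{O}))$, and since the $H^2(X,\mathcal{O})$ factor is a trivial bundle over $\mathbb{P}(V^0)$ with a $G$-action, combining the first two factors into the virtual bundle $\widetilde{V}^1-\widetilde{V}^2$ gives $e_G(Obs) = \sum_{i+j=r,\ i,j\ge 0} c_{i,G}(\widetilde{V}^1-\widetilde{V}^2)\,c_{j,G}(H^2(X,\mathcal{O}))$. Fixing a splitting $G_{\mathfrak{s}_L}\cong S^1\times G$ makes the $V^i$ into honest $G$-representations and $\widetilde{V}^i \cong V^i\otimes\mathcal{O}_{V^0}(1)$; the tensor-twist formula $c_j(E\otimes N) = \sum_l c_l(E)c_1(N)^{j-l}\binom{r-l}{j-l}$ (valid for virtual $E$) expands $c_{i,G}(\widetilde{V}^1-\widetilde{V}^2)$ in powers of $x = c_{1,G}(\mathcal{O}_{V^0}(1))$ with coefficients $c_{l,G}(V^1-V^2)$ and binomial weights $\binom{h^1(L)-h^2(L)-l}{i-l}$. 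Finally I would apply $(\pi_{\mathbb{P}(V^0)})_*$, using $(\pi_{\mathbb{P}(V^0)})_*(x^k) = s_{k-(d-1),G}(V^0)$ with $d = h^0(L)-h^1(L)+h^2(L)$, to obtain the stated triple sum for $SW^\omega_{G,X,\mathfrak{s}_L}(x^m)$.

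The main obstacle is conceptual rather than computational: justifying the equivariant obstruction-bundle formula $SW^\omega_G(\theta) = (\pi_{\mathbb{P}(V^0)})_*(e_G(Obs)\theta)$, since the naive argument (choose an invariant transverse section of $Obs$ and localise on its zero locus) fails for general equivariant bundles. As indicated above, the way around this is to pass to families over an arbitrary base $B$ with a principal $G$-bundle $P\to B$, run the classical obstruction-bundle argument of \cite{fm},\cite[\textsection 12.9]{sal} for the K\"ahler family $E = P\times_G X$ (where transversality after generic perturbation is available in the families sense), obtain $SW^\omega_{G,E,\mathfrak{s}_L}(\theta) = \pi_*(e(Obs_E)\theta)$ with $Obs_E = P\times_G Obs$, and then invoke Theorem \ref{thm:eqfam2} to descend the universal identity back to equivariant cohomology. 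Everything after that is the bookkeeping of Chern classes, orientations (fixed by the K\"ahler orientation convention on $H^+(X)\cong\mathbb{R}\oplus H^0(X,K_X)$), and the standard tensor-twist and projective-bundle pushforward formulas.
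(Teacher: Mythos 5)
Your proposal is correct and follows essentially the same route as the paper: the vanishing cases via emptiness of the perturbed moduli space, the identification $\mathcal{M}\cong\mathbb{P}(V^0)$, the equivariant obstruction-bundle formula justified by passing to families $E=P\times_G X$ and descending via Theorem \ref{thm:eqfam2}, and then the Chern/Segre class bookkeeping with the tensor-twist and pushforward formulas. You have also correctly identified the one genuinely delicate point (the failure of the naive invariant-transverse-section argument) and resolved it exactly as the paper does.
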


The expression for $SW_{G,X,\mathfrak{s}_L}^\omega$ given by Theorem \ref{thm:swk} gives a formula for $SW_{G,X,\mathfrak{s}_L}^\omega$ purely in terms of the representations $V^0,V^1,V^2$ and $H^2(X , \mathcal{O})$ and hence purely in terms of the complex geometry of $X$. Next, we restrict to the case $b_+(X) = 1$.

\begin{theorem}
Let $X$ be a compact complex surface with $b_1(X) = 0$ and $b_+(X)=1$. Let $G$ be a finite group that acts on $X$ by biholomorphisms. Let $L$ be a $G$-equivariant line bundle.  Let $D = V^0 - V^1 + V^2$ and $d = h^0(L) - h^1(L) + h^2(L)$.
\begin{itemize}
\item[(1)]{If $h^0(L) > 0$, then
\[
SW_{G,X,\mathfrak{s}_L}^\omega(x^m) = s_{m-(d-1)}(D), \quad SW_{G,X,\mathfrak{s}_L}^{-\omega}(x^m) = 0.
\]
}
\item[(2)]{If $h^2(L) > 0$, then
\[
SW_{G,X,\mathfrak{s}_L}^\omega(x^m) = 0, \quad SW_{G,X,\mathfrak{s}_L}^{-\omega}(x^m) = -s_{m-(d-1)}(D).
\]
\item[(3)]{If $h^0(L) = h^2(L) = 0$, then $SW_{G,X,\mathfrak{s}_L}^{\pm \omega} = 0$.}
}
\end{itemize}
\end{theorem}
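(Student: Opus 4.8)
The plan is to read off the K\"ahler-chamber invariants from Theorem~\ref{thm:swk} and the anti-K\"ahler-chamber invariants from the wall-crossing formula of Section~\ref{sec:wcf}, once a few consequences of the hypothesis $b_+(X)=1$ are recorded. For a compact complex surface with $b_1(X)=0$ one has $b_+(X)=2p_g(X)+1$, so $b_+(X)=1$ forces $p_g(X)=h^{2,0}(X)=h^{0,2}(X)=0$; hence $H^2(X,\mathcal{O}_X)=0$, and since also $H^1(X,\mathcal{O}_X)=0$ the exponential sequence gives $\mathrm{Pic}(X)\cong H^2(X;\mathbb{Z})$, so every topological line bundle on $X$ carries a unique holomorphic structure. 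By uniqueness this structure is $G$-invariant, so $L$ is automatically $G$-equivariantly holomorphic and Theorem~\ref{thm:swk} applies. Moreover, since $K_X$ is not effective, at most one of $h^0(L)$ and $h^2(L)=h^0(K_X\otimes L^*)$ can be nonzero, so cases (1)--(3) are exhaustive; in case (1) this forces $h^2(L)=0$, i.e. $V^2=0$ and $d=h^0(L)-h^1(L)$, and in case (2) it forces $h^0(L)=0$, i.e. $V^0=0$.

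For the K\"ahler chamber: in cases (2) and (3) we have $h^0(L)=0$, so Theorem~\ref{thm:swk} immediately gives $SW^\omega_{G,X,\mathfrak{s}_L}=0$. In case (1) I substitute $h^{2,0}(X)=0$ and $h^2(L)=0$ into the formula of Theorem~\ref{thm:swk}: the factor $c_{j,G}(H^2(X,\mathcal{O}))=c_{j,G}(0)$ kills every term with $j>0$, so only $i=r=h^1(L)$ survives, the binomial $\binom{h^1(L)-l}{\,h^1(L)-l\,}$ equals $1$, and $V^2=0$. The sum collapses to $\sum_{l=0}^{h^1(L)} s_{m-(d-1)-l,G}(V^0)\,c_{l,G}(V^1)$, which is the degree-$(m-(d-1))$ component of $s(V^0)\,c(V^1)=s(V^0)\,s(V^1)^{-1}=s(V^0-V^1)=s(D)$, using $c(W)s(W)=1$ and multiplicativity of the total Segre class together with $D=V^0-V^1$. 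Hence $SW^\omega_{G,X,\mathfrak{s}_L}(x^m)=s_{m-(d-1)}(D)$, which is the first assertion of (1).

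For the anti-K\"ahler chamber I use wall-crossing. Since $G$ fixes $[\omega]$ it acts trivially, in particular orientation-preservingly, on the one-dimensional space $H^+(X)$, so $\dim H^+(X)^G=1$; together with the splitting of $G_{\mathfrak{s}_L}$ coming from the equivariant structure on $L$ this lets me apply the wall-crossing theorem of Section~\ref{sec:wcf}. As $H^+(X)$ is one-dimensional, $H_0=H^+(X)/\langle\omega\rangle=0$ and $e(H_0)=1$, so $SW^\omega_{G,X,\mathfrak{s}_L}(x^m)-SW^{-\omega}_{G,X,\mathfrak{s}_L}(x^m)=s_{m-(d-1)}(D)$. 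Feeding in the K\"ahler-chamber values: in case (1), $SW^{-\omega}(x^m)=0$; in case (2), $SW^{-\omega}(x^m)=-s_{m-(d-1)}(D)$; in case (3), $V^0=V^2=0$ gives $D=-V^1$, so $s(D)=c(V^1)$ has top degree $h^1(L)=\mathrm{rank}\,V^1$, and since $m-(d-1)=m+h^1(L)+1>h^1(L)$ for every $m\ge 0$ we get $s_{m-(d-1)}(D)=0$ and hence $SW^{-\omega}(x^m)=0$. Because the invariants are $H^*_G(pt;\mathbb{Z})$-module maps and $H^*_{G_{\mathfrak{s}_L}}(pt)\cong H^*_G(pt)[x]$, vanishing on all $x^m$ gives $SW^{\pm\omega}_{G,X,\mathfrak{s}_L}=0$, which completes (3). (Alternatively the anti-K\"ahler chamber can be reached from charge-conjugation symmetry together with the identity $-\mathfrak{s}_L=\mathfrak{s}_{K_X\otimes L^*}$, applying Theorem~\ref{thm:swk} to the line bundle $K_X\otimes L^*$.) The only points that need care are the bookkeeping of the binomial and Segre classes when specializing the formula of Theorem~\ref{thm:swk}, and the small but easily-mishandled observation that $e(H_0)=1$ once $\dim H^+(X)=1$; everything else is formal.
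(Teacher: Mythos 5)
Your proof is correct, and it reaches the conclusion by running the logic in the opposite direction from the paper in the key case. The paper's proof of (1) never evaluates the formula of Theorem \ref{thm:swk} beyond its vanishing clause: it observes that $SW^{-\omega}_{G,X,\mathfrak{s}_L}=0$ (via charge conjugation, since $-\mathfrak{s}_L$ corresponds to the line bundle $K_X\otimes L^*$ with $h^0(K_X\otimes L^*)=h^2(L)=0$) and then reads $SW^{\omega}_{G,X,\mathfrak{s}_L}(x^m)=s_{m-(d-1)}(D)$ off the wall-crossing formula; case (2) is then charge conjugation and case (3) is declared clear. You instead substitute $h^{2,0}(X)=h^2(L)=0$ into the general K\"ahler formula and collapse the double sum to the degree-$(m-(d-1))$ component of $s(V^0)c(V^1)=s(V^0-V^1)=s(D)$, which gives $SW^{\omega}$ directly, and you then obtain the anti-K\"ahler chamber in all three cases from wall-crossing alone (with the pleasant degree count $m-(d-1)=m+h^1(L)+1>\operatorname{rank}V^1$ disposing of case (3) without invoking charge conjugation). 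Your route costs a small Segre-class bookkeeping exercise but buys an independent consistency check between Theorem \ref{thm:swk} and the wall-crossing formula, and it avoids charge conjugation entirely; your preliminary observations (that $b_+(X)=1$ forces $p_g=0$, hence $H^2(X,\mathcal{O})=0$ and every topological line bundle is uniquely holomorphic, so the "not holomorphic" case of Theorem \ref{thm:swk} cannot occur, and that non-effectivity of $K_X$ makes the three cases exhaustive) are exactly the ones the paper uses or leaves implicit.
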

\begin{proof}
Part (3) is clear, so it remains to prove (1) and (2). Since $b_+(X) = 1$, we have $H^2(X , \mathcal{O}) \cong H^0(X , K_X)^* = 0$. If $h^0(L), h^2(L) > 0$, then there are non-zero holomorphic sections $\alpha,\beta$ of $L$ and $K^*_X L$. But then $\alpha \beta$ is a non-zero section of $K_X$, which is impossible. This means that at most one of $h^0(L), h^2(L)$ is non-zero. Consider the case that $h^0(L) > 0$, $h^2(L) = 0$. The case $h^0(L) = 0$, $h^2(L) > 0$ will follow from charge conjugation symmetry. If $h^0(L) > 0$, then $SW_{G,X,\mathfrak{s}_L}^{-\omega} = 0$ and hence the wall-crossing formula gives $SW_{G,X, \mathfrak{s}_L}^{\omega}( x^m ) = s_{m-(d-1)}(D)$.
\end{proof}

Next, we consider the case that $X$ is a K3 surface:

\begin{theorem}
Let $X$ be a complex K3 surface and let $G$ be a finite group which acts on $X$ by biholomorphisms. Let $L$ be a $G$-equivariant line bundle.
\begin{itemize}
\item[(1)]{If $c_1(L)$ is not $(1,1)$ then $SW_{G,X,\mathfrak{s}_L}^{\pm \omega} = 0$.}
\item[(2)]{If $c_1(L)$ is $(1,1)$, $c_1(L) \neq 0$ and $G$ acts trivially on $K_X$, then there is only one chamber and $SW_{G,X,\mathfrak{s}_L} = 0$.}
\item[(3)]{If $c_1(L) = 0$ and $G$ acts trivially on $K_X$, then there is only one chamber and $SW_{G,X,\mathfrak{s}_L}(x^m) = s_{1,G}(L)^m$.}
\item[(4)]{If $c_1(L) = 0$ and $G$ acts non-trivially on $K_X$, then $SW_{G,X,\mathfrak{s}_L}^\omega( x^m) = s_{1,G}(L)^m$, $SW_{G,X,\mathfrak{s}_L}^{-\omega} = (s_{1,G}(L) - s_{1,G}(K_X) )^m$.}
\item[(5)]{If $c_1(L)$ is $(1,1)$, $c_1(L) \neq 0$ and $G$ acts non-trivially on $K_X$, then there are two chambers. If $h^0(L) > 0$, then
\[
SW_{G,X,\mathfrak{s}_L}^\omega(x^m) = e_G( H^0(X,K_X) ) s_{m-(d-1)}(D), \quad SW_{G,X,\mathfrak{s}_L}^{-\omega}(x^m) = 0.
\]
If $h^0(L) = 0$, then
\[
SW_{G,X,\mathfrak{s}_L}^\omega(x^m) = 0, \quad SW_{G,X,\mathfrak{s}_L}^{-\omega}(x^m) = -e_G( H^0(X,K_X) ) s_{m-(d-1)}(D).
\]
}
\end{itemize}

\end{theorem}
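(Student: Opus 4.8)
The plan is to deduce all five cases from Theorem~\ref{thm:swk}, the wall-crossing formula of Section~\ref{sec:wcf}, and charge conjugation, specialised by the standard facts about a K3 surface $X$: $b_1(X)=0$, $b_+(X)=3$, $\sigma(X)=-16$, $h^{2,0}(X)=1$, the canonical bundle $K_X$ is holomorphically trivial, and $H^2(X,\mathcal O_X)\cong H^0(X,K_X)^*$ as $G$-modules. A preliminary point is that the number of chambers is governed by the $G$-action on $K_X$: since $H^+(X)\cong\mathbb R[\omega]\oplus\mathrm{Re}\,H^0(X,K_X)$ with $[\omega]$ fixed, we have $b_+(X)^G=3$ (unique chamber) precisely when $G$ acts trivially on $K_X$, and $b_+(X)^G=1$ (chambers $\pm\omega$) otherwise; this matches the ``one chamber''/``two chambers'' dichotomy in the statement.

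Case (1): if $c_1(L)$ is not of type $(1,1)$ then $L$ is not holomorphic, so $SW^\omega_{G,X,\mathfrak s_L}=0$ by Theorem~\ref{thm:swk}; in the two-chamber case $c_1(K_X^*\otimes L)=c_1(L)$ is also not $(1,1)$, so $SW^\omega_{G,X,\mathfrak s_{K_X^*\otimes L}}=0$ and charge conjugation, via $-\mathfrak s_L=\mathfrak s_{K_X^*\otimes L}$, forces $SW^{-\omega}_{G,X,\mathfrak s_L}=0$. For the remaining cases $L$ is holomorphic, and I will use the elementary fact that when $c_1(L)\ne 0$ at most one of $h^0(L)$ and $h^2(L)=h^0(L^*)$ is nonzero (the product of a section of $L$ and a section of $L^*$ would be a nowhere-vanishing section of $\mathcal O_X$, forcing $c_1(L)=0$), whereas when $c_1(L)=0$ one has $L\cong\mathcal O_X$ holomorphically with $h^0(L)=h^2(L)=1$, $h^1(L)=0$.

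Cases (3)--(4): with $c_1(L)=0$ we get $d=2$ and $r=h^1(L)-h^2(L)+h^{2,0}(X)=0$, so the double sum in Theorem~\ref{thm:swk} collapses to the single term $i=j=l=0$, giving $SW^\omega_{G,X,\mathfrak s_L}(x^m)=s_{m,G}(V^0)=s_{1,G}(L)^m$ where $V^0=H^0(X,L)$ is the one-dimensional $G$-representation underlying the chosen linearisation of $\mathcal O_X$ and $s_{1,G}(L):=s_{1,G}(V^0)$. If $G$ acts trivially on $K_X$ this is the only chamber, giving (3). If not, applying the same collapse to $K_X^*\otimes L$ (also $c_1=0$) gives $SW^\omega_{G,X,\mathfrak s_{K_X^*\otimes L}}(x^m)=s_{1,G}(K_X^*\otimes L)^m$; using additivity of $s_{1,G}$ on line bundles, $s_{1,G}(K_X^*\otimes L)=s_{1,G}(L)-s_{1,G}(K_X)$, and transporting through charge conjugation ($\bar x=-x$, sign $(-1)^{d+b_+(X)+1}=1$) yields the formula for $SW^{-\omega}_{G,X,\mathfrak s_L}$ in (4). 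Cases (2),(5): here $c_1(L)$ is $(1,1)$ and $c_1(L)\ne 0$. If $h^0(L)=0$ then $SW^\omega_{G,X,\mathfrak s_L}=0$ by Theorem~\ref{thm:swk}; if $h^0(L)>0$ then $h^2(L)=0$ and Serre duality gives $H^2(X,L)=0$, so the obstruction sequence~(\ref{equ:obs}) reduces to $0\to\widetilde V^1\to Obs\to H^2(X,\mathcal O_X)\to 0$ with $H^2(X,\mathcal O_X)$ a trivial line bundle; extracting the top equivariant Chern class, $e_G(Obs)=\pm e_G(H^0(X,K_X))\,e_G(\widetilde V^1)$, which pushes forward to $\pm e_G(H^0(X,K_X))\,s_{m-(d-1),G}(D)$ with $D=V^0-V^1+V^2$. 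If $G$ acts trivially on $K_X$ then $e_G(H^0(X,K_X))=0$ and this vanishes, giving (2). If $G$ acts non-trivially, I will instead combine the vanishing of one of $SW^{\pm\omega}_{G,X,\mathfrak s_L}$ --- obtained by applying Theorem~\ref{thm:swk} to whichever of $L,K_X^*\otimes L$ has no global sections --- with the wall-crossing formula $SW^\omega(x^m)-SW^{-\omega}(x^m)=e(H_0)\,s_{m-(d-1),G}(D)$ and the identification $H_0=H^+(X)/\langle\omega\rangle\cong H^0(X,K_X)$, so $e(H_0)=e_G(H^0(X,K_X))$; this pins down both invariants and gives (5).

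The main obstacle is sign bookkeeping rather than structure. Getting the precise signs in (4) and (5) requires careful tracking of the charge-conjugation factor $(-1)^{d+b_+(X)+1}$, the substitution $\bar x=-x$, the orientation conventions defining $e(H_0)$, the equivariant Euler classes and the equivariant Segre classes, and the way $D$ and $H_0$ transform when $\mathfrak s_L$ is replaced by $\mathfrak s_{K_X^*\otimes L}$; in particular one must check that the signs coming from the obstruction-bundle computation and from wall-crossing agree. The collapse of the double sum of Theorem~\ref{thm:swk}, and of the obstruction-bundle expression, in these low-codimension situations is a short calculation with (generalised) binomial coefficients and should be routine once the conventions are fixed.
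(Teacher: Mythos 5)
Your proposal is correct and follows essentially the same route as the paper: the vanishing lemma that $h^0(L)$ and $h^2(L)$ cannot both be nonzero unless $c_1(L)=0$ (in which case $h^0=h^2=1$, $h^1=0$, $d=2$, $r=0$), the collapse of Theorem \ref{thm:swk} in the $c_1(L)=0$ case, and charge conjugation plus wall-crossing to pin down the second chamber in cases (4) and (5). The only variation is your treatment of case (2) with $h^0(L)>0$ via the obstruction-bundle factorisation and the vanishing of $e_G$ of the trivial representation $H^2(X,\mathcal{O})$, where the paper instead invokes charge conjugation together with the uniqueness of the chamber; both arguments are valid and of comparable length.
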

\begin{proof}
Case (1) is clear, so we can assume that $c_1(L)$ is $(1,1)$.

Suppose that $h^0(L)$ and $h^2(L)$ are non-zero. If $\alpha \in H^0(X,L), \beta \in H^0(X , L^*)$ are non-zero, then $\alpha\beta \in H^0(X,\mathcal{O})$ is non-zero and hence non-vanishing. This means that $\alpha, \beta$ are non-vanishing and hence $c_1(L) = 0$. This means that if $c_1(L) \neq 0$ then $h^0(L)$ or $h^2(L)$ is zero. Now if the action of $G$ on $K_X$ is trivial, then $H^+(X)^G = H^+(X)$ and there is only one chamber. Then it follows that $SW_{G,X,\mathfrak{s}_L} = 0$ since $h^0(L) = 0$ or $h^2(L) = 0$. This proves (2). If $c_1(L) \neq 0$ and $G$ acts non-trivially on $K_X$ then there are two chambers, but we have that either $h^0(L) = 0$, in which case $SW_{G,X,\mathfrak{s}_L}^\omega = 0$, or $h^2(L) = 0$, in which case $SW_{G,X,\mathfrak{s}_L}^{-\omega} = 0$. Then the invariants for the other chamber are given by the wall-crossing formula. This proves (5).

It remains to prove (3) and (4). Hence we assume that $c_1(L) = 0$. This means that $h^0(L) = h^2(L) = 1$, $h^1(L) = 0$, $d = 2$, $r=0$. So Theorem \ref{thm:swk} simplifies to
\[
SW_{G , X , \mathfrak{s}_L}^\omega( x^m ) = \binom{-1}{0} s_{m,G}(V^0) = s_{m,G}(V^0).
\]
But $V^0 = H^0(X , L) \cong L$ is $1$-dimensional, hence $s_{m,G}(V^0) = s_{1,G}(L)^m$. This proves (3). Finally in case (4) the formula for $SW_{X,G,\mathfrak{s}_L}^{-\omega}$ follows from the formula for $SW_{G,X,\mathfrak{s}_L}^\omega$ and charge conjugation, or alternatively from the wall-crossing formula.
\end{proof}

\section{Gluing formulas}\label{sec:glue}

In this section we prove gluing formulas for the equivariant Seiberg--Witten invariants of equivariant connected sums.

Let $f : S^{V,U} \to S^{V',U'}$ be a $G$-monopole map. Define the {\em $G_{\mathfrak{s}}$-equivariant degree} $deg_{G_\mathfrak{s}}(f) \in H^{b_+ - 2d}_{G_{\mathfrak{s}}}(pt ; \mathbb{Z}_w)$ by $f^*( \tau_{V',U'} ) = deg_{G_{\mathfrak{s}}}(f) \tau_{V,U}$. The following result is an extension of \cite[Theorem 6.1]{bar} to the case where $G_{\mathfrak{s}}$ is not necessarily split.

\begin{proposition}\label{prop:donaldson}
Let $f : S^{V,U} \to S^{V',U'}$ be a $G$-monopole map. If $e(H^+) = 0$, then $deg_{G_\mathfrak{s}}(f) = 0$. If $e(H^+) \neq 0$, then $d \le 0$ and $deg_{G_\mathfrak{s}}(f) = e(H^+) s_{G_{\mathfrak{s}} , -d}(D)$. Furthermore in this case we have that $e_G(H^+) s_{G_{\mathfrak{s}} , j}(D) = 0$ for $j > -d$.
\end{proposition}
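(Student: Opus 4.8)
The plan is to deduce the result from \cite[Theorem~6.1]{bar}, which is the case of split $G_{\mathfrak{s}}$. When $G_{\mathfrak{s}}$ splits, I would choose a splitting and argue by finite-dimensional approximation exactly as in the proof of Theorem~\ref{thm:eqfam2}: take smooth finite-dimensional approximations $B_n$ of $BG$ with $H^j(BG;A)\to H^j(B_n;A)$ an isomorphism for $j$ below a given bound, let $P_n\to B_n$ be the associated principal $G$-bundles and $f_{P_n}$ the ordinary monopole maps over $B_n$ obtained by the construction $Y\mapsto Y_{P_n}$; by naturality of Thom classes under this construction the classifying maps $\varphi_{P_n}$ carry $deg_{G_{\mathfrak{s}}}(f)$, $e(H^+)$ and $s_{G_{\mathfrak{s}},*}(D)$ to the corresponding classes for $f_{P_n}$, so each assertion of the Proposition for $f_{P_n}$ (which is \cite[Theorem~6.1]{bar}) pulls back to the corresponding assertion for $f$ once $n$ is large. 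When $G_{\mathfrak{s}}$ does not split, I would instead re-run the proof of \cite[Theorem~6.1]{bar}; the point to check is that that argument never uses the splitting.

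The mechanism of that proof is the following. Using that $f|_U$ is the inclusion $U\hookrightarrow U'$, so that $f$ sends $S^U$ to the standard subsphere of $S^{V',U'}$, one expresses $f^{*}(\tau_{V',U'})$ in terms of the refined Thom class and the class $\eta^\phi$ of Section~\ref{sec:ci} (or, when no chamber exists, a canonical chamber-free analogue built from the relative Euler class of $H^+=U'/U$); after the excision identification $H^{*}_{G_{\mathfrak{s}}}(S^{V,U},S^U)\cong H^{*}_{G_{\mathfrak{s}}}(\widetilde Y,\partial\widetilde Y)$ and pullback along the collapse map, this identifies $deg_{G_{\mathfrak{s}}}(f)$ with a push-forward over $\mathbb{P}(V)$ of that class, and hence, via the Gysin sequence of $\mathbb{P}(V)\to pt$, with a characteristic-class expression involving $e_{G_{\mathfrak{s}}}(V)$, $e_{G_{\mathfrak{s}}}(V')$ and $e_{G_{\mathfrak{s}}}(H^+)$. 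The dichotomy of the Proposition is then read off: if $e(H^+)=0$ the relative Euler class of $H^+$ is pulled back from a smaller sphere bundle and the push-forward annihilates it, giving $deg_{G_{\mathfrak{s}}}(f)=0$; if $e(H^+)\ne0$ one evaluates the push-forward — the $H^+$-directions producing the factor $e(H^+)$ and the $V,V'$-directions the Segre classes — to obtain $deg_{G_{\mathfrak{s}}}(f)=e(H^+)\,s_{G_{\mathfrak{s}},-d}(D)$. Every ingredient here (refined Thom class, excision, collapse, Gysin sequence) is available for an arbitrary central extension.

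Finally I would dispose of the two side conditions. The inequality $d\le 0$ when $e(H^+)\ne0$ is a Donaldson-type statement: one uses the behaviour of $f$ along the $S^1$-fixed locus $S^U=(S^{V,U})^{S^1}$ to see that for $d>0$ the contribution of the normal directions forces the $H^+$-factor to vanish, so $e(H^+)\ne0$ is incompatible with $d>0$. The vanishing $e_G(H^+)\,s_{G_{\mathfrak{s}},j}(D)=0$ for $j>-d$ records that the push-forward lands in the single cohomological degree $b_+-2d$, so the higher Segre contributions cancel identically. I expect the main obstacle to be precisely this non-split bookkeeping in the middle step: replacing the splitting-dependent computations of \cite[Theorem~6.1]{bar} — of the relative Euler class of $H^+$, of the excision/collapse isomorphisms, and of the orientation signs and $\mathbb{Z}_w$-twists — by invariant ones in $H^{*}_{G_{\mathfrak{s}}}(pt;\mathbb{Z}_w)$.
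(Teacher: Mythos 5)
Your split-case reduction to \cite[Theorem 6.1]{bar} via finite-dimensional approximations of $BG$ is a workable (and genuinely different) route to part of the statement, but the proposal has a real gap exactly where you flag "the main obstacle": the non-split case. The paper's proof does not go through refined Thom classes, $\eta^\phi$, or a push-forward over $\mathbb{P}(V)$ at all — the degree is defined by the \emph{absolute} Thom class, and the whole computation consists of restricting the identity $f^*(\tau_{V',U'}) = deg_{G_\mathfrak{s}}(f)\tau_{V,U}$ to the $S^1$-fixed sphere $S^U$ and comparing with $\iota^*(\tau_{U'}e_{G_{\mathfrak{s}}}(V')) = \tau_U e(H^+)e_{G_{\mathfrak{s}}}(V')$, which yields
\[
e_{G_{\mathfrak{s}}}(V)\, deg_{G_\mathfrak{s}}(f) = e(H^+)\, e_{G_{\mathfrak{s}}}(V').
\]
This determines $deg_{G_\mathfrak{s}}(f)$ only implicitly (as $e_{G_{\mathfrak{s}}}(V)$ is a non-zero-divisor); the entire difficulty is extracting the explicit Segre-class formula from it. When $G_{\mathfrak{s}}$ is split one has $H^*_{G_{\mathfrak{s}}}(pt)\cong H^*_G(pt)[x]$ and can expand $e_{G_{\mathfrak{s}}}(V)^{-1}$ as a Laurent series in $x$; when it is not split there is no such polynomial generator, and "re-running the argument" stalls precisely here. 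The paper's resolution, which your proposal is missing, is to enlarge the symmetry group to $\widehat{G}_{\mathfrak{s}} = S^1\times G_{\mathfrak{s}}$ acting through $(u,g)\mapsto ug$, so that $H^*_{\widehat{G}_{\mathfrak{s}}}(pt;\mathbb{Z}_w)\cong H^*_{G_{\mathfrak{s}}}(pt;\mathbb{Z}_w)[y]$; one then solves the analogous identity by formal Laurent series in $y$ (this is also where $d\le 0$ falls out, by comparing top degrees in $y$, and where $e(H^+)s_{G_{\mathfrak{s}},j}(D)=0$ for $j>-d$ appears, by comparing the coefficients of $y^{a'-j}$ for $j>-d$ against a left-hand side that has no such terms), and finally restricts back to $G_{\mathfrak{s}}$ by setting $y=0$. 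Without this device, or an equivalent one, your non-split argument does not close.

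Two smaller points. First, your heuristics for the side conditions are not arguments: $d\le 0$ does not follow from "the contribution of the normal directions", and the classes $e_G(H^+)s_{G_{\mathfrak{s}},j}(D)$ for $j>-d$ live in varying degrees, so "the push-forward lands in a single degree" does not explain their vanishing — both facts come out of the coefficient comparison above. Second, in the split case your approximation argument is fine in outline (the needed injectivity of $H^j_{G_{\mathfrak{s}}}(pt)\to H^j_{S^1}(B_n)$ in a fixed range for large $n$ follows as in Theorem \ref{thm:eqfam2}, applied to $BS^1\times BG$), and it is a legitimately different proof from the paper's; but it only covers split extensions, and the proposition is stated, and used (e.g.\ in Theorem \ref{thm:glue}), for arbitrary $G_{\mathfrak{s}}$.
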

\begin{proof}
Consider the commutative square
\[
\xymatrix{
S^{V , U } \ar[r]^-{f} & S^{V' , U'} \\
S^{U} \ar[u]^-{i} \ar[r]^-{\iota} & S^{U'} \ar[u]^-{i}
}
\]
where $\iota : S^{U} \to S^{U'}$ is given by inclusion. We have
\[
i^* f^*( \tau_{V' , U'} ) = i^*( \tau_{V , U} deg_{G_\mathfrak{s}}(f)) = \tau_{U} e_{G_{\mathfrak{s}}}(V) deg_{G_\mathfrak{s}}(f).
\]
On the other hand,
\[
\iota^* i^*( \tau_{V' , U'}) = \iota^*( \tau_{U'} e_{G_{\mathfrak{s}}}(V') ) = \tau_{U} e(H^+) e_{G_{\mathfrak{s}}}(V').
\]
Since $f \circ i = i \circ \iota$, we may equate these two expressions, giving
\[
e_{G_{\mathfrak{s}}}(V) deg_{G_\mathfrak{s}}(f) = e(H^+) e_{G_{\mathfrak{s}}}(V').
\]
Using a spectral sequence argument it can be shown that $e_{G_{\mathfrak{s}}}(V)$ is not a zero divisor and so the above equation uniquely determines $deg_{G_\mathfrak{s}}(f)$. However, it seems difficult to obtain an explicit formula for $deg_{G_\mathfrak{s}}(f)$ from this equation. We can remedy this by considering the larger group $\widehat{G}_{\mathfrak{s}} = S^1 \times G_{\mathfrak{s}}$. We let $\widehat{G}_{\mathfrak{s}}$ act on $S^{V,U}$ and $S^{V',U'}$ through the homomorphism $S^1 \times G_{\mathfrak{s}} \to G_{\mathfrak{s}}$ given by $(u,g) \mapsto ug$. Then $f$ can be regarded as a $\widehat{G}_{\mathfrak{s}}$-equivariant map. Repeating the above computation for this larger group gives
\begin{equation}\label{equ:degghat}
e_{\widehat{G}_{\mathfrak{s}}}(V) deg_{\widehat{G}_{\mathfrak{s}}}(f) = e(H^+) e_{\widehat{G}_{\mathfrak{s}}}(V').
\end{equation}
Let $\mathbb{C}_1$ be the $1$-dimensional representation of $S^1 \times G_{\mathfrak{s}}$ where the first factor acts by scalar multiplication and the second factor acts trivially. Then $H^*_{\widehat{G}_{\mathfrak{s}}}(pt ; \mathbb{Z}_w) \cong H^*_{G_{\mathfrak{s}}}(pt ; \mathbb{Z}_w)[y]$ where $y = c_1(\mathbb{C}_1)$. Equation (\ref{equ:degghat}) can be expanded as
\begin{equation*}
( y^a + c_{G,1}(V) y^{a-1} + \cdots + c_{G,a}(V)) deg_{\widehat{G}_{\mathfrak{s}}}(f) = e(H^+)( y^{a'} + c_{G,1}(V') y^{a-1} + \cdots + c_{G,a'}(V') ).
\end{equation*}
If $e(H^+) = 0$, then it follows that $deg_{\widehat{G}_{\mathfrak{s}}}(f) = 0$. Suppose instead that $e(H^+) \neq 0$. Then we can write $deg_{\widehat{G}_{\mathfrak{s}}}(f) = f_0 y^k + f_{1} y^{k-1} + \cdots + f_k$ for some $k \ge 0$, where $f_0 \neq 0$. Expanding the left hand side of the above equation we see that $a' = k+a$, hence $d = a-a' = -k \le 0$. Now consider the above equation in the group of formal Laurent series of the form $\sum_{j = -\infty}^{n} c_n y^n$ where $n$ is any integer and $c_j \in H^*_{G_\mathfrak{s}}(pt ; \mathbb{Z}_w)$. This group is a module over the ring $H^*_{G_\mathfrak{s}}(pt ; \mathbb{Z})[[y^{-1}]]$ of formal power series in $y$ with coefficients in $H^*_{G_\mathfrak{s}}(pt ; \mathbb{Z})$. Multiplying both sides by $1 + y^{-1} s_{G,1}(V) + y^{-2} s_{G,2}(V) + \cdots $ we obtain
\[
y^a ( f_0 y^{-d} + f_{1} y^{-d-1} + \cdots + f_k ) = e(H^+)( y^{a'} + s_{G,1}(D) y^{a'-1} + s_{G,2}(D) y^{a'-2} + \cdots ).
\]
Equating coefficients we see that $f_j = e(H^+) s_{G,j}(D)$ for $j \le -d$ and $e(H^+) s_{G,j}(D) = 0$ for $j > -d$. Thus
\[
deg_{\widehat{G}_{\mathfrak{s}}}(f) = e(H^+)( y^{-d} + s_{G,1}(D) y^{-d-1} + \cdots + s_{G,-d}(D) ).
\]
Restricting to $G_{\mathfrak{s}} \subset \widehat{G}_{\mathfrak{s}}$ amounts to setting $y=0$ in the above equality, giving
\[
deg_{G_{\mathfrak{s}}}(f) = e(H^+) s_{G,-d}(D).
\]
\end{proof}

\subsection{Abstract gluing formula}\label{sec:agf}

We first prove a general gluing formula for the abtract Seiberg--Witten invariants of a smash product of monopole maps. Let $G$ be a finite group and $G_{\mathfrak{s}}$ an $S^1$-central extension of $G$. Suppose that we have two $G$-monopole maps
\[
f_i : S^{V_i , U_i} \to S^{V'_i , U'_i}, \quad i = 1,2
\]
over $B = pt$ and assume $f_1,f_2$ are $G_{\mathfrak{s}}$-equivariant for the same central extension of $G$. Then the smash product
\[
f = f_1 \wedge f_2 : S^{V,U} \to S^{V',U'}
\]
is again $G_{\mathfrak{s}}$-equivariant, where $V = V_1 \oplus V_2$, $U = U_1 \oplus U_2$, $V' = V'_1 \oplus V'_2$, $U' = U'_1 \oplus U'_2$. As usual we write $D = V - V'$, $H^+ = U' - U$. Similarly define $D_i, H^+_i$ for $i=1,2$. Since $(H^+)^G = (H^+_1)^G \oplus (H^+_2)^G$, we see that $f$ admits a chamber if and only if at least one of $f_1$ and $f_2$ admits a chamber. Suppose that this is the case and let $\phi = (\phi_1 , \phi_2)$ be a chamber. Thus either $\phi_1 \neq 0$ or $\phi_2 \neq 0$.

Our orientation conventions are as follows. We orient $H^+$ according to $H^+ = H^+_1 \oplus H^+_2$, orient $U$ according to $U = U_1 \oplus U_2$ and orient $U'$ according to $U' = U \oplus H^+$. Note that this is different from taking $U'$ to be $U'_1 \oplus U'_2$ by a factor of $(-1)^{b_2 (b_{+,1})}$, where $b_i$ denotes the rank of $U_i$ and $b_{+,i}$ the rank of $H^+_i$.

\begin{theorem}\label{thm:glue}
If $(H^+_1)^G, (H^+_2)^G$ are both non-zero, then the Seiberg--Witten invariants of $f$ are all zero. If $(H^+_1)^G \neq 0$, $(H^+_2)^G = 0$, then
\[
SW_{G,f}^\phi( \theta ) = SW_{G,f_1}^{\phi_1}(e_G(H^+_2) s_{G_{\mathfrak{s}} , -d_2}(D_2) \theta ).
\]
where $s_{G_{\mathfrak{s}} , -d_2}$ is taken to be zero if $d_2 > 0$. In particular, if $G_\mathfrak{s} \cong S^1 \times G$ is the trivial extension then
\[
SW_{G,f}^\phi( \theta ) = \sum_{k+l=-d_2} s_{G,l}(D_2) SW_{G,f_1}^{\phi_1}( x^k e_G( H^+_2 ) \theta ).
\]
\end{theorem}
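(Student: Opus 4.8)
The plan is to track the single class $\eta^{\phi}$ through the smash product and then read off $f_2$'s contribution from Proposition \ref{prop:donaldson}. Since $f$ admits a chamber exactly when one of $f_1,f_2$ does, after relabelling we may assume $(H^+_1)^G\neq 0$, which holds in both clauses of the statement. Fix a chamber $\phi_1$ for $f_1$ and (after normalising) take $\phi=(\phi_1,0)\in S((H^+_1)^G)\subseteq S(H^+)$ as the chamber for $f$: when $b_+^G\ge 2$ this represents the unique chamber, and when $b_+^G=1$ it represents one of the two chambers of $f$, the other being obtained by replacing $\phi_1$ with $-\phi_1$. It is worth noting the alternative decomposition $f=(f_1\wedge\mathrm{id})\circ(\mathrm{id}\wedge f_2)$, which exhibits $f$ as a suspension of $f_1$ precomposed with a suspension of $f_2$; one may run the argument either directly via Thom classes (as below) or by combining suspension invariance with the definition of $\deg_{G_{\mathfrak{s}}}(f_2)$.

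The crucial step is the factorization of the refined Thom class of the target of $f$. Because the chosen chamber $\phi$ has vanishing $H^+_2$-component, unwinding the construction of refined Thom classes from \cite[\textsection 3]{bk} shows that the cutting of $S^{U}$ out of $S^{V',U'}$ takes place entirely on the $S^{V'_1,U'_1}$-factor, so that, in the appropriate relative cohomology,
\[
\tau^{\phi}_{V',U'}=(-1)^{b_2 b_{+,1}}\,\tau^{\phi_1}_{V'_1,U'_1}\times\tau_{V'_2,U'_2},
\]
where $\tau^{\phi_1}_{V'_1,U'_1}$ is the refined Thom class of $f_1$'s target and $\tau_{V'_2,U'_2}$ the ordinary Thom class; the sign comes from the convention $U'=U\oplus H^+$ rather than $U'_1\oplus U'_2$. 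Pulling back by $f=f_1\wedge f_2$, using naturality of the external product and substituting $f_1^{*}(\tau^{\phi_1}_{V'_1,U'_1})=(-1)^{b_1}\delta\tau_{U_1}\eta^{\phi_1}$ and $f_2^{*}(\tau_{V'_2,U'_2})=\deg_{G_{\mathfrak{s}}}(f_2)\,\tau_{V_2,U_2}$, and then rewriting the outcome in the form $f^{*}(\tau^{\phi}_{V',U'})=(-1)^{b}\delta\tau_{U}\eta^{\phi}$ with $b=b_1+b_2$ — here one uses $\tau_{U}=\pm\tau_{U_1}\times\tau_{U_2}$, $\tau_{V_2,U_2}=\tau_{V_2}\tau_{U_2}$, and the fact that on cohomology the $\tau_{V_2}$-factor realises the Gysin pushforward $\iota_{*}$ along the inclusion $\iota\colon\mathbb{P}(V_1)\hookrightarrow\mathbb{P}(V)$, whose normal bundle $V_2\otimes\mathcal{O}(1)$ has $G_{\mathfrak{s}}$-equivariant Euler class $e_{G_{\mathfrak{s}}}(V_2)$ — one reads off
\[
\eta^{\phi}=\iota_{*}\bigl(\eta^{\phi_1}\cdot\deg_{G_{\mathfrak{s}}}(f_2)\bigr)\in H^{*}_{G}(\mathbb{P}(V);\mathbb{Z}_w).
\]
The only genuine work in this step is the bookkeeping of the orientation signs prescribed by the conventions of the paper; the factor $(-1)^{b}$ built into $\eta^{\phi}$ is exactly what makes this identity come out cleanly (degrees and the local system $\mathbb{Z}_w$ also match up automatically).

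With $\pi_1\colon\mathbb{P}(V_1)\to pt$ and $\pi\colon\mathbb{P}(V)\to pt$ one has $\pi\circ\iota=\pi_1$ and $\iota^{*}\pi^{*}=\pi_1^{*}$, so the projection formula gives
\[
SW^{\phi}_{G,f}(\theta)=\pi_{*}\bigl(\eta^{\phi}\pi^{*}\theta\bigr)=\pi_{1*}\bigl(\eta^{\phi_1}\deg_{G_{\mathfrak{s}}}(f_2)\,\pi_1^{*}\theta\bigr)=SW^{\phi_1}_{G,f_1}\bigl(\deg_{G_{\mathfrak{s}}}(f_2)\,\theta\bigr).
\]
Proposition \ref{prop:donaldson} applied to $f_2$ now finishes the proof. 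If $e(H^{+}_2)=0$ — in particular whenever $(H^{+}_2)^{G}\neq 0$, since then $H^{+}_2$ contains a trivial summand and its equivariant Euler class vanishes — then $\deg_{G_{\mathfrak{s}}}(f_2)=0$, so all invariants of $f$ vanish; this is the first clause. If $e(H^{+}_2)\neq 0$ then $d_2\le 0$ and $\deg_{G_{\mathfrak{s}}}(f_2)=e_G(H^{+}_2)\,s_{G_{\mathfrak{s}},-d_2}(D_2)$, which gives the formula of the second clause. In the split case one expands $\deg_{G_{\mathfrak{s}}}(f_2)=\sum_{k+l=-d_2}s_{G,l}(D_2)\,x^{k}\,e_G(H^{+}_2)$ exactly as in the proof of Proposition \ref{prop:donaldson}, and pulls the classes $s_{G,l}(D_2)\in H^{*}_G(pt)$ through the $H^{*}_G(pt)$-linear map $SW^{\phi_1}_{G,f_1}$ to obtain the stated sum. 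I expect the Thom-class factorization together with the sign analysis to be the one real obstacle; everything downstream of the formula for $\eta^{\phi}$ is formal.
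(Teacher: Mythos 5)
Your proposal is correct and follows essentially the same route as the paper: factor the refined Thom class as $(-1)^{b_{+,1}b_2}\tau^{\phi_1}_{V'_1,U'_1}\tau_{V'_2,U'_2}$, pull back under $f_1\wedge f_2$ to identify $\eta^{\phi}$ with the pushforward of $\eta^{\phi_1}\deg_{G_{\mathfrak{s}}}(f_2)$ along $\mathbb{P}(V_1)\hookrightarrow\mathbb{P}(V)$, and then substitute the formula for $\deg_{G_{\mathfrak{s}}}(f_2)$ from Proposition \ref{prop:donaldson}. The only cosmetic difference is that you phrase the key identity directly as $\eta^{\phi}=\iota_{*}(\eta^{\phi_1}\deg_{G_{\mathfrak{s}}}(f_2))$ where the paper writes the equivalent $\eta^{\phi}=e_{G_{\mathfrak{s}}}(V_2)\eta^{\phi_1}_1\deg_{G_{\mathfrak{s}}}(f_2)$ and then invokes $(\iota_1)_*(1)=e_{G_{\mathfrak{s}}}(V_2)$.
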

\begin{proof}
Without loss of generality we can assume $(H^+_1)^G \neq 0$. If we also have $(H^+_2)^G \neq 0$, then $dim( (H^+)^G ) > 1$ and so $SW_{G,f}^{\phi}$ is independent of the choice of chamber. Hence without loss of generality we can assume that $\phi = (\phi_1 , 0)$, where $\phi_1 \neq 0$. We have that $SW_{G,f}^\phi(\theta) = (\pi_{\mathbb{P}(V)})_*( \eta^\phi \theta)$ where $f^*( \tau^\phi_{V',U'}) = (-1)^{b} \delta \tau_U \eta^\phi$. Similarly $SW_{G,f_1}^{\phi_1}(\theta) = (\pi_{\mathbb{P}(V_1)})_*( \eta^{\phi_1}_1 \theta)$ where $f^*( \tau^{\phi_1}_{V'_1,U'_1}) = (-1)^{b_1} \delta \tau_{U_1} \eta^{\phi_1}_1$. Recall that the equivariant degree $deg_{G_\mathfrak{s}}(f_2)$ of $f_2$ is defined by $f_2^*( \tau_{V'_2,U'_2}) = \tau_{V_2,U_2} deg_{G_\mathfrak{s}}(f_2)$.

Consider the external cup product
\[
H^i_{G_{\mathfrak{s}}}( S^{V'_1,U'_1} , S^{U_1} ; \mathbb{Z} ) \times H^j_{G_{\mathfrak{s}}}( S^{V'_2,U'_2} , pt ; \mathbb{Z}) \to H^{i+j}_{G_{\mathfrak{s}}}( S^{V',U'} , S^U ; \mathbb{Z}).
\]
Using this cup product we can write
\[
\tau^\phi_{V',U'} =  (-1)^{ b_{+,1}b_2 }\tau^{\phi_1}_{V'_1,U'_2} \tau_{V'_2,U'_2}.
\]
The sign factor $(-1)^{b_{+,1} b_2}$ arises because our orientation conventions described above. From this, we find
\begin{align*}
(-1)^{b} \delta \tau_U \eta^\phi &= f^*( \tau^\phi_{V',U'} ) \\
&= (-1)^{b_{+,1}b_2} (f_1 \wedge f_2)^*( \tau^{\phi_1}_{V'_1,U'_1} \tau_{V'_2, U'_2} ) \\
&= (-1)^{b_{+,1}b_2} f_1^*( \tau^{\phi_1}_{V'_1,U'_1} ) f_2^*( \tau_{V'_2 , U'_2} ) \\
&= (-1)^{b_{+,1}b_2 + b_1} \delta \tau_{U_1} \eta^{\phi_1}_1  \tau_{V_2 , U_2} deg_{G_\mathfrak{s}}(f_2) \\
&= (-1)^{b_{+,1}b_2 + b_1 + (b_{+,1}+1)b_2} \delta \tau_{U} e_{G_{\mathfrak{s}}}(V_2) \eta^{\phi_1}_1 deg_{G_\mathfrak{s}}(f_2) \\
&= (-1)^{b} \delta \tau_{U} e_{G_{\mathfrak{s}}}(V_2) \eta^{\phi_1}_1 deg(f_2).
\end{align*}
Hence
\begin{equation}
\eta^\phi = e_{G_{\mathfrak{s}}}(V_2) \eta^{\phi_1}_1 deg_{G_\mathfrak{s}}(f_2).
\end{equation}

Let $\iota_1 : \mathbb{P}(V_1) \to \mathbb{P}(V_2)$ be the inclusion map. Since $(\iota_1)_*(1) = e_{G_{\mathfrak{s}}}(V_2)$, we have
\begin{align*}
SW_{G,f}^{\phi}(\theta) &= (\pi_{\mathbb{P}(V)})_*( \eta^\phi \theta ) \\
&= (\pi_{\mathbb{P}(V)})_*( e_{G_{\mathfrak{s}}}(V_2) \eta^{\phi_1}_1 deg_{G_\mathfrak{s}}(f_2) \theta ) \\
&= (\pi_{\mathbb{P}(V)})_* (\iota_1)_*(  \eta^{\phi_1}_1 deg_{G_\mathfrak{s}}(f_2) \theta ) \\
&= (\pi_{\mathbb{P}(V_1)})_*( \eta^{\phi_1}_1 deg_{G_\mathfrak{s}}(f_2) \theta ) \\
&= SW^{\phi_1}_{G,f_1}( deg_{G_\mathfrak{s}}(f_2) \theta ).
\end{align*}
Now if $(H^+_2)^G \neq 0$, then $e(H^+_2) = 0$ and $deg_{G_\mathfrak{s}}(f_2) = 0$ by Proposition \ref{prop:donaldson}. Hence in this case $SW_{G,f}^\phi$ vanishes. If $(H^+_2)^G = 0$, then $d_2 \le 0$ and Proposition \ref{prop:donaldson} gives $deg_{G_\mathfrak{s}}(f_2) = e(H^+_2) s_{G_{\mathfrak{s}} , -d_2}(D_2)$, which gives
\[
SW_{G,f}^\phi( \theta ) = SW_{G,f_1}^{\phi_1}(e_G(H^+_2) s_{G_{\mathfrak{s}} , -d_2}(D_2) \theta ).
\]
Lastly if $G_{\mathfrak{s}} \cong S^1 \times G$ is the trivial extension, then 
\[
s_{G_{\mathfrak{s}} , -d_2}(D_2) = \sum_{k=0}^{-d_2} x^k s_{G , -d_2-k}(D_2)
\]
and hence
\[
SW_{G,f}^\phi( \theta ) = \sum_{k+l=-d_2} s_{G,l}(D_2) SW_{G,f_1}^{\phi_1}( x^k e_G( H^+_2 ) \theta ).
\]
\end{proof}

\subsection{Equivariant connected sum}\label{sec:ecs}

Let $X,Y$ be compact, oriented, smooth $4$-manifolds. Let $G$ be a finite group acting smoothly and orientation preservingly on $X$. Let $H$ be a subgroup of $G$ and assume that $H$ acts smoothly and orientation preservingly on $Y$. We will construct an action of $G$ on the connected sum of $X$ with $|G/H|$ copies of $Y$. For this we need to assume that $X$ and $Y$ contain points $x,y$ whose stabiliser groups are $H$. We also need that $T_x X, T_y Y$ are isomorphic as real representations of $H$ via an orientation reversing isomorphism. Choose such an isomorphism $\psi : T_x X \to T_y Y$. Then we can form the {\em $G$-equivariant connected sum} $X \# ind^G_H(Y)$ as follows. Here $ind^G_H(Y)$ is the disjoint union of $G/H$ copies of $Y$ which is made into a $G$-space by taking $ind^G_H(Y) = G \times_H Y$. We remove from $X$ and $ind^G_H(Y)$ equivariant tubular neighbourhoods $\nu( Gx), \nu(Gy)$ of the orbits $Gx,Gy$ and identify the boundaries of $X \setminus \nu(Gx)$, $ind^G_H(Y) \setminus \nu(Gy)$ by the orientation reversing $G$-equivariant isomorphism $\partial \nu(Gx) \to \partial \nu(Gy)$ determined by $\psi$. We note that the construction of $X \# ind^G_H(Y)$ depends on the choice of points $x,y$ and the isomorphism $\psi$. The underlying smooth manifold of $X \# ind^G_H(Y)$ is $X \# |G/H|Y$, the connected sum of $X$ with $|G/H|$ copies of $Y$. Note also that if $b_1(X) = b_1(Y) = 0$, then $b_1(X \# ind^G_H(Y) ) = 0$.

Suppose that $\mathfrak{s}_X$ is a $G$-invariant spin$^c$-structure on $X$ and $\mathfrak{s}_Y$ is a $H$-invariant spin$^c$-structure on $Y$. Let $S^{\pm}$ denote the spinor bundles corresponding to $\mathfrak{s}_Y$. Since $H$ fixes $y$, we see that $H_{\mathfrak{s}_Y}$ lifts to the fibres $S^{\pm}_y$. Hence the extension class of $H_{\mathfrak{s}_Y}$ is given by $Sq_3^{\mathbb{Z}}( T_y Y ) \in H^3_H( pt ; \mathbb{Z} )$, the third integral Stiefel--Whitney class of $T_y Y$. Similarly, since $H$ fixes $x$, we see that the restriction $G_{\mathfrak{s}_X}|_H$ of the extension $G_{\mathfrak{s}_X}$ to $H$ has extension class $Sq_3^{\mathbb{Z}}( T_x X )$. Since $T_x X$ and $T_y Y$ are orientation reversingly isomorphic, it follows that the extension classes agree (reversing orientation has the effect of exchanging the roles of positve and negative spinors). Therefore $H_{\mathfrak{s}_Y}$ and $G_{\mathfrak{s}_X}|_H$ are isomorphic as extensions of $H$. The isomorphism $\psi : T_x X \to T_y Y$ determines the isomorphism $G_{\mathfrak{s}_X}|_H \to H_{\mathfrak{s}_Y}$. We can induce a $G$-invariant spin$^c$-structure $ind^G_H(\mathfrak{s}_Y)$ on $ind^G_H(Y)$ as follows. We take the spinor bundles of $ind^G_H(\mathfrak{s}_Y)$ to be given by $G_{\mathfrak{s}_X} \times_{H_{\mathfrak{s}_Y}} S^{\pm}$. The underlying isomorphism class of $ind^G_H(\mathfrak{s}_Y)$ is simply given by equipping each connected component of $ind^G_H(Y)$ with a copy of $\mathfrak{s}_Y$. However our construction makes it clear that the extension of $G$ determined by $ind^G_H(\mathfrak{s}_Y)$ is isomorphic to $G_{\mathfrak{s}_X}$. It follows that the spin$^c$-structures $\mathfrak{s}_X$ and $ind^G_H(\mathfrak{s}_Y)$ can be identified equivariantly on the boundaries of $X \setminus \nu(Gx)$ and $ind^G_H(Y) \setminus \nu(Gx)$ to form a spin$^c$-structure $\mathfrak{s}_X \# ind^G_H(\mathfrak{s}_Y)$ on $X \# ind^G_H(\mathfrak{s}_Y)$.

To summarise, given a $G$-invariant spin$^c$-structure $\mathfrak{s}_X$ on $X$ and a $H$-invariant spin$^c$-structure $\mathfrak{s}_Y$ on $Y$, the choice of an orientation reversing isomorphism $\psi : T_x X \to T_y Y$ of representations of $H$ allows us to construct a $G$-invariant spin$^c$-structure $\mathfrak{s} = \mathfrak{s}_X \# ind^H_G(\mathfrak{s}_Y)$ on $X \# ind^G_H(Y)$ and moreover the extensions $G_{\mathfrak{s}}$ and $G_{\mathfrak{s}_X}$ are isomorphic.

Suppose $f : S^{V , U} \to S^{V',U'}$ is an $H$-equivariant monopole map with respect to some extension $H_{\mathfrak{s}}$.  Suppose $H \subseteq G$ and $H_{\mathfrak{s}} = G_{\mathfrak{s}}|_H$ for some extension $G_{\mathfrak{s}}$. We will define an induced monopole map
\[
ind^{G_{\mathfrak{s}}}_{H_{\mathfrak{s}}}(f) : S^{ind^{G_{\mathfrak{s}}}_{H_{\mathfrak{s}}}(V) , ind^{G_{\mathfrak{s}}}_{H_{\mathfrak{s}}}(U) } \to S^{ind^{G_{\mathfrak{s}}}_{H_{\mathfrak{s}}}(V') , ind^{G_{\mathfrak{s}}}_{H_{\mathfrak{s}}}(U') }
\]
as follows. Choose representatives $g_1, \dots , g_n$ for the left cosets of $H$ in $G$ and choose lifts $\tilde{g}_i$ of $g_i$ to $G_{\mathfrak{s}}$. Then $\tilde{g}_1, \dots , \tilde{g}_n$ are representatives for the left cosets of $H_{\mathfrak{s}}$ in $G_{\mathfrak{s}}$. For any representation $W$ of $H_{\mathfrak{s}}$, $ind^{G_{\mathfrak{s}}}_{H_{\mathfrak{s}}}(W) = \mathbb{R}[G_{\mathfrak{s}}] \otimes_{\mathbb{R}[H_{\mathfrak{s}}]} W$. Regard $S^W$ as $W \cup \{\infty\}$ and similarly $S^{ind^{G_{\mathfrak{s}}}_{H_{\mathfrak{s}}}(W)} = ind^{G_{\mathfrak{s}}}_{H_{\mathfrak{s}}}(W) \cup \{\infty\}$. Then $ind^{G_{\mathfrak{s}}}_{H_{\mathfrak{s}}}(f)$ is defined by 
\[
ind^{G_{\mathfrak{s}}}_{H_{\mathfrak{s}}}(f)( \sum_i g_i \otimes w_i ) = \begin{cases} \sum_i g_i \otimes f(w_i) & \text{if } f(w_i) \neq \infty \text{ for all } i, \\ \infty & \text{otherwise}. \end{cases}
\]
This is well-defined because $f$ is $H_{\mathfrak{s}}$-equivariant.

\begin{theorem}\label{thm:ecs}
Let $X,Y$ be compact, oriented, smooth $4$-manifolds with $b_1(X) = b_1(Y) = 0$. Let $G$ be a finite group acting smoothly and orientation preservingly on $X$. Let $H$ be a subgroup of $G$ and suppose that $H$ acts smoothly and orientation preservingly on $Y$. Suppose that there is an $x \in X$ and $y \in Y$ with stabiliser group $H$ and an orientation reversing isomorphism $\psi : T_x X \to T_y Y$ of representations of $H$. Suppose that $\mathfrak{s}_X$ is a $G$-invariant spin$^c$-structure on $X$ and $\mathfrak{s}_Y$ is a $H$-invariant spin$^c$-structure on $Y$. Set $Z = X \# ind^G_H(Y)$, $\mathfrak{s} = \mathfrak{s}_X \# ind^G_H(\mathfrak{s}_Y)$. Then:
\begin{itemize}
\item[(1)]{If $H^+(X)^G, H^+(Y)^H$ are both non-zero then the equivariant Seiberg--Witten invariants of $Z$ vanish.}
\item[(2)]{If $H^+(X)^G \neq 0$ and $H^+(Y)^H = 0$, then for any chamber $\phi \in H^+(X)^G \setminus \{0\}$ we have
\[
SW^\phi_{G , Z , \mathfrak{s}}( \theta ) = SW^\phi_{G , X , \mathfrak{s}_X}( e(ind^G_H(H^+(Y))) s_{G_{\mathfrak{s}} , -d_Y}( ind^{G_{\mathfrak{s}}}_{H_{\mathfrak{s}}}( D_Y ) ) \theta ).
\]
}
\end{itemize}

\end{theorem}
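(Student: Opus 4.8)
The plan is to reduce Theorem~\ref{thm:ecs} to the abstract gluing formula of Theorem~\ref{thm:glue}, the link between the two being a connected-sum formula for the equivariant Bauer--Furuta invariant.

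\textbf{Step 1: connected-sum formula for the equivariant Bauer--Furuta invariant.} Let $f_X$ be the $G$-equivariant Bauer--Furuta invariant of $(X,\mathfrak{s}_X)$ and $f_Y$ the $H$-equivariant Bauer--Furuta invariant of $(Y,\mathfrak{s}_Y)$. I claim the $G$-equivariant Bauer--Furuta invariant of $(Z,\mathfrak{s})$ is stably $G_{\mathfrak{s}}$-equivariantly homotopic to $f_X \wedge ind^{G_{\mathfrak{s}}}_{H_{\mathfrak{s}}}(f_Y)$. To prove this, equip $Z$ with a $G$-invariant metric having long necks near the gluing spheres over the orbit $Gx$ (the necks are permuted by $G$, each preserved by the corresponding point stabiliser). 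Stretching the necks, the Seiberg--Witten equations on $Z$ decouple into those on $X$ and those on $ind^G_H(Y)$; since $b_1(Z)=0$ the reducible on each connecting $S^3$ is isolated, so the usual neck-stretching/gluing argument for the Bauer--Furuta invariant (cf.\ \cite{bar,bk1}) applies, and it can be run $G$-equivariantly because every choice involved — metrics, cutoffs, the gluing datum $\psi$ — is made equivariantly. Finally $ind^G_H(Y)$ is a single $G$-orbit of copies of $Y$ with stabiliser $H$, so its monopole map is by construction the induced map $ind^{G_{\mathfrak{s}}}_{H_{\mathfrak{s}}}(f_Y)$ from the discussion preceding the theorem (using $S^{ind(W)}=\bigwedge_{g_iH}S^{g_iW}$ and the fact that the monopole map of a disjoint union is the smash of the monopole maps of the pieces). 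Since the abstract Seiberg--Witten invariants only depend on the stable $G_{\mathfrak{s}}$-equivariant homotopy class, we may compute with $f_X\wedge ind^{G_{\mathfrak{s}}}_{H_{\mathfrak{s}}}(f_Y)$.

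\textbf{Step 2: apply Theorem~\ref{thm:glue}.} Take $f_1=f_X$ and $f_2=ind^{G_{\mathfrak{s}}}_{H_{\mathfrak{s}}}(f_Y)$. From $U_2=ind^{G_{\mathfrak{s}}}_{H_{\mathfrak{s}}}(U_Y)$, $U'_2=ind^{G_{\mathfrak{s}}}_{H_{\mathfrak{s}}}(U'_Y)$, $V_2=ind^{G_{\mathfrak{s}}}_{H_{\mathfrak{s}}}(V_Y)$, $V'_2=ind^{G_{\mathfrak{s}}}_{H_{\mathfrak{s}}}(V'_Y)$ one reads off $D_2=ind^{G_{\mathfrak{s}}}_{H_{\mathfrak{s}}}(D_Y)$ and $H^+_2=ind^{G}_{H}(H^+(Y))$, while $D_1=D_X$, $H^+_1=H^+(X)$; since $(ind^G_H W)^G\cong W^H$ we have $(H^+_2)^G\cong H^+(Y)^H$ and $(H^+_1)^G=H^+(X)^G$. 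In case (1) both $(H^+_1)^G$ and $(H^+_2)^G$ are non-zero, so the first alternative of Theorem~\ref{thm:glue} gives that the equivariant Seiberg--Witten invariants vanish. In case (2), $(H^+_1)^G\neq 0$ and $(H^+_2)^G=0$; the chamber of $Z$ induced by $\phi\in H^+(X)^G\setminus\{0\}$ is forced since $H^+(Z)^G=H^+(X)^G$, and the second alternative of Theorem~\ref{thm:glue} yields
\[
SW^\phi_{G,Z,\mathfrak{s}}(\theta)=SW^\phi_{G,X,\mathfrak{s}_X}\bigl(e_G(ind^G_H(H^+(Y)))\, s_{G_{\mathfrak{s}},-d_Y}(ind^{G_{\mathfrak{s}}}_{H_{\mathfrak{s}}}(D_Y))\, \theta\bigr),
\]
the Segre-class factor being exactly $\deg_{G_{\mathfrak{s}}}(f_2)/e_G(H^+_2)$ as computed by Proposition~\ref{prop:donaldson} applied to the monopole map $f_2$ (so $(H^+_2)^G=0$ with $e_G(H^+_2)\neq 0$ forces the virtual rank of $D_2$ to be $\le 0$, and otherwise the term vanishes). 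The $K$-theoretic statement is obtained the same way. It remains to match orientation conventions: the reordering sign from writing $U'(Z)\cong U(Z)\oplus H^+(Z)$ rather than $U'_1\oplus U'_2$, and the identifications $H^+(Z)\cong H^+(X)\oplus ind^G_H(H^+(Y))$, $D_Z\cong D_X\oplus ind^{G_{\mathfrak{s}}}_{H_{\mathfrak{s}}}(D_Y)$ as oriented (equivariant) vector spaces; these are routine given the setup of Step 1 and the conventions fixed in Section~\ref{sec:esw}.

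\textbf{Expected main obstacle.} The substantive input is Step 1: confirming that the connected-sum formula for the Bauer--Furuta invariant survives equivariantly. The non-equivariant version is standard, but one must check that the neck-stretching can be performed through $G$-invariant metrics, that the gluing obstruction — trivial here because $b_1=0$ makes the $S^3$-reducible isolated — is unchanged in the presence of the $G$-action permuting the necks, and that the resulting stable homotopy equivalence is genuinely $G_{\mathfrak{s}}$-equivariant. Once this is established, Steps 2 is a purely formal combination of Theorem~\ref{thm:glue} and Proposition~\ref{prop:donaldson}.
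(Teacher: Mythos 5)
Your proposal is correct and follows essentially the same route as the paper: the paper likewise establishes $f_Z = f_X \wedge ind^{G_{\mathfrak{s}}}_{H_{\mathfrak{s}}}(f_Y)$ (quoting Bauer's connected sum theorem and an equivariant extension of Sung's result, where you instead sketch the neck-stretching directly) and then applies Theorem \ref{thm:glue} with $f_1 = f_X$, $f_2 = ind^{G_{\mathfrak{s}}}_{H_{\mathfrak{s}}}(f_Y)$, using $(ind^G_H W)^G \cong W^H$ to translate the hypotheses. Your extra commentary on the equivariant neck-stretching and on $\deg_{G_{\mathfrak{s}}}(f_2)$ via Proposition \ref{prop:donaldson} matches what the paper leaves implicit.
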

\begin{proof}
Let $f_X, f_Y, f_Z$ denote the equivariant monopole maps for $X,Y$ and $Z$. Bauer's connected sum formula \cite{b2} extends easily to the $G$-equivariant setting. Then by a straightforward extension of \cite[Theorem 3.1]{sung}, we see that $f_Z = f_X \wedge ind^{G_{\mathfrak{s}}}_{H_{\mathfrak{s}}}(f_Y)$.

Set $f = f_Z$, $f_1 = f_X$, $f_2 = ind^{G_{\mathfrak{s}}}_{H_{\mathfrak{s}}}(f_Y)$. We use the notation $H^+, D, H^+_i, D_i$ as in Section \ref{sec:agf}. Then $H^+_1 = H^+(X)$, $H^+_2 = ind^{G_{\mathfrak{s}}}_{H_{\mathfrak{s}}}(H^+(Y))$, $(H^+_1)^G = H^+(X)^G$, $(H^+_2)^G = H^+(Y)^H$. Hence if $H^+(X)^G, H^+(Y)^H$ are both non-zero, then the Seiberg--Witten invariants of $Z$ vanish by Theorem \ref{thm:glue}.

Now suppose $H^+(X)^G \neq 0$ and $H^+(Y)^H = 0$. Theorem \ref{thm:glue} gives
\[
SW^\phi_{G,Z , \mathfrak{s}}( \theta ) = SW^\phi_{G , X , \mathfrak{s}}( e( H^+_2 ) s_{G_{\mathfrak{s}} , -d_Y }( D_2 ) \theta ).
\]
The result follows, since $H^+_2 = ind^{G_{\mathfrak{s}}}_{H_{\mathfrak{s}}}(H^+(Y))$ and $D_2 = ind^{G_{\mathfrak{s}}}_{H_{\mathfrak{s}}}(D_Y)$.
\end{proof}

We consider now the case where $H^+(X)^G = 0$ and $H^+(Y)^H \neq 0$. Here it is more difficult to obtain a general formula for the cohomological invariants so we restrict to the case $G = \mathbb{Z}_p$ and $H = 1$. Let $\mathbb{C}_j$ denote the $1$-dimensional complex representation of $\mathbb{Z}_p = \langle g \rangle$ where $g$ acts as multiplication by $\omega^j$, $\omega = e^{2\pi i /p}$. When $p$ is odd we may choose the orientation on $ind^G_1(H^+(Y))$ such that $ind^G_1(H^+(Y))/H^+(Y) \cong \bigoplus_{j=1}^{(p-1)/2} \mathbb{C}_j^{b_+(Y)}$.

Recall that $H^*_{\mathbb{Z}_p}(pt ; \mathbb{Z}) \cong \mathbb{Z}[v]/(pv)$, $deg(v) = 2$. When $p=2$ to avoid orientability issues we use $\mathbb{Z}_2$-coefficients. We have $H^*_{\mathbb{Z}_2}(pt ; \mathbb{Z}_2) \cong \mathbb{Z}_2[u]$, $deg(u) = 1$. In this case we set $v = u^2$ and we will also denote $u$ by $v^{1/2}$.

\begin{theorem}\label{thm:sump}
Let $X,Y$ be compact, oriented, smooth $4$-manifolds with $b_1(X) = b_1(Y) = 0$. Let $G = \mathbb{Z}_p$ where $p$ is prime act smoothly and orientation preservingly on $X$. Suppose that $\mathfrak{s}_X$ is a $G$-invariant spin$^c$-structure on $X$ and $\mathfrak{s}_Y$ is a spin$^c$-structure on $Y$ with $d(Y,\mathfrak{s}_Y) = 2d_Y - b_+(Y) - 1 = 0$. Set $Z = X \# ind^G_1(Y)$, $\mathfrak{s} = \mathfrak{s}_X \# ind^G_1(\mathfrak{s}_Y)$. Suppose that $H^+(X)^G = 0$ and $b_+(Y) > 0$. Then
\[
SW_{G , Z , \mathfrak{s}}^\phi( x^m ) = (-1)^{d_Y+1} h {\sum_l}' e(H^+(X)) s_{-d_X-l}(D_X) SW(Y , \mathfrak{s}_Y , \phi_Y) v^{m+l - (p-1)/2}
\]
where the sum ${\sum_l}'$ is over $l$ such that $0 \le l \le -d_X$, $m+l > 0$, $m+l = 0 \; ({\rm mod} \; p-1)$ and $h = \prod_{j=1}^{(p-1)/2} j^{b_+(Y)}$ for $p \neq 2$, $h=1$ for $p=2$.
\end{theorem}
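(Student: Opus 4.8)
The plan is to reduce Theorem \ref{thm:sump} to the abstract gluing formula of Theorem \ref{thm:glue}, just as in the proof of Theorem \ref{thm:ecs}, but now with the roles of $X$ and $Y$ reversed: here $f_Z = f_X \wedge ind^{G_{\mathfrak{s}}}_{1}(f_Y)$ with $(H^+_1)^G = H^+(X)^G = 0$ while $(H^+_2)^G = ind^G_1(H^+(Y))^G = H^+(Y) \neq 0$. So I would first invoke the connected sum formula $f_Z = f_X \wedge ind^{G_{\mathfrak{s}}}_1(f_Y)$ (Bauer's formula, equivariantly), and then apply the symmetric version of Theorem \ref{thm:glue}: since $f_Y$ admits a chamber $\phi_Y$ while $f_X$ does not, the chamber on $f_Z$ is $(0,\phi_Y)$ and we obtain
\[
SW^\phi_{G,f_Z}(\theta) = SW^{\phi_Y}_{G, ind^{G_{\mathfrak{s}}}_1(f_Y)}\big( e_G(H^+(X)) s_{G_{\mathfrak{s}}, -d_X}(D_X)\, \theta \big),
\]
using Proposition \ref{prop:donaldson} applied to $f_X$ (note $d_X \le 0$ is forced since $e(H^+(X)) \neq 0$, which holds because $H^+(X)^G = 0$ and $G$ is cyclic of prime order so that $H^+(X)$ is a sum of nontrivial irreducibles with nonzero Euler class).

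Next I would compute the induced invariant $SW^{\phi_Y}_{G, ind^{G_{\mathfrak{s}}}_1(f_Y)}$ explicitly. Since $H = 1$, the group $G = \mathbb{Z}_p$ permutes the $p$ copies of $Y$ freely, so $ind^{G_{\mathfrak{s}}}_1(f_Y)$ is a $G$-equivariant monopole map built from $p$ copies of $f_Y$ on which $G$ acts by permutation together with the cyclic twisting. The key point is that its cohomological Seiberg--Witten invariant can be evaluated by the induction/transfer structure: the reduced (=ordinary, since the action downstairs is free on the factors) invariant is $SW(Y,\mathfrak{s}_Y,\phi_Y)$ in each factor, $d(Y,\mathfrak{s}_Y) = 0$, and after pushing forward over $\mathbb{P}$ of the induced bundle one gets a universal expression in $v$. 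Concretely, $ind^{G_{\mathfrak{s}}}_1(D_Y)$ restricted appropriately is a sum $\bigoplus_{j} \mathbb{C}_j^{d_Y}$ (or the relevant half thereof, matching the orientation convention that $ind^G_1(H^+(Y))/H^+(Y) \cong \bigoplus_{j=1}^{(p-1)/2} \mathbb{C}_j^{b_+(Y)}$), and the Segre/Euler class bookkeeping produces the factor $e(H^+(X)) s_{-d_X - l}(D_X)$ together with the combinatorial factor $h = \prod_{j=1}^{(p-1)/2} j^{b_+(Y)}$ coming from $e_G(ind^G_1(H^+(Y))/H^+(Y)) = \prod_{j=1}^{(p-1)/2}(jv)^{b_+(Y)}$, and the constraint $m + l \equiv 0 \pmod{p-1}$ arising from which powers of $v$ survive in $H^*_{\mathbb{Z}_p}(pt)$ after the pushforward. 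The sign $(-1)^{d_Y+1}$ comes from the orientation convention in Section \ref{sec:agf} (the discrepancy between $U' = U \oplus H^+$ and $U'_1 \oplus U'_2$) combined with the parity of the codimension.

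I expect the main obstacle to be the explicit evaluation of $SW^{\phi_Y}_{G, ind^{G_{\mathfrak{s}}}_1(f_Y)}$ and in particular the careful tracking of orientations and the combinatorial factor $h$. The cleanest route is probably to pass to families via Theorem \ref{thm:eqfam2}: take a principal $\mathbb{Z}_p$-bundle $P \to B$, form the family $E_P$ associated to $ind^G_1(Y)$, identify it with the fibrewise disjoint union of $p$ copies of $Y$ permuted cyclically, recognise that its families Bauer--Furuta invariant is the "norm" of $f_Y$, and compute the families Seiberg--Witten invariant there where the bundle-theoretic identities (Euler classes of the associated bundles $ind^G_1(H^+(Y))_P$, Segre classes of $ind^G_1(D_Y)_P$) are unambiguous; then pull back to $BG$. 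The divisibility constraint $m+l \equiv 0 \pmod{p-1}$ and the appearance of $v^{m+l-(p-1)/2}$ then fall out of restricting the computed class in $H^*(B)$ to the finite skeleta $B_n$ of $BG$ and matching against $H^*_{\mathbb{Z}_p}(pt;\mathbb{Z}) = \mathbb{Z}[v]/(pv)$. I would also double-check the $p=2$ case separately, where $\mathbb{Z}_2$-coefficients are used, $u = v^{1/2}$, $h = 1$, and the product over $j=1,\dots,(p-1)/2$ is empty.
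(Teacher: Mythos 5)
Your first step is exactly the paper's: write $f_Z = f_X \wedge ind^{G_{\mathfrak{s}}}_1(f_Y)$, note $(H^+(X))^G = 0$ forces $e(H^+(X)) \neq 0$ and $d_X \le 0$, and apply Theorem \ref{thm:glue} together with Proposition \ref{prop:donaldson} to reduce everything to $SW^{\phi_Y}_{G,\,ind^{G_{\mathfrak{s}}}_1(f_Y)}\bigl(e_G(H^+(X))\,s_{G_{\mathfrak{s}},-d_X}(D_X)\,x^m\bigr)$. That part is correct and is how the paper proceeds.

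The gap is in the evaluation of $SW^{\phi_Y}_{G,\,ind^{G_{\mathfrak{s}}}_1(f_Y)}(x^m)$, which you leave as a sketch, and the two mechanisms you do name are wrong. The congruence $m+l \equiv 0 \pmod{p-1}$ does not come from ``which powers of $v$ survive in $H^*_{\mathbb{Z}_p}(pt)$'': every power $v^k$ is nonzero in $\mathbb{Z}[v]/(pv)$, so nothing is killed at that stage. Likewise the sign $(-1)^{d_Y+1}$ does not come from the orientation discrepancy in Section \ref{sec:agf} (that discrepancy, $(-1)^{b_{+,1}b_2}$, is already absorbed into the statement of Theorem \ref{thm:glue}). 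The missing idea is to compute $SW^{\phi_Y}_{G,\,ind^G_1(f_Y)}$ by the cohomological localisation theorem (Theorem \ref{thm:loc}): each splitting $s_j$ of $S^1\times\mathbb{Z}_p$ has $(ind^G_1(f_Y))^{s_j} = f_Y$, and since $d(Y,\mathfrak{s}_Y)=0$ the reduced invariant only picks out the constant term, so one is led to the character sum $\sum_{j=0}^{p-1} j^{m}$. By Fermat's little theorem this is $-1 \bmod p$ when $m>0$ and $(p-1)\mid m$, and $0$ otherwise --- that is the actual source of both the congruence condition and the factor $-1$ in $(-1)^{d_Y+1}$ (the $(-1)^{d_Y}$ comes from expanding $e_{G_{\mathfrak{s}}}(D/D_j)^{-1} = \prod_{k\neq 0}(x+kv)^{-d_Y}$), while $h$ comes from $e_G\bigl(ind^G_1(H^+(Y))/H^+(Y)\bigr) = \prod_{r=1}^{(p-1)/2}(rv)^{b_+(Y)}$ as you say. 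Your proposed detour through families and finite skeleta of $BG$ would not produce these features either; without the localisation step the proof does not close.
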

\begin{proof}
We give the proof for $p \neq 2$. The case $p=2$ is similar. Let $f_X,f_Y,f_Z$ denote the monopole maps for $X,Y,Z$. We have that $f_Z = ind^G_1(f_Y) \wedge f_X$. Theorem \ref{thm:glue} gives
\[
SW_{G,Z , \mathfrak{s}}^\phi( x^m ) = SW_{G,ind^G_1(f_Y)}^{\phi_Y}(e_G(H^+(X)) s_{G_{\mathfrak{s}} , -d_X}(D_X) x^m ).
\]
Next we will compute $SW_{G , ind^G_1(f_Y)}^{\phi_Y}$ using Theorem \ref{thm:loc}. It is easy to see that for each splitting $s_j : \mathbb{Z}_p \to S^1 \times \mathbb{Z}_p$, we have $(ind^G_1(f_Y))^{s_j} = f_Y$. Furthermore, $ind^G_1(D_Y) \cong \bigoplus_{j=0}^{p-1} \mathbb{C}_j^{d_Y}$. As explained above we can orient $ind^G_1(H^+(Y))$ in such a way that $ind^G_1(H^+(Y))/H^+(Y) \cong \bigoplus_{j=1}^{(p-1)/2} \mathbb{C}_j^{b_+(Y)}$. Then
\begin{align*}
SW^{\phi_Y}_{G,ind^G_1(f_Y)}( x^m ) &= e_G( ind^G_1(H^+(Y))/H^+(Y) ) \sum_{j=0}^{p-1} SW^{\phi_Y}_{G, f_Y}( e_{S^1 \times G}(D/D_j)^{-1} (x+jv)^m ) \\
&= \prod_{r=1}^{(p-1)/2} (rv)^{b_+(Y)} \sum_{j=0}^{p-1} SW^{\phi_Y}_{G,f_Y}\left( \psi_{s_j}^{-1} \left(\prod_{k=1}^{p-1} (x + kv)^{-d_Y} (x-jv)^m \right) \right)
\end{align*}
where $\psi_{s_j}$ was defined in Section \ref{sec:zpact}. Since $d(Y , \mathfrak{s}_Y) = 0$, we have $SW^{\phi_Y}_{G , f_Y}(1) = SW(Y,\mathfrak{s}_Y,\phi_Y)$ and $SW^{\phi_Y}_{G,f_Y}( \psi_{s_j}^{-1}(x^m)) = 0$ for $m > 0$. Hence 
\begin{align*}
SW^{\phi_Y}_{G,ind^G_1(f_Y)}( x^m ) &= (-1)^{d_Y} \prod_{r=1}^{(p-1)/2} (rv)^{b_+(Y)} SW(Y,\mathfrak{s}_Y,\phi_Y) v^{m-d_Y(p-1)} \sum_{j=0}^{p-1} (-j)^m \\
&= (-1)^{d_Y} h \, SW(Y , \mathfrak{s}_Y , \phi_Y) v^{m - (p-1)/2} \sum_{j=0}^{p-1} j^m
\end{align*}
where $h = \prod_{j=1}^{(p-1)/2} j^{b_+(Y)}$. But $\sum_{j=0}^{p-1} j^m$ equals $-1$ mod $p$ if $m > 0$ and $(p-1)$ divides $m$ and is zero otherwise. Hence
\[
SW^{\phi_Y}_{G,ind^G_1(f_Y)}( x^m ) = \begin{cases} (-1)^{d_Y+1} h \, SW(Y,\mathfrak{s}_Y , \phi_Y) & m > 0, m = 0 \; ({\rm mod} \; p-1), \\ 0 & \text{otherwise} \end{cases}
\]
Then using 
\[
s_{G_{\mathfrak{s}} , -d_X}(D_X) = \sum_{l = 0}^{-d_X} x^l s_{-d_X - l}(D_X),
\]
we obtain
\[
SW_{G,Z , \mathfrak{s}}^\phi( x^m ) =  (-1)^{d_Y+1} h {\sum_l}' e(H^+(X)) s_{-d_X-l}(D_X) SW(Y , \mathfrak{s}_Y , \phi_Y) v^{m+l - (p-1)/2}
\]
where the sum is over $l$ with $0 \le l \le -d_X$, $m+l > 0$ and $m+l = 0 \; ({\rm mod} \; p-1)$.
\end{proof}

The following corollary follows immediately from Theorem \ref{thm:sump}.

\begin{corollary}\label{cor:psumx}
Let $p$ be a prime. Let $X$ be a compact, oriented, smooth $4$-manifold with $b_1(X) = 0$ and $b_+(X)>0$. Let $\mathfrak{s}$ be a spin$^c$-structure on $X$ such that $d(X,\mathfrak{s}) = 0$. If $b_+(X)=1$ then fix a chamber. Let $G = \mathbb{Z}_p$ act on the connected sum $\# pX$ of $p$ copies of $X$ by cyclically permuting the summands (to be precise, we take the equivariant connected sum $S^4 \# pX$ where $\mathbb{Z}_p$ acts on $S^4$ by rotation). Then
\[
SW_{G , \# pX , \# p\mathfrak{s}}( x^m ) = \begin{cases} (-1)^{d_X+1} h SW(X , \mathfrak{s}_X) v^{m - (p-1)/2} & m > 0, m = 0 \; ({\rm mod} \; p-1), \\ 0 & \text{otherwise} \end{cases}
\]
where $h = \prod_{j=1}^{(p-1)/2} j^{b_+(X)}$ for $p \neq 2$, $h=1$ for $p=2$.
\end{corollary}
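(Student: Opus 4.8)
The plan is to obtain Corollary~\ref{cor:psumx} as a direct specialisation of Theorem~\ref{thm:sump}. In the notation of that theorem I would take ``$X$'' to be $S^4$ equipped with the $\mathbb{Z}_p$-rotation action and its unique spin$^c$-structure $\mathfrak{s}_{S^4}$, take ``$Y$'' to be the given manifold $X$ with spin$^c$-structure $\mathfrak{s}$, and take $H = 1$. Choosing a point $x \in S^4$ with trivial stabiliser (the rotation action has lower-dimensional fixed set, so generic points are free), any $y \in X$, and an orientation reversing isomorphism $T_x S^4 \cong \mathbb{R}^4 \cong T_y X$, the Section~\ref{sec:ecs} equivariant connected sum $S^4 \,\#\, ind^{\mathbb{Z}_p}_1(X)$ is precisely the connected sum $\#pX$ of $p$ copies of $X$ on which $\mathbb{Z}_p$ cyclically permutes the summands, and $\mathfrak{s}_{S^4} \,\#\, ind^{\mathbb{Z}_p}_1(\mathfrak{s})$ is the spin$^c$-structure $\#p\mathfrak{s}$; the orientation on $ind^{\mathbb{Z}_p}_1(H^+(X))$ is the one fixed just before the statement of Theorem~\ref{thm:sump}.

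Next I would verify the hypotheses of Theorem~\ref{thm:sump}: $b_1(S^4) = b_1(X) = 0$; $\mathfrak{s}_{S^4}$ is $\mathbb{Z}_p$-invariant because $H^2(S^4;\mathbb{Z}) = 0$; $d(X,\mathfrak{s}) = 0$ and $b_+(X) > 0$ are assumed; and $H^+(S^4)^{\mathbb{Z}_p} = H^+(S^4) = 0$. Since the only $S^1$-central extension of $\mathbb{Z}_p$ is the trivial one, a splitting of $G_{\mathfrak{s}}$ always exists, so the class $x$ is defined; for $p = 2$ one works with $\mathbb{Z}_2$-coefficients exactly as in Theorem~\ref{thm:sump}.

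Then I would evaluate the $S^4$-dependent data entering the formula of Theorem~\ref{thm:sump} (where $S^4$ now plays the role of ``$X$''). As $H^+(S^4) = 0$ we have $e(H^+(S^4)) = 1$; as $\sigma(S^4) = 0$ and $c(\mathfrak{s}_{S^4})^2 \in H^2(S^4;\mathbb{Z}) = 0$, the complex rank $d_{S^4}$ of the Dirac index $D_{S^4}$ vanishes, so the constraint $0 \le l \le -d_{S^4}$ leaves only the term $l = 0$, and $s_{-d_{S^4}-0}(D_{S^4}) = s_0(D_{S^4}) = 1$. Substituting into the display of Theorem~\ref{thm:sump}, with the remaining data coming from ``$Y$'' $= X$ --- so that ``$d_Y$'' becomes $d_X = (c(\mathfrak{s})^2 - \sigma(X))/8$, ``$h$'' becomes $\prod_{j=1}^{(p-1)/2} j^{b_+(X)}$ (with $h=1$ if $p=2$), and ``$SW(Y,\mathfrak{s}_Y,\phi_Y)$'' becomes $SW(X,\mathfrak{s})$ computed with the fixed chamber when $b_+(X)=1$ --- the sum collapses to a single term and yields exactly the asserted expression, which is non-zero only for $m > 0$ with $(p-1)\mid m$. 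As a sanity check one may note that the expected dimension of the Seiberg--Witten moduli space for $(\#pX,\#p\mathfrak{s})$ is $p(2d_X - b_+(X)) - 1 = p-1$, using $2d_X - b_+(X) = 1$ from $d(X,\mathfrak{s}) = 0$, which matches the cohomological degree $2m - (p-1)$ of $v^{m-(p-1)/2}$.

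There is no substantial obstacle, since every ingredient is supplied by Theorem~\ref{thm:sump}; the only points requiring care are the identification of the Section~\ref{sec:ecs} equivariant connected sum $S^4 \,\#\, ind^{\mathbb{Z}_p}_1(X)$ with the naive cyclic-permutation connected sum $\#pX$ (which relies on $T_xS^4$ at a free point being the trivial $4$-dimensional representation of the trivial group), the bookkeeping of which $d$'s and $b_+$'s enter the two formulas, and checking that the $S^4$ factor contributes trivially so that the sum over $l$ degenerates to its $l=0$ term.
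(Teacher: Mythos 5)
Your proposal is correct and matches the paper's approach: the paper derives Corollary \ref{cor:psumx} as an immediate specialisation of Theorem \ref{thm:sump}, with $S^4$ (carrying the rotation action) in the role of the theorem's ``$X$'' and the given manifold in the role of ``$Y$'', exactly as you do. Your verification that $e(H^+(S^4))=1$, $d_{S^4}=0$, and hence that the sum over $l$ collapses to the $l=0$ term is precisely the bookkeeping the paper leaves implicit.
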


\section{Some examples}\label{sec:ex}

\subsection{Constraints on group actions}

Consider the case where $G = \mathbb{Z}_p$ for a prime $p$. We use notation from Section \ref{sec:zpact}. Combining the divisibility condition Theorem \ref{thm:div} with Theorem \ref{thm:zp}, we obtain the following constraint on smooth $\mathbb{Z}_p$-actions:

\begin{theorem}\label{thm:constrzp}
Let $X$ be a compact, oriented, smooth $4$-manifold with $b_1(X) = 0$. Let $G = \mathbb{Z}_p$ act smoothly on $X$ and let $\mathfrak{s}$ be a spin$^c$-structure preserved by $G$. If $SW(X,\mathfrak{s}) \neq 0 \; ({\rm mod} \; p)$ and $b_0 \neq 1 \; ({\rm mod} \; 2p)$ then there exists an $i$ such that $0 \le 2d_i - b_0 - 1 \le 2(p-2)$.
\end{theorem}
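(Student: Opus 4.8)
The strategy is to combine the localisation formula of Theorem \ref{thm:zp} (which expresses the $\mathbb{Z}_p$-equivariant Seiberg--Witten invariants in terms of the reduced invariants $\overline{SW}^{s_j,\phi}_{G,X,\mathfrak{s}}$) with the divisibility/dimension bound of Theorem \ref{thm:div} applied to the reduced monopole maps $f^{s_j}$, and to argue by contradiction: assuming $SW(X,\mathfrak{s}) \neq 0 \; ({\rm mod}\; p)$ but that \emph{no} $i$ satisfies $0 \le 2d_i - b_0 - 1 \le 2(p-2)$, we will derive that in fact $SW(X,\mathfrak{s}) = 0 \; ({\rm mod}\; p)$.

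\textbf{Step 1: reduce $SW(X,\mathfrak{s})\bmod p$ to the reduced invariants.} Since $d(X,\mathfrak{s})=2d-b_+-1$ and $SW(X,\mathfrak{s})$ is nonzero, we have $d(X,\mathfrak{s})=2m\ge 0$, and by the trivial-group property (Theorem \ref{thm:propb}(3)) applied after restricting to $1\subset \mathbb{Z}_p$, or more directly by Change of group, $SW(X,\mathfrak{s})$ is the image of $SW^\phi_{\mathbb{Z}_p,X,\mathfrak{s}}(x^m)$ under the restriction map $H^*_{\mathbb{Z}_p}(pt;\mathbb{Z}_p)\to H^*_1(pt;\mathbb{Z}_p)$, i.e. the ``constant term'' (degree-zero part in $v$). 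Apply Theorem \ref{thm:zp} with $m_0=m$, $m_1=\cdots=m_{p-1}=0$ to get
\[
SW^\phi_{\mathbb{Z}_p,X,\mathfrak{s}}(x^m) = e_{\mathbb{Z}_p}(H^+/(H^+)^g)\sum_{j=0}^{p-1} c_j\!\left(-\tfrac{b_0+1}{2};\, m-d_0, -d_1,\dots,-d_{p-1}\right)\overline{SW}^{s_j,\phi}_{G,X,\mathfrak{s}}.
\]
Taking the part of this of cohomological degree $2m$ (so that restriction to the trivial group is nonzero) and tracking powers of $v$, the factor $e_{\mathbb{Z}_p}(H^+/(H^+)^g)$ contributes a fixed positive power $v^{(b_+-b_0)/2}$, and each $c_j(\cdots)$ is a multiple of $v^{\text{(something)}}$ times a rational coefficient that becomes an integer mod $p$; extracting the coefficient of $v^m$ expresses $SW(X,\mathfrak{s})\bmod p$ as a $\mathbb{Z}_p$-linear combination of the integers $\overline{SW}^{s_j,\phi}_{G,X,\mathfrak{s}}\bmod p$.

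\textbf{Step 2: kill the reduced invariants using the divisibility bound.} Each $\overline{SW}^{s_j,\phi}_{G,X,\mathfrak{s}}$ is, by its definition in Section \ref{sec:zpact}, equal to $SW(f^{s_j})$ in the sense of Theorem \ref{thm:div}: the reduced monopole map $f^{s_j}$ is an ordinary ($S^1$-equivariant) monopole map over a point with $b_+ = b_0$ and expected dimension $2d_j - b_0 - 1 = 2\delta_j$. By hypothesis $b_0 = b_+^G \neq 1 \; ({\rm mod}\; 2p)$, so Theorem \ref{thm:div} applies: if $\delta_j > p-2$ (equivalently $2d_j-b_0-1 > 2(p-2)$) and $\delta_j\ge 0$, then $SW(f^{s_j}) \equiv 0\; ({\rm mod}\; p)$, hence $\overline{SW}^{s_j,\phi}_{G,X,\mathfrak{s}}\equiv 0$. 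If instead $\delta_j < 0$, i.e. $2d_j - b_0 - 1 < 0$, then $\overline{SW}^{s_j,\phi}_{G,X,\mathfrak{s}} = 0$ by definition (moduli space empty / negative expected dimension). Therefore, if no $i$ satisfies $0 \le 2d_i - b_0 - 1 \le 2(p-2)$, then \emph{every} index $j$ falls into one of these two cases and every $\overline{SW}^{s_j,\phi}_{G,X,\mathfrak{s}}$ vanishes mod $p$. Plugging into the expression from Step 1 gives $SW(X,\mathfrak{s}) \equiv 0\;({\rm mod}\; p)$, contradicting the hypothesis. This proves the theorem.

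\textbf{Main obstacle.} The delicate point is Step 1: one must verify that extracting the coefficient of $v^m$ (the degree-zero-under-restriction part) from the right-hand side of Theorem \ref{thm:zp} genuinely produces $SW(X,\mathfrak{s})\bmod p$ on the left, and in particular that the rational coefficients $c_j$, multiplied by the $v$-powers coming from $e_{\mathbb{Z}_p}(H^+/(H^+)^g)$ and from the $c_j$'s themselves, are $p$-integral so that the mod-$p$ reduction makes sense and the vanishing of the $\overline{SW}^{s_j,\phi}$ forces vanishing of $SW(X,\mathfrak{s})$. This requires care with which splitting $s_0$ is used to identify $H^*_{G_\mathfrak{s}}(pt;\mathbb{Z}_p)\cong H^*_{\mathbb{Z}_p}(pt;\mathbb{Z}_p)[x]$ and with the observation (Remark after Definition \ref{def:swc}) that the relevant invariant $SW^\phi_{G,f}(1)$, and more generally the lowest nonvanishing $SW^\phi_{G,f}(x^n)$, is splitting-independent. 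Once the bookkeeping is done the argument is formal; no hard geometry beyond Theorems \ref{thm:div} and \ref{thm:zp} is needed.
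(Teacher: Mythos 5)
Your proposal is correct and follows essentially the same route as the paper: the paper's (two-sentence) proof likewise applies Theorem \ref{thm:div} to each reduced monopole map $f^{s_i}$ to kill $\overline{SW}^{s_i,\phi}_{G,X,\mathfrak{s}}$ mod $p$ whenever $2d_i-b_0-1$ lies outside $[0,2(p-2)]$, and then invokes Theorem \ref{thm:zp} plus restriction to the trivial subgroup to conclude $SW(X,\mathfrak{s})=0\;({\rm mod}\;p)$. Your extra bookkeeping in Step 1 is harmless (and the $p$-integrality worry is moot since Theorem \ref{thm:zp} is already an identity in $H^*(\mathbb{Z}_p;\mathbb{Z}_p)$, so vanishing of all the $\overline{SW}^{s_j,\phi}$ mod $p$ immediately kills the right-hand side).
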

\begin{proof}
If the condition $0 \le 2d_i - b_0 - 1 \le 2(p-2)$ is not satisfied then Theorem \ref{thm:div} implies that $\overline{SW}^{s_i}_{G , X , \mathfrak{s}} = 0 \; ({\rm mod} \; p)$. If this holds for all $i$ then Theorem \ref{thm:zp} implies that $SW_{G,X,\mathfrak{s}} = 0 \; ({\rm mod} \; p)$ and hence $SW(X,\mathfrak{s}) = 0 \; ({\rm mod} \; p)$.
\end{proof}

\begin{corollary}
Let $X$ be a compact, oriented, smooth $4$-manifold with $b_1(X) = 0$. Let $G = \mathbb{Z}_2$ act smoothly on $X$ and let $\mathfrak{s}$ be a spin$^c$-structure preserved by $G$. If $SW(X,\mathfrak{s})$ is odd and $b_0 \neq 1 \; ({\rm mod} \; 4)$ then there exists an $i$ such $2d_i = b_0 + 1$.
\end{corollary}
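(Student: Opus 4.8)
The plan is to derive this as the immediate $p=2$ specialisation of Theorem \ref{thm:constrzp}. First I would observe that the hypothesis "$SW(X,\mathfrak{s})$ is odd" is by definition the statement $SW(X,\mathfrak{s}) \neq 0 \ ({\rm mod}\ 2)$, and that for $p=2$ the congruence condition $b_0 \neq 1 \ ({\rm mod}\ 2p)$ appearing in Theorem \ref{thm:constrzp} reads precisely $b_0 \neq 1 \ ({\rm mod}\ 4)$. Thus both hypotheses of Theorem \ref{thm:constrzp} are in force.

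Next I would invoke Theorem \ref{thm:constrzp} directly: it produces an index $i$ such that $0 \le 2d_i - b_0 - 1 \le 2(p-2)$. Substituting $p=2$ gives $2(p-2) = 0$, so the inequality collapses to $0 \le 2d_i - b_0 - 1 \le 0$, forcing $2d_i - b_0 - 1 = 0$, i.e. $2d_i = b_0 + 1$. This completes the argument.

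There is essentially no obstacle here; the only point worth spelling out is the trivial arithmetic that $2(p-2)$ vanishes at $p=2$, which pinches the inequality of Theorem \ref{thm:constrzp} down to an equality. One could additionally remark, for context, that $2d_i = b_0+1$ means the reduced moduli space associated to the splitting $s_i$ has expected dimension exactly $0$, but this is not needed for the proof.
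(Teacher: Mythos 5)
Your proposal is correct and is exactly the argument the paper intends: the corollary is stated without proof as the immediate $p=2$ specialisation of Theorem \ref{thm:constrzp}, where $2(p-2)=0$ pinches the inequality $0 \le 2d_i - b_0 - 1 \le 2(p-2)$ to the equality $2d_i = b_0+1$.
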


\begin{example}
Let $K$ denote a $K3$ surface given as a degree $3$ cyclic branched cover of $\mathbb{CP}^1 \times \mathbb{CP}^1$ branched over a smooth curve $\Sigma$ of bi-degree $(3,3)$. This gives an action of $G = \mathbb{Z}_3$ on $K$ with fixed point set a surface of genus $4$ and self-intersection $6$. Similarly we can realise $4(S^2 \times S^2)$ as the branched triple cover of an unknotted surface in $S^4$ of genus $2$ \cite{ak}. This gives an action of $G$ on $4(S^2 \times S^2)$ with fixed point set a surface of genus $2$ and self-intersection zero. Now consider the equivariant connected sum $X_0 = 4(S^2 \times S^2) \# 5 K$ of $4(S^2 \times S^2)$ and five copies of $K$ with the given $\mathbb{Z}_3$-action. This gives a $\mathbb{Z}_3$-action on $X_0$ with fixed point set a single surface of genus $22$ and self-intersection $30$. The $4$-manifold $X_0$ is homeomorphic to the elliptic surface $X = E(10)$, hence the $\mathbb{Z}_3$-action on $X_0$ also defines a continuous, locally linear $\mathbb{Z}_3$-action on $X$. On the other hand we will use Theorem \ref{thm:constrzp} to show that there is no smooth $\mathbb{Z}_3$-action on $X$ with the same fixed point data. Let $\mathfrak{s}$ denote the unique spin structure on $X$. Then $SW(X,\mathfrak{s}) = \binom{8}{4} = 1 \; ({\rm mod} \; 3)$. A smooth $\mathbb{Z}_3$-action on $X$ with fixed point set a surface of genus $22$ and self-intersection $30$ will have $b_0 = 5$ by the $G$-signature theorem and $d_0 = 0$, $d_1=d_2 = 5$ by the $G$-spin theorem. But this contradicts Theorem \ref{thm:constrzp} which requires $3 \le d_i \le 4$ for some $i$. So such an action does not exist.
\end{example}

\subsection{Exotic group actions}

The gluing formula gives a method of contructing group actions which are homeomorphic but not diffeomorphic. Let $X_1, X_2$ be compact oriented smooth $4$-manifolds with $b_1 = 0$ and $b_+ > 1$. Assume that there is a homeomorphism $\varphi : X_1 \to X_2$, but that $X_1$, $X_2$ have different mod $p$ Seiberg--Witten invariants for a prime $p$, so in particular they are not diffeomorphic. More precisely we will assume the following. For simplicity assume $H_1(X_i ; \mathbb{Z}) = 0$ for $i=1,2$ so that spin$^c$-structures can be identified with characteristics. Then we will require that there does not exist an isometry $\psi : H^2(X_1 ; \mathbb{Z}) \to H^2(X_2 ; \mathbb{Z})$ for which $SW(X_1 , \mathfrak{s}_1) = SW(X_2 , \psi(\mathfrak{s}_1)) \; ({\rm mod} \; p)$ for every spin$^c$-structure $\mathfrak{s}_1$ with $d(X_1 , \mathfrak{s}_1) = 0$ and for some choice of orientations of $H^+(X_i)$.

Given $g > 0$, let $Y_g$ denote the degree $p$ cyclic branched cover of an unknotted embedded surface in $S^4$ of genus $g$. Then $Y_p$ is diffeomorphic to $\#^{g(p-1)} S^2 \times S^2$. The branched covering construction defines an action of $G = \mathbb{Z}_p$ on $Y_g$. Moreover $H^2(Y_g ; \mathbb{Z})^G = 0$, in particular $H^+(Y_g)^G = 0$. By \cite{gom} there exists a $k>0$ such that $X_1 \# k(S^2 \times S^2)$ and $X_1 \# k(S^2 \times S^2)$ are diffeomorphic. This also implies that $p X_1 \# k(S^2 \times S^2)$ and $pX_2 \# k(S^2 \times S^2)$ are diffeomorphic. Thus if $g(p-1) \ge k$ then we get two $\mathbb{Z}_p$-actions on $X = pX_1 \# Y_g \cong pX_2 \# Y_g$. The first is obtained by taking $Y_g$ with the $\mathbb{Z}_p$-action described above and attaching $p$ copies of $X_1$ which are permuted by the $\mathbb{Z}_p$-action. The second action on $X$ is given by the same construction but with $X_2$ in place of $X_1$. These two actions are equivariantly homeomorphic since we can apply the homeomorphism $\varphi : X_1 \to X_2$ to each copy of $X_1$. On the other hand they are not equivarianly diffeomorphic. To see this, note that since $H^2(Y_g ; \mathbb{Z})^G = 0$, one finds that the $G$-invariant spin$^c$-structures on $X_i \# Y_g$ are precisely those of the form $ind^{G}_1(\mathfrak{s}_{i}) \# \mathfrak{s}$ where $\mathfrak{s}_i $ is a spin$^c$-structure on $X_i$ and $\mathfrak{s}$ is the unique spin structure on $Y_g$. If $d(X_i , \mathfrak{s}_i) = 0$ then Theorem \ref{thm:sump} gives
\[
SW_{G , X , ind^G(\mathfrak{s}_i) \# \mathfrak{s}}( x^{p-1} ) = (-1)^{g(p-1)/2} SW( X_i , \mathfrak{s}_i) v^{(p-1)/2} \; ({\rm mod} \; p).
\]
Our assumption that $X_1,X_2$ have different mod $p$ Seiberg--Witten invariants then implies that the two different $G$-actions have different equivariant Seiberg--Witten invariants, so they are not diffeomorphic.

\subsection{Obstructions to enlarging group actions}

Let $G$ be a finite group and $H$ a subgroup of $G$. The compatibility of the equivariant Seiberg--Witten invariants with the restriction map from $G$-equivariant cohomology (or $K$-theory) to $H$-equivariant implies a condition for a smooth $H$-action to extend to $G$. This follows immediately from Theorem \ref{thm:propb} (2), but we restate it here from the perspective of extending a group action.

\begin{proposition}\label{prop:extending}
Let $X$ be a compact, oriented smooth $4$-manifold with $b_1(X) = 0$. Suppose that a finite group $H$ acts on $X$ by orientation preserving diffeomorphisms and suppose that $H^+(X)^H \neq 0$. Let $\mathfrak{s}$ be a $H$-invariant spin$^c$-structure and let $\phi \in H^+(X)^H \setminus \{0\}$ be a chamber. Let $G$ be a finite group containing $H$. If the $H$-action on $X$ extends to a smooth, orientation preserving action of $G$ which fixes $\mathfrak{s}$ and $\phi$, then $H_{\mathfrak{s}}$ is the restriction to $H$ of an $S^1$ central extension $G_{\mathfrak{s}} \to G$ and for every $\theta$ in the image of the restriction map $H^*_{G_{\mathfrak{s}}}(pt ; A) \to H^*_{H_{\mathfrak{s}}}(pt ; A)$, we have that $SW_{H,X,\mathfrak{s}}^\phi(\theta)$ is in the image of $H^*_{G}(pt ; A_w) \to H^*_H(pt ; A_w)$.

Furthermore, if $b_+(X)$ is odd and a $H$-equivariant spin$^c$-struture $\mathfrak{o}$ on $H^+(X)$ is given which can be lifted to a $G$-equivariant spin$^c$-structure, then for every $\theta$ in the image of $R(G_{\mathfrak{s}}) \to R(H_{\mathfrak{s}})$, we have that $SW_{H,X,\mathfrak{s}}^{\phi , K}(\theta)$ is in the image of $R(G) \to R(H)$.
\end{proposition}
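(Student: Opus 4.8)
The plan is to deduce this directly from the change-of-group property together with the compatibility of the invariants with restriction maps, which for the cohomological invariants was already recorded in Section~\ref{sec:ci} via the commutative square relating $SW^\phi_{G,f}$ and $SW^\phi_{H,f}$; the same argument applies verbatim in $K$-theory. First I would observe that if the $H$-action on $X$ extends to a smooth orientation-preserving $G$-action fixing $\mathfrak{s}$ and $\phi$, then the monopole map $f$ of $(X,\mathfrak{s})$ for the $H$-action is simply the restriction of the monopole map for the $G$-action, and the extension $H_{\mathfrak{s}}$ is the restriction to $H$ of the central extension $G_{\mathfrak{s}}$ associated to the $G$-action (this is immediate from the construction of $G_{\mathfrak{s}}$ as the group of lifts preserving a chosen $G$-invariant spin$^c$ connection, which restricts to an $H$-invariant one). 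Since the chamber $\phi$ is $G$-fixed, it defines compatible chambers for both the $G$- and $H$-equivariant problems.

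Next I would invoke the commutative diagram from Section~\ref{sec:ci} (equivalently Theorem~\ref{thm:propb}(2) applied to the inclusion $\psi : H \hookrightarrow G$):
\[
\xymatrix{
H^*_{G_{\mathfrak{s}}}(pt ; A) \ar[r]^-{SW^\phi_{G,X,\mathfrak{s}}} \ar[d]_-{res} & H^{*-\delta}_G(pt ; A_w) \ar[d]^-{res} \\
H^*_{H_{\mathfrak{s}}}(pt ; A) \ar[r]^-{SW^\phi_{H,X,\mathfrak{s}}} & H^{*-\delta}_H(pt ; A_w)
}
\]
where the vertical maps are the restriction maps. Chasing this diagram: if $\theta \in H^*_{H_{\mathfrak{s}}}(pt;A)$ lies in the image of $res : H^*_{G_{\mathfrak{s}}}(pt;A) \to H^*_{H_{\mathfrak{s}}}(pt;A)$, say $\theta = res(\widetilde{\theta})$, then $SW^\phi_{H,X,\mathfrak{s}}(\theta) = SW^\phi_{H,X,\mathfrak{s}}(res(\widetilde{\theta})) = res(SW^\phi_{G,X,\mathfrak{s}}(\widetilde{\theta}))$ lies in the image of $res : H^*_G(pt;A_w) \to H^*_H(pt;A_w)$. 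This is exactly the first assertion.

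For the $K$-theoretic statement I would run the identical argument with the analogous commutative square for $SW^{K,\phi,\mathfrak{o}}_{G,f}$ and $SW^{K,\phi,\mathfrak{o}}_{H,f}$; the only extra point is that the restriction of $f$ to $H$-fixed data requires a choice of $K$-orientation on $H^+(X)$, and the hypothesis that the $H$-equivariant spin$^c$-structure $\mathfrak{o}$ on $H^+(X)$ lifts to a $G$-equivariant one guarantees that the $G$-invariant $K$-theoretic Thom class $\tau^{K,\phi}_{V',U'}$ restricts to the $H$-equivariant one used to define $SW^{K,\phi,\mathfrak{o}}_{H,X,\mathfrak{s}}$, so that the square genuinely commutes. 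The rest is the same diagram chase in $K$-theory. I do not expect a serious obstacle here; the only subtlety worth spelling out carefully is the naturality of the refined (chamber-dependent) Thom classes under restriction of groups — one needs that the construction of $\tau^\phi_{V',U'}$ (resp. $\tau^{K,\phi}_{V',U'}$) from Section~\ref{sec:ci} (resp. Section~\ref{sec:ki}) commutes with the restriction $H^*_{G_{\mathfrak{s}}} \to H^*_{H_{\mathfrak{s}}}$, which follows from its construction in \cite[\textsection 3]{bk} being functorial, and correspondingly that $\eta^\phi$ restricts to $\eta^\phi$. Once that is noted, everything reduces to the module/diagram-chase arguments above.
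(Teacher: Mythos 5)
Your proof is correct and follows exactly the route the paper takes: the paper derives Proposition \ref{prop:extending} as an immediate consequence of the change-of-group property (Theorem \ref{thm:propb}(2)), i.e.\ the commutative square from Section \ref{sec:ci} relating $SW^\phi_{G,f}$ and $SW^\phi_{H,f}$ under restriction. Your diagram chase, together with the observation that $G_{\mathfrak{s}}$ restricts to $H_{\mathfrak{s}}$ and the remark about lifting the $K$-orientation, is precisely that argument spelled out.
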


\begin{example}
Let us consider Proposition \ref{prop:extending} in the case that $H = \mathbb{Z}_p = \langle \sigma \; | \; \sigma^p \rangle$ is cyclic of odd prime order and $G = D_p = \langle \sigma , \tau \; | \; \sigma^p , \tau^2, (\tau \sigma)^2 \rangle$ is the dihedral group of order $2p$. Both $G$ and $H$ have no non-trivial $S^1$ central extensions. Let $w \in H^1_{D_p}(pt ; \mathbb{Z}_2) \cong \mathbb{Z}_2$ be the unique non-trivial element. Recall that $H^{2k}_{\mathbb{Z}_p}(pt ; \mathbb{Z}) \cong \mathbb{Z}_p$ for every $k > 0$. On the other hand a simple calculation shows that $H^{2k}_{D_p}(pt ; \mathbb{Z}) = 0$ for odd $k$ and $H^{2k}_{D_p}(pt ; \mathbb{Z}_w) = 0$ for even $k > 0$. Also $R(\mathbb{Z}_p) \cong \mathbb{Z}[t]/(t^p-1)$ and the image of $R(D_p) \to R(\mathbb{Z}_p)$ is the subring generated by $t+t^{-1}$. From this we obtain non-trivial conditions for a smooth $\mathbb{Z}_p$-action on $X$ to extend to $D_p$, where $X$ is a compact, oriented smooth $4$-manifold with $b_1(X) = 0$ and $H$ acts smoothly and orientation preservingly. Suppose $\mathfrak{s}$ is a $H$-invariant spin$^c$-structure on $X$ and $\phi$ is a $H$-invariant chamber. If the $\mathbb{Z}_p$-action extends to a smooth orientation preserving action of $D_p$ which preserves $\mathfrak{s}$ and $\phi$. Assume $b_+(X)$ is odd, so $d(X,\mathfrak{s})$ is even. Then the condition is that $SW^\phi_{\mathbb{Z}_p , X , \mathfrak{s}}( x^m ) = 0$ whenever $2m-d(X,\mathfrak{s})$ is positive and equals $2$ mod $4$ if $\sigma$ preserves orientation on $H^+$, or equals $0$ mod $4$ if $\sigma$ reverses orientation on $H^+$.

Consider for instance the case that $X = \# pY$ is the connected sum of $p$ copies of a $4$-manifold $Y$ and $\sigma$ cyclically permutes the summands. Then by Corollary \ref{cor:psumx}, $SW_{\mathbb{Z}_p , \# pY , \# p \mathfrak{s}_Y}^{\phi}( x^{p-1} )$ is a non-zero multiple of $SW(Y,\mathfrak{s}_Y , \phi) v^{(p-1)/2}$. We obtain the following non-existence result: if $SW(Y,\mathfrak{s}_Y , \phi) \neq 0 \; ({\rm mod} \; p)$ then there does not exist an extension of the $\mathbb{Z}_p$-action on $X$ to a smooth action of $D_p$ which fixes $\# p \mathfrak{s}_Y$ and $\phi$ and for which $\tau$ preserves orientation on $H^+(X)$ if $p = 3 \; ({\rm mod} \; 4)$ or reverses orientation on $H^+(X)$ if $p = 1 \; ({\rm mod} \; 4)$.
\end{example}

\subsection{Obstructions to equivariant connected sum decompositions}

Let $X$ be a compact, oriented, smooth $4$-manifold with $b_1(X)=0$. Consider a smooth $G$-action on $X$. If $X$ can be written as an equivariant connected sum $X = X_1 \# X_2$ where $b_+(X_1)^G, b_+(X_2)^G > 0$, then the equivariant Seiberg--Witten invariants vanish by Theorem \ref{thm:ecs}. This can be used to limit the possible ways in which $X$ can be an equivariant connected sum.

\begin{example}
We will construct two smooth $\mathbb{Z}_3$-actions on $X = 5 \mathbb{CP}^2 \# 23 \overline{\mathbb{CP}}^2$ with the same fixed point data. One of these actions will decompose as an equivariant connected sum $X = X_1 \# X_2$ with $b_+(X_1)^G , b_+(X_2)^G > 0$, the other will not decompose in this way. The actions on $X$ will be constructed from the following:
\begin{itemize}
\item[(1)]{Let $K$ be the Fermat quartic $\{ [z_0,z_1,z_2,z_3] \in \mathbb{CP}^3 \; | \; z_0^4 + z_1^4 + z_2^4 + z_3^4 = 0\}$ with $\mathbb{Z}_3$-action given by $[z_0,z_1,z_2,z_3] \mapsto [z_0 , z_2 , z_3 , z_1]$.}
\item[(2)]{Let $\mathbb{CP}^2_{(a)}$ denote $\mathbb{CP}^2$ with $\mathbb{Z}_3$-action $[z_0,z_1,z_2] \mapsto [z_0 , \omega z_1 , \omega^a z_2]$, where $\omega = e^{2\pi i/3}$.}
\item[(3)]{Let $2(S^2 \times S^2)$ be the branched triple cover of an unknotted torus in $S^4$.}
\end{itemize}

The action of $\mathbb{Z}_3$ on the tangent space of an isolated fixed point will be orientation preservingly isomorphic to $(z_1,z_2) \mapsto ( \omega^{-1} z_1 , \omega^a z_2)$, where $a$ is either $1$ or $-1$. Let $n_{\pm}$ denote the number of isolated fixed points of type $a = \pm 1$. Then $K$ has $(n_+ , n_-) = (6,0)$, $\mathbb{CP}^2_{(1)}$ has $(n_+ , n_-) = (0,1)$, $\mathbb{CP}^2_{(2)}$ has $(n_+ , n_-) = (3,0)$ and $2(S^2 \times S^2)$ has $(n_+ , n_-) = 0$.

Let $Y$ be the equivariant sum $Y = \overline{\mathbb{CP}}^2_{(2)} \# \overline{\mathbb{CP}}^2_{(1)} \# 2(S^2 \times S^2)$, where the first and second summands are connected along isolated fixed points and the second and third summands are connected along non-isolated fixed points. Then $X$ is diffeomorphic to $K \# Y$ and so we obtain a $\mathbb{Z}_3$-action on $X$ by considering $K \# Y$ as an equivariant connected sum. We have that $b_+(Y)^G = 0$ and if we equip $2(S^2 \times S^2)$ with its unique spin structure and equip each $\overline{\mathbb{CP}}^2$ summand of $Y$ with a spin$^c$-structure satisfying $c(\mathfrak{s})^2 = -1$, then we obtain an invariant spin$^c$-structure $\mathfrak{s}_Y$ on $Y$ for which $d_Y = 0$. If $\mathfrak{s}_K$ denotes the unique spin structure on $K$, then Theorem \ref{thm:ecs} gives
\[
SW_{\mathbb{Z}_3 , K \# Y , \mathfrak{s}_K \# \mathfrak{s}_Y}(1) = SW_{\mathbb{Z}_3 , K , \mathfrak{s}_K}(1) = SW(K , \mathfrak{s}_K) = 1.
\]
So $K \# Y$ has a non-zero equivariant Seiberg--Witten invariant and can't be obtained as an equivariant connected sum of the form $X_1 \# X_2$ with $b_+(X_1)^G, b_+(X_2)^G > 0$.

Next let $K'$ be defined as follows. First take the equivariant connected sum $K'_0 = 3 \mathbb{CP}^2_{(2)} \# \overline{\mathbb{CP}}^2_{(2)}$ where the three $\mathbb{CP}^2_{(2)}$ summands are attached to the three isolated fixed points of $\overline{\mathbb{CP}}^2_{(2)}$. Then let $K' = K'_0 \# 18 \overline{\mathbb{CP}}^2$, where the $\mathbb{Z}_3$-action permutes the $18$ copies of $\overline{\mathbb{CP}}^2$ in six $3$-cycles. It is easily seen that the $\mathbb{Z}_3$-action on $K'$ has the same fixed point data as $K$, namely $(n_+ , n_-) = (6,0)$ and no non-isolated fixed points. Hence the $\mathbb{Z}_3$-actions on $K \# Y$ and $K' \# Y$ have the same fixed points as well. Moreover, $K' \# Y$ is diffeomorphic to $X$ so this gives another $\mathbb{Z}_3$-action on $X$ with the same fixed point data. Unlike the first action this one can be decomposed into an equivariant connected sum $X_1 \# X_2$ with $b_+(X_1)^G, b_+(X_2)^G > 0$. This is clear because each of the three $\mathbb{CP}^2_{(2)}$-summands in $K'_0$ have $b_+^G = 1$.
\end{example}

\subsection{Obstructions to equivariant positive scalar curvature}

The $4$-manifolds of the form $a \mathbb{CP}^2 \# b\overline{\mathbb{CP}}^2$ admit metrics of positive scalar curvature. On the other hand the equivariant Seiberg--Witten invariants can be used to find many examples of actions on such manifolds for which there is no invariant metric of positive scalar curvature. Let $X$ be a simply-connected $4$-manifold with $b_+(X) > 1$ on which $\mathbb{Z}_p$ acts smoothly. Assume the action has a non-isolated fixed point and that $X$ has a non-zero mod $p$ Seiberg--Witten invariant. We assume $X$ is not spin (if necessary we can replace $X$ by a blow up to achieve this). Then for some $g>0$ we have that $X \# g(p-1)(S^2 \times S^2)$ is diffeomorphic to $a \mathbb{CP}^2 \# b\overline{\mathbb{CP}}^2$, where $a = b_+(X)+g(p-1)$, $b = b_-(X)+g(p-1)$. Now recall that $Y_g = g(p-1)(S^2 \times S^2)$ can be realised as the degree $p$ cyclic branched cover of an unknotted surface in $S^4$ of genus $g$. We have $b_+(Y_g)^{\mathbb{Z}_p} = 0$ and then $X \# Y_g$ has non-zero equivariant Seiberg--Witten invariants by Theorem \ref{thm:ecs}. This gives a smooth $\mathbb{Z}_p$-action on $a \mathbb{CP}^2 \# b\overline{\mathbb{CP}}^2$ which does not admit an invariant positive scalar curvature metric.


\bibliographystyle{amsplain}

\end{document}